\documentclass[12pt]{amsart}
\usepackage{pb-diagram}

\oddsidemargin 0 cm
\evensidemargin 0 cm
\addtolength{\textwidth}{3.5cm} 
\addtolength{\leftmargin}{2cm}
\addtolength{\textheight}{2cm}
\addtolength{\topmargin}{-1cm}

\usepackage{amscd}
\usepackage{amsfonts}
\usepackage{amsmath}
\usepackage{amssymb}
\usepackage{color}

\newcommand{\tr}{\textnormal{tr}}
\newcommand{\ric}{\textnormal{Ric}}

\newcommand{\Rm}{\textnormal{Rm}}

\newcommand{\ep}{\varepsilon}

\newcommand{\var}{\textnormal{Var}}
\newcommand{\fs}{\textnormal{FS}}

\newcommand{\dbar}{\overline{\partial}}

\newcommand{\ddt}[1]{\frac{\partial #1}{\partial t}}
\newcommand{\dds}[1]{\frac{\partial #1}{\partial s}}

\newcommand{\ddbar}{\sqrt{-1}\partial\dbar}

\newcommand{\cL}{\mathcal{L}}
\newcommand{\cS}{\mathcal{S}}
\newcommand{\cN}{\mathcal{N}}
\newcommand{\cW}{\mathcal{W}}
\newcommand{\cX}{\mathcal{X}}
\newcommand{\cR}{\mathcal{R}}
\newcommand{\vol}{\textnormal{Vol}}
\newcommand{\osc}{\textnormal{osc}}

\newcommand{\CCC}{\mathfrak{C}}
\newcommand{\bF}{\mathbb{F}}

\newtheorem{theorem}{Theorem}[section]
\newtheorem{lemma}[theorem]{Lemma}
\newtheorem{corollary}[theorem]{Corollary}
\newtheorem{proposition}[theorem]{Proposition}

\newtheorem{claim}[theorem]{Claim}

\numberwithin{equation}{section}

\theoremstyle{definition}
\newtheorem{remark}[theorem]{Remark}

\theoremstyle{definition}
\newtheorem{definition}[theorem]{Definition}


\begin{document}

\title{Geometric regularity of  blow-up limits of the K\"ahler-Ricci flow}

\author{Max Hallgren$^*$, Wangjian Jian$^{**}$, Jian Song$^\dagger$ and Gang Tian$^\ddagger$}

\thanks{Max Hallgren is supported in part by National Science Foundation grant DMS-2202980. Wangjian Jian is supported in part by NSFC No.12201610, NSFC No.12288201, MOST No.2021YFA1003100. Jian Song is supported in part by National Science Foundation grant DMS-2203607. Gang Tian is supported in part by NSFC No.11890660, MOST No.2020YFA0712800.}

\address{$^*$ Department of Mathematics, Rutgers University, Piscataway, NJ 08854}

\email{mh1564@math.rutgers.edu}

\address{$^{**}$ Institute of Mathematics, Academy of Mathematics and Systems Science, Chinese Academy of Sciences, Beijing, 100190, China}

\email{wangjian@amss.ac.cn}

\address{$^\dagger$ Department of Mathematics, Rutgers University, Piscataway, NJ 08854}

\email{jiansong@math.rutgers.edu}

\address{$^\ddagger$ BICMR and SMS, Peking University, Beijing 100871, China}

\email{gtian@math.pku.edu.cn}

\maketitle
\thispagestyle{empty}
\markboth{Geometric regularity of the blow-up limits of the K\"ahler-Ricci flow}{Max Hallgren, Wangjian Jian, Jian Song and Gang Tian}

\begin{abstract}  
We establish geometric regularity for Type I blow-up limits of the K\"ahler-Ricci flow based at any sequence of Ricci vertices. As a consequence, the limiting flow is continuous in time in both Gromov-Hausdorff and Gromov-$W_1$ distance. In particular, the singular sets of each time slice and its tangent cones are close and of codimension no less than $4$.
\end{abstract}

\maketitle
{\footnotesize \tableofcontents}


\bigskip
\section{Introduction}

Blow-up analysis is fundamental in the study of the formation of singularities in Ricci flow. Hamilton (cf. \cite{Ham, Ham2}) laid out various approaches to classify both finite time and long singularities of Ricci flow. Type I scaling is the most natural parabolic blow-up that one wishes to extract a geometric limit satisfying the Ricci soliton equations.   In \cite{SeT}, Perelman showed that Type I rescalings of a Fano K\"ahler-Ricci flow have uniformly bounded scalar curvature. Based on this fact, it was shown in \cite{Bam18,CW3,TiZZL} that limits of normalized Fano K\"ahler-Ricci flow have partial regularity akin to noncollapsed limits of Einstein manifolds.  Perelman's estimate was also applied in \cite{Chen} to give a Ricci flow proof of the Yau-Tian-Donaldson conjecture, and in \cite{DerSz,HanLi} to establish the uniqueness of tangent flows in the Fano setting.

However, it is still unknown whether the scalar curvature is bounded for Type I rescalings of general finite time solutions of the K\"ahler-Ricci flow. On the other hand, it was shown in \cite{JST23a} that Type I rescalings of any projective K\"ahler-Ricci flow have locally bounded scalar curvature near certain distinguished points, called Ricci vertices. Moreover, it was shown that the scalar curvature has at most quadratic growth as a function of distance from the Ricci vertex, and that a partial $C^0$ estimate also holds at bounded distance from a Ricci vertex. The position of such Ricci vertices depends on a choice of background $(1,1)$-form associated to the limiting cohomology class at the singular time, which offers substantial flexibility in the singularity analysis of projective K\"ahler-Ricci flows. Nonetheless, it is so far unclear when such blow-up limits would coincide with  tangent flows, hence they could fail to be self-similar. Because of this, the strongest form of Bamler's partial regularity theory \cite{Bam20c} does not apply. In particular, Bamler's theory does not guarantee that each time slice of the limiting metric flow has singularities of codimension four, that the metric flow is continuous, or that the tangent cones of the time slices are metric cones. The primary goal of this paper is to establish these facts.

The continuity of the limiting metric flow is established based on locally uniform estimates for distance distortion. Such estimates for Ricci flows with globally bounded scalar curvature were established in \cite{BZ17,BZ19}, and (with additional assumptions) in \cite{CW3}. Some of the techniques employed in these proofs may be successfully localized, giving a fairly straightforward proof that distances cannot decrease too quickly in time. The reverse estimate, however, requires new ideas as the corresponding proofs in \cite{BZ19,CW3} rely heavily on a global scalar curvature bound. 

We localize the proof of \cite[Section 6]{Bam18} to show that each time slice of the limiting metric flow has singularities of codimension four and is indeed a singular space in the sense of \cite{Bam17}. The partial regularity theory we rely on is from \cite{Bam20c} rather than from \cite{Bam17}, since we choose to consider the $\bF$-limit. For this reason, we have to work on the flow instead of on a single time-slice. The derivative estimate of the Ricci potential in the time direction (which is the Laplacian estimate in the space direction) plays a crucial role in working on the space-time. Such Li-Yau type estimate is established in \cite{JST23a}, and it fits well with Perelman's reduced geometry as the boundedness of the Ricci potential propagated along the reduced geodesic.

Our results are closely related to the analytic minimal model program with Ricci flow proposed in \cite{ST1, ST2, ST4} in the hope that the blow-up limits will reveal both geometric and algebraic structures of underlying algebraic singularities and the associated birational surgeries. New proofs of Perelman's estimates were discovered for the Fano K\"ahler-Ricci flow \cite{JST23b} and were extended to general case in \cite{JST23a}. They provide a refined geometric picture of the analytic minimal model program. This paper is a continuation of \cite{JST23a} and \cite{JST23b} and lays the groundwork for the future study on the formation of singularities of the K\"ahler-Ricci flow.


\medskip
\subsection{Statement of the main results}\label{STofthemain}

We consider the unnormalized K\"ahler-Ricci flow
\begin{equation}\label{unkrflow1}
\left\{
\begin{array}{l}
{ \displaystyle \ddt{g(t)} = -\ric(g(t)) ,}\\
\\
g(0)=g_0, 
\end{array} \right. 
\end{equation}
on a K\"ahler manifold $X$ of complex dimension $n\geq 2$ for some initial K\"ahler metric $g_0 \in H^{1,1}(X, \mathbb{R})\cap H^2(X, \mathbb{Q})$.  

Suppose the flow develops finite time singularities at $T>0$. Kawamata's rationality and base point free theorem imply that $T\in \mathbb{Q}$ and the limiting cohomology class $\vartheta= [\omega_0]+ T[K_X] \in H^{1,1}(X, \mathbb{R})\cap H^2(X, \mathbb{Q})$ is a semi-ample $\mathbb{Q}$-line bundle. In particular,  the semi-ample line bundle $\vartheta $ induces a unique surjective holomorphic map
\begin{equation}
\Phi: X \rightarrow Y \subset \mathbb{CP}^N, 
\end{equation}
where $Y$ is a normal projective variety and $\dim Y$ is equal to the Kodaira dimension of $\vartheta$. We will always assume that $T=1$ after replacing $\omega_0$ by $T^{-1} \omega_0$. 
\begin{enumerate}

    \item When $Y$ is a point, $X$ is a Fano manifold and the K\"ahler-Ricci flow must have finite time extinction. 
    
    \medskip

    \item When $0< m:=\dim_{\mathbb{C}} Y < \dim_{\mathbb{C}} X$, the general fibre of $\Phi: X \rightarrow Y$ is a Fano manifold and such a Fano fibration is also called a Mori fibration. 

    \medskip

    \item When $\dim Y = \dim X$, $\Phi$ is a birational morphism corresponding to a divisorial contraction or a small contraction of a flip. 
   
\end{enumerate}

We let $\theta_Y$ be a smooth closed $(1,1)$-form on $Y$ with 
 \begin{equation}\label{puly}
 \Phi^*\theta_Y\in \vartheta.
 \end{equation}
That is, $\theta_Y$ is the restriction of a local smooth closed $(1,1)$-form  through a local embedding of $Y$ into some $\mathbb{C}^M$. For example, we can choose $\theta_Y$ to be the multiple of the Fubini-Study metric $\mathbb{CP}^N$ restricted to $Y \subset \mathbb{CP}^N$. 
We abuse notation by identifying $\theta_Y$ with $\Phi^*\theta_Y$  for convenience.

For fixed $\theta_Y$, there exists $u\in C^\infty(X \times [0, 1))$ such that
$$\ric(g(t)) - (1-t)^{-1} g(t) = - (1-t)^{-1} \theta_Y  - \ddbar  u.$$
%
In \cite{JST23a}, we define the so-called Ricci vertex in the following way. A point $p$ is said to be a Ricci vertex at $t\in [0, 1)$ associated to $\theta_Y$ if
$$ u(p, t) = \inf_X u(\cdot, t). $$
The goal of this paper is to study the geometric regularity of the Type I blow-up limits of the solution of (\ref{unkrflow1}) around the Ricci vertices.




%
Consider any sequence of times $t_i\nearrow 1$. Let $(M_i, (g_{i,t})_{t\in [-T_i , 0]})$ be the flows arising from $(X, (g(t))_{t\in [0, 1) })$ by setting 
$$ M_i:=X,~~~ g_{i,t}:=(1-t_i)^{-1}g((1-t_i)t+t_i),~~~ t\in [-T_i , 0]. $$
where $T_i=t_i/(1-t_i)\to \infty$ as $i\to\infty$. By \cite{Bam20b}, let $p_i\in X$ be any base-point, by passing to a subsequence, we can obtain $\bF$-convergence on compact time-intervals
\begin{equation}\label{FcorKRF'1}
(M_i, (g_{i,t})_{t\in [-T_i , 0]}, (\nu_{p_i,0; t})_{t\in [-T_i , 0]}) \xrightarrow[i\to\infty]{\bF,\CCC}  (\cX, (\nu_{p_\infty; t})_{t\in (-\infty , 0]}),
\end{equation}
within some correspondence $\CCC$, where $\cX$ is a future continuous and $H_{2n}$-concentrated metric flow of full support over $(-\infty , 0]$.

According to \cite{Bam20c}, we can decompose $\cX$ into its regular and singular part 
\begin{equation}\label{rsd'2}
\cX= \cR \sqcup \cS,
\end{equation}
where $\cR$ is dense open subset of $\cX$. Also, $\cR$ carries the structure of a Ricci flow spacetime $(\cR, \mathfrak{t}, \partial_{\mathfrak{t}}, g)$. For any $t\in (-\infty, 0)$, writing $\cR_t=\cX_t\cap\cR$, we have that $(\cX_t, d_t)$ is the metric completion of $(\cR_t, g_t)$.

The first main result of this paper is the following theorem.

%

%
\begin{theorem}\label{mainforKRF1}
Let $\theta_Y\in \vartheta$ be any smooth closed $(1,1)$-form on $Y$, and let $p_i\in X$ be a Ricci vertex associated with $\theta_Y$ at $t_i$. Then the limiting metric flow $\cX$ is a continuous metric flow on $(-\infty, 0]$, both in the Gromov-Hausdorff sense and the Gromov-$W_1$-sense.
\end{theorem}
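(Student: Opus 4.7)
The plan is to reduce continuity of $\cX$ to uniform local distance distortion estimates on the approximating flows $(M_i, g_{i,t})$ near the Ricci vertices $p_i$, and then transfer these estimates to the $\bF$-limit. Fix $A, T > 0$ and consider the parabolic region $P_i^{A,T} := \{(x, t) \in M_i \times [-T, 0] : d_{g_{i,t}}(p_i, x) \leq A\}$. By the main estimates of \cite{JST23a} (the local scalar bound with at most quadratic growth from $p_i$, the Laplacian bound on the Ricci potential, and Perelman-type noncollapsing near the Ricci vertex), for $i$ large depending on $A, T$, one has uniform upper bounds on $|R_{g_{i,t}}|$, on the Ricci potential $u$, and on $\ddt{u}$ throughout $P_i^{A,T}$, together with a uniform lower bound on volumes of unit balls centered in $P_i^{A,T}$.

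The main task is to produce a modulus of continuity $\Phi = \Phi_{A,T}$, independent of $i$, with
\begin{equation*}
\bigl|d_{g_{i,t_2}}(x,y) - d_{g_{i,t_1}}(x,y)\bigr| \le \Phi(|t_2 - t_1|),\qquad \Phi(s)\to 0\ \text{as}\ s\to 0,
\end{equation*}
whenever $x, y \in B_{g_{i,0}}(p_i, A/2)$ and $t_1, t_2 \in [-T, 0]$. The one-sided bound preventing distances from collapsing too quickly is the easier of the two: using the K\"ahler-Ricci flow identity $\ric(g(t)) = (1-t)^{-1}(g(t) - \theta_Y) - \ddbar u$ together with the partial $C^0$ estimate (which bounds $\theta_Y$ in terms of $g$ near $p_i$) and the Laplacian estimate on $u$ from \cite{JST23a}, one obtains a localized upper bound $\ric \leq C g$ on $P_i^{A,T}$, whence a standard ODE comparison applied to the length of a minimizing geodesic yields the desired estimate.

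The hard part is the reverse bound, since the techniques of \cite{BZ19, CW3} for Ricci flows with globally bounded scalar curvature fail when only local scalar control is available. I would localize the heat-kernel approach of Bamler-Zhang as follows. First, using the local scalar bound and volume noncollapsing together with Perelman's $\mathcal{W}$-entropy, one derives two-sided Gaussian bounds for the conjugate heat kernel $K(x, t_1; y, t_2)$ when both space-time points lie in a slightly interior parabolic subregion of $P_i^{A,T}$. Second, the Li-Yau-type estimate of \cite{JST23a} on $\ddt{u}$ provides the space-time comparison needed to transfer conjugate heat kernel concentration between nearby time slices. Third, one runs the Bamler-Zhang weighted distance argument with spatial cutoffs supported in $P_i^{A,T}$, using heat kernel concentration to ensure that the error from truncation remains uniformly small. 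Together these produce the missing distance upper bound.

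Finally, the uniform distance distortion on the approximating flows transfers to $\cX$ via $\bF$-convergence: on the regular part $\cR \subset \cX$, the distances $d_t$ are limits of $d_{g_{i,t}}$ along regular approximating points, so the modulus $\Phi$ is inherited. Continuity in the Gromov-$W_1$-sense then follows because the $\bF$-limit conjugate heat kernels $(\nu_{p_\infty; t})$ become automatically $W_1$-Lipschitz in $t$ once distance distortion is established, while Gromov-Hausdorff continuity follows by combining distance distortion with the noncollapsing inherited from the approximating sequence. The expected main obstacle is the heat-kernel-based distance upper bound, as this is precisely the step where the global scalar bound in \cite{BZ19} is used in an essential way.
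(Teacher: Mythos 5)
Your plan contains two genuine gaps, one of which is a step that would outright fail.

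First, your argument for the easier direction (distances cannot decrease too quickly) is based on deriving a local bound $\ric \leq Cg$ on $P_i^{A,T}$ from the identity $\ric(\omega_{i,t}) = \frac{1}{1-t}(\omega_{i,t} + \frac{\alpha - \omega_Y}{1-t_i}) - \ddbar v_i$, the Schwarz lemma, and the Laplacian estimate on $v_i$. This does not work: the estimates of \cite{JST23a} control $|\Delta v_i|$, i.e.\ the \emph{trace} of $\ddbar v_i$, which is why the \emph{scalar} curvature $R_{g_i}$ is bounded near the Ricci vertex. They do not control the full complex Hessian $\ddbar v_i$ as a tensor, and indeed the Ricci tensor is expected to be unbounded even near $p_i$ (otherwise the blow-up would be trivial). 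So there is no $\ric \leq Cg$ bound to feed into an ODE comparison. The paper instead proves Proposition \ref{gddlb} via heat kernel estimates: solving the heat equation with initial datum $d_{t_1}(x,\cdot)$, combining the resulting $1$-Lipschitz function with the maximum principle, and using concentration of the conjugate heat measure around $H_{2n}$-centers together with the bound $R(x,\cdot)\leq C$ on the line $\{x\}\times[t_1,t_2]$.

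Second, for the hard direction, your plan to ``run the Bamler-Zhang weighted distance argument with spatial cutoffs supported in $P_i^{A,T}$'' misses the key new idea. The paper explicitly notes that the relevant backward-heat-kernel gradient estimate ``relies on global information of the flow, particularly on the quadratic growth rate for scalar curvature.'' Localization by spatial cutoffs does not succeed here because the cutoff error terms involve uncontrolled curvature outside $P_i^{A,T}$. What the paper actually does is prove a new Li-Yau-type Harnack inequality (Theorem \ref{HarnackiofKRF}) in which the Harnack quantity is weighted by the \emph{Ricci potential} $v$ rather than by the scalar curvature, and then establishes a \emph{global} integral gradient estimate (Lemma \ref{gmp}) for solutions of the backward heat equation, using the weight $e^{A(t-t_0)^\alpha v(x,t)}K(x,t;y,t_0)$. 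The exponential factor in $v$ is exactly what tames the contribution from regions far from $p_i$, replacing the role played by a global scalar curvature bound in \cite{BZ19}. This global-weight mechanism, not truncation, is the missing ingredient. Your transfer-to-the-limit paragraph is also substantially less than what is needed: Propositions \ref{GHctoflmf} and \ref{GW1conofX} require constructing a continuous base-point path $q_t$, relating Gromov-Hausdorff convergence of points to $\bF$-convergence of points, and a separate $W_1$-continuity argument using properties of the correspondence $\CCC$, none of which is automatic from distance distortion on the regular set.
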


The main technical obstacle to proving Theorem \ref{mainforKRF1} is the lack of uniform scalar curvature control. In fact, the proof of the theorem is built on the key gradient estimate we establish for solutions of backwards heat equations, along with techniques developed in \cite{BZ19}. It seems that the validity of such an estimate relies on global information of the flow, particularly on the quadratic growth rate for scalar curvature. To prove Theorem \ref{mainforKRF1},  we first prove a Li-Yau type estimate for the forward heat kernel on the K\"ahler-Ricci flow. This estimate is similar to that of \cite{ZhZh}, but depends on the Ricci potential rather than the scalar curvature. We can further establish a gradient estimate for solutions of the backward heat equation via integral estimates for the gradient, whose weight combines the forwards heat kernel with the twisted Ricci potential. This allows us to establish locally uniform continuity of the distance function in time along the Type I rescaled flow, which can also be passed to the limit.

The following theorem analyzes the geometric regularity of the blow-up limits.

\begin{theorem}\label{mainforKRF2}
Let $\theta_Y\in \vartheta$ be any smooth closed $(1,1)$-form on $Y$, and let $p_i\in X$ be a Ricci vertex associated with $\theta_Y$ at $t_i$. Then the following statements hold for every $t\in (-\infty, 0]$.

\begin{enumerate}
%
\item $(\cX_t, d_t,\cR_t, g_t)$ is a singular space of dimension $2n$ in the sense of Definition \ref{sp}.
 \smallskip

\item The Minkowski dimension for the singular set $\cS_t$ of each time slice is given by  
$$\dim_{\mathcal{M}}\cS_{t}\leq 2n-4.$$

 \smallskip

\item Any tangent cone of $(\mathcal{X}_t,d_t)$ is a metric cone.
%
\end{enumerate}
\end{theorem}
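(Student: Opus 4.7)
The strategy is to localize Bamler's partial regularity theory from \cite{Bam20c,Bam18} to the blow-up limit $\cX$, using the locally uniform scalar curvature bound and partial $C^0$-estimate near Ricci vertices established in \cite{JST23a}, together with the continuity of $\cX$ provided by Theorem \ref{mainforKRF1}. The $\bF$-limit $\cX$ already comes with the decomposition $\cR \sqcup \cS$ and a smooth Ricci flow spacetime structure on $\cR$. The remaining task is to verify that each time slice $\cX_t$ inherits, as a metric space, the structure of a singular space of dimension $2n$, to prove the codimension-$4$ estimate on the time-slice singular set, and to show that tangent cones of $\cX_t$ are metric cones.

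For part (1), I would verify the axioms in Definition \ref{sp}: the smooth Riemannian structure on $\cR_t$ and the appropriate Ricci-type curvature bounds come from restricting the spacetime structure on $\cR$ to a time slice. Completeness of $(\cX_t, d_t)$ is built into the $\bF$-limit construction, while the fact that $(\cX_t, d_t)$ is the metric completion of $(\cR_t, g_t)$ follows from the smooth convergence on the regular part together with the time continuity of Theorem \ref{mainforKRF1}. Part (2), the codimension-$4$ estimate on $\cS_t$, is obtained by adapting \cite[Section 6]{Bam18} to the localized setting. The main tool is Bamler's $\epsilon$-regularity theorem in spacetime, which is available near $p_\infty$ because the scalar curvature is locally bounded and the Li-Yau type estimate on the Ricci potential from \cite{JST23a} holds. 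A covering argument then slices the spacetime singular set estimate down to each time slice; Theorem \ref{mainforKRF1}, and specifically the uniform continuity of distance in time it encodes, ensures that spacetime coverings descend to time-slice coverings without loss.

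For part (3), tangent cones at $y \in \cX_t$ are shown to be metric cones by the volume-cone-implies-metric-cone strategy. At $y$ one constructs a monotone quantity adapted to the backwards Ricci flow (such as a Nash-entropy or reduced-volume functional based at $y$), using the local boundedness of the Ricci potential and scalar curvature to set up the conjugate heat kernel estimate. The monotonicity becomes almost constant on the blow-up scale; combined with the codimension-$4$ regularity from (2) and Cheeger--Colding cone rigidity in the Ricci-flow-adapted form of \cite{Bam20c}, this forces the tangent cone to be a metric cone.

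The main obstacle will be the absence of a global scalar curvature bound: the arguments of \cite{Bam18,Bam20c} rely heavily on a uniform scalar curvature bound to propagate regularity throughout spacetime. Here the bound is only local near the Ricci vertex, so every step---$\epsilon$-regularity, volume monotonicity, the covering arguments used to pass from spacetime to a time slice, and cone rigidity---must be carried out within a parabolic neighborhood of $p_\infty$. The crucial enabling tool is the Laplacian-type estimate for the Ricci potential from \cite{JST23a}, which propagates local scalar curvature control along reduced geodesics and lets the localized estimates reach any fixed spacetime ball.
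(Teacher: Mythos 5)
Your proposal captures the right spirit for parts (1) and (2): localizing Bamler's partial regularity via the barrier estimates on the Ricci potential. However, the proposal is vague on the actual mechanisms the paper needs. For (1), the volume doubling and non-inflating bounds of Definition \ref{sp}(4) are not automatic from the $\bF$-limit structure; the paper proves a compactness-of-points result (Proposition \ref{copuug'1}), showing that points of uniformly bounded geometry based near $p_i$ converge within the regular part, and then uses it (in Proposition \ref{eoyinftyandtilyinfty}) to locate regular points of definite curvature radius at controlled distance and with definite volume. For (2), the ``covering argument'' you invoke is in fact the substance of the proof: one needs an iterated covering-number estimate for $\{r_{\Rm}\le \lambda r\}$ by $P^*$-parabolic balls, yielding an $L^{\mathbf p}$-bound on $r_{\Rm}^{-1}$ with any $\mathbf p<4$ (Propositions \ref{eocnoMi}--\ref{icbonlimitflow}), and this integral estimate must be established on the approximating flows and then passed to the limit using Proposition \ref{copuug'1}. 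Your proposal does not identify these steps, but they are in the same direction.

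There is a genuine gap in your argument for part (3). You propose a volume-cone-implies-metric-cone strategy using a monotone spacetime quantity (Nash entropy or reduced volume) together with Cheeger--Colding rigidity. This conflates two different objects: the \emph{tangent flows} of the metric flow $\cX$ at $(x_0, t_0)$, which Bamler's theory (via entropy monotonicity) shows are static metric cones, and the \emph{tangent cones} of the single time slice $(\cX_{t_0}, d_{t_0})$ at $x_0$, which are purely metric blow-ups at a fixed time. Entropy monotonicity and the rigidity of \cite{Bam20c} live in spacetime and directly govern the former, not the latter; furthermore, $(\cX_{t_0}, d_{t_0}, \cR_{t_0}, g_{t_0})$ carries no Ricci lower bound, so Bishop--Gromov and Cheeger--Colding theory do not apply to the time slice directly. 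The missing bridge is precisely the good distance distortion estimate of Section \ref{ApptotheKRF} (Propositions \ref{gddlb} and \ref{gddub2}) combined with the Gromov--Hausdorff continuity of Theorem \ref{mainforKRF1}: these let one show that $(\cX_{t_0}, \lambda_i d_{t_0}, x_0)$ is $\epsilon(t)$-close in the pointed Gromov--Hausdorff topology to a fixed negative time slice $(\cX^{\lambda_i, t_0}_t, d^{\lambda_i, t_0}_t, z_t)$ of the parabolically rescaled flow, and the latter converges to a time slice of the static cone tangent flow. This identification (Proposition \ref{tangentcmc}) is the crux of part (3), and it is exactly the step your proposal leaves unaddressed. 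Without it, the entropy argument only yields that after a \emph{further} blow-up one obtains a metric cone (as in Corollary \ref{CorforRF2}), not the stronger statement that every tangent cone of $(\cX_t,d_t)$ is itself a metric cone.
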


The proof of Theorem \ref{mainforKRF2} is built by combining techniques from \cite{Bam18} with \cite{Bam20c}. An important step is to identify Gromov-Hausdorff convergence of points with convergence of points in the sense defined in \cite{Bam20b}. 

\begin{remark} It follows from the results of \cite{JST23a} that any tangent cone of $(\mathcal{X}_t,d_t)$ at a point $x\in \mathcal{X}_t$ is also a complex analytic variety. In a forthcoming note, we will show that this tangent cone is an affine algebraic variety uniquely and algebraically determined by the germ of the variety $(\mathcal{X}_t,x)$. 
\end{remark}

This paper is organized as follows.

In section \ref{conandpre}, we recall some conventions and notations, as well as some known results to be used in the later sections.

In section \ref{ApptotheKRF}, we derive analytic estimates for the finite time solutions of the K\"ahler-Ricci flow. The local good distance distortion estimates are established based on a new Harnack inequality for the heat equation coupled with the K\"ahler-Ricci flow (see Theorem \ref{HarnackiofKRF}). We further prove Theorem \ref{mainforKRF1} after obtaining Gromov-Hausdorff continuity and Gromov-$W_1$ continuity for the limiting metric flow in time.

In section \ref{prfofthmmain}, we prove Theorem \ref{mainforKRF2} for Ricci flows with scalar curvature controlled by suitable barrier functions. Such barrier functions are natural generalizations of the Ricci potentials from the K\"ahler-Ricci flow.

%



\bigskip
\section{ Conventions and preliminary results }\label{conandpre}


\medskip
\subsection{Notation and conventions}
Let $(M, g(t))_{t\in I}$ be a smooth Ricci flow on a compact $n$-dimensional manifold, where $I\subset \mathbb{R}$ is an interval. For $(x_0, t_0)\in M\times I$, $A, T^-, T^+\geq 0$, the corresponding parabolic neighborhood is defined by
\begin{equation} \label{pnbd}
P(x_0, t_0; A, -T^-, T^+) = B(x_0, t_0, A) \times \left( [t_0- T^-, t_0+ T^+]\cap I\right),
\end{equation}
where we may omit $-T^-$ or $T^+$ if it is zero. For any $r>0$, we set $P(x_0, t_0; r):=P(x_0, t_0; r, -r^2, r^2)$.

The heat operator associated to $(M, g(t))$ is given by
$$\Box = \ddt{} - \Delta$$
and the conjugate heat operator is given by
$$\Box^* = - \ddt{} - \Delta + R, $$
where $\Delta$ is the Laplacian associated to $g(t)$ and $R$ is the scalar curvature of $g(t)$.

For any $(x, t)$, $(y, s) \in M \times I$ with $s\leq t$, we denote by $K(x,t; y,s)$ the heat kernel of the Ricci flow based at $(y,s)$, satisfying
\begin{equation}
\Box K(\cdot, \cdot; y,s)=0, ~~ \lim_{t\rightarrow s^+} K(\cdot, t, ; y, s) = \delta_y,
\end{equation}
where $\delta_y$ is the Dirac measure at $y$. Then, $K(x, t; \cdot,\cdot)$ is the conjugate heat kernel based at $(x,t)$, satisfying
\begin{equation}
\Box^* K(x, t; \cdot, \cdot)=0, ~ \lim_{s\rightarrow t^-} K(x, t; \cdot, s) =\delta_x.
\end{equation}

Using the conjugate heat kernel, we can define the conjugate heat measure $\nu_{x, t; s}$ based at $(x,t)$ by
\begin{equation}
d\nu_{x,t;s} = K(x,t; \cdot, s) dg(t) = (4\pi\tau)^{-n/2} e^{-f} dg(t),
\end{equation}
where $\tau=t-s$ and $f\in C^\infty(M \times (-\infty, t))$ is called the potential of the conjugate heat measure $\nu_{x,t;s}$.

For two probability measures $\mu_1$ and $\mu_2$ on a Riemannian manifold $(M, g)$, the Wasserstein $W_1$-distance between $\mu_1$ and $\mu_2$ is defined by
\begin{equation}
d^g_{W_1}(\mu_1, \mu_2) = \sup_f \left( \int_M f d\mu_1 - \int_M f d\mu_2 \right), 
\end{equation}
where the supremum is taken over all bounded $1$-Lipschitz function on $(M, g)$. The variance between $\mu_1$ and $\mu_2$ is defined by
\begin{equation}
\var(\mu_1, \mu_2) = \int_{(x_1, x_2) \in M \times M}  d_g^2(x_1, x_2) d\mu_1(x_1) d\mu_2(x_2).
\end{equation}
Then we have the following basic relation between the Wasserstein $W_1$-distance and the variance
\begin{equation}
d^g_{W_1} (\mu_1, \mu_2) \leq \sqrt{\var(\mu_1, \mu_2)}.
\end{equation}

For any $(x_0, t_0)\in M\times I$, $A, T^-, T^+\geq 0$, we define the $P^*$-parabolic neighborhood 
\begin{equation} \label{p*nbd}
P^*(x_0, t_0; A, -T^-, T^+) \subset M\times I ,
\end{equation}
as the set of $(x,t)\in M\times I$ with $t\in [t_0-T^-, t_0+ T^+]$ and
\begin{equation} \label{p*nbd2}
d^{g_{t_0-T^-}}_{W_1}( \nu_{x_0, t_0; t_0-T^-} , \nu_{x, t; t_0-T^-} ) < A .
\end{equation}
As before, we may omit $-T^-$ or $T^+$ if it is zero. We also define the $P^*$-parabolic $r$-ball by $P^*(x_0, t_0; r):=P^*(x_0, t_0; r, -r^2, r^2)$.
Next we define the $H_n$-center at a base point along the Ricci flow.
\begin{definition}\label{hnc}
A point $(z, t)\in M\times I$ is called an $H_n$-center of a point $(x_0, t_0)\in M \times I$ if $t\leq t_0$ and
\begin{equation}
\var_t(\delta_z, \nu_{x_0, t_0; t}) \leq H_n (t_0 - t),
\end{equation}
where $\var_t$ is the variance with respect to the metric $g(t)$.

\end{definition}
Immediately,  if $(z, t)$ is an $H_n$-center of $(x_0, t_0)$, then we have
\begin{equation}\label{dvh}
d^{g_t}_{W_1}(\delta_z, \nu_{x_0, t_0; t} ) \leq \sqrt{\var(\delta_z, \nu_{x_0, t_0; t} )} \leq \sqrt{H_n(t_0-t)}.
\end{equation}
The following lemma is proved in \cite{Bam20a}, which asserts that the mass of the conjugate heat kernel measure will concentrate around the $H_n$-centers.
\begin{lemma} \label{hnc'2}
If the point $(z, t)$ is an $H_n$-center of $(x_0, t_0)$ with $t< t_0$, then for any $A>0$, we have
$$\nu_{x_0, t_0; t} \left( B\left( z, t, \sqrt{AH_n (t_0-t)} \right) \right) \geq 1- \frac{1}{A} .$$
\end{lemma}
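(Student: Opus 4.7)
The plan is to prove this as a direct application of Markov's (Chebyshev's) inequality applied to the variance bound that defines an $H_n$-center. No new estimates from the Ricci flow structure are needed beyond the definition.

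First, I would unpack the definition of variance with respect to the Dirac mass $\delta_z$. Since one of the two measures is a point mass at $z$, the variance simplifies to the single integral
\begin{equation*}
\var_t(\delta_z, \nu_{x_0, t_0; t}) = \int_M d_{g_t}^2(z, y) \, d\nu_{x_0, t_0; t}(y).
\end{equation*}
By the assumption that $(z,t)$ is an $H_n$-center of $(x_0,t_0)$, this integral is bounded above by $H_n(t_0-t)$.

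Next, I would apply Markov's inequality to the nonnegative function $y \mapsto d_{g_t}^2(z,y)$ against the probability measure $\nu_{x_0,t_0;t}$. For the threshold $r^2 := A H_n (t_0-t)$, Markov's inequality gives
\begin{equation*}
\nu_{x_0, t_0; t}\bigl(\{y \in M : d_{g_t}(z,y)^2 \geq A H_n(t_0-t)\}\bigr) \leq \frac{1}{A H_n(t_0-t)} \int_M d_{g_t}^2(z,y) \, d\nu_{x_0, t_0;t}(y) \leq \frac{1}{A}.
\end{equation*}
Taking complements in $M$ and noting that $\{y : d_{g_t}(z,y) < \sqrt{AH_n(t_0-t)}\} = B(z,t, \sqrt{AH_n(t_0-t)})$ yields the claimed lower bound $1 - 1/A$ on the measure of this ball.

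There is essentially no obstacle here; the statement is a tautological consequence of the variance bound in Definition \ref{hnc} combined with Markov's inequality. The only minor care needed is to make sure one is consistent about whether the inequality $d_{g_t}(z,y) < \sqrt{A H_n(t_0-t)}$ is strict or not—this is immaterial since the measure is absolutely continuous with respect to $dg(t)$ (as $s = t < t_0$ ensures the heat kernel $K(x_0,t_0;\cdot,t)$ is a smooth positive function), so boundary spheres have $\nu_{x_0,t_0;t}$-measure zero.
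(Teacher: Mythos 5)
Your proof is correct and is exactly the standard argument (the paper cites \cite{Bam20a} for the proof rather than reproducing it, but the argument there is the same Markov/Chebyshev application to the variance bound). One small simplification: since the complement of $\{y : d_{g_t}^2(z,y) \geq AH_n(t_0-t)\}$ is precisely the open ball $B(z,t,\sqrt{AH_n(t_0-t)})$, Markov's inequality gives the conclusion directly and the remark about boundary spheres having measure zero is unnecessary.
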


We now define the Nash entropy introduced by Hein-Naber \cite{HN}. Let $d\nu= (4\pi \tau)^{-n/2} e^{-f} dg$ be a probability measure on a closed $n$-dimensional Riemannian manifold $(M, g)$ with $\tau>0$ and $f\in C^\infty(M)$. The Nash entropy is defined by
\begin{equation}
\cN[g, f, \tau] = \int_M f d\nu - \frac{n}{2}.
\end{equation}

Writing $d\nu_{x_0, t_0; t} = (4\pi \tau)^{-n/2} e^{-f(t)} dg(t)$, where $\tau = t_0 - t\geq 0$, 
 we define the pointed Nash entropy based at $(x_0, t_0)$ by 
\begin{equation}
\cN_{x_0, t_0}(\tau) = \cN [ g(t_0-\tau), f(t_0-\tau), \tau].
\end{equation}
We also set
$$\cN_{x_0, t_0}(0) =0,$$ 
which makes $\cN_{x_0, t_0}(\tau)$ continuous at $\tau=0$. We also define
\begin{equation}
\cN_s^*(x_0, t_0)= \cN_{x_0, t_0}(t_0-s),
\end{equation}
for $s< t_0$ and $s\in I$. The pointed Nash entropy $\cN_{x_0, t_0}(\tau)$ is non-increasing as a function of $\tau\geq 0$.

By \cite{Bam20a}, the pointed Nash entropy $\cN_s^*$ has bounded oscillation in any $P^*$-neighborhood. To be more precise, if $P=P^*(x_0, t_0; A, -T^-, T^+)$ and $T^-<t_0-s$, then we have
\begin{equation}\label{oscofNashinPstar}
\osc_{P}\cN^*_s\leq 2\left(  \frac{n}{2(t_0-s-T^-)-R_{\min}}  \right)^{1/2}A + \frac{n}{2}\ln \left(  \frac{t_0-s+T^+}{t_0-s-T^-}  \right) ,
\end{equation}
where $R_{\min}$ denotes the lower scalar curvature bound.
For any compact, n-dimensional manifold $(M, g)$, Perelman's $\cW$-functional is defined by, for any $\tau > 0$,
$$\cW[g, f, \tau] = (4\pi \tau)^{-n/2} \int_M \left(\tau ( |\nabla f|^2 + R) + f -n \right) e^{-f} dg,$$
with $f\in C^\infty(M)$ so that $\int_M (4\pi \tau)^{-n/2} e^{-f} dg =1$, and Perelman's $\mu$-functional and $\nu$-functional are defined by
$$\mu[g, \tau]=\underset{\int_M (4\pi \tau)^{-n/2} e^{-f} dg =1}{\inf} \cW[g, f, \tau],$$
and 
$$\nu[g, \tau]=\underset{0<\tau'<\tau}{\inf} \mu[g, \tau].$$
If $(M, (g_t)_{t\in [0, T)})$ is a Ricci flow, then the functions $t\to \mu[g_t, T-t]$ and $t\to \nu[g_t, T-t]$ are non-decreasing. It is proved in \cite{Bam20a} that 
\begin{equation}\label{NM}
\cN^*_t (x_0, t_0) \geq \mu[g(t), t_0-t],  
\end{equation}
for any $t<t_0$. 

Next, we define
\begin{definition}[Curvature Radius]\label{cs}
For any $(x,t)\in M\times I$, we define the curvature radius at $(x, t)$ as follows:
$$r_{\Rm}(x,t):=\sup \left\{ r>0: |\Rm|\leq r^{-2} ~~ on ~~ P( x , t ; r) \right\}.$$
\end{definition}
Then we have the following lemma.
\begin{lemma}\label{deofrrm}
For any $\alpha>0$, there exists $C(n, \alpha)<\infty$, such that the following statement holds.

Let $(M, g(t))_{t\in I}$ be a smooth Ricci flow on a compact $n$-dimensional manifold with the interval $I\subset \mathbb{R}$. Assume $[a-\alpha, b]\subset I$, then in the weak sense we have
\begin{enumerate}
    \item $|\nabla r_{\Rm}|\leq 1$ on each time-slice $M\times \left\{t\right\}$ for all $t\in [a , b]$;
    \item $|\partial_t r^2_{\Rm}|\leq C(n, \alpha)$ on $M\times [a , b]$.
\end{enumerate}
\end{lemma}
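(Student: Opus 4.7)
The plan is to prove both items by direct parabolic-neighborhood comparisons, with Ricci-flow distance distortion as the main analytic ingredient.

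For (1) I would argue straight from the definition of $r_{\Rm}$. Fix $t\in[a,b]$ and $x,y\in M$, and write $d:=d_t(x,y)$, $\rho:=r_{\Rm}(x,t)$. Assuming $d<\rho$, set $r:=\rho-d$; then any $(z,s)\in P(y,t;r)$ satisfies
\begin{equation*}
d_t(z,x)\leq d_t(z,y)+d_t(y,x)\leq r+d=\rho,\qquad s\in[t-r^2,t+r^2]\cap I\subset[t-\rho^2,t+\rho^2]\cap I.
\end{equation*}
Hence $P(y,t;r)\subset P(x,t;\rho)$, so $|\Rm|\leq \rho^{-2}\leq r^{-2}$ on $P(y,t;r)$, which gives $r_{\Rm}(y,t)\geq \rho-d=r_{\Rm}(x,t)-d_t(x,y)$. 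Symmetrizing in $x,y$ yields the 1-Lipschitz property, valid in the weak (and in fact pointwise) sense.

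For (2), fix $x\in M$ and $t_1<t_2$ in $[a,b]$ with $\rho_i:=r_{\Rm}(x,t_i)$. From $|\Rm|\leq\rho_1^{-2}$ on $P(x,t_1;\rho_1)$ one has $|\ric|\leq (n-1)\rho_1^{-2}$ there, and a standard continuity-along-a-family-of-radii argument produces the metric distortion
\begin{equation*}
e^{-2(n-1)\rho_1^{-2}|s-t_1|}\,g(t_1)\leq g(s)\leq e^{2(n-1)\rho_1^{-2}|s-t_1|}\,g(t_1)
\end{equation*}
inside $P(x,t_1;\rho_1)$. Taking $r_2^2:=\rho_1^2-C_0(t_2-t_1)$ with $C_0=C_0(n)$ large enough, the containments $B(x,t_2,r_2)\subset B(x,t_1,\rho_1)$ (from the distortion) and $[t_2-r_2^2,t_2+r_2^2]\subset[t_1-\rho_1^2,t_1+\rho_1^2]$ (direct from $r_2^2\leq \rho_1^2-(t_2-t_1)$) together give $P(x,t_2;r_2)\subset P(x,t_1;\rho_1)$. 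Hence $|\Rm|\leq\rho_1^{-2}\leq r_2^{-2}$ on $P(x,t_2;r_2)$, so $\rho_2^2\geq r_2^2\geq \rho_1^2-C_0(n)(t_2-t_1)$. The reverse inequality $\rho_1^2\geq \rho_2^2 - C_0(n)(t_2-t_1)$ follows by running the same argument with $t_1$ and $t_2$ exchanged, using $\rho_2$ to control the distortion. Combined, these bounds show that $t\mapsto r_{\Rm}^2(x,t)$ is Lipschitz in $t$, realizing $|\partial_t r_{\Rm}^2|\leq C(n,\alpha)$ in the weak sense.

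The role of the hypothesis $[a-\alpha,b]\subset I$ is to ensure that the truncation of parabolic neighborhoods against $\partial I$ (baked into the definition of $P$) is compatible with the above comparisons for all $t\in[a,b]$: when $r_{\Rm}$ is large (in particular larger than $\sqrt{\alpha}$) the time interval of $P(x,t;r)$ is cut off at $a-\alpha$, and the buffer of size $\alpha$ is what enforces uniform control of this truncation, contributing the $\alpha$-dependence of $C$. The main obstacle is the distance-distortion step itself, since a priori a minimizing geodesic between two points of $B(x,t_2,r_2)$ at time $t_1$ (or at an intermediate time) could exit $B(x,t_1,\rho_1)$, where the curvature bound fails; this is resolved by the standard trick of running the comparison along a monotone family of radii and using continuity, exploiting $|\Rm|\leq\rho_1^{-2}$ throughout $P(x,t_1;\rho_1)$ to control any excursion up to the first moment the geodesic would reach the boundary.
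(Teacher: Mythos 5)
Your proof is correct and fills in what the paper elides: the paper's own proof of this lemma is a one-liner, asserting that item (1) is ``clear from definition'' and citing \cite[Lemma 6.1]{BZ17} for item (2). Your argument for (1) via $P(y,t;\rho-d_t(x,y))\subset P(x,t;\rho)$ is exactly the standard observation. Your argument for (2) is sound; it is worth noting that it works this cleanly precisely because the paper uses a \emph{two-sided} parabolic ball $P(x,t;r,-r^2,r^2)$ in Definition \ref{cs}, so that the inclusion $P(x,t_2;r_2)\subset P(x,t_1;\rho_1)$ can be arranged directly. The cited reference \cite{BZ17} actually works with a \emph{one-sided} (backward) curvature radius, for which the forward-in-time direction cannot be handled by neighborhood containment alone and instead requires a curvature doubling-time estimate; your two-sided version avoids that extra ingredient, so in this sense your route is a bit more elementary than a literal unwinding of the citation.

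Two small remarks. First, the ``continuity along a family of radii'' device is not needed to obtain the pointwise metric distortion $e^{-2(n-1)\rho_1^{-2}|s-t_1|}g(t_1)\leq g(s)\leq e^{2(n-1)\rho_1^{-2}|s-t_1|}g(t_1)$ from $|\ric|\leq (n-1)\rho_1^{-2}$ -- that is just Gronwall applied to $\partial_t g=-2\ric$ at each point of $B(x,t_1,\rho_1)$. Where it \emph{is} needed, and where you correctly locate it in your final paragraph, is in promoting this to the ball inclusion $B(x,t_2,r_2)\subset B(x,t_1,\rho_1)$, by looking at the first exit time of a $t_2$-minimizing geodesic from $B(x,t_1,\rho_1)$. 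Second, your explanation of the role of the buffer $\alpha$ is not supported by the computation: the inclusion $[t_2-r_2^2,t_2+r_2^2]\subset[t_1-\rho_1^2,t_1+\rho_1^2]$ holds as an inclusion of intervals in $\mathbb{R}$ \emph{before} intersecting with $I$, so the truncation at $\partial I$ never enters, and your constant $C_0$ is purely dimensional. The $\alpha$-dependence in the statement appears to be inherited from the conventions of \cite{BZ17} (where the one-sided radius is capped and a time margin before $a$ is genuinely required); for the two-sided $r_{\Rm}$ your argument actually delivers a constant depending only on $n$, which is fine since the lemma only claims an upper bound.
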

\begin{proof}
Item (1) is clear from definition. Item (2) is from \cite[Lemma 6.1]{BZ17}.
\end{proof}
%


\medskip
\subsection{Entropy and heat kernel bounds}
In \cite{Bam20a}, Bamler established systematic results on the Nash entropy and heat kernel bounds on a Ricci flow background. Let us recall some results that will be used in our theory.

The following quantitative volume estimates are established in \cite{Bam20a}.

\begin{lemma} \label{lvnc}
Let $(M, g(t))_{t\in [-r^2, 0]}$ be a solution of the Ricci flow. If 
$$R\leq r^{-2}, ~ on~ B_{g(0)}(x, r) \times [-r^2, 0],$$
then
\begin{equation}
\vol_{g(0)} (B_{g(0)}(x, r)) \geq c e^{\cN^*_{-r^2}(x, 0)} r^n.
\end{equation}

\end{lemma}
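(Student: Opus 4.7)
The plan is to turn the pointed Nash entropy bound into an on-diagonal upper bound for the conjugate heat kernel, and then into a volume lower bound via concentration of mass at an $H_n$-center. The hypothesis and conclusion are both invariant under the parabolic rescaling $g(t) \mapsto r^{-2} g(r^2 t)$, so I first reduce to the case $r = 1$. Set $\cN := \cN^*_{-1}(x, 0)$, let $u(\cdot, t) := K(x, 0; \cdot, t)$ be the conjugate heat kernel based at $(x, 0)$, and write $u = (4\pi(-t))^{-n/2} e^{-f}$ for $t \in [-1, 0)$, so that $\cN = \int_M f(\cdot, -1) \, d\nu_{x, 0; -1} - n/2$.

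The core step is the on-diagonal upper bound
\[
\sup_M u(\cdot, -1) \leq C(n)\, e^{-\cN}.
\]
This is where the scalar curvature hypothesis is used in an essential way: via Perelman's differential Harnack inequality (equivalently, the monotonicity of $\cW$ along the conjugate heat equation), the bound $R \leq 1$ on $P(x, 0; 1)$ controls the scalar-curvature contribution along reduced geodesics terminating at $(x, 0)$, forcing $\min_M f(\cdot, -1) \geq -C(n) - \cN$, which is equivalent to the stated pointwise bound.

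Next, let $(z, -1)$ be an $H_n$-center of $(x, 0)$, which exists by Definition \ref{hnc}. Applying Lemma \ref{hnc'2} with $A = 2$ and combining with the on-diagonal bound,
\[
\tfrac{1}{2} \leq \nu_{x, 0; -1}\!\left(B_{g(-1)}\bigl(z, \sqrt{2 H_n}\bigr)\right) \leq C(n)\, e^{-\cN} \bigl|B_{g(-1)}(z, \sqrt{2 H_n})\bigr|_{g(-1)},
\]
so $\bigl|B_{g(-1)}(z, \sqrt{2 H_n})\bigr|_{g(-1)} \geq c(n)\, e^{\cN}$. To pass to $|B_{g(0)}(x, 1)|_{g(0)}$, I use the volume-form comparison $dg(0) \geq e^{-1}\, dg(-1)$ on $B_{g(0)}(x, 1)$ (from $\partial_t\, dg = -R\, dg$ and $R \leq 1$), while the $(1/2)$-concentration of $\nu_{x, 0; -1}$ together with the oscillation bound \eqref{oscofNashinPstar} forces $z$ to lie within controlled $g(-1)$-distance of $x$; a covering argument then transfers the volume lower bound to $B_{g(0)}(x, 1)$, absorbing dimensional constants.

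I expect the main obstacle to be the on-diagonal heat-kernel upper bound, which is the quantitative heart of Bamler's non-collapsing theory. A weaker non-collapsing follows from $\cN^* \geq \mu$ by \eqref{NM} (recovering Perelman's $\kappa$-noncollapsing with $\kappa = e^{\mu}$), but the sharper bound with $e^{\cN^*}$ requires integrating $f$ against the conjugate heat measure rather than taking its pointwise minimum, and depends essentially on the scalar curvature bound along the full parabolic cylinder in order to use Perelman's reduced geometry. The transfer from the $g(-1)$-ball around the $H_n$-center to the $g(0)$-ball around $x$ is by now routine given standard distortion estimates under the scalar curvature bound.
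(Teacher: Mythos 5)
The paper cites this lemma from \cite{Bam20a} without proof, so I assess your argument on its own terms. Your plan --- on-diagonal bound for the conjugate heat kernel combined with $H_n$-center concentration --- is in the right circle of ideas, but as written it essentially re-derives Lemma \ref{hcv} (the paper's very next lemma, which bounds $\vol_{g(-r^2)}(B_{g(-r^2)}(z,(2H_n)^{1/2}r))$), not Lemma \ref{lvnc}. The gap between these two statements is exactly what your proof skips, and it is not routine.

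There are two concrete problems. First, your on-diagonal bound $\sup_M u(\cdot,-1)\leq C(n)e^{-\cN}$ requires a lower scalar-curvature bound at time $-1$: the paper's own heat kernel upper bound lemma (stated just after Lemma \ref{hcv}) carries a constant $C(R_{\min}(t-s))$, and since the flow is only assumed on $[-r^2,0]$, nothing constrains $R_{\min}(-r^2)$. This particular issue is repairable --- by the evolution inequality $\partial_t R_{\min}\geq \frac{2}{n}R_{\min}^2$ one gets $R_{\min}(-\tau)\geq -\frac{n}{2(1-\tau)}$ for $\tau\in(0,1)$, and $\cN^*_{-\tau}\geq\cN^*_{-1}$ by monotonicity, so one should work at some intermediate time $-\tau$ rather than $-1$ --- but you do not do so, and doing so complicates the concentration radius.

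Second, and more seriously, the passage from $\bigl|B_{g(-1)}(z,\sqrt{2H_n})\bigr|_{g(-1)}\geq c e^{\cN}$ to $\vol_{g(0)}(B_{g(0)}(x,1))\geq ce^{\cN}$ is not a ``routine distortion estimate.'' The $H_n$-center ball has radius $\sqrt{2H_n}>1$, so it is already larger than the region where the hypothesis $R\leq 1$ applies, and a one-sided \emph{scalar} curvature bound gives no control whatsoever on how distances change in time (that requires Ricci bounds). The volume-form comparison $dg(0)\geq e^{-1}dg(-1)$ you cite is valid only on the set where $R\leq 1$, namely $B_{g(0)}(x,1)\times[-1,0]$, so to use it one must first show $B_{g(-1)}(z,\sqrt{2H_n})\subset B_{g(0)}(x,1)$, which is precisely the kind of statement that fails without two-sided curvature control. (For comparison, the paper's short-time distortion estimate, Proposition \ref{gddlb}, goes the other way: it shows distances cannot decrease too fast, not that they cannot increase, and it is only proved later and under quite different hypotheses.) You flag this step as the easy one, but it is where the actual content of the lemma lives; Bamler's argument localizes in a more delicate way to avoid relying on such a containment. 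Without a concrete mechanism for the transfer step, the proof has a genuine gap.
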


The assumption on the scalar curvature upper bound can be replaced using the $H_n$-center as proved in \cite{Bam20a}.

\begin{lemma} \label{hcv}  
Let $(M, g(t))_{t\in [-r^2, 0]}$ be a solution of the Ricci flow. Suppose $(z, -r^2)$ is an $H_n$-center of $(x_0, 0)$ and 
$$R(\cdot, -r^2) \geq R_{min},$$
for some fixed $R_{min} \in \mathbb{R}$. Then there exists $c= c(R_{min} r^2)>0$ such that 
\begin{equation}
\vol_{g(-r^2)}(B_{g(-r^2)}(z, (2H_n)^{1/2} r)) \geq c e^{\cN^*_{-r^2}(x, 0)} r^n.
\end{equation}

\end{lemma}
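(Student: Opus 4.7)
The plan is to combine the mass concentration of the conjugate heat measure around the $H_n$-center with a pointwise upper bound on the conjugate heat kernel expressed in terms of the pointed Nash entropy.

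First, I would apply Lemma \ref{hnc'2} with $A = 2$ to the $H_n$-center $(z, -r^2)$ of $(x_0, 0)$, which immediately gives
\begin{equation*}
\nu_{x_0, 0; -r^2}\!\left(B_{g(-r^2)}\!\left(z, \sqrt{2 H_n}\, r\right)\right) \geq \tfrac{1}{2}.
\end{equation*}
This provides a definite fraction of the heat mass concentrated in the target ball $B := B_{g(-r^2)}(z, (2H_n)^{1/2} r)$.

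Next, I would invoke a pointwise Gaussian-type upper bound for the conjugate heat kernel of the form
\begin{equation*}
K(x_0, 0; \,\cdot\,, -r^2) \leq C_1(R_{\min} r^2)\, r^{-n}\, e^{-\cN^*_{-r^2}(x_0, 0)},
\end{equation*}
which is precisely the on-diagonal-type heat kernel bound established in \cite{Bam20a}. The crucial point here is that, unlike the argument underlying Lemma \ref{lvnc}, no scalar curvature \emph{upper} bound is required: only the lower scalar bound $R(\cdot, -r^2) \geq R_{\min}$ enters, through the monotonicity of $\mathcal{N}^*$ and the comparison $\cN^*_t(x_0, t_0) \geq \mu[g(t), t_0 - t]$ recalled in \eqref{NM}. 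The constant $C_1$ depends only on the scale-invariant quantity $R_{\min} r^2$, which is exactly the dependence claimed in the lemma.

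Finally, combining the two estimates and writing $d\nu_{x_0, 0; -r^2} = K(x_0, 0; \,\cdot\,, -r^2)\, dg(-r^2)$, I obtain
\begin{equation*}
\tfrac{1}{2} \;\leq\; \int_{B} K(x_0, 0; y, -r^2)\, dg(-r^2)(y) \;\leq\; C_1\, r^{-n}\, e^{-\cN^*_{-r^2}(x_0, 0)}\, \vol_{g(-r^2)}(B),
\end{equation*}
which rearranges to the desired lower bound with $c = (2C_1)^{-1}$. The main obstacle is not the combinatorics of this estimate, which is essentially a three-line calculation, but rather establishing (or citing correctly) the pointwise heat kernel upper bound involving $e^{-\cN^*}$; this is the substantive input from Bamler's heat kernel theory and is what lets the argument dispense with any scalar curvature upper bound.
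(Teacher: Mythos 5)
Your reconstruction is correct and is essentially the argument one would extract from Bamler's work; note that this paper itself does not prove Lemma~\ref{hcv} but simply cites \cite{Bam20a}, so there is no paper proof to compare against. The two ingredients you use---concentration of $\nu_{x_0,0;-r^2}$ on the ball about the $H_n$-center via Lemma~\ref{hnc'2}, and the Gaussian upper bound for the conjugate heat kernel (the unnumbered lemma following Lemma~\ref{hcv}, quoting Bamler's Theorem~7.2, with the Gaussian factor discarded since $e^{-d_s^2(z,y)/(C(t-s))}\leq 1$)---combine exactly as you say to yield $\vol_{g(-r^2)}(B)\geq (2C)^{-1}e^{\cN^*_{-r^2}(x_0,0)}r^n$, with $C=C(R_{\min}r^2)$ as required.

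Two small points. First, Bamler's heat kernel bound requires $R\geq R_{\min}$ on the entire slab $M\times[-r^2,0]$, whereas the lemma only hypothesizes $R(\cdot,-r^2)\geq R_{\min}$; you should remark that the maximum principle applied to $\partial_t R = \Delta R + 2|\mathrm{Ric}|^2\geq \Delta R$ shows $\min_M R(\cdot,t)$ is non-decreasing in $t$, so the lower bound automatically propagates forward. Second, your remark that the lower scalar bound ``enters through the monotonicity of $\cN^*$ and the comparison \eqref{NM}'' is not quite accurate: $R_{\min}$ enters this proof only through the constant $C(R_{\min}(t-s))$ in the Gaussian upper bound; the relation \eqref{NM} is internal to Bamler's derivation of that bound and plays no role in the three-line argument you then give. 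These are cosmetic; the proof itself is sound.
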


Next, we have the following heat kernel upper bound, which is proved in \cite{Bam20a} (Theorem 7.2).
\begin{lemma} 
Let $(M, g(t))_{t\in I}$ be a solution of the Ricci flow. Suppose that on $M \times [s, t]$,
$$[s, t]\subset I, ~ R\geq R_{min}.$$
Let $(z, s) \in M\times I$ be an $H_n$-center of $(x, t)\in M\times I$. Then there exist $C=C(R_{min}(t-s))<\infty$, such that for any $y\in M$, we have
\begin{equation}
K(x, t; y,s) \leq C (t-s)^{-n/2} e^{-\cN^*_{s}(x, t)} e^{ - \frac{ d^2_s(z, y)}{C (t-s)}}.
\end{equation}

\end{lemma}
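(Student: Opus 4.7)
The plan is to combine an on-diagonal heat kernel bound in terms of the Nash entropy with concentration of the conjugate heat measure near an $H_n$-center, following the framework of Hein-Naber and Bamler.

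First, I would establish the on-diagonal estimate
$$K(x,t;y,s)\leq C(t-s)^{-n/2} e^{-\cN^*_s(x,t)},$$
with $C=C(R_{\min}(t-s))$. This is a consequence of Perelman's monotonicity of the $\mu$-functional along the flow together with the inequality $\cN^*_s(x,t)\geq \mu[g_s,t-s]$ from (\ref{NM}). The quantity $\mu[g_s,t-s]$ encodes a logarithmic Sobolev inequality on $(M,g_s)$ at scale $t-s$, which by the Gross-Davies ultracontractivity machinery upgrades to an $L^\infty$-bound on heat evolution after time $t-s$; the lower scalar curvature bound enters through the energy contribution $\tau R$ in $\cW$, and is the source of the $R_{\min}(t-s)$-dependence.

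Next, to extract the Gaussian decay $\exp(-d_s^2(z,y)/(C(t-s)))$, I would apply the semigroup identity
$$K(x,t;y,s)=\int_M K(x,t;w,r)\,K(w,r;y,s)\,dg_r(w)$$
at an intermediate time $r\in(s,t)$. The first factor is the density of the conjugate heat measure $\nu_{x,t;r}$, which by Lemma \ref{hnc'2} is concentrated in a ball of radius $\sim\sqrt{t-r}$ about an $H_n$-center $(z',r)$ of $(x,t)$; by the oscillation bound (\ref{oscofNashinPstar}) the pointed Nash entropy of such an intermediate center differs from $\cN^*_s(x,t)$ by at most a bounded constant, so splitting the semigroup does not cost any Nash-entropy-weighted factors. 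The second factor $K(w,r;y,s)$ is controlled pointwise via the on-diagonal bound from the previous step. When $d_s(z,y)$ is much larger than $\sqrt{t-s}$, these two concentration regions are disjoint after accounting for the distance distortion between times $s$ and $r$ implied by $R\geq R_{\min}$, and this separation produces the exponential decay in $d_s(z,y)$.

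The main technical obstacle is that Lemma \ref{hnc'2} only provides $L^1$-mass concentration, whereas the conclusion demands a pointwise Gaussian upper bound on $K$. Closing this gap requires either a dyadic iteration of the semigroup splitting above --- at each scale converting integrated concentration into a pointwise estimate through the on-diagonal bound --- or an application of a differential Harnack inequality for $\log K$ in the spirit of Perelman, which smooths out the pointwise density directly. The dependence of $C$ on $R_{\min}(t-s)$ ultimately arises from both the energy term in $\cW$ and the control on distance distortion across $[s,t]$, both of which are governed by the scalar curvature lower bound.
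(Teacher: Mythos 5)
The paper does not prove this lemma; it cites it directly as Theorem~7.2 of \cite{Bam20a}. So the comparison below is against Bamler's actual argument.

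Your proposal has two genuine gaps. First, the on-diagonal step is wrong as stated: the log-Sobolev/ultracontractivity route via $\mu[g_s,t-s]$ yields $K(x,t;y,s)\leq C(t-s)^{-n/2}e^{-\mu[g_s,t-s]}$, but by inequality \eqref{NM} one has $\cN^*_s(x,t)\geq \mu[g_s,t-s]$, so $e^{-\cN^*_s(x,t)}\leq e^{-\mu[g_s,t-s]}$. The stated bound with $e^{-\cN^*_s(x,t)}$ is therefore \emph{strictly sharper} than what Davies--Gross ultracontractivity delivers, and cannot be recovered that way; it requires the pointwise lower bound on the conjugate heat kernel potential $f$ proved in \cite[Theorem~7.1]{Bam20a}.

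Second, the semigroup-splitting step hinges on ``accounting for the distance distortion between times $s$ and $r$ implied by $R\geq R_{\min}$.'' A lower scalar curvature bound does not control distance distortion in Ricci flow: $\partial_t d_t$ involves $\ric(\dot\gamma,\dot\gamma)$ along minimizing geodesics, and $R\geq R_{\min}$ does not bound individual eigenvalues of $\ric$. So the claimed disjointness of the two concentration regions is not available, and the dyadic bootstrap you sketch as option~(1) would not close. What does work is your option~(2), which is exactly Bamler's route: from Perelman's differential Harnack $\tau(2\Delta f-|\nabla f|^2+R)+f-n\leq 0$ one gets $|\nabla f|\leq\sqrt{(f-\tau R_{\min}+C)/\tau}$, hence a Lipschitz bound on $\sqrt{f-\tau R_{\min}+C}$, which integrated on the \emph{single time slice} $s$ gives the quadratic-in-$d_s(z,\cdot)$ growth of $f$. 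Combined with $\cN^*_s(x,t)=\int f\,d\nu-n/2$ and the $H_n$-center concentration, this gives both the on-diagonal Nash factor and the Gaussian tail simultaneously, avoiding the distortion issue entirely. You identified the right tool, but the argument you actually carry out uses the wrong one.
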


Using this heat kernel upper bound estimate, we have the following estimate which relates the $W_1$-distance to the $\mathcal{L}$-length. This estimate was used in \cite{J} to prove an improved version of the volume non-collapsing estimate. 
\begin{lemma} \label{wdn} 
Let $(M, g(t))_{t\in (-T, 0)}$ be a solution of the Ricci flow for some $T>0$. Suppose $(s, t) \subset (-T, 0)$ with $s>-T+\epsilon>0$ for some $\epsilon>0$.  Let  $\gamma: [0, t-s] \rightarrow M \times (-T, 0)$ be a $C^1$ spacetime curve with 
$$\gamma(\tau) \in M \times \{ t-\tau\}, ~~\gamma(0)=(x,t) ~\gamma(t-s) = (y,s). $$
Then there exists $C=C(\epsilon)>0$ such that
\begin{equation}
d^{g(s)}_{W_1} (\delta_{y, s}, \nu_{x,t; s}) \leq C \left(1+ \frac{\cL(\gamma)}{2(t-s)^{1/2}} -\cN^*_s(x, t)  \right) ^{1/2} (t-s)^{1/2}.
\end{equation}
\end{lemma}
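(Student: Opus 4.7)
My plan is to bound $d^{g(s)}_{W_1}(\delta_y, \nu_{x,t;s})$ by $\sqrt{\var_s(\delta_y, \nu_{x,t;s})}$, control the variance via an $H_n$-center of $(x,t)$, and then control the position of $y$ relative to that center by sandwiching $K(x,t;y,s)$ between the pointwise upper bound of the preceding lemma and Perelman's reduced-distance lower bound. The role of the assumption $s > -T+\epsilon$ is to yield a uniform lower bound on $R_{\min}$ on $M\times[s,t]$ via the maximum principle for the evolution of $R$ along the Ricci flow, ensuring the constant in the preceding lemma depends only on $(n,\epsilon)$. Write $\tau := t-s$ and let $f(y,s) := -\ln\bigl((4\pi\tau)^{n/2}K(x,t;y,s)\bigr)$ denote the potential of $\nu_{x,t;s}$.

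First, I fix an $H_n$-center $(z^*,s)$ of $(x,t)$. The inequality $d^{g(s)}_{W_1}(\mu_1,\mu_2) \leq \sqrt{\var_s(\mu_1,\mu_2)}$, the elementary bound $d_s^2(y,\cdot) \leq 2 d_s^2(y,z^*) + 2 d_s^2(z^*,\cdot)$, and the defining property of an $H_n$-center together give
\[ \var_s(\delta_y,\nu_{x,t;s}) \;\leq\; 2 d_s^2(y,z^*) + 2 H_n\tau, \]
so it suffices to bound $d_s(y,z^*)$. Applying the preceding heat kernel upper bound at the point $(y,s)$ and taking logarithms yields
\[ d_s^2(z^*,y) \;\leq\; C\tau\bigl(f(y,s) - \cN^*_s(x,t) + C_1\bigr) \]
for some $C_1 = C_1(n,\epsilon)$.

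To control $f(y,s)$ from above I will invoke the pointwise inequality $K(x,t;y,s) \geq (4\pi\tau)^{-n/2} e^{-l(y,s)}$, where $l(\cdot,s)$ denotes the reduced distance from $(x,t)$. This follows from the parabolic comparison principle applied to $(4\pi\tau)^{-n/2} e^{-l}$, which is a $\Box^*$-subsolution matching $K$ in the limit $s \to t^-$. Since $\gamma$ is an admissible space-time curve from $(x,t)$ to $(y,s)$, one has $l(y,s) \leq \cL(\gamma)/(2\sqrt{\tau})$, whence $f(y,s) \leq \cL(\gamma)/(2\sqrt{\tau})$. Substituting and absorbing $C_1$ into the constant gives
\[ d_s^2(z^*,y) \;\leq\; C\tau\left(1 + \frac{\cL(\gamma)}{2\sqrt{\tau}} - \cN^*_s(x,t)\right), \]
and combining with the variance bound above, followed by taking square roots, completes the proof.

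The main technical obstacle is verifying that all constants depend only on $(n,\epsilon)$. This reduces to the uniform lower bound $R_{\min}(s') \geq -n/(2\epsilon)$ on $s' \in [s,t]\subset[-T+\epsilon,0]$, which is standard for Ricci flow and controls the constant $C(R_{\min}(t-s))$ appearing in the preceding heat kernel upper bound. A secondary point is justifying Perelman's pointwise lower bound; this is classical in reduced geometry and follows from the parabolic maximum principle applied to a $\Box^*$-subsolution with the correct initial asymptotics as $s \to t^-$.
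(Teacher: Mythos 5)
The paper does not prove Lemma \ref{wdn}; it cites \cite{J} for it and applies it as a black box. So there is no in-paper proof to compare against, but your argument is the natural one and is essentially correct. You combine the three ingredients that one would expect: (i) the bound $d^{g(s)}_{W_1}(\delta_y,\nu_{x,t;s})\le\sqrt{\var_s(\delta_y,\nu_{x,t;s})}$ together with an $H_n$-center $(z^*,s)$ of $(x,t)$ to reduce to controlling $d_s(y,z^*)$; (ii) Bamler's Gaussian upper bound for the conjugate heat kernel centered at $(z^*,s)$, which translates $d_s^2(z^*,y)$ into $C\tau\bigl(f(y,s)-\cN^*_s(x,t)+C_1\bigr)$; and (iii) Perelman's lower bound $K(x,t;y,s)\ge(4\pi\tau)^{-n/2}e^{-\ell(y,s)}$ with $\ell(y,s)\le\cL(\gamma)/(2\sqrt{\tau})$ to bound $f(y,s)$ from above. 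You also correctly identify the role of the hypothesis $s>-T+\epsilon$: the parabolic maximum principle gives $R\ge -n/(2\epsilon)$ on $M\times[s,t]$, which fixes the constant in Bamler's kernel estimate (and also the constant in the Hamilton--Ivey--type lower bound on $\ell$).

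One caveat is worth flagging, though it concerns the formulation of the lemma more than your argument. Your chain of inequalities actually produces
\[
d^{g(s)}_{W_1}(\delta_y,\nu_{x,t;s})\;\le\;C(\epsilon)\Bigl(C_1(\epsilon)+\tfrac{\cL(\gamma)}{2\sqrt{t-s}}-\cN^*_s(x,t)\Bigr)^{1/2}(t-s)^{1/2},
\]
and the step ``absorbing $C_1$ into the constant'' to reach the stated $\bigl(1+\cdots\bigr)^{1/2}$ form is not automatic: $C_1+D\le C'(1+D)$ for all admissible $D$ fails when $D$ is close to $-1$ (which can occur, since $\cL(\gamma)$ may be negative), unless $C_1\le 1$. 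In other words, the inequality you prove has an additive constant $C_1(\epsilon)$ inside the parenthesis in place of the $1$ in the statement. This is a genuine, if minor, mismatch with the precise form printed in the paper; I believe the printed ``$1+$'' should be read as ``$C(\epsilon)+$'', and in every application in the paper (e.g.\ Lemma \ref{W1balongwl}, where one only needs an \emph{upper} bound and $D$ is bounded above in terms of $Y$) the two versions are interchangeable. A secondary, smaller point: Bamler's Theorem 7.2 gives a constant depending on $R_{\min}(t-s)$, so strictly speaking your constant depends on $t-s$ as well as $\epsilon$ when $t-s$ is not a priori bounded; again this is an imprecision inherited from the lemma statement rather than a defect of your argument, and in the paper's applications $t-s$ is normalized to a fixed scale.
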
 

Next, we have the following Lemma.
\begin{lemma}\label{W1balongwl}
Let $(M, g(t))_{t\in I}$ be a smooth Ricci flow on a compact $n$-dimensional manifold with the interval $I\subset \mathbb{R}$. Assume that $(x_0, t_0)\in M\times I$, $r_0\leq 1$ satisfy that $[t_0-2r_0^2, t_0]\subset I$, $R(x_0, t)\leq Yr_0^{-2}$ for all $t\in [t_0-r_0^2, t_0]$, and $\cN^*_{t_0-r_0^2}(x_0, t_0)\geq -Y$, then we have
$$d^{g(t_0-r_0^2)}_{W_1} (\nu_{x_0, t_0; t_0-r_0^2} , \delta_{x_0} )\leq Cr_0$$
for some constant $C=C(n, Y)<\infty$.
\end{lemma}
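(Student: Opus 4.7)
The plan is to apply Lemma \ref{wdn} to the constant spacetime curve $\gamma:[0,r_0^2]\to M\times I$ defined by $\gamma(\tau) = (x_0,t_0-\tau)$. This curve satisfies $\gamma(0) = (x_0,t_0)$ and $\gamma(r_0^2) = (x_0,t_0-r_0^2)$, so Lemma \ref{wdn} (with $s = t_0-r_0^2$, $t = t_0$, $y = x_0$) will directly produce an upper bound on $d^{g(t_0-r_0^2)}_{W_1}(\delta_{x_0},\nu_{x_0,t_0;t_0-r_0^2})$, provided we can control the two terms appearing inside the square root, namely $\mathcal{L}(\gamma)/(2(t-s)^{1/2})$ and $-\mathcal{N}^*_s(x_0,t_0)$.

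Since $\gamma$ is constant in space, $|\dot\gamma|_{g(t_0-\tau)} \equiv 0$, and the scalar curvature bound $R(x_0,t)\le Yr_0^{-2}$ on $[t_0-r_0^2,t_0]$ immediately gives
\begin{equation*}
\mathcal{L}(\gamma) \;=\; \int_0^{r_0^2} \sqrt{\tau}\,R(x_0,t_0-\tau)\,d\tau \;\le\; Yr_0^{-2}\int_0^{r_0^2}\sqrt{\tau}\,d\tau \;=\; \tfrac{2Y}{3}\,r_0,
\end{equation*}
so that $\mathcal{L}(\gamma)/(2(t-s)^{1/2}) \le Y/3$. Combined with the hypothesis $-\mathcal{N}^*_{t_0-r_0^2}(x_0,t_0)\le Y$, the bracket in Lemma \ref{wdn} is bounded by $1+Y/3+Y$, and we obtain
\begin{equation*}
d^{g(t_0-r_0^2)}_{W_1}(\delta_{x_0},\nu_{x_0,t_0;t_0-r_0^2}) \;\le\; C(1+4Y/3)^{1/2}\,r_0.
\end{equation*}

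The one subtle point is that the constant $C$ in Lemma \ref{wdn} depends on an $\epsilon$ measuring how much flow exists before time $s$. To make the final constant depend only on $n$ and $Y$ (and not on $r_0$), I would first parabolically rescale the flow by $r_0^{-1}$, which sends $r_0\mapsto 1$, preserves Nash entropy, and transforms the hypothesis $[t_0-2r_0^2,t_0]\subset I$ into an interval of definite length $2$ preceding the rescaled time of interest. This provides a uniform $\epsilon = 1$ buffer for Lemma \ref{wdn} and reduces everything to the scale-$1$ case, which after scaling back produces the bound $Cr_0$ with $C=C(n,Y)$. The main (mild) obstacle is simply unwinding this scaling correctly; the analytic content is entirely contained in the trivial $\mathcal{L}$-length computation for the constant curve together with the already-established Lemma \ref{wdn}.
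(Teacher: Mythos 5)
Your proof is correct and follows essentially the same route as the paper's: both pick the constant curve $\gamma(\tau)=(x_0,t_0-\tau)$, bound $\mathcal{L}(\gamma)$ using the pointwise scalar curvature hypothesis, invoke Lemma \ref{wdn} together with the Nash entropy lower bound, and reduce to $r_0=1$ by parabolic rescaling to make the constant in Lemma \ref{wdn} uniform. The paper simply performs the rescaling up front and records the cruder bound $\mathcal{L}(\gamma)\le 3Y$ where you more precisely get $\tfrac{2Y}{3}$; the reasoning is otherwise identical.
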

\begin{proof}
After parabolic rescaling, we may assume without loss of generality that $r_0=1$. Consider the spacetime curve defined by $\gamma(\tau)=(x_0, t_0-\tau)$ for $\tau\in [0, 1]$, then we have
$$\cL(\gamma)= \int_{0}^{1} \tau^{1/2}R(x_0, t_0-\tau) d\tau \leq 3Y,$$
Hence we can apply Lemma \ref{wdn} to obtain
\[
\begin{split}
&d^{g(t_0-1)}_{W_1} (\nu_{x_0, t_0; t_0-1} , \delta_{x_0} )  \\
\leq& C \left(1+ \frac{\cL(\gamma)}{2(t_0- (t_0-1))^{\frac{1}{2}}} - \cN^*_{t_0-1}(x_0, t_0) \right) ^{1/2} (t_0- (t_0-1))^{1/2}\\
\leq& C.   
\end{split}
\]
This completes the proof.
\end{proof}

Finally, let us recall the following result, which was proved by Perelman in \cite{Per1}.

\begin{lemma} \label{lcenter} 
Let $(M, g(t))_{t\in (-T, 0)}$ be a solution of the Ricci flow for some $T>0$. Suppose $[s, t] \subset (-T, 0)$. Then for any $x\in M$, there exists a point $y\in M$, such that
$$ \ell_{(x, t)} (y, s) \leq \frac{n}{2}.$$
\end{lemma}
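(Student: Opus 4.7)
The plan is to follow Perelman's original argument from \cite{Per1}, which combines his two basic differential inequalities for the reduced distance with a barrier maximum-principle estimate applied at the spatial minimum. Fix $(x, t)$, set $\tau := t - s > 0$, and write $\ell(y, \tau) := \ell_{(x,t)}(y, t - \tau)$, which is locally Lipschitz on $M \times (0, t + T)$ and smooth off the $\mathcal{L}$-cut locus. Since $M$ is compact, the spatial minimum $\phi(\tau) := \min_{y \in M} \ell(y, \tau)$ is attained at some $y_\tau \in M$.

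First I would recall Perelman's two identities/inequalities for $\ell$, derived from the first and second variations of $\mathcal{L}$-length: along a minimizing $\mathcal{L}$-geodesic, in the smooth locus,
$$\partial_\tau \ell \;=\; \tfrac{1}{2}R \;-\; \tfrac{1}{2}|\nabla\ell|^2 \;-\; \tfrac{\ell}{2\tau},$$
and, in the barrier sense on $M\times(0, t + T)$,
$$2\Delta \ell \;-\; |\nabla\ell|^2 \;+\; R \;+\; \frac{\ell - n}{\tau} \;\leq\; 0.$$

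Next I would evaluate both at the minimum point $y_\tau$, using that $\nabla\ell(y_\tau, \tau) = 0$ and $\Delta\ell(y_\tau, \tau) \geq 0$ in the barrier sense (via a smooth lower barrier $\phi_\epsilon \leq \ell$ with $\phi_\epsilon(y_\tau) = \ell(y_\tau)$). The Laplacian inequality then yields $R(y_\tau, t - \tau) \leq (n - \phi(\tau))/\tau$, while the time-derivative identity gives $\partial_\tau \ell(y_\tau, \tau) = \tfrac{1}{2}R(y_\tau, t-\tau) - \tfrac{1}{2\tau}\phi(\tau)$. Using that the upper right Dini derivative of $\phi$ satisfies $D^+\phi(\tau) \leq \partial_\tau \ell(y_\tau, \tau)$ (because $\phi(\tau + h) \leq \ell(y_\tau, \tau + h)$ for small $h > 0$), these combine to
$$D^+\bigl(\tau \phi(\tau)\bigr) \;=\; \phi(\tau) \;+\; \tau\, D^+\phi(\tau) \;\leq\; \tfrac{n}{2}.$$
Since $\tau \phi(\tau) \to 0$ as $\tau \to 0^+$ (as one sees by testing with the constant spatial path at $x$, whose $\mathcal{L}$-length is $O(\tau^{3/2})$), integration of this Dini inequality gives $\tau \phi(\tau) \leq n\tau/2$, i.e., $\phi(\tau) \leq n/2$, as required.

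The hard part will be rigorously interpreting the maximum-principle calculus at $y_\tau$, since $\ell$ is only Lipschitz and $y_\tau$ may lie on the $\mathcal{L}$-cut locus where $\ell$ fails to be smooth. This is handled by Perelman's barrier-function formalism: at each non-smooth point one chooses a smooth lower barrier touching $\ell$ from below, verifies the inequalities for the barrier, and passes to a limit along a sequence of barriers. We refer to \cite{Per1} for the detailed execution.
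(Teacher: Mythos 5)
Your proof is correct and is, in substance, Perelman's original argument from \cite{Per1}, which is exactly what the paper points to for this lemma (the paper states it as a recalled fact without its own proof). Both ingredients you use — the first-variation identity $\partial_\tau\ell=\tfrac12 R-\tfrac12|\nabla\ell|^2-\tfrac{\ell}{2\tau}$ and the second-variation barrier inequality $2\Delta\ell-|\nabla\ell|^2+R+\tfrac{\ell-n}{\tau}\le 0$ — are the standard ones, and combining them at the spatial minimum to get $D^+\!\bigl(\tau\phi(\tau)\bigr)\le\tfrac{n}{2}$ is equivalent to Perelman's formulation $\partial_{\bar\tau}\bar L+\Delta\bar L\le 2n$ (with $\bar L=2\sqrt{\bar\tau}\,L=4\bar\tau\ell$) followed by the minimum principle applied to $\bar L-2n\bar\tau$; your Dini-derivative presentation just carries this out directly on $\phi(\tau)=\min_y\ell$. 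The only thing worth flagging is that the step $D^+\phi(\tau)\le\partial_\tau\ell(y_\tau,\tau)$ and the evaluation of the two identities at $y_\tau$ require the barrier/upper-barrier formalism because $\ell$ need not be differentiable at $y_\tau$ (either in space or in $\tau$); you correctly note this and defer to \cite{Per1}, where the barrier argument is carried out, so no gap remains.
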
 

We will call the point $(y,s)$ an $\ell_n$-center of $(x,t)$ in Lemma \ref{lcenter}.


\medskip
\subsection{Metric flows and $\bF$-convergence}
Let $(X, d)$ be a complete, separable metric space and denote by $\mathcal{B}(X)$ the Borel $\sigma$-algebra generated by the open subsets of $X$. A probability measure on X is a measure $\mu$ on $\mathcal{B}(X)$ with $\mu(X)=1$. We denote by $\mathcal{P}(X)$ the set of probability measures on $X$. Denote by $\Phi:\mathbb{R}\to (0,1)$ the antiderivative satisfies that $\Phi'(x)=(4\pi)^{-1/2}e^{-x^2/4}$, $\lim_{x\to-\infty}\Phi(x)=0$, $\lim_{x\to\infty}\Phi(x)=1$.

\begin{definition}[Metric Flow Pairs, Definitions 3.2, 5.1 in \cite{Bam20b}]\label{mfpairs}
A metric flow over $I\subseteq \mathbb{R}$ is a tuple
$$(\mathcal{X},\mathfrak{t},(d_t)_{t\in I},(\nu_{x;s})_{x\in \mathcal{X},s \in I\cap (-\infty,\mathfrak{t}(x)]}),$$
where $\mathcal{X}$ is a set, $\mathfrak{t}:\mathcal{X}\to I$ is a function, $d_t$ are metrics on the level sets $\mathcal{X}_t:=\mathfrak{t}^{-1}(t)$, such that $(\mathcal{X}_t, d_t)$ is a complete and separable metric space for all $t$, and $\nu_{x;s}\in \mathcal{P}(\mathcal{X}_s)$, $s\leq \mathfrak{t}(x)$ are such that $\nu_{x;\mathfrak{t}(x)}=\delta_x$ and the following hold:
\begin{enumerate}
    \item (Gradient estimate for heat flows) For $s,t\in I$, $s<t$, $T\geq 0$, if $u_s:\mathcal{X}_s\to [0,1]$ is such that $\Phi^{-1}\circ u_s$ is $T^{-\frac{1}{2}}$-Lipschitz (or just measurable if $T=0$), then either $u_t:\mathcal{X}_t\to [0,1]$, $x\mapsto \int_{\mathcal{X}_s}u_s d\nu_{x;s}$, is constant or $\Phi^{-1}\circ u_t$ is $(T+t-s)^{-\frac{1}{2}}$-Lipschitz, 
    \item (Reproduction formula) For $t_1 \leq t_2 \leq t_3$ in $I$, $\nu_{x;t_1}(E)=\int_{\mathcal{X}_{t_2}}\nu_{y;t_1}(E)d\nu_{x;t_2}(y)$ for $x\in \mathcal{X}_{t_3}$ and all Borel sets $E\subseteq \mathcal{X}_{t_1}$.
\end{enumerate}

A conjugate heat flow on $\mathcal{X}$ is a family $\mu_t \in \mathcal{P}(\mathcal{X}_t)$, $t\in I'$, such that for $s\leq t$ in $I'$, we have $\mu_s(E)=\int_{\mathcal{X}_t} \nu_{x;s}(E) d\mu_t(x)$ for any Borel subset $E\subseteq \mathcal{X}_s$. A metric flow pair $(\mathcal{X},(\mu_t)_{t\in I'})$ consists of a metric flow $\mathcal{X}$, along with a conjugate heat flow $(\mu_t)_{t\in I'}$ such that $\text{supp}(\mu_t)=\mathcal{X}_t$ and $|I\setminus I'|=0$.
\end{definition}
Next, we have the following definitions.
\begin{definition}[Correspondences and $\mathbb{F}$-Distance, Definitions 5.4, 5.6 in \cite{Bam20b}]\label{corrspandFcon}
Given metric flows $(\mathcal{X}^i)_{i\in \mathcal{I}}$ defined over $I'^{,i}$, a correspondence over $I''\subseteq \mathbb{R}$ is a pair
$$
\mathfrak{C}=\left( (Z_t,d_t)_{t\in I''},(\varphi_t^i)_{t\in I''^{,i},i\in \mathcal{I}}\right)
$$
where $(Z_t,d_t^Z)$ are metric spaces, $I''^{,i}\subseteq I'^{,i}\cap I''$, and $\varphi_t^i:(\mathcal{X}_t^i,d_t^i)\to (Z_t,d_t^Z)$ are isometric embeddings.

The $\mathbb{F}$-distance between metric flow pairs $(\mathcal{X}^j,(\mu_t^j)_{t\in I'^{,j}})$, $j=1,2$, within $\mathfrak{C}$ is the infimum of $r>0$ such that there exists a measurable set $E\subseteq I''$ such that $I''\setminus E\subseteq I''^{,1}\cap I''^{,2}$, $|E|\leq r^2$, and there exist couplings $q_t$ of $(\mu_t^1,\mu_t^2)$, $t\in I''\setminus E$, such that for all $s,t\in I''\setminus E$ with $s\leq t$, we have
$$\int_{\mathcal{X}_t^1 \times \mathcal{X}_t^2}d_{W_1}^{Z_s}\left( (\varphi_s^1)_{\ast}\nu_{x^1;s}^1 , (\varphi_s^2)_{\ast}\nu_{x^2;s}^2 \right) dq_t(x^1,x^2) \leq r.$$
The $\mathbb{F}$-distance between metric flow pairs is the infimum of $\mathbb{F}$-distances within a correspondence $\mathfrak{C}$, where $\mathfrak{C}$ is varied among all correspondences.
\end{definition}
For the next definition, we suppose $(\mathcal{X}^i,(\mu_t^i)_{t\in I'^{,i}})$ $\mathbb{F}$-converge to $(\mathcal{X}^{\infty},(\mu_t^{\infty})_{t\in I'^{\infty}})$ within the correspondence $\mathfrak{C}$. 
\begin{definition}[Convergence within a correspondence, Definition 6.18 in \cite{Bam20b}]\label{cwithincor}
Given $\mu^i \in \mathcal{P}(\mathcal{X}_{t_i}^i)$ and $\mu^{\infty}\in \mathcal{P}(\mathcal{X}_{t_{\infty}}^{\infty})$, we write $\mu^i \xrightarrow[i\to \infty]{\mathfrak{C}} \mu^{\infty}$ if $t_i \to t_{\infty}$ and there exist $E_i \subseteq I''$ such that $|I''\setminus E_i|\to 0$, $E_i \subseteq I''$ and 
$$
\lim_{i\to \infty}\sup_{t\in I''\setminus E} d_W^{Z_t}\left( (\varphi_t^i)_{\ast} \mu_t^i , (\varphi_t^{\infty})_{\ast} \mu_t^{\infty} \right)=0,
$$
where $\mu_t^i$ is the conjugate heat flow on $\mathcal{X}^i$ with $\mu_{t_i}^i=\mu^i$, for $i\in \mathbb{N}\cup \{ \infty\}$. We write $x_i \xrightarrow[i \to \infty]{\mathfrak{C}} x_{\infty}$ if $\delta_{x_i} \xrightarrow[i\to \infty]{\mathfrak{C}} \delta_{x_{\infty}}$.
\end{definition}
We will need the following definition of singular spaces introduced by Bamler, see \cite[Definition 2.1]{Bam18}.
\begin{definition}[Singular space]\label{sp}
A tuple $(X,d,\cR,g)$ is called a singular space (of dimension $n$) if it satisfies the following properties:
\begin{enumerate}
\item $(X,d)$ is a locally compact, complete metric length space. 
\item $\cR_{X}\subset X$ is an open and dense subset, which can be equipped with a structure of a smooth Riemannian $n$-manifold $(\cR_{X}, g)$, such that the inclusion map $(\cR_{X}, d_g)\to (X,d)$ is a local isometry,
\item The length metric of $(\cR_{X}, g)$ is equal to the restriction of $d$ to $\cR$. In other words, $(X, d)$ is the completion of the length metric on $(\cR, g)$.
\item For any compact subset $K\subset X$, there are constants $0<\kappa_1(K)<\kappa_2(K)<\infty$ such that for all $x\in K$ and $0<r<1$, we have
$$\kappa_1 r^n\leq |B(x,r)\cap\cR|\leq \kappa_2 r^n.$$
Here $|\cdot|$ denotes the Riemannian volume with respect to the metric $g$ and distance
balls $B(x, r)$ are measured with respect to the metric d.
\end{enumerate}
The subset $\cR$ is called the regular part and its complement $\cS:=X\setminus\cR$ is called the singular part.
\end{definition}
%


\bigskip
\section{Continuity of blow-up limits of the K\"ahler-Ricci flow}\label{ApptotheKRF}
%


%
\medskip
\subsection{Set up and preliminary results}\label{setupandpre}
We consider the normalized version of (\ref{unkrflow1}), that is, we consider the following normalized K\"ahler-Ricci flow
\begin{equation}\label{nkrflow1}
\left\{
\begin{array}{l}
{ \displaystyle \dds{\tilde{\omega}(s)} = -\ric(\tilde{\omega}(s)) + \tilde{\omega}(s),}\\
\\
\tilde{\omega}(0) =\omega_0,
\end{array} \right.
\end{equation}
which has a long-time solution with $s\in [0, \infty)$. The relations between the unnormalized K\"ahler-Ricci flow (\ref{unkrflow1}) and normalized K\"ahler-Ricci flow (\ref{nkrflow1}) are given by
\begin{equation}\label{rb unkrf and nkrf}
s=-\ln (1-t),~~ t=1-e^{-s},~~ \tilde{\omega}(s)=(1-t)^{-1}\omega(t),~~t\in [0,1).
\end{equation}
We can always find a smooth closed $(1,1)$-form $\chi \in -[K_X]$ such that 
$$\omega_Y= \omega_0 - \chi $$
is the restriction of the Fubini-Study metric $\omega_{\fs}$ on $\mathbb{CP}^N$ to $Y$. We can also choose a smooth volume form $\Omega$ such that
$$-\ddbar \log \Omega = \chi$$
since $-\chi \in [K_X]$. 
%
%
%
%
The normalized flow (\ref{nkrflow1}) can be reduced to the complex Monge-Amp\`ere flow as below:
\begin{equation}\label{maflow1}
\left\{
\begin{array}{l}
{ \displaystyle \dds{\varphi} = \log \frac{ (\omega_0 + (e^{s}-1) \omega_Y + \ddbar \varphi )^n }{\Omega} + \varphi, ~~ s\in [0, \infty), }\\
\\
\varphi|_{ s=0} = 0 .
\end{array} \right. 
\end{equation}
We have the following well-known parabolic Schwarz lemma.
\begin{lemma}[\bf{Parabolic Schwarz Lemma}] \label{pschwarz} 
Let $\beta$ be any K\"ahler metric on $\mathbb{CP}^N$. For the solution to the unnormalized flow $\omega(t)$, we have
\begin{equation} \label{pschwarz'1}
\tr_{\omega(t)}\beta \leq C,
\end{equation}
and
\begin{equation} \label{pschwarz'2}
\left( \ddt{} - \Delta_{\omega(t)} \right) \tr_{\omega(t)}\beta \leq -C^{-1}|\nabla \tr_{\omega(t)}\beta|_{\omega(t)}^2+C,
\end{equation}
on $X\times [0, 1)$.
For the solution to the normalized flow $\tilde{\omega}(s)$, we have
\begin{equation} \label{pschwarz'3}
\tr_{\tilde{\omega}(s)}(e^s\beta) \leq C, 
\end{equation}
and
\begin{equation} \label{pschwarz'4}
\left( \dds{} - \Delta_{\tilde{\omega}(s)} \right) \tr_{\tilde{\omega}(s)}(e^s\beta) \leq -C^{-1}|\nabla \tr_{\tilde{\omega}(s)}(e^s\beta)|_{\tilde{\omega}(s)}^2+Ce^{-s},
\end{equation}
on $X\times [0, \infty)$. Here $C<\infty$ is a constant, depends on $n, \omega_0$ and the upper bound for the bisectional curvature of $\beta$.
\end{lemma}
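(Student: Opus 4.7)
The plan is to prove the unnormalized statements (5.3)--(5.4) and deduce the normalized ones (5.5)--(5.6) by the rescaling $\tilde\omega(s)=e^s\omega(t)$, $t=1-e^{-s}$. This identifies $\tr_{\tilde\omega(s)}(e^s\beta)=\tr_{\omega(t)}\beta$, relates the heat operators by $(\dds{}-\Delta_{\tilde\omega})=e^{-s}(\ddt{}-\Delta_\omega)$, and scales the gradient so that the constant $C$ on the right of (5.4) becomes $Ce^{-s}$ in (5.6). So it suffices to establish (5.3)--(5.4).

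For these, first I would apply Yau's standard Schwarz (Chern-Lu) calculation to the holomorphic identity map $(X,\omega(t))\to (X,\Phi^*\beta)$ to obtain the pointwise inequality
\[
\Delta_{\omega(t)}\log\tr_{\omega(t)}\beta \,\geq\, -C_B\tr_{\omega(t)}\beta + \frac{g^{i\bar k}g^{l\bar j}R_{k\bar l}\beta_{i\bar j}}{\tr_{\omega(t)}\beta},
\]
with $C_B$ depending on $n$ and the upper bisectional curvature bound of $\beta$. Combined with the KRF identity $\partial_t g^{i\bar j}=g^{i\bar k}R_{k\bar l}g^{l\bar j}$, the Ricci contributions cancel, giving $(\ddt{}-\Delta_{\omega(t)})\log\tr_{\omega(t)}\beta \leq C_B\tr_{\omega(t)}\beta$. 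To deduce (5.3), I would pick the background K\"ahler form $\hat\omega_t := (1-t)\omega_0 + t\omega_Y$, which lies in $[\omega(t)]=[\omega_0]+t[K_X]$, is strictly positive on $[0,1)$, and satisfies $\hat\omega_t \geq c\Phi^*\beta$ uniformly in $t$ (since $\omega_0\geq c_0\Phi^*\beta$ on $X$ and $\omega_Y\geq c_1\Phi^*\beta$ after pullback, one has $\hat\omega_t\geq \min(c_0,c_1)\Phi^*\beta$). Writing $\omega(t)=\hat\omega_t+\ddbar\varphi(t)$, the standard $L^\infty$-estimates from the Song-Tian theory yield $\|\varphi\|_{L^\infty}+\|\partial_t\varphi\|_{L^\infty}\leq C_0$ on $X\times[0,1)$. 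Applying the maximum principle to $Q:=\log\tr_{\omega(t)}\beta - A\varphi$ for $A\gg 1$, and using $\Delta_\omega\varphi = n-\tr_\omega\hat\omega_t$ together with the lower bound $\tr_\omega\hat\omega_t\geq c\tr_\omega\beta$, the $A\tr_\omega\hat\omega_t$ term absorbs $C_B\tr_\omega\beta$ and forces $\tr_{\omega(t)}\beta$ bounded at any spacetime maximum of $Q$; combined with the $L^\infty$-bound on $\varphi$, this yields (5.3).

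Finally, for (5.4), I would convert the log-inequality above into an evolution of $\tr_\omega\beta$ itself using the identity $\Delta\tr_\omega\beta = (\tr_\omega\beta)\Delta\log\tr_\omega\beta + |\nabla\tr_\omega\beta|_\omega^2/\tr_\omega\beta$, yielding
\[
\left(\ddt{}-\Delta_{\omega(t)}\right)\tr_{\omega(t)}\beta \,\leq\, C_B(\tr_\omega\beta)^2 - \frac{|\nabla\tr_\omega\beta|_\omega^2}{\tr_\omega\beta};
\]
the bound $\tr_\omega\beta\leq C_1$ from (5.3) converts the first term into a constant and provides the uniform lower bound $1/\tr_\omega\beta\geq 1/C_1$ needed for the gradient coefficient $-C^{-1}$. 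The main technical point is the $C^0$ step: the reference $\hat\omega_t$ degenerates along the fibers of $\Phi$ as $t\to 1$, but crucially the semi-positive form $\Phi^*\beta$ degenerates in the same directions, so the uniform comparison $\hat\omega_t\geq c\Phi^*\beta$ survives; the $L^\infty$-estimate for the Monge-Amp\`ere potential $\varphi$ is the one nontrivial input, and is by now standard in the finite-time K\"ahler-Ricci flow theory.
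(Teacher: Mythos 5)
The paper records this lemma as well known and gives no proof, so I evaluate your argument on its own terms. Your overall strategy --- Chern--Lu for the identity map into $(\mathbb{CP}^N,\beta)$, cancellation of the Ricci term against $\partial_t g^{i\bar j}$, a maximum principle for $\log\tr_\omega\beta$ minus a uniformly bounded potential, and the rescaling (5.3)--(5.4) $\Rightarrow$ (5.5)--(5.6) --- is the standard route (Song--Tian, Tian--Zhang) and is sound in outline; the conversion from the $\log$-inequality to (5.4) and the rescaling bookkeeping are also correct.

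There is, however, a genuine gap in the $C^0$ step: you assert $\|\partial_t\varphi\|_{L^\infty}\le C_0$ on $X\times[0,1)$, but the lower bound $\partial_t\varphi\ge -C_0$ is false whenever the generic fiber of $\Phi$ is positive-dimensional (cases (1) and (2) of Section 1.1). For the unnormalized flow with your choice of reference one has $\partial_t\varphi=\log(\omega(t)^n/\Omega)$, and along a Fano/Mori fibration the metric collapses on the fibers like $(1-t)$, so $\omega(t)^n/\Omega\to 0$ and $\partial_t\varphi\to-\infty$. Only the upper bound $\partial_t\varphi\le C$ is uniform. Consequently, at a spacetime maximum of $Q=\log\tr_\omega\beta-A\varphi$ you get $0\le (C_B-Ac)\tr_\omega\beta+An-A\,\partial_t\varphi$, and the term $-A\,\partial_t\varphi$ is unbounded above, so the argument does not close.

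The standard repair is to replace $\varphi$ by $\psi:=\varphi+(1-t)\partial_t\varphi$ in the auxiliary. Since $\hat\omega_t-(1-t)\chi=\omega_Y$ with $\chi=\omega_0-\omega_Y$, a direct computation gives $(\partial_t-\Delta_{\omega(t)})\psi=-n+\tr_{\omega(t)}\omega_Y$, so for $Q=\log\tr_\omega\beta-A\psi$ one has $(\partial_t-\Delta)Q\le C_B\tr_\omega\beta+An-A\tr_\omega\omega_Y$, and since $\omega_Y\ge c\,\Phi^\ast\beta$ (pullbacks of comparable K\"ahler forms on $\mathbb{CP}^N$), the term $-A\tr_\omega\omega_Y$ absorbs $C_B\tr_\omega\beta$ for $A\gg 1$. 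Moreover $\psi$ is uniformly bounded: above because $\|\varphi\|_{L^\infty}\le C$ and $\partial_t\varphi\le C$; below because $(\partial_t-\Delta)\psi\ge -n$ gives $\psi\ge\inf_X\psi(\cdot,0)-nt$ by the maximum principle. With $\psi$ in place of $\varphi$, the rest of your proof goes through as written.
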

In the normalized flow, we denote the Ricci potential by
\begin{equation}\label{Ricponu0}
u_0 = \dds{\varphi} ,
\end{equation}
then $u_0$ satisfies the following coupled equations
\begin{equation}\label{ceofu0}
\left\{
\begin{array}{l}
\dds {} u_0 = \Delta u_0 + \tr_{\tilde\omega(s)}(e^s\omega_Y) + u_0=n-R_{\tilde g}(s) + u_0 , \\
\\
\ric (\tilde\omega(s)) = \tilde\omega(s) - e^s\omega_Y - \ddbar u_0 .
\end{array} \right. 
\end{equation}
For convenience, we still denote by $u_0(t)$ the function $u_0 (s(t))$ with $s(t)=-\log(1-t)$, which is a function on the unnormalized flow $X\times [0, 1)$.

Now, let $\theta_Y$ be a smooth closed $(1,1)$-form on $Y$ with $\Phi^*\theta_Y\in \vartheta$. Then we have
\begin{equation} \label{defofrho}
\omega_Y-\theta_Y=\ddbar \rho ,
\end{equation}
where $\rho$ is a smooth function on $\mathbb{CP}^N$. We still denote by $\rho$ the pullback function $\pi\circ\rho$. Then in the normalized flow, we define
\begin{equation} \label{defofu'2}
u_1 = u_0 + e^s\rho,
\end{equation}
on $X \times [0, \infty)$. When we are in the unnormalized flow, we still denote by $u_1(t)$ the function $u_1 (s(t))$ with $s(t)=-\log(1-t)$, and we can check that
$$\ric(\omega(t)) - (1-t)^{-1} \omega(t) = - (1-t)^{-1} \theta_Y  - \ddbar  u_1,$$
for all $t\in [0, 1)$. Denote by
\begin{equation} \label{defofalp}
\alpha = \ddbar\rho.
\end{equation}
We can view $\alpha$ as a smooth form on $\mathbb{CP}^N$. Now $u_1$ is a smooth function, satisfying the following coupled equations 
\begin{equation}\label{ceofu}
\left\{
\begin{array}{l}
\dds {} u_1 = \Delta u_1  -  \tr_{\tilde\omega(s)}(e^s(\alpha-\omega_Y)) + u_1 = n- R_{\tilde g}(s) + u_1 , \\
\\
\ric (\tilde\omega(s)) = \tilde\omega(s) + e^s(\alpha-\omega_Y) - \ddbar u_1 ,
\end{array} \right.
\end{equation}
on $X \times [0, \infty)$.

In order to normalize $u_1$, denote by $a(s):=\inf_{X} u_1 ( \cdot , s )$. 
We have the following important estimates of $a(s)$.
\begin{lemma}[Lemma 4.3 in \cite{JST23a}] \label{a'3} 
For any constants $s_0, T\geq 0$, for $s\in [0, s_0+T]$ we have
\begin{equation} \label{a'4}
e^{s-s_0}a(s_0)-B\leq a(s),
\end{equation}
for some constant $B=B(n , \omega_0, \|\rho \|_{C^2(\omega_{\fs})}, T)<\infty$.
\end{lemma}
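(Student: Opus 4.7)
The plan is to derive a distributional differential inequality for $a(s) := \inf_X u_1(\cdot, s)$ from the maximum principle applied to the evolution equation of $u_1$, and then integrate it. The key analytic input is a time-uniform bound on the twisting term that appears in the evolution equation.

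First I would rewrite (\ref{ceofu}). Using $\omega_Y - \theta_Y = \alpha = \ddbar\rho$ from (\ref{defofrho}) and (\ref{defofalp}), the first equation of (\ref{ceofu}) takes the form
$$\partial_s u_1 = \Delta u_1 + e^s \tr_{\tilde\omega(s)}(\theta_Y) + u_1.$$
Since $\theta_Y$ is the pullback of a smooth $(1,1)$-form on $\mathbb{CP}^N$, there exists a constant $C_0 = C_0(\|\rho\|_{C^2(\omega_{\fs})})$ such that $-C_0\omega_{\fs} \leq \theta_Y \leq C_0\omega_{\fs}$. Applying the parabolic Schwarz Lemma \ref{pschwarz} with $\beta = \omega_{\fs}$ then yields a time-uniform bound
$$|e^s \tr_{\tilde\omega(s)}(\theta_Y)| \leq C_1$$
on $X \times [0, \infty)$, with $C_1 = C_1(n, \omega_0, \|\rho\|_{C^2(\omega_{\fs})})$.

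Next, at any spatial minimizer $x_s$ of $u_1(\cdot, s)$ one has $\Delta u_1(x_s, s) \geq 0$, so the evolution equation gives $\partial_s u_1(x_s, s) \geq a(s) - C_1$. Hamilton's trick applied to the Lipschitz envelope $a$ then upgrades this to the distributional inequality $a'(s) \geq a(s) - C_1$, equivalently $\bigl(e^{-s}(a(s) - C_1)\bigr)' \geq 0$. Thus $e^{-s}(a(s) - C_1)$ is non-decreasing, and for $s \in [s_0, s_0 + T]$ this integrates to
$$a(s) \geq e^{s-s_0}(a(s_0) - C_1) + C_1 \geq e^{s-s_0} a(s_0) - C_1(e^T - 1),$$
which is (\ref{a'4}) on the forward window with $B := C_1(e^T - 1)$.

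The main technical obstacle is the backward window $s \in [0, s_0)$, because the monotonicity of $e^{-s}(a(s) - C_1)$ alone produces only an upper bound on $a(s)$ in terms of $a(s_0)$ when $s < s_0$. I would close this case by applying the forward monotonicity from the initial time $s = 0$, obtaining $a(s) \geq e^s a(0) - C_1(e^{s_0} - 1)$ on $[0, s_0]$, and using the initial data bound $|a(0)| = |\inf(u_0(\cdot, 0) + \rho)| < \infty$ which depends only on $\omega_0$ and $\rho$. Combining this with the parallel forward bound $a(s_0) \geq e^{s_0} a(0) - C_1(e^{s_0} - 1)$ and exploiting $e^{s - s_0} \leq 1$, the $s_0$-dependent exponential factors align and can be absorbed into a final constant $B$ depending only on $n, \omega_0, \|\rho\|_{C^2(\omega_{\fs})}, T$. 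The essential structural ingredient throughout is the pullback relation $\Phi^*\theta_Y \in \vartheta$, which is precisely what makes the Schwarz bound in the first step time-uniform; without it the ODE inequality would degenerate as $s\to\infty$.
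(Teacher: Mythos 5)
Your reduction of (\ref{ceofu}) to $\partial_s u_1 = \Delta u_1 + e^s\tr_{\tilde\omega(s)}\theta_Y + u_1$, the uniform Schwarz bound $|e^s\tr_{\tilde\omega(s)}\theta_Y|\leq C_1$, and the resulting lower barrier inequality $a'(s)\geq a(s)-C_1$ are all correct, and they do give the forward window $s\in[s_0,s_0+T]$ with the stated constant. The backward window $s\in[0,s_0)$ --- which is the case the paper actually uses in (\ref{cordob2}), where one needs $b_i(s)=e^{s-s_i}a(s_i)-B_0\leq a(s)$ for all $s\leq s_i$ --- is not closed by your argument.

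The monotonicity of $e^{-s}(a(s)-C_1)$ propagates a lower bound on $a$ only \emph{forward} in time. To bound $a(s)-e^{s-s_0}a(s_0)$ from below for $s<s_0$, you must control $-e^{s-s_0}a(s_0)$ from below, which requires an \emph{upper} bound on $a(s_0)$. Both facts you invoke, $a(s)\geq e^{s}a(0)-C_1(e^{s_0}-1)$ and $a(s_0)\geq e^{s_0}a(0)-C_1(e^{s_0}-1)$, are lower bounds; upon subtraction they point the wrong way, and the $s_0$-dependent error $C_1(e^{s_0}-1)$ (and, when $a(0)<C_1$, the leading term $e^{s}(a(0)-C_1)$) is unbounded. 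More concretely, the one-sided inequality $a'\geq a-C_1$ by itself is compatible with $a$ being very negative on $[0,s_0-1]$ and then rising arbitrarily high near $s_0$, in which case $a(s)-e^{s-s_0}a(s_0)$ has no uniform lower bound; nothing in your argument excludes this.

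The missing ingredient is the \emph{upper} barrier inequality $a'(s)\leq a(s)+C_2$. It comes from the second form of the evolution equation in (\ref{ceofu}), $\partial_s u_1=n-R_{\tilde g}+u_1$, which you did not use: at a minimizer $x_s$ of $u_1(\cdot,s)$, the same envelope/Hamilton argument gives $a'(s)=n-R_{\tilde g}(x_s,s)+a(s)$ a.e., and the uniform lower bound $R_{\tilde g}\geq -C(n,\omega_0)$ on the normalized flow (a standard maximum-principle estimate, or just the scaling of $R_g\geq\min\{R_{\min}(0),0\}$ under $\tilde\omega=e^{s}\omega$) yields $a'(s)\leq a(s)+n+C=:a(s)+C_2$. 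Then $e^{-s}\bigl(a(s)+C_2\bigr)$ is non-increasing, so for $s\leq s_0$,
\begin{equation*}
a(s)+C_2\geq e^{s-s_0}\bigl(a(s_0)+C_2\bigr),\qquad\text{hence}\qquad a(s)\geq e^{s-s_0}a(s_0)-C_2\bigl(1-e^{s-s_0}\bigr)\geq e^{s-s_0}a(s_0)-C_2,
\end{equation*}
which closes the backward window with a constant independent of $s_0$. (The Laplacian bound $|\Delta u_1(x_s,s)|\leq C$ implicit in (\ref{gleou0}) would also supply the upper barrier via your first form of the equation, but that estimate is a separate and more delicate input; the scalar-curvature route is the elementary one at the level of Lemma 4.3 of \cite{JST23a}.)
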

Given any sequence of times $t_i\nearrow 1$ in the normalized flow, let $s_i=-\ln (1-t_i)\to\infty$ as $i\to\infty$. Let $B_0=B(n , \omega_0, \|\rho \|_{C^2(\omega_{\fs})}, 0)$ be the constant from Lemma \ref{a'3}, then we define
\begin{equation}\label{cordob2}
b_i(s) = e^{s-s_i}a(s_i)-B_0,
\end{equation}
where $s$ is the time parameter in the normalized flow $(X, \tilde g(s))$. By Lemma \ref{a'3}, we have $b_i(s)\leq a(s)$ for all $s\in [0 , s_i]$. We then denote by
\begin{equation}\label{cordovsi}
v_{i}=u_1-b_i(s)+1,
\end{equation}
which is a smooth function on the normalized flow $(X, \tilde g(s))$, $s\in [0, \infty)$. Then we have $v_i\geq 1$ on $X\times [0 , s_i]$.

According to \cite{JST23a}, we have the following gradient and Laplacian estimates
\begin{equation} \label{gleou0}
\frac{ \left| \Delta u_1 \right| }{ u_1 - a + 1 } + \frac{ |\nabla u_1|^2}{ u_1 - a + 1 } \leq C ,
\end{equation}
on the normalized flow $(X, \tilde g(s))$, $s\in [0, \infty)$. Hence for $v_i$, by Lemma \ref{a'3} and the Schwarz lemma, we have
\begin{equation} \label{gleovi}
\frac{ \left| \partial_s v_i \right| }{ v_i } + \frac{ \left| \Delta v_i \right| }{ v_i } + \frac{ |\nabla v_i |^2}{ v_i } \leq C ,
\end{equation}
on $X\times [0 , s_i]$.

Now, recall from the unnormalized flow $(X, g(t))$, $t\in [0, 1)$, we define $M_i=X$ and $g_{i,t}:=(1-t_i)^{-1}g((1-t_i)t+t_i)$, $t\in [-T_i , 0]$ with $T_i=t_i/(1-t_i)$. Hence we can compute that
$$
\omega_{i,t} = (1-t) \tilde \omega(s(t)), ~~ s(t)= -\ln (1-t) -\ln (1-t_i), ~~t\in [-T_i , 0] .
$$
For the convenience of the notations, we still denote by $v_{i}(t)=v_{i}(s(t))$, where $s(t)=-\ln (1-t) -\ln (1-t_i)$, which makes $v_i$ a function on the Ricci flow $(M_i, (g_{i,t})_{t\in [-T_i , 0]})$, hence we have $v_i\geq 1$ on $M_i\times [-T_i, 0]$. From (\ref{ceofu}), $v_i$ satisfy the following coupled equations 
\begin{equation}\label{ceofvi}
\left\{
\begin{array}{l}
\left( \partial_t - \Delta_{\omega_{i,t}} \right) v_i = \frac{ v_i - (B_0+1) }{ 1 - t } - \frac{1}{1-t} \tr_{\omega_{i,t}} ( \frac{\alpha-\omega_Y}{1-t_i} )   , \\
\\
\ric (\omega_{i,t}) = \frac{1}{1-t}( \omega_{i,t} + \frac{\alpha-\omega_Y}{1-t_i} ) - \ddbar v_i ,
\end{array} \right.
\end{equation}
on $M_i\times [-T_i, 0]$. We should remark here that, the factor $\frac{1}{1-t}$ here is not a Type I bound, it's actually a good term on $(M_i, (g_{i,t})_{t\in [-T_i , 0]})$. From the parabolic Schwarz Lemma, say Lemma \ref{pschwarz}, we have for any K\"ahler metric $\beta$ on $\mathbb{CP}^N$, $ \tr_{\omega_{i,t}} \left( \frac{\beta}{1-t_i} \right) \leq C$ and
\begin{equation} \label{pschwarz1}
\left( \partial_t - \Delta_{\omega_{i,t}} \right) \tr_{\omega_{i,t}} \left( \frac{\beta}{1-t_i} \right) \leq -C^{-1}\left|\nabla \tr_{\omega_{i,t}} \left( \frac{\beta}{1-t_i} \right) \right|_{\omega_{i,t}}^2+C(1-t_i),
\end{equation}
on $M_i\times [-T_i, 0]$, for some constant $C$ depending on $\beta$.

From (\ref{gleovi}), on $(M_i, (g_{i,t})_{t\in [-T_i , 0]})$, we have
\begin{equation} \label{gleovi2}
\frac{ \left| \partial_t v_i \right| }{ v_i } + \frac{ \left| \Delta v_i \right| }{ v_i } + \frac{ |\nabla v_i |^2}{ v_i } \leq \frac{C}{1-t} \leq C.
\end{equation}

In conclusion, if we let $p_i$ be the Ricci vertex associated to $\theta_Y$ at $t_i=1-e^{-s_i}$, then we have the following estimates.
\begin{lemma}[] \label{viprop} 
There exists constant $C=C(n , \omega_0, \|\rho \|_{C^4(\omega_{\fs})})<\infty$, such that the following statements hold on the Ricci flow $(M_i, (g_{i,t})_{t\in [-T_i , 0]})$.
\begin{enumerate}
\item $v_i\geq 1$;
\item $\frac{|\partial_t v_i|}{v_i} + \frac{| \Delta v_i |}{v_i} + \frac{|\nabla v_i|^2}{v_i} \leq C$;
\item $R_{g_{i}} \leq Cv_i$;
\item $v_i (p_i, 0)= B_0 + 1$.
\end{enumerate}
Here all the operators are with respect to the metric $g_{i,t}$.
\end{lemma}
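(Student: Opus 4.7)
\medskip
\textbf{Proof plan.} The statement is essentially a repackaging of estimates already in place in Subsection~\ref{setupandpre}, so the plan is to trace each of the four items to its source and verify that nothing is lost under the parabolic rescaling. For item~(1), the key input is the barrier inequality $b_i(s)\leq a(s)$ provided by Lemma~\ref{a'3} together with the definition \eqref{cordob2}; combined with $v_i=u_1-b_i(s)+1$ this immediately gives $v_i\geq u_1-a(s)+1\geq 1$ on $X\times[0,s_i]$, which translates to $v_i\geq 1$ on $M_i\times [-T_i,0]$ under the change of variables $s(t)=-\ln(1-t)-\ln(1-t_i)$. Item~(4) is a one-line computation: since $p_i$ is a Ricci vertex for $\theta_Y$ at $t_i$, one has $u_1(p_i,t_i)=a(s_i)$, and by \eqref{cordob2}, $b_i(s_i)=a(s_i)-B_0$, so $v_i(p_i,0)=a(s_i)-(a(s_i)-B_0)+1=B_0+1$.

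For item~(2), I plan to invoke directly the estimate \eqref{gleovi2}, which is the rescaled form of the normalized-flow bound \eqref{gleovi}. The latter is deduced from \eqref{gleou0} proved in \cite{JST23a} once one notes that $b_i$ depends only on time, so $\nabla v_i=\nabla u_1$ and $\Delta v_i=\Delta u_1$, and that $v_i\geq u_1-a+1$ converts the denominators in \eqref{gleou0}; the time-derivative piece is handled by the evolution equation in \eqref{ceofu} combined with the parabolic Schwarz Lemma~\ref{pschwarz} to control the cross term $\tr_{\tilde\omega}(e^s(\alpha-\omega_Y))$. The rescaling factors $(1-t)^{-1}$ coming from $\partial_s=(1-t)\partial_t$ and $\Delta_{\tilde g}=(1-t)\Delta_{g_i}$ remain uniformly bounded on $t\in [-T_i,0]$ since $t\leq 0$ forces $1-t\geq 1$.

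For item~(3), I will take the trace with respect to $\omega_{i,t}$ of the Ricci identity in \eqref{ceofvi} to obtain
\[
R_{g_i}=\frac{1}{1-t}\Bigl(n+\tr_{\omega_{i,t}}\bigl(\tfrac{\alpha-\omega_Y}{1-t_i}\bigr)\Bigr)-\Delta v_i.
\]
The first two terms are bounded absolutely, using $\frac{1}{1-t}\leq 1$ on $[-T_i,0]$ together with the parabolic Schwarz estimate \eqref{pschwarz1} applied to $\alpha-\omega_Y$, and $|\Delta v_i|\leq Cv_i$ by item~(2); since $v_i\geq 1$, these combine to $R_{g_i}\leq Cv_i$.

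There is no serious obstacle in any of the four items; the only care required is bookkeeping of the scaling factor $1-t$ arising from $g_{i,t}=(1-t)\tilde g(s(t))$, which takes values in $[1,1+T_i]$ on $[-T_i,0]$ and is either inverted to $\frac{1}{1-t}\leq 1$ or absorbed into the constant $C=C(n,\omega_0,\|\rho\|_{C^4(\omega_{\fs})})$.
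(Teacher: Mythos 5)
Your proposal is correct and follows the same route the paper takes: Lemma~\ref{viprop} is stated as a summary of the derivations in Subsection~\ref{setupandpre}, and you trace each item to its source accurately. Item~(1) from $b_i\le a$ and $a=\inf u_1$; item~(4) from $u_1(p_i,s_i)=a(s_i)$ and $b_i(s_i)=a(s_i)-B_0$; item~(2) by pulling \eqref{gleou0} through \eqref{gleovi} to \eqref{gleovi2}, with the rescaling factor $(1-t)^{-1}\le 1$ on $[-T_i,0]$; item~(3) by tracing the Ricci identity in \eqref{ceofvi}. The only minor imprecision is the phrase ``Schwarz estimate applied to $\alpha-\omega_Y$'': since $\alpha-\omega_Y$ need not be positive, one should first write it as a difference of K\"ahler forms, e.g.\ $(\alpha-\omega_Y+A\omega_{\fs})-A\omega_{\fs}$ for $A$ large, and apply \eqref{pschwarz1} to each piece; this gives the needed two-sided bound on $\tr_{\omega_{i,t}}\bigl(\tfrac{\alpha-\omega_Y}{1-t_i}\bigr)$, and the rest of your argument for item~(3) then goes through.
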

Passing to a subsequence, we can obtain $\bF$-convergence on compact time-intervals
\begin{equation}\label{FcorKRF'3}
(M_i, (g_{i,t})_{t\in [-T_i , 0]}, (\nu_{p_i,0; t})_{t\in [-T_i , 0]}) \xrightarrow[i\to\infty]{\bF,\CCC}  (\cX, (\nu_{p_\infty; t})_{t\in (-\infty , 0]}),
\end{equation}
within some correspondence $\CCC$, where $\cX$ is a future continuous and $H_{2n}$-concentrated metric flow of full support over $(-\infty , 0]$. 



%
Throughout this section, unless otherwise stated, all the constants will depend at most on $n , \omega_0, \|\rho \|_{C^4(\omega_{\fs})}$. We will omit this dependence in this section for convenience.


%
\medskip
\subsection{Heat kernel estimate and good distance distortion lower bound}\label{hkandgddl}
In this subsection, we will obtain the good distortion lower bound on $(M_i, (g_{i,t})_{t\in [-T_i , 0]})$. For convenience of notions, we will omit all the subscript $i$ in this subsection.

First, we have the following heat kernel estimate.

\begin{lemma}\label{ghke0}
For $-T\leq s < t \leq 0$ and $x, y\in M$, we have the following estimate for some $C<\infty$:

$(i)$ Any $H_{2n}$-center $(z,s)$ of $(x,t)$ satisfies $d_{s}^{2}(x,z)\leq C\left(1+\int_{s}^{t}R(x,\tau)d\tau\right)(t-s)$;

$(ii)$ We have
$$
K(x,t;y,s) \geq \frac{1}{C(t-s)^{\frac{n}{2}}}\exp\left(-C\int_{s}^{t}R(y,\tau)d\tau-\frac{Cd_{t}^{2}(x,y)}{(t-s)}\right),
$$
$$
K(x,t;y,s)\leq \frac{C}{(t-s)^{\frac{n}{2}}}\exp\left(C\int_{s}^{t}R(x,\tau)d\tau-\frac{d_{s}^{2}(x,y)}{C(t-s)}\right).
$$
\end{lemma}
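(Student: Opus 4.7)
The plan is to combine Bamler's framework of $H_{2n}$-centers and pointed Nash entropy with Perelman-type $\mathcal{L}$-length computations and a Li-Yau-Hamilton Harnack estimate. Two preliminary observations are needed throughout: the pointed Nash entropy $\mathcal{N}^*_s(x,t)$ is uniformly bounded on the rescaled flows (nonpositive by monotonicity, and bounded below via (\ref{NM}) by $\mu[g_{i,s},t-s]$, itself controlled by Perelman's monotonicity of $\mu$ applied to the fixed initial data); and the rescaled scalar curvature has a uniform lower bound coming from Hamilton's maximum principle on the original K\"ahler-Ricci flow.

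For (i), apply Lemma \ref{wdn} to the constant spacetime curve $\gamma(\tau)=(x,t-\tau)$, whose $\mathcal{L}$-length satisfies $\mathcal{L}(\gamma)=\int_0^{t-s}\sqrt{\tau}R(x,t-\tau)d\tau\leq\sqrt{t-s}\int_s^t R(x,\sigma)d\sigma$. This yields
\[
d^{g_s}_{W_1}(\delta_x,\nu_{x,t;s})\leq C\Bigl(1+\int_s^t R(x,\sigma)d\sigma\Bigr)^{1/2}(t-s)^{1/2},
\]
and combining with the $H_{2n}$-center estimate $d^{g_s}_{W_1}(\delta_z,\nu_{x,t;s})\leq\sqrt{H_{2n}(t-s)}$ from (\ref{dvh}) via the triangle inequality, followed by squaring, produces (i). For the upper bound in (ii), apply the heat kernel upper bound of Section 2.2 at the $H_{2n}$-center $(z,s)$ of $(x,t)$ and convert $d_s(z,y)$ into $d_s(x,y)$ via the elementary inequality $(d_s(x,y)-d_s(x,z))^2\geq\tfrac{1}{2}d_s^2(x,y)-d_s^2(x,z)$ together with (i); the correction $d_s^2(x,z)/(t-s)$ and the bounded entropy factor $\exp(-\mathcal{N}^*_s(x,t))$ both get absorbed into a factor proportional to $\exp(C\int_s^t R(x,\sigma)d\sigma)$.

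For the lower bound in (ii), apply a Li-Yau-Hamilton differential Harnack inequality of the form $\partial_\tau\log u\geq|\nabla\log u|^2-R-\tfrac{n}{2(\tau-s)}$, valid for positive solutions of the forward heat equation $\partial_tu=\Delta u$ under Ricci flow, to $u(\cdot,\cdot)=K(\cdot,\cdot;y,s)$. Integrating along the test path $x(\tau)$ that stays at $y$ on $[s+\epsilon,(s+t)/2]$ and sweeps from $y$ to $x$ at essentially constant speed on $[(s+t)/2,t]$, and using the absorption $\langle\nabla\log u,\dot x\rangle+|\nabla\log u|^2\geq-|\dot x|^2/4$, the static portion contributes $\int_s^{(s+t)/2}R(y,\sigma)d\sigma$ to the curvature integral while the kinetic piece contributes $\lesssim d_t^2(x,y)/(t-s)$. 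Using the on-diagonal small-time asymptotic $K(y,s+\epsilon;y,s)\sim(4\pi\epsilon)^{-n/2}$, the $\log\epsilon$ terms cancel as $\epsilon\to 0$ to yield the claimed lower bound.

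The main obstacle is controlling the kinetic and curvature contributions on the sweeping portion of the test curve. The integrand $|\dot x|^2$ is measured in the time-varying metric $g_\tau$, so producing a clean $d_t^2(x,y)/(t-s)$ factor requires local distance distortion estimates on $\tau\in[(s+t)/2,t]$; similarly, $R(x(\tau),\tau)$ for $\tau$ in this interval must be compared with $R(y,\tau)$ up to a bounded error that can be absorbed into the constant $C$ since $t-s$ is bounded on compact time-intervals. Both are achieved via the localized scalar curvature bound $R_{g_i}\leq Cv_i$ from Lemma \ref{viprop}, which furnishes a uniform $R$ bound in a neighborhood once $v_i$ is locally bounded; hence the argument must be localized in a neighborhood containing both $x$ and $y$.
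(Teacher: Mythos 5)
Your proof of part (i) is a legitimate alternative to the paper's: the paper derives (i) directly by squeezing the on-diagonal kernel value $K(x,t;x,s)$ between the on-diagonal lower bound $K(x,t;x,s)\geq \frac{1}{C(t-s)^n}e^{-\ell_{(x,t)}(x,s)}$ combined with $\ell_{(x,t)}(x,s)\leq \int_s^tR(x,\tau)d\tau$, and Bamler's Gaussian upper bound centered at the $H_{2n}$-center, which gives $d_s^2(x,z)\lesssim (1+\int R)(t-s)$ after taking logarithms; your detour through Lemma \ref{wdn} and the $W_1$ triangle inequality reaches the same place, modulo the entropy lower bound you flag, which the paper instead absorbs into the Sobolev constant needed for Zhang's on-diagonal upper bound. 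Your treatment of the upper bound in (ii) is essentially the paper's argument (Bamler's Theorem 7.2 plus the $H_{2n}$-center estimate from (i)).

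For the lower bound in (ii), however, there is a genuine gap. You invoke a Li--Yau--Hamilton differential Harnack $\partial_\tau\log u\geq|\nabla\log u|^2-R-\frac{n}{2(\tau-s)}$ for positive solutions of the \emph{forward} heat equation $\partial_t u=\Delta u$ along Ricci flow. This inequality is not available in the stated generality: LYH-type Harnack inequalities for the forward heat equation coupled to Ricci flow require curvature assumptions (nonnegative curvature operator, bounded curvature, or similar), none of which hold here. It is Perelman's differential Harnack for the \emph{conjugate} heat kernel that is unconditional; the forward analogue is not. The Harnack estimate that the paper does establish (Theorem \ref{HarnackiofKRF}) comes later, depends on the K\"ahler structure and the Ricci potential bound, and yields an error term controlled by $v$, not by $R$ -- so even if you used that, you would get $\int v$, not $\int R(y,\tau)d\tau$, in the exponent, and moreover you would create a circular dependence since that theorem is proved using Lemma \ref{ghke0}. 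Relatedly, your sweeping-path argument needs to compare $R(x(\tau),\tau)$ with $R(y,\tau)$, which forces you to localize via $v$; the constant would then depend on the local geometry, contradicting the uniformity stated in the lemma. The paper's proof avoids all of this by using Qi Zhang's gradient estimate $|\nabla_x K|/K\leq\sqrt{\frac{1}{t-s}\log(C(t-s)^{-n}/K)}$, which is unconditional for Ricci flow and integrates directly along a geodesic in the fixed time slice $t$ to convert the on-diagonal lower bound $K(y,t;y,s)\geq\frac{1}{C(t-s)^n}e^{-\int_s^tR(y,\tau)d\tau}$ into the off-diagonal Gaussian lower bound at $(x,t)$.
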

\begin{proof}
First, we have (note here we are of real dim $2n$)
\begin{equation} \label{ghke1}
\frac{1}{C(t-s)^{n}}\exp\left(-\int_{s}^{t}R(x,\tau)d\tau\right)\leq K(x,t;x,s)\leq\frac{C}{(t-s)^{n}} .
\end{equation}
The upper bound follows from \cite{ZhQ11}, and the lower bound follows by combining $K(x,t;x,s)\geq\frac{1}{C(t-s)^{n}}e^{-\ell_{(x,t)}(x,s)}$ with 
$$
\ell_{(x,t)}(x,s)\leq\frac{1}{2\sqrt{t-s}}\int_{0}^{t-s}\sqrt{\tau}R(x,t-\tau)d\tau\leq\int_{s}^{t}R(x,\tau)d\tau.
$$
$(i)$ Let $(z,s)$ be any $H_{2n}$-center of $(x,t)$. Then from \cite[Theorem 7.2]{Bam20a} and (\ref{ghke1}), we have
$$
\frac{1}{C(t-s)^{n}}\exp\left(-\int_{s}^{t}R(x,\tau)d\tau\right)\leq K(x,t;x,s)\leq\frac{C}{(t-s)^{n}}\exp\left(-\frac{d_{s}^{2}(x,z)}{10(t-s)}\right),
$$
which implies that
$$
d_{s}^{2}(x,z)\leq C\left(1+\int_{s}^{t}R(x,\tau)d\tau\right)(t-s) .
$$

$(ii)$ Qi Zhang's gradient estimate \cite[Theorem 3.2]{ZhQ06} and the upper bound in (\ref{ghke1}) combine to give
$$
\frac{|\nabla_{x}K(x,t;y,s)|}{|K(x,t;y,s)|}\leq\sqrt{\frac{1}{t-s}\log\left(\frac{C(t-s)^{-n}}{K(x,t;y,s)}\right)},
$$
so that
$$
\left|\nabla_{x}\sqrt{\log\left(\frac{C(t-s)^{-n}}{K(x,t;y,s)}\right)}\right|\leq\frac{C}{\sqrt{t-s}},
$$
which we can integrate to get 
$$
\log\left(\frac{C(t-s)^{-n}}{K(x_{2},t;y,s)}\right)\leq2\log\left(\frac{C(t-s)^{-n}}{K(x_{1},t;y,s)}\right)+\frac{Cd_{t}^{2}(x_1, x_2)}{t-s}.
$$
Choosing $x_{1}=y$ and $x_{2}=x$, the lower bound in (\ref{ghke1}) then gives
$$
\log\left(\frac{C(t-s)^{-n}}{K(x,t;y,s)}\right)\leq C\int_{s}^{t}R(y,\tau)d\tau+\frac{Cd_{t}^{2}(x,y)}{t-s}.
$$
Rearranging terms gives the lower bound.

Combining $(i)$ with \cite[Theorem 7.2]{Bam20a} gives
$$
K(x,t;y,s)\leq\frac{C}{(t-s)^{n}}\exp\left(-\frac{d_{s}^{2}(y,z)}{9(t-s)}\right)\leq\frac{C}{(t-s)^{n}}\exp\left(C\int_{s}^{t}R(x,\tau)d\tau-\frac{d_{s}^{2}(x,y)}{10(t-s)}\right),
$$
which proves the upper bound.
\end{proof}
Now we can prove the good distance distortion lower bound.
\begin{proposition}\label{gddlb}
For any $A<T$, $D<\infty$, there exist constants $\theta=\theta(A,D)>0$, $C=C(A,D)<\infty$, such that the following statement holds.

Assume $-A\leq t_{1}< t_{2}\leq0$ satisfies $t_2-t_1\leq \theta$. Assume $x,y\in\bigcup_{ t\in[ t_{1}, t_{2} ] } B( p, t, D)$. Then we have
$$
d_{t_{2}}(x,y)\geq d_{t_{1}}(x,y)-C\sqrt{t_{2}-t_{1}}.
$$
\end{proposition}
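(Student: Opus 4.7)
The plan is to estimate $d_{t_1}(x,y)$ from above using the triangle inequality in the Wasserstein $W_1$-distance on $(M,g_{t_1})$. Since $d^{g_{t_1}}_{W_1}(\delta_x,\delta_y)=d_{t_1}(x,y)$, inserting the heat kernel measures $\nu_{x,t_2;t_1}$ and $\nu_{y,t_2;t_1}$ yields
\[
d_{t_1}(x,y) \leq d^{g_{t_1}}_{W_1}(\delta_x,\nu_{x,t_2;t_1}) + d^{g_{t_1}}_{W_1}(\nu_{x,t_2;t_1},\nu_{y,t_2;t_1}) + d^{g_{t_1}}_{W_1}(\nu_{y,t_2;t_1},\delta_y).
\]
I will bound the middle term by $d_{t_2}(x,y)$ and each outer term by $C\sqrt{t_2-t_1}$; the conclusion then follows by rearrangement.

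\textbf{Contraction of the middle term.} For any bounded $1$-Lipschitz function $f$ on $(M,g_{t_1})$, set $h(w,t):=\int_M f(z)K(w,t;z,t_1)\,dg_{t_1}(z)$ on $M\times [t_1,t_2]$. Then $h$ solves the forward heat equation $\partial_t h=\Delta_{g_t} h$ along the Ricci flow with $h(\cdot,t_1)=f$. Combining $\partial_t g^{ij}=2R^{ij}$ with the Bochner formula gives the identity
\[
\left(\partial_t - \Delta_{g_t}\right)|\nabla h|^2_{g_t} = -2|\nabla^2 h|^2_{g_t} \leq 0,
\]
so the parabolic maximum principle on the closed manifold $M$ forces $|\nabla h(\cdot,t)|_{g_t}\leq 1$ throughout $[t_1,t_2]$. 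Hence $|h(x,t_2)-h(y,t_2)|\leq d_{t_2}(x,y)$, and taking the supremum over $f$ gives $d^{g_{t_1}}_{W_1}(\nu_{x,t_2;t_1},\nu_{y,t_2;t_1}) \leq d_{t_2}(x,y)$.

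\textbf{Outer terms via $H_{2n}$-centers.} For the first outer term, pick an $H_{2n}$-center $(z,t_1)$ of $(x,t_2)$. The triangle inequality gives
\[
d^{g_{t_1}}_{W_1}(\delta_x,\nu_{x,t_2;t_1}) \leq d_{t_1}(x,z) + d^{g_{t_1}}_{W_1}(\delta_z,\nu_{x,t_2;t_1}),
\]
where the second term is bounded by $\sqrt{H_{2n}(t_2-t_1)}$ by (\ref{dvh}), and the first by Lemma \ref{ghke0}(i):
\[
d_{t_1}^2(x,z) \leq C\left(1 + \int_{t_1}^{t_2} R(x,\tau)\,d\tau\right)(t_2-t_1).
\]
The task therefore reduces to a uniform bound on $\int_{t_1}^{t_2} R(x,\tau)\,d\tau$. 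By Lemma \ref{viprop} one has $R\leq Cv_i$; the spatial estimate $|\nabla\sqrt{v_i}|\leq C$ together with the basepoint value $v_i(p_i,0)=B_0+1$ and the bound $|\partial_t\log v_i|\leq C$ yields $v_i\leq C(A,D)$ on the thickened set $\bigcup_{t\in [-A,0]} B(p_i,t,D)\times\{t\}$. For fixed $x$ with $x\in B(p_i,t_0,D)$ at some $t_0\in [t_1,t_2]$, the bound $v_i(x,t_0)\leq C(A,D)$ propagates via $|\partial_t \log v_i|\leq C$ to $v_i(x,\tau)\leq C(A,D)e^{C\theta}$ for all $\tau\in [t_1,t_2]$; hence $\int_{t_1}^{t_2}R(x,\tau)\,d\tau\leq C(A,D)$. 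The same argument handles the outer term involving $y$. Combining all three estimates, and choosing $\theta$ small depending on $A,D$, yields the claim.

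\textbf{Main obstacle.} The chief subtlety is controlling $R$ along the entire vertical slice $\{x\}\times [t_1,t_2]$, since the hypothesis only places $x$ in a distance ball around $p_i$ at one time. The combined use of the spatial gradient estimate and the time-derivative estimate for $v_i$ in Lemma \ref{viprop}, together with the smallness of $\theta$, is precisely what allows the scalar curvature to be controlled uniformly in both directions—serving as the local substitute for the global scalar curvature bound used in \cite{BZ17,BZ19}.
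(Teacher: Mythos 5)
Your proof is correct and its ingredients coincide with the paper's: the $H_{2n}$-center estimate of Lemma \ref{ghke0}(i), control of $\int_{t_1}^{t_2}R(x,\tau)\,d\tau$ through the barrier $v_i$ (using $|\nabla\sqrt{v_i}|\leq C$ and $|\partial_t\log v_i|\leq C$ to propagate from $v_i(p_i,0)=B_0+1$), and the fact that the heat flow does not increase the Lipschitz constant. The packaging, however, is genuinely different and somewhat cleaner. The paper solves the heat equation with initial data $d_{t_1}(x,\cdot)$ and $d_{t_1}(y,\cdot)$, runs a maximum-principle argument for the sum, and bounds $u(x,t_2)$ by a dyadic decomposition of $\int K(x,t_2;w,t_1)d_{t_1}(w,z)\,dg_{t_1}(w)$ using the Gaussian concentration in \cite[Theorem 3.14]{Bam20a}. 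You instead observe that this very integral is $d_{W_1}^{g_{t_1}}(\delta_z,\nu_{x,t_2;t_1})$, bound it directly by $\sqrt{H_{2n}(t_2-t_1)}$ via the variance inequality (2.16), and then avoid the maximum-principle step altogether by invoking the $W_1$-contraction property $d_{W_1}^{g_{t_1}}(\nu_{x,t_2;t_1},\nu_{y,t_2;t_1})\leq d_{t_2}(x,y)$ (which is also item (1) of Definition \ref{mfpairs}, though your Bochner computation proves it directly). This replaces the paper's explicit heat-kernel estimates with abstract Wasserstein geometry and shortens the argument; the paper's version, conversely, is more self-contained and does not lean on the metric-flow axioms. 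Both routes rely on the same local scalar-curvature control, which is the genuinely new input here, and both are valid.
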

\begin{proof}
Throughout the proof, all the constants will depend at most on $A, D$.
Let $(z,t_{1})$ be an $H_{2n}$-center of $(x,t_{2})$, then by Lemma \ref{ghke0}, we have
\begin{equation} \label{ghke2}
d_{t_1}^{2}(x,z)\leq C\left( 1+\int_{t_1}^{t_2}R(x,t)dt \right) (t_2-t_1) .
\end{equation}
Assume $t_x\in [ t_{1}, t_{2} ]$ satisfies $d_{t_x}(x, p)\leq D$, then we have
$$
R(x,t) \leq C v(x,t) \leq C v(x,t_x) \leq C ( 2v(p,t_x) + CD^2 ) \leq C ( Cv(p,0) + CD^2 ) \leq C,
$$
for all $t \in [ t_{1}, t_{2} ]$. Hence by (\ref{ghke2}), we have
\begin{equation} \label{debxz}
d_{t_1}(x,z) \leq C \sqrt{t_2-t_1} .
\end{equation}
Let $u\in C^{\infty}(M \times (t_{1},t_{2}])$ solve the heat equation, with $u(\cdot,t_{1}):=d_{t_{1}}(x,\cdot)$. Then (\ref{debxz}) and \cite[Theorem 3.14]{Bam20a} give
\[
\begin{split}
u(x,t_{2})= & \int_{M}K(x,t_{2};w,t_{1})d_{t_{1}}(w,x)dg_{t_{1}}(w) \\
\leq & C\sqrt{t_{2}-t_{1}}+\int_{M}K(x,t_{2};w,t_{1})d_{t_{1}}(w,z)dg_{t_{1}}(w) \\
\leq & C\sqrt{t_{2}-t_{1}}+\sqrt{t_{2}-t_{1}}\int_{B(z,t_{1},\sqrt{t_{2}-t_{1}})}K(x,t_{2};w,t_{1})dg_{t_{1}}(w) \\
& +\sqrt{t_{2}-t_{1}}\sum_{j=1}^{\infty}2^{j}\int_{B(z,t_{1},2^{j+1}\sqrt{t_{2}-t_{1}})\setminus B(z,t_{1},2^{j}\sqrt{t_{2}-t_{1}})}K(x,t_{2};w,t_{1})dg_{t_{1}}(w) \\
\leq & C\sqrt{t_{2}-t_{1}}\left(1+\sum_{j=1}^{\infty}2^{j}\exp\left(-\frac{(2^{j})^{2}}{10}\right)\right) \\
\leq & C\sqrt{t_{2}-t_{1}}.
\end{split}
\]
If we let $\tilde u\in C^{\infty}(M\times(t_{1},t_{2}])$ solve the heat equation with $\tilde u(\cdot,t_{1})=d_{t_{1}}(y,\cdot)$, the same computation gives $\tilde u(y,t_{2})\leq C\sqrt{t_{2}-t_{1}}$. However, $(u+\tilde u)(\cdot,t_{1})\geq d_{t_1}(x,y)$, so the maximum principle gives $(u+\tilde u)(\cdot,t_{2})\geq d_{t_1}(x,y)$, and in particular $u(y,t_{2})\geq d_{t_1}(x,y) -C\sqrt{t_{2}-t_{1}}$. Because $|\nabla u|\leq1$, we conclude
$$
d_{t_{2}}(x,y)\geq u(y,t_{2})-u(x,t_{2})\geq d_{t_1}(x,y)-C\sqrt{t_{2}-t_{1}},
$$
which completes the proof.
\end{proof}
%


%
\medskip
\subsection{Harnack inequality and global weak distance distortion upper bound}\label{Hiandgwddub}
As in subsection \ref{hkandgddl}, we will omit all the subscript $i$ of $(M_i, (g_{i,t})_{t\in [-T_i , 0]})$ in this subsection.

\begin{theorem}\label{HarnackiofKRF}
For any $t_0,t_1\in [-T, 0]$, $t_0<t_1$, there exists $C=C(t_0)<\infty$, such that the following statement holds.

For any positive solution $u\in C^{\infty}(M\times[ t_0 , t_1 ])$ of the heat equation, we have
$$
-\frac{\Delta u}{u}+\frac{1}{2}\frac{|\nabla u|^{2}}{u^{2}}\leq C\left(\frac{1}{t-t_0}+v\right),
$$
on $M\times (  t_0 , t_1  ]$.
\end{theorem}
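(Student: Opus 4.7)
The plan is to apply the parabolic maximum principle to an auxiliary function of the form
$$\Phi \;:=\; (t-t_0)\, Q \;-\; A\, v$$
on $M\times [t_0, t_1]$, where $A = A(t_0)$ is a large constant to be chosen, $v$ is the Ricci-potential barrier from Lemma \ref{viprop}, and $Q := -\Delta u/u + \tfrac{1}{2}|\nabla u|^2/u^2$ is the quantity we wish to bound. Setting $f := \log u$, the heat equation yields $Q = \tfrac{1}{2}|\nabla f|^2 - \partial_t f = -\Delta f - \tfrac{1}{2}|\nabla f|^2$, and the Cauchy--Schwarz inequality on symmetric tensors (in real dimension $2n$) gives
$$|\nabla^2 f|^2 \;\geq\; \frac{(\Delta f)^2}{2n} \;=\; \frac{(Q + \tfrac{1}{2}|\nabla f|^2)^2}{2n},$$
which will ultimately furnish the dominant negative quadratic term $-Q^2/(2n)$ in the evolution inequality.

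I would first derive $\tdelta Q$ using the Bochner identity together with the convention $\partial_t g = -\ric$ of the K\"ahler--Ricci flow (so that $[\partial_t, \Delta]f = \ric\cdot \nabla^2 f$). After substituting $\nabla \Delta f = -\nabla Q - \nabla^2 f \cdot \nabla f$ coming from $\Delta f = -Q - \tfrac{1}{2}|\nabla f|^2$, a direct calculation yields
$$\tdelta\, Q \;=\; -|\nabla^2 f|^2 \;+\; 2\langle \nabla f, \nabla Q\rangle \;-\; \tfrac{3}{2}\ric(\nabla f, \nabla f) \;-\; \ric \cdot \nabla^2 f.$$
The Ricci couplings are then handled via the coupled equation (\ref{ceofvi}), $\ric = (1-t)^{-1}\omega + (1-t)^{-1}(1-t_i)^{-1}(\alpha - \omega_Y) - \ddbar v$: the leading piece $(1-t)^{-1}\omega(\nabla f, \nabla f) = (1-t)^{-1}|\nabla f|^2 \geq 0$ is favorably signed; the $(\alpha - \omega_Y)$ contribution, after contraction, is uniformly bounded by the parabolic Schwarz lemma (Lemma \ref{pschwarz}); and the $\ddbar v$ contribution is absorbed into half of $|\nabla^2 f|^2$ by Cauchy--Schwarz, producing a residual term that must be estimated in terms of $v$.

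At an interior space-time maximum $(x^*, t^*)$ of $\Phi$ with $\Phi(x^*, t^*) > 0$, the first-order condition $\nabla \Phi = 0$ yields $\nabla Q = A(t^*-t_0)^{-1}\nabla v$, so that the drift term $2\langle \nabla f, \nabla Q\rangle$ is controlled by $|\nabla v|^2 \leq Cv$ (Lemma \ref{viprop}(2)) combined with Young's inequality. The second-order condition $\tdelta \Phi \geq 0$, together with the evolution $\tdelta v \leq Cv + C$ derived from (\ref{ceofvi}) and Schwarz, yields a polynomial inequality in $Q(x^*, t^*)$ whose leading $-(t^*-t_0) Q^2/(2n)$ term forces, for $A = A(t_0)$ chosen sufficiently large, the bound $(t^*-t_0) Q(x^*, t^*) \leq C + C A\, v(x^*, t^*)$, i.e.\ $\Phi(x^*, t^*) \leq C$. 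The main obstacle I expect is controlling the $|\ddbar v|^2$ leftover from the Cauchy--Schwarz absorption of $\ric \cdot \nabla^2 f$, since Lemma \ref{viprop}(2) provides only the trace bound $|\Delta v| \leq C v$, not a pointwise bound on the complex Hessian $v_{,i\bar j}$. The resolution should exploit the K\"ahler $(1,1)$-structure --- only the Hermitian part $f_{,i\bar j}$ of the Hessian couples to $\ric$, and the sharper K\"ahler inequality $|\nabla^2 f|^2 \geq 2\sum_{i,j}|f_{,i\bar j}|^2$ permits a tighter absorption --- possibly combined with replacing $v$ in the definition of $\Phi$ by $v + \lambda \tr_\omega(\alpha - \omega_Y)$ for a suitable $\lambda$, whose coupled evolution (\ref{pschwarz1}) helps cancel the offending $\ddbar v$ contribution.
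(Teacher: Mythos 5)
You correctly identify the crux of the difficulty: the coupling $\ric\cdot\nabla\overline{\nabla}u/u$ brings in $\ddbar v$, for which Lemma \ref{viprop}(2) gives no pointwise bound. However, the two resolutions you propose do not close the argument, and the key idea of the paper's proof is absent from your outline. Neither the K\"ahler refinement $|\nabla^2 f|^2\geq 2\sum|f_{,i\bar j}|^2$ nor adding $\lambda\,\tr_\omega(\alpha-\omega_Y)$ produces a term of the form $-|\nabla\overline{\nabla}v|^2$, which is what you actually need: after Cauchy--Schwarz on $\langle\nabla\overline{\nabla}u,\overline{\nabla}\nabla v\rangle/u \leq \tfrac{\alpha}{2}|\nabla\overline{\nabla}v|^2 + \tfrac{1}{2\alpha}|\nabla\overline{\nabla}u|^2/u^2$, the $|\nabla\overline{\nabla}u|^2$ piece is absorbed by the good Bochner term, but the $|\nabla\overline{\nabla}v|^2$ piece has no a priori upper bound (the estimate \eqref{ricnormboundbyv} goes the wrong way for this purpose). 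The paper's resolution is to build $\alpha|\nabla v|^2$ into the Harnack quantity itself; by \eqref{evoeofgradv}, $(\partial_t-\Delta)|\nabla v|^2$ contributes $-|\nabla\nabla v|^2-|\nabla\overline{\nabla}v|^2$, and this good negative term is precisely what eats the $\tfrac{\alpha}{2}|\nabla\overline{\nabla}v|^2$ leftover. Your auxiliary function $\Phi=(t-t_0)Q-Av$ omits this term, so the maximum-principle computation will not close.

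There are two secondary issues. First, the paper also adds $\tr_{\omega_t}\widetilde{\omega}+\tr_{\omega_t}\beta_0$ to $F$; their evolutions (parabolic Schwarz, \eqref{pschwarz1}) supply the $-C^{-1}|\nabla\tr_{\omega_t}\widetilde\omega|^2$ and $-C^{-1}|\nabla\tr_{\omega_t}\beta_0|^2$ terms needed to absorb the gradient cross-terms coming from $\tfrac{1}{1-t}\langle\nabla\tr_{\omega_t}\beta,\overline{\nabla}v\rangle$ inside $(\partial_t-\Delta)|\nabla v|^2$; your plan to put $\tr_\omega(\alpha-\omega_Y)$ alongside $v$ goes in this direction but is not the same and does not substitute for the $|\nabla v|^2$ term. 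Second, in $\Phi=(t-t_0)Q-Av$ you place $-Av$ outside the time factor, so at a positive maximum the first-order condition forces $\nabla Q=A(t^*-t_0)^{-1}\nabla v$, a coefficient that blows up as $t^*\to t_0$; the paper instead places $-\gamma v$ inside $F$ and studies $(t-t_0)F$, so that the first-order cancellation involves $\nabla F$ and no such blow-up occurs. Rewriting your ansatz as $(t-t_0)\bigl(Q+20n|\nabla v|^2+\tr_{\omega_t}\widetilde\omega+\tr_{\omega_t}\beta_0-\gamma v\bigr)$ (with $\gamma$ large) and redoing the computation recovers the paper's argument.
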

\begin{proof}
Recall the evolution equation of the Ricci potential $v$ from (\ref{ceofvi}):
$$
\left( \partial_t - \Delta \right) v = \frac{ v - (B_0+1) }{ 1 - t } - \frac{1}{1-t} \tr_{\omega_t} \beta  ,
$$
where $\beta=\frac{\alpha-\omega_Y}{1-t_i}$. Then we can compute
\begin{equation} \label{evoeofgradv}
(\partial_{t}-\Delta)|\nabla v|^{2}=-|\nabla\nabla v|^{2}-|\nabla\overline{\nabla}v|^{2}+\frac{2}{1-t}|\nabla v|^{2}+\frac{2}{1-t}\text{Re}\left\langle \nabla\text{tr}_{\omega_{t}}\beta,\overline{\nabla}v\right\rangle .
\end{equation}
Denote by $\beta_0=\frac{1}{1-t_i}\omega_{\fs}$. Then choose a constant $A<\infty$ large enough such that $\widetilde{\omega}:=A\beta_0+\beta$ is a K\"ahler metric on $\mathbb{C}P^{N}$. By the parabolic Schwarz lemma (\ref{pschwarz1}), we have
\begin{equation} \label{pschwarz2}
(\partial_{t}-\Delta)\text{tr}_{\omega_{t}}\widetilde{\omega}\leq-C^{-1}|\nabla\text{tr}_{\omega_{t}}\widetilde{\omega}|^{2}+C ,
\end{equation}
\begin{equation} \label{pschwarz3}
\left( \partial_t - \Delta \right) \tr_{\omega_{t}} \beta_0 \leq -C^{-1}\left|\nabla \tr_{\omega_{t}} \beta_0 \right|^2+C  ,
\end{equation}
and $\text{tr}_{\omega_{t}}\widetilde{\omega}\leq C$, $\tr_{\omega_{t}} \beta_0 \leq C$. Hence by (\ref{ceofvi}), we have 
\begin{equation} \label{ricnormboundbyv}
|\nabla\overline{\nabla}v|^{2}\geq|Ric|^{2} - C  .
\end{equation}
We will estimate the following Li-Yau type Harnack quantity:
$$
F:=-\frac{\Delta u}{u}+\delta\frac{|\nabla u|^{2}}{u^{2}}+\alpha|\nabla v|^{2}+\text{tr}_{\omega_{t}}\widetilde{\omega}+\text{tr}_{\omega_{t}}\beta_0 -\gamma v ,
$$
where $\delta\in[\frac{1}{2},1)$, $\alpha,\gamma\in[1,\infty)$ are
to be determined. We compute
$$
(\partial_{t}-\Delta)\frac{\Delta u}{u}=\frac{\langle Ric,\nabla\overline{\nabla}u\rangle}{u}+2\frac{\text{Re}\left\langle \nabla\Delta u,\overline{\nabla}u\right\rangle }{u^{2}}-2\frac{|\nabla u|^{2}\Delta u}{u^{3}},
$$
\[
\begin{split}
(\partial_{t}-\Delta)\frac{|\nabla u|^{2}}{u^{2}} = & -\frac{|\nabla\nabla u|^{2}+|\nabla\overline{\nabla}u|^{2}}{u^{2}} \\
& +4\frac{\text{Re}\left(\langle\nabla\nabla u,\overline{\nabla}u\otimes\overline{\nabla}u\rangle+\langle\nabla\overline{\nabla}u,\nabla u\otimes\overline{\nabla}u\rangle\right)}{u^{3}}-6\frac{|\nabla u|^{4}}{u^{4}} .
\end{split}
\]
From (\ref{ceofvi}), we have 
$$
\frac{\langle Ric,\overline{\nabla}\nabla u\rangle}{u}=\frac{1}{1-t}\frac{\Delta u}{u} - \frac{\langle\nabla\overline{\nabla}u,\overline{\nabla}\nabla v\rangle}{u}-\frac{1}{1-t}\langle \beta,\overline{\nabla}\nabla u\rangle .
$$
Combining expressions we can compute
\begin{align*}
& (\partial_{t}-\Delta)F \\
& \leq  -\frac{1}{1-t}\frac{\Delta u}{u} + \frac{\langle\nabla\overline{\nabla}u,\overline{\nabla}\nabla v\rangle}{u}+\frac{1}{1-t}\langle \beta , \overline{\nabla}\nabla u\rangle-2\frac{\text{Re}\langle\nabla\Delta u,\nabla u\rangle}{u^{2}}+2\frac{|\nabla u|^{2}\Delta u}{u^{3}}\\
& -\delta\frac{|\nabla\nabla u|^{2}+|\nabla\overline{\nabla}u|^{2}}{u^{2}}+4\delta\frac{\text{Re}\left(\langle\nabla\nabla u,\overline{\nabla}u\otimes\overline{\nabla}u\rangle+\langle\nabla\overline{\nabla}u,\overline{\nabla}u\otimes\nabla u\rangle\right)}{u^{3}}-6\delta\frac{|\nabla u|^{4}}{u^{4}}\\
& -\alpha|\nabla\nabla v|^{2}-\alpha|\nabla\overline{\nabla}v|^{2}+\frac{2\alpha}{1-t}|\nabla v|^{2}+\frac{2\alpha}{1-t}\text{Re}\left\langle \nabla\text{tr}_{\omega_t}\beta,\overline{\nabla}v\right\rangle -C^{-1}|\nabla\text{tr}_{\omega_{t}}\beta_0|^{2} \\
& -C^{-1}|\nabla\text{tr}_{\omega_t}\widetilde{\omega}|^{2}-\gamma\frac{1}{1-t}v-\gamma\frac{1}{1-t}\text{tr}_{\omega_t}\beta+C .
\end{align*}
Combining this with 
\begin{align*}
& \text{Re}\left\langle \nabla F,\frac{\overline{\nabla}u}{u}\right\rangle \\
= & -\frac{\text{Re}\langle\nabla\Delta u,\overline{\nabla}u\rangle}{u^{2}}+\frac{|\nabla u|^{2}\Delta u}{u^{3}}+\delta\frac{\text{Re}\left(\langle\nabla\nabla u,\overline{\nabla}u\otimes\overline{\nabla}u\rangle+\langle\nabla\overline{\nabla}u,\overline{\nabla}u\otimes\nabla u\rangle\right)}{u^{3}} \\
& - 2\delta\frac{|\nabla u|^{4}}{u^{4}} +2\alpha\frac{\text{Re}\left(\langle\nabla\nabla v,\overline{\nabla}u\otimes\overline{\nabla}v\rangle+\langle\nabla\overline{\nabla}v,\overline{\nabla}u\otimes\nabla v\rangle\right)}{u} \\
& + \frac{\text{Re}\langle\nabla\text{tr}_{\omega_t}\beta_0,\overline{\nabla}u\rangle}{u} +\frac{\text{Re}\langle\nabla\text{tr}_{\omega_t}\widetilde{\omega},\overline{\nabla}u\rangle}{u}-\gamma\frac{\text{Re}\langle\nabla v,\overline{\nabla}u\rangle}{u},
\end{align*}
we obtain the following:
\begin{align*}
&(\partial_{t}-\Delta)F-2\text{Re}\left\langle \nabla F,\frac{\overline{\nabla} u}{u}\right\rangle \\
\leq & -\frac{1}{1-t}\frac{\Delta u}{u}+\frac{\langle\nabla\overline{\nabla}u,\overline{\nabla}\nabla v\rangle}{u}+\frac{1}{1-t}\langle\beta,\overline{\nabla}\nabla u\rangle-\delta\frac{|\nabla\nabla u|^{2}+|\nabla\overline{\nabla}u|^{2}}{u^{2}}-2\delta\frac{|\nabla u|^{4}}{u^{4}}\\
& -\alpha|\nabla\nabla v|^{2}-\alpha|\nabla\overline{\nabla}v|^{2}+\frac{2\alpha}{1-t}|\nabla v|^{2}+\frac{2\alpha}{1-t}\text{Re}\left\langle \nabla\text{tr}_{\omega_t} \beta,\overline{\nabla}v\right\rangle - C^{-1}|\nabla\text{tr}_{\omega_{t}}\beta_0|^{2} \\
& -C^{-1}|\nabla\text{tr}_{\omega_t}\widetilde{\omega}|^{2}-\frac{\gamma}{1-t}v-\frac{\gamma}{1-t}\text{tr}_{\omega_t}\beta-2\frac{\text{Re}\langle\nabla\text{tr}_{\omega_t}\widetilde{\omega},\overline{\nabla}u\rangle}{u}+2\gamma\frac{\text{Re}\langle\nabla v,\overline{\nabla}u\rangle}{u}\\
& -2\frac{\text{Re}\langle\nabla\text{tr}_{\omega_t}\beta_0,\overline{\nabla}u\rangle}{u}+2\delta\frac{\text{Re}\left(\langle\nabla\nabla u,\overline{\nabla}u\otimes\overline{\nabla}u\rangle+\langle\nabla\overline{\nabla}u,\overline{\nabla}u\otimes\nabla u\right)}{u^{3}}\\
& -2\alpha\frac{\text{Re}\left(\langle\nabla\nabla v,\overline{\nabla}u\otimes\overline{\nabla}v\rangle+\langle\nabla\overline{\nabla}v,\overline{\nabla}u\otimes\nabla v\rangle\right)}{u}+C.\\
& 
\end{align*}
Using Cauchy's inequality, we can also estimate
\begin{align*}
\frac{2\alpha}{1-t}\text{Re}\left\langle \nabla\text{tr}_{\omega_t}\beta,\overline{\nabla}v\right\rangle \leq & \frac{1}{20CA}|\nabla\text{tr}_{\omega_t}\beta|^{2}+\frac{20CA\alpha^{2}}{(1-t)^{2}}|\nabla v|^{2}\\
\leq & \frac{1}{10CA}\left(|\nabla\text{tr}_{\omega_t}\widetilde{\omega}|^{2}+A|\nabla\text{tr}_{\omega_t}\beta_0|^{2}\right) + \frac{20CA\alpha^{2}}{(1-t)^{2}}|\nabla v|^{2},
\end{align*}
$$
-2\frac{\text{Re}\langle\nabla\text{tr}_{\omega_t}\beta_0,\overline{\nabla}u\rangle}{u}\leq\frac{1}{10C}|\nabla\text{tr}_{\omega_t}\beta_0|^{2}+10C\frac{|\nabla u|^{2}}{u^{2}},
$$
$$
-2\frac{\text{Re}\langle\nabla\text{tr}_{\omega_t}\widetilde{\omega},\overline{\nabla}u\rangle}{u}\leq\frac{1}{10C}|\nabla\text{tr}_{\omega_t}\widetilde{\omega}|^{2}+10C\frac{|\nabla u|^{2}}{u^{2}},
$$
$$
\frac{\langle\nabla\overline{\nabla}u,\overline{\nabla}\nabla v\rangle}{u}\leq\frac{\alpha}{2}|\nabla\overline{\nabla}v|^{2}+\frac{1}{2\alpha}\frac{|\nabla\overline{\nabla}u|^{2}}{u^{2}},
$$
$$
-2\alpha\frac{\text{Re}\left(\langle\nabla\nabla v,\overline{\nabla}u\otimes\overline{\nabla}v\rangle+\langle\nabla\overline{\nabla}v,\overline{\nabla}u\otimes\nabla v\rangle\right)}{u}\leq\frac{\alpha}{2}(|\nabla\nabla v|^{2}+|\nabla\overline{\nabla}v|^{2})+4\alpha\frac{|\nabla u|^{2}}{u^{2}}|\nabla v|^{2},
$$
$$
\frac{1}{1-t}\langle\beta,\overline{\nabla}\nabla u\rangle\leq  \frac{CA\alpha}{(1-t)^{2}}(\text{tr}_{\omega_t}\widetilde{\omega}+\text{tr}_{\omega_t}\beta_0)^{2}+\frac{1}{2\alpha}\frac{|\nabla\overline{\nabla}u|^{2}}{u^{2}}\leq \frac{1}{2\alpha}\frac{|\nabla\overline{\nabla}u|^{2}}{u^{2}}+\frac{CA\alpha}{(1-t)^{2}},
$$
$$
-\frac{1}{1-t}\frac{\Delta u}{u}\leq\frac{n|\nabla\overline{\nabla}u|}{(1-t)u}\leq\frac{1}{\alpha}\frac{|\nabla\overline{\nabla}u|^{2}}{u^{2}}+\frac{n\alpha^{2}}{4(1-t)^2}.
$$
Combining expressions gives
\begin{align*}
&(\partial_{t}-\Delta)F-2\text{Re}\left\langle \nabla F,\frac{\overline{\nabla} u}{u}\right\rangle \\
\leq & \frac{2}{\alpha}\frac{|\nabla\overline{\nabla}u|^{2}}{u^{2}}-\delta\frac{|\nabla\nabla u|^{2}+|\nabla\overline{\nabla}u|^{2}}{u^{2}}-2\delta\frac{|\nabla u|^{4}}{u^{4}}+\frac{2\alpha}{1-t}|\nabla v|^{2}+4\frac{|\nabla u|^{2}}{u^{2}}|\nabla v|^{2} \\
&+C\frac{|\nabla u|^{2}}{u^{2}}+\frac{20CA\alpha^{2}}{(1-t)^{2}}|\nabla v|^{2} -\gamma\frac{1}{1-t}v-\gamma\frac{1}{1-t}\text{tr}_{\omega_t}\beta+\frac{CA\alpha^2}{(1-t)^{2}}\\
& + 2\gamma\frac{\text{Re}\langle\nabla v,\overline{\nabla}u\rangle}{u} +2\delta\frac{\text{Re}\left(\langle\nabla\nabla u,\overline{\nabla}u\otimes\overline{\nabla}u\rangle+\langle\nabla\overline{\nabla}u,\overline{\nabla}u\otimes\nabla u\rangle\right)}{u^{3}}.
\end{align*}
Next, we complete the square to obtain
$$
-\delta\frac{|\nabla\nabla u|^{2}}{u^{2}}+2\delta\frac{\text{Re}\langle\nabla\nabla u,\overline{\nabla}u\otimes\overline{\nabla}u\rangle}{u^{3}}-\delta\frac{|\nabla u|^{4}}{u^{4}}=-\delta\left|\frac{\nabla\nabla u}{u}-\frac{\nabla u\otimes\nabla u}{u^{2}}\right|^{2},
$$
\begin{align*}
-\left(\delta-\frac{2}{\alpha}\right)&\frac{|\nabla\overline{\nabla}u|^{2}}{u^{2}}+ 2\delta\frac{\text{Re}\langle\nabla\overline{\nabla}u,\overline{\nabla}u\otimes\nabla u\rangle}{u^{3}}-\delta\frac{|\nabla u|^{4}}{u^{4}} \\
= & -\left(\delta-\frac{2}{\alpha}\right)\left|\frac{\nabla\overline{\nabla}u}{u}-\frac{\delta}{\delta-\frac{2}{\alpha}}\frac{\nabla u\otimes\overline{\nabla}u}{u^{2}}\right|^{2}+\delta\left(\frac{\delta}{\delta-\frac{2}{\alpha}}-1\right)\frac{|\nabla u|^{4}}{u^{4}}.
\end{align*}
Again combining expressions, and using $|\beta|\leq C$ and $|\nabla v|^{2}\leq Cv$, we obtain 
\begin{align*}
&(\partial_{t}-\Delta)F-2\text{Re}\left\langle \nabla F,\frac{\overline{\nabla} u}{u}\right\rangle \\
\leq & -\left(\delta-\frac{2}{\alpha}\right)\left|-\frac{\nabla\overline{\nabla}u}{u}+\frac{\delta}{\delta-\frac{2}{\alpha}}\frac{\nabla u\otimes\overline{\nabla}u}{u^{2}}\right|^{2}+\delta\left(\frac{\delta}{\delta-\frac{2}{\alpha}}-1\right)\frac{|\nabla u|^{4}}{u^{4}}+C\alpha\frac{|\nabla u|^{2}}{u^{2}}v \\
& +C\frac{|\nabla u|^{2}}{u^{2}}-\left(\frac{\gamma}{1-t}-\frac{C\alpha}{1-t}-\frac{CA\alpha^{2}}{(1-t)^{2}}\right)v +2\gamma\frac{\text{Re}\langle\nabla v,\overline{\nabla}u\rangle}{u} +\frac{C\gamma}{1-t}+\frac{CA\alpha^2}{(1-t)^{2}}.
\end{align*}
Next, we observe that 
$$
-\frac{\Delta u}{u}+\frac{\delta}{\delta-\frac{2}{\alpha}}\frac{|\nabla u|^{2}}{u^{2}}=F+\left(\frac{\delta}{\delta-\frac{2}{\alpha}}-\delta\right)\frac{|\nabla u|^{2}}{u^{2}}-\alpha|\nabla v|^{2}-\text{tr}_{\omega_t}\widetilde{\omega}-\text{tr}_{\omega}\beta_0+\gamma v.
$$
We assume that $\gamma\geq\underline{\gamma}(\alpha)$ has been chosen so that
$$
\gamma v\geq2\alpha|\nabla v|^{2}+2\text{tr}_{\omega_t}\widetilde{\omega}+2\text{tr}_{\omega_t}\beta_0.
$$
At any point $(x^*,t^*)\in M\times(t_0, t_1]$ where $Q:=(t-t_0)F$ achieves a positive maximum, we then have
\begin{align*}
& 0\leq (t-t_0)(\partial_{t}-\Delta)Q=(t-t_0)F+(t-t_0)^{2}(\partial_{t}-\Delta)F\\
\leq & (t-t_0)F-\frac{(t-t_0)^{2}}{n}\left(\delta-\frac{2}{\alpha}\right)\left(F+\left(\frac{\delta}{\delta-\frac{2}{\alpha}}-\delta\right)\frac{|\nabla u|^{2}}{u^{2}}+\frac{\gamma}{2}v\right)^{2}\\
& +C(t-t_0)^{2}\alpha\frac{|\nabla u|^{2}}{u^{2}}v+C(t-t_0)^{2}\frac{|\nabla u|^{2}}{u^{2}}-\left(\frac{\gamma}{1-t}-\frac{C\alpha}{1-t}-\frac{CA\alpha^{2}}{(1-t)^{2}}\right)(t-t_0)^{2}v\\
& +2\gamma (t-t_0)^{2}\frac{\text{Re}\langle\nabla v,\overline{\nabla}u\rangle}{u}+\delta\left(\frac{\delta}{\delta-\frac{2}{\alpha}}-1\right)(t-t_0)^{2}\frac{|\nabla u|^{4}}{u^{4}}+C(\gamma + \alpha) (t-t_0)^{2}\\
\leq & -\frac{\delta-\frac{2}{\alpha}}{n}Q^{2}+Q-\left(\frac{1}{n}\left(\delta-\frac{2}{\alpha}\right)\left(\frac{\delta}{\delta-\frac{2}{\alpha}}-\delta\right)^{2}-\delta\left(\frac{\delta}{\delta-\frac{2}{\alpha}}-1\right)\right)(t-t_0)^{2}\frac{|\nabla u|^{4}}{u^{4}}\\
& -\frac{(t-t_0)^{2}}{4n}\left(\delta-\frac{2}{\alpha}\right)\gamma^{2}v^{2} -\left(\frac{1}{n}\left(\delta-\frac{2}{\alpha}\right)\left(\frac{\delta}{\delta-\frac{2}{\alpha}}-\delta\right)\gamma-C\alpha\right)(t-t_0)^{2}\frac{|\nabla u|^{2}}{u^{2}}v\\
& +(t-t_0)^{2}\left(C\frac{|\nabla u|^{2}}{u^{2}}-\left(\gamma-\frac{C\alpha^{2}}{1-t}\right)\frac{v}{1-t}+2\gamma \frac{|\nabla u|}{u}|\nabla v|+C(\gamma + \alpha) \right).
\end{align*}
Next, we use Cauchy's inequality to estimate
$$
C(t-t_0)^{2}\frac{|\nabla u|^{2}}{u^{2}}\leq\gamma^{-1}(t-t_0)^{2}\frac{|\nabla u|^{4}}{u^{4}}+C\gamma (t-t_0)^{2},
$$
\begin{align*}
2\gamma (t-t_0)^{2}\frac{|\nabla u|}{u}|\nabla v|\leq\frac{1}{2n} & \left(\delta-\frac{2}{\alpha}\right)\left(\frac{\delta}{\delta-\frac{2}{\alpha}}-\delta\right)\gamma (t-t_0)^{2}\frac{|\nabla u|^{2}}{u^{2}}v\\
& +C\left(\delta-\frac{2}{\alpha}\right)^{-1}\left(\frac{\delta}{\delta-\frac{2}{\alpha}}-\delta\right)^{-1}\gamma (t-t_0)^{2} ,\\
\end{align*}
so combining expressions gives the following at $(x^*,t^*)$:
\begin{align*}
0\leq & -\frac{\delta-\frac{2}{\alpha}}{n}Q^{2}+Q-\left(\frac{1}{2n}\left(\delta-\frac{2}{\alpha}\right)\left(\frac{\delta}{\delta-\frac{2}{\alpha}}-\delta\right)\gamma-C\alpha\right)(t-t_0)^{2}\frac{|\nabla u|^{2}}{u^{2}}v\\
& -\left(\frac{1}{n}\left(\delta-\frac{2}{\alpha}\right)\left(\frac{\delta}{\delta-\frac{2}{\alpha}}-\delta\right)^{2}-\delta\left(\frac{\delta}{\delta-\frac{2}{\alpha}}-1\right)-\gamma^{-1}\right)(t-t_0)^{2}\frac{|\nabla u|^{4}}{u^{4}}\\
& -\frac{(t-t_0)^{2}}{4n}\left(\delta-\frac{2}{\alpha}\right)\gamma^{2}v^{2} -(t-t_0)^{2}\left(\gamma-\frac{C\alpha^{2}}{1-t}\right)\frac{v}{1-t}+C(\alpha, \gamma, \delta, t_0).
\end{align*}
We now choose $\delta=\frac{1}{2}$, $\alpha=20n$, so that $\delta>\frac{10}{\alpha},$ then we have
$$
\frac{1}{n}\left(\delta-\frac{2}{\alpha}\right)\left(\frac{\delta}{\delta-\frac{2}{\alpha}}-\delta\right)^{2}-\delta\left(\frac{\delta}{\delta-\frac{2}{\alpha}}-1\right)=\frac{\left(\left(\frac{1}{2}+\frac{1}{10n}\right)^2-\frac{1}{5}\right)}{2n-\frac{2}{5}}=c_0(n)>0.
$$
We next choose $\gamma\geq\underline{\gamma}(\alpha)$ large so that
$$
\gamma>\frac{C\alpha^{2}}{1-t} + c_0(n)^{-1} , ~~~~  \frac{1}{2n}\left(\delta-\frac{2}{\alpha}\right)\left(\frac{\delta}{\delta-\frac{2}{\alpha}}-\delta\right)\gamma>C\alpha ,
$$
then at at $(x^*,t^*)$, we have
$$
0\leq -\frac{1}{4n}Q^{2}+Q+C(t_0)\leq-\frac{1}{8n}Q^{2}+C(t_0),
$$
so that $Q\leq C(t_{0})$ on $M\times(t_0, t_1]$. Hence we have
$$
-\frac{\Delta u}{u}+\frac{1}{2}\frac{|\nabla u|^{2}}{u^{2}}+20n|\nabla v|^{2}+\text{tr}_{\omega_{t}}\widetilde{\omega}+\text{tr}_{\omega_{t}}\beta_0-\gamma v\leq\frac{C(t_{0})}{t-t_0}, 
$$
which completes the proof.
\end{proof}
As an application of Theorem \ref{HarnackiofKRF}, we have the following global weak distortion estimate.
\begin{proposition}\label{gwdde}
Let $t_1,t_2\in [-T, 0]$, $0<t_2-t_1<1$, there exists $C=C(t_1)<\infty$, such that the following statement holds.

For any $x_{1},x_{2}\in M$, if $d_{t_{1}}(x_{1},x_{2})\geq \sqrt{t_2-t_1}$, then we have
$$
\frac{d_{t_{2}}(x_{1},x_{2})}{d_{t_{1}}(x_{1},x_{2})}\leq C\exp\left((t_{2}-t_{1})\left(d_{t_{1}}^{2}(x_{1},p)+d_{t_{1}}^{2}(x_{2},p)\right)+d_{t_{1}}(x_{1},p)+d_{t_{1}}(x_{2},p)\right);
$$
if $d_{t_{1}}(x_{1},x_{2})\leq \sqrt{t_2-t_1}$, then we have
$$
\frac{d_{t_{2}}(x_{1},x_{2})}{\sqrt{t_2-t_1}}\leq C\exp\left((t_{2}-t_{1})\left(d_{t_{1}}^{2}(x_{1},p)+d_{t_{1}}^{2}(x_{2},p)\right)+d_{t_{1}}(x_{1},p)+d_{t_{1}}(x_{2},p)\right).
$$
\end{proposition}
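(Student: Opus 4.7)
Set $\tau := t_2 - t_1$ and $D_i := d_{t_1}(x_i, p)$. Following the strategy outlined in the introduction, the plan is to derive a weighted gradient estimate for solutions of the backward heat equation via the Harnack inequality of Theorem \ref{HarnackiofKRF}, and then apply it to a mollified version of $d_{t_2}(x_1, \cdot)$ at time $t_2$. Unlike the lower bound in Proposition \ref{gddlb} where $|\nabla u|^2$ is automatically a subsolution of $\Box$ on any Ricci flow, the reverse direction has no such direct maximum principle and must be handled by combining Theorem \ref{HarnackiofKRF} with a weighted energy identity.

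Given a bounded $1$-Lipschitz function $f$ at time $t_2$, define
\[
\psi(x, t) := \int_M K(z, t_2; x, t)\, f(z)\, dg_{t_2}(z), \qquad t \in [t_1, t_2],
\]
so $\psi$ satisfies the conjugate heat equation $\Box^* \psi = 0$ with $\psi(\cdot, t_2) = f$. The technical heart is a pointwise gradient estimate for $\psi(\cdot, t_1)$: I would compute $(\partial_t - \Delta)|\nabla \psi|^2$, test it against a weight $K(\cdot, \cdot; y_0, t_0) \cdot e^{-\eta(v)}$ on $M \times [t_1, t_2]$ for a suitable base point $y_0$, reference time $t_0 < t_1$, and profile $\eta$, and integrate by parts. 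The forward heat kernel $K(\cdot, \cdot; y_0, t_0)$ localizes in space via Lemma \ref{ghke0}, while the factor $e^{-\eta(v)}$ absorbs the scalar curvature and $\ric$-dependent error terms through Theorem \ref{HarnackiofKRF} and the evolution equation for $v$ in Lemma \ref{viprop}. The outcome should be a bound of the form
\[
|\nabla \psi|(x, t_1) \leq C\exp\bigl(C\tau\, v(x, t_1)\bigr) \cdot \mathrm{Lip}(f) + C\sqrt{\tau} \cdot \text{(error)}.
\]
Combined with the quadratic growth $v(x, t_1) \leq C(1 + d_{t_1}^2(x, p))$ (which follows from $|\nabla \sqrt{v}|_{g} \leq C$, $|\partial_t \log v| \leq C$, and $v(p, 0) \leq C$), the function $\psi(\cdot, t_1)$ is effectively $C\exp(C\tau(1 + D^2))$-Lipschitz on the $g_{t_1}$-ball of radius $D$ around $p$.

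Applied to (a mollification of) $f(z) = d_{t_2}(x_1, z)$ and evaluated at $x_1$ and $x_2$, this Lipschitz bound gives
\[
|\psi(x_1, t_1) - \psi(x_2, t_1)| \leq C \exp\bigl(C\tau(D_1^2 + D_2^2)\bigr)\, d_{t_1}(x_1, x_2).
\]
The $H_{2n}$-center argument used in the proof of Proposition \ref{gddlb}, together with Lemma \ref{ghke0}(i) and the bound $\int_{t_1}^{t_2} R(x_i, t)\, dt \leq C\tau(1 + D_i^2)$ (from $R \leq Cv$ and the time-regularity of $v$), yields $|\psi(x_i, t_1) - d_{t_2}(x_1, x_i)| \leq C\sqrt{\tau}(1 + D_i)$. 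Chaining these inequalities produces
\[
d_{t_2}(x_1, x_2) \leq C \exp\bigl(C\tau(D_1^2 + D_2^2)\bigr)\, d_{t_1}(x_1, x_2) + C\sqrt{\tau}\,(1 + D_1 + D_2),
\]
which, after dividing by $\max(d_{t_1}(x_1, x_2), \sqrt{\tau})$ and absorbing the additive term into an $\exp(D_1 + D_2)$ multiplicative factor via elementary estimates, yields both cases of the proposition. The main obstacle is precisely the construction of the weighted gradient estimate in the second paragraph: without a uniform scalar curvature or Ricci bound, only the trace-type control $\Delta v \leq Cv$ and $R \leq Cv$ from Lemma \ref{viprop} is available, and the weight $K(\cdot, \cdot; y_0, t_0) e^{-\eta(v)}$ must be delicately tuned so that the Harnack inequality of Theorem \ref{HarnackiofKRF} absorbs the non-boundary error terms that arise from the $-\sqrt{-1}\partial\overline{\partial} v$ contribution to $\ric$.
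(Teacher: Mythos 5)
Your proposal takes a genuinely different route from the paper. The paper proves this proposition by a BZ17-style geodesic-covering argument: subdivide a $g_{t_1}$-minimizing geodesic from $x_1$ to $x_2$ into unit segments (after parabolic rescaling by $\sqrt{t_2-t_1}$), base a \emph{forward} heat kernel at an $\ell_{2n}$-center for each segment endpoint, use Theorem \ref{HarnackiofKRF} to show $-\partial_t \log K \leq C(t_2-t_1)(1 + d_{g_{t_1}}^2(\cdot,p))$, integrate in time to propagate a lower bound on $K$ along each segment up to time $t_2$, extend it to unit balls at time $t_2$ via Qi Zhang's gradient estimate, and then bound the number of such disjoint balls using the volume estimate Proposition \ref{roughvb} together with $\int_M K\,dg \leq e^{2n}$. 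You instead propose the backward-heat gradient-estimate strategy that the paper develops \emph{later}, in Lemma \ref{gmp} and Proposition \ref{gddub2}, applied directly here. That alternative, if carried through, would in fact give a stronger conclusion (no exponential $D_1+D_2$ factor needed), but there are several genuine gaps.

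First, the weighted gradient estimate that you yourself call ``the technical heart'' is asserted, not proved. The paper's closest analogue, Lemma \ref{gmp}, takes roughly two pages of delicate computation, makes essential use of the K\"ahler structure via the evolution of the Ricci potential $v$, and — critically — uses a weight of the opposite sign from yours: $e^{+A(t-t_0)^{\alpha}v(x,t)}K(x,t;y,t_0)$ rather than $K\cdot e^{-\eta(v)}$. The positive exponent is what produces the dominant term $A\alpha(t-t_0)^{\alpha-1}v$ from $(\partial_t-\Delta)$ of the weight, which is then used to absorb all the $|\nabla u|^2 v$-type error terms arising from $\nabla\overline{\nabla}v$ in the Ricci tensor; a negative exponent would have the wrong sign for this absorption, and the argument would collapse.

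Second, you take $\psi$ to solve the conjugate heat equation $\Box^*\psi = 0$, whereas the paper's Lemma \ref{gmp} and Proposition \ref{gddub2} work with the backward heat equation $(\partial_t + \Delta)u = 0$. These are different: the conjugate equation has $\partial_t \psi = -\Delta\psi + R\psi$, and upon differentiating $|\nabla\psi|^2$ one picks up a $\psi\langle\nabla R,\nabla\psi\rangle$ term which has no usable bound here, since $\nabla R$ is not controlled. The backward heat equation avoids this; the paper then reconnects to the conjugate heat kernel via the identity $\Box^* u_i = R u_i$ (Duhamel), but at no point needs $\nabla R$.

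Third, the step ``$|\psi(x_i,t_1) - d_{t_2}(x_1,x_i)| \leq C\sqrt{\tau}(1+D_i)$ from the $H_{2n}$-center argument'' risks circularity. It requires controlling $\int_M K(z,t_2;x_i,t_1)\,d_{t_2}(z,x_i)\,dg_{t_2}(z)$, the first moment of the heat kernel measure against $t_2$-distance. But the available Gaussian upper bound (Lemma \ref{ghke0}(ii)) decays in $d_{t_1}(z,x_i)$, not $d_{t_2}(z,x_i)$, and converting between the two is \emph{precisely} the distortion estimate you are trying to prove. This is exactly why the paper proves Proposition \ref{gwdde} first by an argument that never requires such moment bounds, and only then uses it as an ingredient in Proposition \ref{gddub2} to control integrals of $K\,d_{t_2}$ (see the appeal to Proposition \ref{gwdde} and (\ref{ihkupbaxi}) there). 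To make your route non-circular you would have to either re-derive the $t_2$-moment bound from scratch (which seems to require a distortion estimate) or reorganize the argument entirely.

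In summary: the high-level plan you describe is the plan the paper uses for Proposition \ref{gddub2}, not for Proposition \ref{gwdde}, and to make it work for the statement at hand you would need to (i) actually prove a weighted gradient estimate with the correct sign on the weight, (ii) replace the conjugate heat equation by the backward heat equation, and (iii) resolve the circular dependency in the final moment estimate. As written, the proposal contains the outline of an approach but not a proof.
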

\begin{proof}
Denote by $r_0=\sqrt{t_2-t_1}$, then consider the rescaling $\hat g_{t}=r_0^{-2}g_{r_0^2t}$. Denote by $\hat{t}_1=r_0^{-2}t_1$, $\hat{t}_2=r_0^{-2}t_2$. Let $\gamma:[0,d_{\hat g_{\hat t_{1}}}(x_{1},x_{2})]\to M$ be a unit-speed $\hat g_{\hat t_{1}}$-minimizing geodesic. Define $N:=\lfloor d_{\hat g_{\hat t_{1}}}(x_{1},x_{2})\rfloor$, and consider the restrictions $\gamma_{j}:=\gamma|_{[j,\min\{j+1,d_{\hat g_{\hat t_{1}}}(x_{1},x_{2})\}]}$ for $j=0,...,N$. Define $s_{j}:=j$ for $j=0,...,N$, and $s_{N+1}:=d_{\hat g_{\hat t_{1}}}(x_{1},x_{2})$; also set $z_{j}:=\gamma(s_{j})$. For each $j$, let $(z_{j}',\hat t_{1}-1)\in M$ be a $\ell_{2n}$-center of $(z_{j}, \hat t_{1})$, so that $K(z_{j},\hat t_{1};z_{j}', \hat t_{1}-1)\geq c(n)$. By Qi Zhang's gradient estimate \cite[Theorem 3.2]{ZhQ06}, we then have
\[
\inf_{s\in[s_{j},s_{j+1}]}K(\gamma_{j}(s),\hat t_{1};z_{j}',\hat t_{1}-1)\geq c>0.
\]

Next, we apply the above Harnack inequality, say Theorem \ref{HarnackiofKRF}, to $u:=K(\cdot,\cdot;z_{j}', \hat t_{1}-1)$ to obtain (by the triangle inequality)
\begin{align*}
-\partial_{t}\log u(\gamma_{j}(s),t)\leq & C(t_1)(t_{2}-t_{1})\left(1+d_{g_{t_{1}}}^{2}(\gamma_{j}(s),p))\right) \\
\leq & C(t_1)(t_{2}-t_{1})\left(1+d_{g_{t_{1}}}^{2}(x_{1},p)+d_{g_{t_{1}}}^{2}(x_{2},p)\right),
\end{align*}
for all $t\in [\hat t_{1} , \hat t_{2}]$. Integrating in time gives
\[
\log\left(\frac{u(\gamma_{j}(s),\hat t_{1})}{u(\gamma_{j}(s),\hat t_{2})}\right)\leq C(t_1)(t_{2}-t_{1})\left(1+d_{g_{t_{1}}}^{2}(x_{1},p)+d_{g_{t_{1}}}^{2}(x_{2},p)\right).
\]
That is, 
\[
u(\gamma_{j}(s),\hat t_{2})\geq c\exp\left(-C(t_1)(t_{2}-t_{1})\left(d_{g_{t_{1}}}^{2}(x_{1},p)+d_{g_{t_{1}}}^{2}(x_{2},p)\right)\right).
\]
Again by \cite[Theorem 3.2]{ZhQ06}, we have
\[
\left|\nabla\sqrt{\log\left(\frac{C}{u(\cdot,\hat t_{2})}\right)}\right|\leq C,
\]
so that 
\[
\log\left(\frac{C}{u(\gamma_{j}(s), \hat t_{2})}\right)\leq Cd_{\hat g_{\hat t_{2}}}^2(y,\gamma_{j}(s))+2\log\left(\frac{C}{u(y,\hat t_{2})}\right),
\]
hence for all $s\in[s_{j},s_{j+1}]$ and $y\in B(\gamma_{j}(s),\hat t_{2},1)$, we have
\[
K\left(y,\hat t_{2};z_{j}',\hat t_{1}-1\right)\geq c\exp\left(-C(t_1)(t_{2}-t_{1})\left(d_{g_{t_{1}}}^{2}(x_{1},p)+d_{g_{t_{1}}}^{2}(x_{2},p)\right)\right).
\]
Fix $j\in\mathbb{N}$, and let $N_{j}\in\mathbb{N}$ be maximal such that there exists $s_{j}\leq s_{j,1}\leq\cdots\leq s_{j,N_{j}}\leq s_{j+1}$ such that $B(\gamma_{j}(s_{j,i}),t_{2},1)$ are pairwise disjoint. By arguing as in \cite{BZ17}, we have $d_{\hat g_{\hat t_{2}}}(z_{j},z_{j+1})\leq2N_{j}$. On the other hand, we have 
\[
|B(\gamma_{j}(s_{j,i}),\hat t_{2},1)|_{\hat g_{\hat t_{2}}}\geq\frac{1}{C(t_1)(1+d_{g_{t_{1}}}^{2}(x_{1},p)+d_{g_{t_{1}}}^{2}(x_{2},p))^{2n}} ,
\]
for $i=1,...,N_{j}$, hence
\begin{align*}
e^{n}\geq & \int_{M}K\left(y,\hat t_{2}; z_{j}',\hat t_{1}-1\right)d\hat g_{\hat t_{2}}(y)\geq\sum_{i=1}^{N_{j}}\int_{B(\gamma_{j}(s),\hat t_{2},1)}K\left(y,\hat t_{2};z_{j}', \hat t_{1}-1\right)d\hat g_{\hat t_{2}}(y)  \\
\geq & \frac{c}{(1+d_{g_{t_{1}}}^{2}(x_{1},p)+d_{g_{t_{1}}}^{2}(x_{2},p))^{2n}}\exp\left(-C(t_{2}-t_{1})\left(d_{g_{t_{1}}}^{2}(x_{1},p)+d_{g_{t_{1}}}^{2}(x_{2},p)\right)\right)N_{j}.
\end{align*}
Combining estimates gives
\begin{align*}
d_{\hat g_{\hat t_{2}}}(z_{j},z_{j+1})\leq C(1+d_{g_{t_{1}}}^{2}(x_{1},p)+d_{g_{t_{1}}}^{2}(x_{2},p))^{2n}\exp\left(C(t_{2}-t_{1})\left(d_{g_{t_{1}}}^{2}(x_{1},p)+d_{g_{t_{1}}}^{2}(x_{2},p)\right)\right) ,
\end{align*}
this proves the case $N=0$. When $N>0$, summing in $j$ gives 
\[
\frac{d_{\hat g_{\hat t_{2}}}(x_{1},x_{2})}{d_{\hat g_{\hat t_{1}}}(x_{1},x_{2})}\leq C(1+d_{g_{t_{1}}}^{2}(x_{1},p)+d_{g_{t_{1}}}^{2}(x_{2},p))^{2n}\exp\left(C(t_{2}-t_{1})\left(d_{g_{t_{1}}}^{2}(x_{1},p)+d_{g_{t_{1}}}^{2}(x_{2},p)\right)\right) .
\]
This completes the proof.
\end{proof}
%


%
\medskip
\subsection{Good distance distortion upper bound}\label{gddub}

As in subsection \ref{hkandgddl}, we will omit all the subscript $i$ of $(M_i, (g_{i,t})_{t\in [-T_i , 0]})$ in this subsection.

\begin{proposition}\label{roughvb}
For all $t\in[-T,0]$, $r>0$, and $x \in M$, we have
\[
|B(x,t,r)|_{t}\leq C(t)r^{2n}e^{C(t)r \log( 10 + d_t(x,p))}.
\]
\end{proposition}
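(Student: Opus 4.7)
My plan is to use the forward heat kernel based at the Ricci vertex $p$, combined with the Harnack inequality from Theorem \ref{HarnackiofKRF}, to convert lower bounds for the heat kernel into an upper bound for volume. Specifically, fix a reference time $s_0 \in [-T, t)$ with $t - s_0$ of order one, and consider the positive solution
$$u(y, \tau) := K(y, \tau; p, s_0)$$
of the forward heat equation on $M \times (s_0, 0]$. The heat kernel diagonal lower bound of Lemma \ref{ghke0} together with the boundedness of $v(p, \cdot)$ (which follows from $v(p,0)=B_0+1$ and $|\partial_\tau v|/v \leq C$ from Lemma \ref{viprop}) yields $u(p, \tau) \geq c > 0$ for $\tau$ at bounded distance (in time) from $s_0$.

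To propagate this to a uniform lower bound on $u$ over $B(x,t,r)$, I apply Theorem \ref{HarnackiofKRF} to $u$ along a spacetime curve $(\gamma(s), s)_{s \in [s_1, t]}$ with $\gamma(s_1) = p$ and $\gamma(t) = y$ for $y \in B(x, t, r)$. Integrating the inequality $\partial_s \log u \geq \tfrac{1}{2}|\nabla \log u|^2 - C(1/(s-s_0) + v)$ along $\gamma$ and completing the square in the cross term $\langle \nabla \log u, \gamma'\rangle$ produces
$$\log u(y, t) \geq \log u(p, s_1) - \tfrac{1}{2}\int_{s_1}^t |\gamma'(s)|_{g_s}^2\, ds - C\int_{s_1}^t v(\gamma(s), s)\, ds - C \log\tfrac{t-s_0}{s_1-s_0}.$$
Using the quadratic growth $v(z, \tau) \leq C(1 + d_\tau(z,p))^2$ coming from $|\nabla \sqrt{v}| \leq C$ (a consequence of Lemma \ref{viprop}(2)), plus $|\partial_\tau v|/v \leq C$ to compare $v$ at different times, both integrals can be estimated along a carefully chosen path from $p$ to $y$. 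This gives a lower bound $u(y,t) \geq c(r, d_t(x,p))$ valid throughout $B(x,t,r)$.

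To conclude, I would compute the total mass at time $t$: since $u$ solves $\Box u = 0$, we have $\partial_\tau \int_M u\, dg_\tau = -\int_M R u\, dg_\tau$, and $R \geq R_{\min} \geq -C$ on our rescaled flow together with $\int_M u(\cdot, s_0)\, dg_{s_0} = 1$ gives $\int_M u(\cdot, t)\, dg_t \leq C(t)$. Hence
$$|B(x,t,r)|_t \cdot \inf_{B(x,t,r)} u(\cdot, t) \leq \int_{B(x,t,r)} u\, dg_t \leq C(t),$$
producing the volume upper bound after rearranging.

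The main obstacle will be obtaining the sharp exponent $C(t)\, r\log(10+d_t(x,p))$ rather than the weaker $C(t)\, r(10+d_t(x,p))$ that emerges from the naive choice of $\gamma$ as a constant-speed geodesic (which yields $\int|\gamma'|^2 \sim (d+r)^2/(t-s_1)$ and $\int v \sim (1+d+r)^2(t-s_1)$, optimizing to $\sim r(1+d+r)$ in the exponent). Upgrading to logarithmic dependence in $d_t(x,p)$ appears to require exploiting the Harnack inequality at multiple scales, for example via an iteration where one controls the doubling ratio $|B(x,t,r+1)|_t/|B(x,t,r)|_t$ by a $(10+d)^C$ factor and compounds over $\sim r$ unit steps. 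The quadratic Schwarz-type bound on $\tr_{\omega_t}(\beta/(1-t_i))$ from \eqref{pschwarz1}, which was crucial in Theorem \ref{HarnackiofKRF}, should likewise be reinvoked so that the $\int v$ terms along $\gamma$ are absorbed into logarithmic factors rather than polynomial ones.
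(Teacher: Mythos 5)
The paper's proof of this proposition does not invoke the Harnack inequality or the heat kernel at all: it is a self-contained metric-measure covering argument. It combines Perelman's $\kappa$-noncollapsing with Bamler's noninflating estimate, which give $c r^{2n}/(1+d_t(x,p))^{4n}\leq|B(x,t,r)|_t\leq Cr^{2n}$ for $r\leq1$, and then inducts on the radius parameter $k/4$: $B(x,(k+1)/4)$ is covered by $N\lesssim(1+d_t(x,p))^{4n}$ balls $B(x_j,k/4)$ of radius only $1/4$ smaller, with the centers $x_j$ drawn from the \emph{fixed} unit ball $B(x,1)$. Your route through the forward heat kernel and a path-integral Harnack is a genuinely different approach, and, as you acknowledge, your primary argument cannot deliver the claimed exponent. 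In fact the shortfall is worse than the $r(1+d+r)$ you estimate: a spacetime curve from $(p,s_1)$ to a point of $B(x,t,r)$ must traverse distance $\sim d+r$, so the kinetic term $\int|\gamma'|^2$ cannot be optimized below order $(d+r)^2$, and lower-bounding $u$ at $x$ already costs $e^{-Cd^2}$; your $r(1+d+r)$ accounts only for the hop from $x$ to $y\in B(x,t,r)$ and drops the leg from $p$ to $x$. The heat kernel route therefore yields at best $|B(x,t,r)|\lesssim e^{C(d+r)^2}$, exponentially far from the logarithmic bound.

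The iteration you sketch as a repair, controlling $|B(x,r+1)|_t/|B(x,r)|_t$ by $(10+d)^{C}$ over $\sim r$ unit steps, does point toward the right mechanism, but it is not a Harnack-at-multiple-scales argument and is not correctly set up as stated: covering $B(x,r+1)$ by small balls centered anywhere inside it applies noncollapsing at points up to distance $d+r+1$ from $p$, so each step picks up $(1+d+r)^{C}$ rather than $(1+d)^{C}$. The decisive combinatorial feature of the paper's cover, which your proposal does not contain, is that the balls of radius $k/4$ covering $B(x,(k+1)/4)$ have centers confined to $B(x,1)$; the cover shrinks the radius by only $1/4$ per stage while keeping the covering centers within distance $1$ of $x$ (hence within $d+1$ of $p$), so the per-stage count stays $\lesssim(1+d_t(x,p))^{4n}$ independently of the scale. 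Once that covering observation is in hand, none of the heat kernel or Harnack machinery is required for this proposition.
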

\begin{proof}
Throughout the proof, all the constants depend at most on $t$. By Perelman's $\kappa$-noncollapsing estimate and Bamler's noninflating estimate in \cite{Bam20a}, we
have 
\[
\frac{cr^{2n}}{(1+d_{t}(x,p))^{4n}}\leq|B(x,t,r)|_{t}\leq Cr^{2n},
\]
for all $r\in(0,1]$ and $x\in M$. We will argue in a single time slice, so fix $t\in[-T,0]$, and write $B(x,r)=B(x,t,r)$, etc. 

We will show by induction on $k\in\mathbb{N}$ that 
$$|B(x,\frac{1}{4}k)|\leq C^k (1+d_{t}(x, p))^{4nk}$$
for all $x\in M$. The cases $k\leq4$ follow from Bamler's noninflating estimate. Suppose the claim holds for some $k\geq4$. Given $x\in M$, let $\{x_{1},...,x_{N}\}$ be a maximal subset of $B(x,1)$ such that $\{B(x_{j},\frac{1}{8})\}_{j=1}^{N}$ is a pairwise disjoint collection. Then 
\[
C\geq|B(x,1)|\geq\sum_{j=1}^{N}|B(x_{j},\frac{1}{8})|\geq N\frac{c}{(1+d_{t}(x, p))^{4n}},
\]
so that 
$$
N\leq C_1(1+d_{t}(x, p))^{4n}.
$$ 
Moreover, we know $B(x,1)\subseteq\bigcup_{j=1}^{N}B(x_{j},\frac{1}{4})$, so that $\bigcup_{j=1}^{N}B(x_{j},\frac{k}{4})$ cover $B(x,\frac{1}{4}(k+1))$. Then the induction hypothesis gives
\begin{align*}
|B(x,\frac{1}{4}(k+1))|\leq \sum_{j=1}^{N}|B(x_{j},\frac{k}{4})| \leq C_1C^k(1+d_{t}(x, p))^{4n(k+1)}  ,
\end{align*}
so if we choose $C\geq C_{1}$, then the claim follows. Hence for all $k\in\mathbb{N}$, we have
\begin{align*}
|B(x,k/4)| \leq C^k \left(1+d_t(x,p)\right)^{4nk} = e^{Ck+4nk\log(1+d_t(x,p)) } ,
\end{align*}
for all $k\in \mathbb{N}$.

Now fix $r>0$. If $r\in (0,1]$, then Bamler's volume non-inflating gives 
$$
|B(x,t,r)|_t \leq Cr^{2n} \leq Cr^{2n} e^{Cr\log(1+d_t(x,p))}.
$$
If instead $r\geq 1$, we choose $k \in \mathbb{N}^{+}$ such that $\frac{k-1}{4} \leq r\leq  \frac{k}{4}$, then 
\begin{align*}
|B(x,t,r)|_t \leq & e^{Ck+4nk\log(1+d_t(x,p)) } \leq  Cr^{2n}e^{Cr+Cr\log(1+d_t(x,p))} .
\end{align*}
This completes the proof.
\end{proof}
\begin{lemma}\label{gmp}
For any $t_0,t_1\in [-T, 0]$, $t_0<t_1$, there exist constants $\theta=\theta(t_0)>0$, $C=C(t_0)<\infty$, such that the following statement holds.

If $u\in C^{\infty}(M\times[t_{0},t_{1}])$ is a solution of $(\partial_{t}+\Delta)u=0$, with $|\nabla u(\cdot,t_1)|\leq1$ and $\text{supp}(|\nabla u(\cdot,t_1)|)\subseteq B(p,t_{1},D)$, then we have
\[
|\nabla u|^{2}(y,t_{0})\leq\exp\left(C\sqrt{t_{1}-t_{0}}(D^{2}+1)\right)  ,
\]
for all $y\in M$.
\end{lemma}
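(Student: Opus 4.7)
The plan is to adapt the weighted integral-gradient-estimate technique from \cite{BZ19}, replacing the global scalar curvature bound used there by the local control via the twisted Ricci potential $v$ of Lemma \ref{viprop}. Fix $y\in M$, and let $K=K(\cdot,\cdot;y,t_0)$ denote the forward heat kernel based at $(y,t_0)$. For a small parameter $\kappa>0$, to be taken proportional to $\sqrt{t_1-t_0}$, consider the weighted integral
$$
\Phi(t):=\int_M |\nabla u|^2(z,t)\, K(z,t;y,t_0)\, e^{\kappa v(z,t)}\,dg_t(z),\qquad t\in(t_0,t_1].
$$
Since $K(\cdot,t;y,t_0)\to\delta_y$ as $t\downarrow t_0$ while $v$ is continuous, $\lim_{t\downarrow t_0}\Phi(t)=e^{\kappa v(y,t_0)}|\nabla u|^2(y,t_0)\geq |\nabla u|^2(y,t_0)$. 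The goal is thus to bound $\Phi(t_0^+)$ by $\exp(C\sqrt{t_1-t_0}(D^2+1))$.

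The main step is to compute $\tfrac{d}{dt}\Phi$ using $(\partial_t+\Delta)u=0$, the forward heat equation $\partial_t K=\Delta K$, the evolution of $v$ in (\ref{ceofvi}), the Bochner formula, and $\partial_t dg_t = -R\,dg_t$. After Green's identity, the integrand contains $3\ric(\nabla u,\nabla u)+2|\nabla^2 u|^2 - R|\nabla u|^2 + \kappa(\partial_t v)|\nabla u|^2$, together with cross-terms in $\nabla K$, $\nabla v$, and $\nabla |\nabla u|^2$. Substituting the K\"ahler-Ricci flow identity $\ric=\tfrac{1}{1-t}(g_t+\beta)-\ddbar v$ from (\ref{ceofvi}), the contribution from $\tfrac{1}{1-t}(g_t+\beta)$ is harmless because $\text{tr}_{g_t}\beta$ is bounded, while the problematic $\ddbar v$ piece is integrated by parts against $Ke^{\kappa v}|\nabla u|^2$. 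The resulting terms are controlled via (i) the bound $|\nabla v|^2\leq Cv$ from Lemma \ref{viprop}, (ii) the Harnack estimate of Theorem \ref{HarnackiofKRF}, which yields $|\nabla K|^2 \leq C\bigl(\tfrac{1}{t-t_0}+v\bigr)K^2$ in an integrated sense, and (iii) Cauchy--Schwarz against the coercive Hessian term $2|\nabla^2 u|^2 Ke^{\kappa v}$. Choosing $\kappa$ sufficiently small so that the nonlinear-in-$v$ error terms $\kappa^2|\nabla v|^2 K|\nabla u|^2$ are absorbed by the Hessian, one obtains $\tfrac{d}{dt}\Phi\geq -C\Phi$ on $(t_0,t_1]$, hence $\Phi(t_0^+)\leq e^{C(t_1-t_0)}\Phi(t_1)$.

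It remains to bound $\Phi(t_1)$ using the terminal condition. The support hypothesis restricts the domain of integration to $B(p,t_1,D)$. Since $\sqrt v$ is uniformly Lipschitz by Lemma \ref{viprop} and $v(p,t_1)\leq C$ (propagating the normalization $v(p,0)=B_0+1$ using $|\partial_t v|/v\leq C$), one has $v(\cdot,t_1)\leq C(1+D^2)$ throughout $B(p,t_1,D)$. Combined with $|\nabla u(\cdot,t_1)|\leq 1$ and the total-mass bound $\int_M K(z,t_1;y,t_0)\,dg_{t_1}(z)\leq C$ (a consequence of the lower scalar curvature bound on the flow), this yields
$$
\Phi(t_1)\leq C\exp\bigl(C\kappa(1+D^2)\bigr).
$$
Taking $\kappa=\sqrt{t_1-t_0}$ produces the desired estimate $|\nabla u|^2(y,t_0)\leq \exp(C\sqrt{t_1-t_0}(D^2+1))$.

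The principal technical obstacle is handling the $\ddbar v$ piece of the Ricci tensor: only the trace $\Delta v$ is controlled by $v$, so there is no pointwise Ricci bound, and only the weighted $L^2$ framework allows one to distribute derivatives by integration by parts. The smallness of $\kappa$ is forced by the requirement that $\kappa^2|\nabla v|^2 K|\nabla u|^2$ and the related error terms be absorbed into the coercive $|\nabla^2 u|^2$ contribution, which is the reason the final bound involves $\sqrt{t_1-t_0}$ rather than $t_1-t_0$ in the exponent.
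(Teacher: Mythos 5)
Your overall template --- a weighted integral of the form $\int_M |\nabla u|^2\, K\, e^{(\text{weight})\cdot v}\,dg_t$ differentiated in time, with the $t=t_1$ evaluation controlled via the support hypothesis and $v\le C(1+D^2)$ on $B(p,t_1,D)$, and the $t\downarrow t_0$ evaluation returning $|\nabla u|^2(y,t_0)$ --- matches the paper's proof. The decisive difference is the choice of weight, and it is where your argument breaks down.

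The paper's weight is $e^{A(t-t_0)^{1/2}v}$, \emph{time-dependent}, so that $\partial_t$ hitting the weight produces the term $\tfrac{A}{2}(t-t_0)^{-1/2}\,v\,e^{A(t-t_0)^{1/2}v}K$, which blows up as $t\searrow t_0$. This coercive, $v$-proportional term is what absorbs the $v$-proportional errors. Those errors do not carry a small prefactor: they come from integrating by parts the $\nabla\bar\nabla v$ piece of $\ric$ (entering through the Bochner identity, not through the weight), and after Cauchy--Schwarz against the Hessian and the Harnack term, they land as $\sim (t-t_0)^{-1/2}\,v\,|\nabla u|^2\,K\,e^{(\cdot)}$ with an $O(1)$ constant. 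With the constant-in-time weight $e^{\kappa v}$, differentiating the weight only gives $\kappa\,(\partial_t v)\,e^{\kappa v}K$, which is bounded (and sign-indefinite) since $|\partial_t v|\le Cv$; there is no coercive term. Since $v$ is unbounded on $M$ (it grows like $d_t(\cdot,p)^2$), the error $\int v\,|\nabla u|^2\,K\,e^{\kappa v}$ is not dominated by $C\Phi$, so your claimed differential inequality $\tfrac{d}{dt}\Phi\ge -C\Phi$ does not follow from the scheme you describe. Shrinking $\kappa$ only helps the terms that are actually multiplied by $\kappa$ or $\kappa^2$, which are not the dangerous ones. Relatedly, the Harnack of Theorem \ref{HarnackiofKRF} does not give a pointwise bound $|\nabla K|^2\le C(\tfrac{1}{t-t_0}+v)K^2$; it gives $\partial_t\log K\ge\tfrac12|\nabla\log K|^2 - C(\tfrac1{t-t_0}+v)$, and the paper exploits this by a further integration by parts producing the \emph{positive} term $\epsilon\int|\nabla\log K|^2|\nabla u|^2 K e^{(\cdot)}$ at a cost $-C\epsilon\int(\tfrac1{t-t_0}+v)|\nabla u|^2 K e^{(\cdot)}$, with a carefully chosen time-dependent $\epsilon=(t-t_0)^{1/2}$. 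You will want to replace your constant $\kappa$ by $A(t-t_0)^{1/2}$ and redo the bookkeeping; once that is done, the two approaches coincide.
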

\begin{proof}
Throughout the proof, all the constants depend at most on $t_0$.

Let $\alpha\in (0,1)$, $A<\infty$ be constants to be determined. We first use
\begin{align*}
&(\partial_{t}-\Delta_{x})\left(e^{A(t-t_{0})^{\alpha}v(x,t)}K(x,t;y,t_{0})\right)\\
= & \left(A\alpha(t-t_{0})^{\alpha-1}v(x,t)-A^{2}(t-t_{0})^{2\alpha}|\nabla v|^{2}(x,t)+A(t-t_{0})^{\alpha}(\partial_{t}-\Delta)v(x,t)\right) \\
& \qquad\qquad \cdot e^{A(t-t_{0})^{\alpha}v(x,t)}K(x,t;y,t_{0})\\
& -2A(t-t_{0})^{\alpha}\langle\nabla^{\mathbb{R}}v(x,t),\nabla_{x}^{\mathbb{R}}\log K(x,t;y,t_{0})\rangle e^{A(t-t_{0})^{\alpha}v(x,t)}K(x,t;y,t_{0})\\
\geq & \left(A\alpha(t-t_{0})^{\alpha-1}-CA^{2}(t-t_{0})^{2\alpha}-CA(t-t_{0})^{\alpha}\right) v(x,t)  \\
& \qquad\qquad \cdot e^{A(t-t_{0})^{\alpha}v(x,t)}K(x,t;y,t_{0})\\
& -2A(t-t_{0})^{\alpha}\langle\nabla^{\mathbb{R}}v(x,t),\nabla_{x}^{\mathbb{R}}\log K(x,t;y,t_{0})\rangle e^{A(t-t_{0})^{\alpha}v(x,t)}K(x,t;y,t_{0}),
\end{align*}
and (here we use the Schwarz lemma)
\begin{align*}
(\partial_{t}+\Delta_{x})|\nabla u|^{2}(x,t)= & \left(|\nabla\nabla u|^{2}+|\nabla\overline{\nabla}u|^{2}\right)(x,t)+2Rc(\nabla u,\overline{\nabla}u)\\
\geq & \left(|\nabla\nabla u|^{2}+|\nabla\overline{\nabla}u|^{2}\right)(x,t)-2\nabla\overline{\nabla}v(\nabla u,\overline{\nabla}u)-C|\nabla u|^{2} ,
\end{align*}
to estimate (here we use $R\leq Cv$)
\begin{align*}
&\frac{d}{dt}\int_{M}|\nabla u|^{2}(x,t)e^{A(t-t_{0})^{\alpha}v(x,t)}K(x,t;y,t_{0})dg_{t}(x)\\
= & \int_{M}e^{A(t-t_{0})^{\alpha}v(x,t)}K(x,t;y,t_{0})(\partial_{t}+\Delta-R)|\nabla u|^{2}(x,t)  \\
& \qquad\qquad +|\nabla u|^{2}(x,t)(\partial_{t}-\Delta)\left(e^{A(t-t_{0})^{\alpha}v(x,t)}K(x,t;y,t_{0})\right) dg_{t}(x)\\
\geq & \int_{M}\left(\left(|\nabla\nabla u|^{2}+|\nabla\overline{\nabla}u|^{2}\right)(x,t)-2\nabla\overline{\nabla}v(\nabla u,\overline{\nabla}u)-C|\nabla u|^{2}\right)  \\
& \qquad\qquad \cdot e^{A(t-t_{0})^{\alpha}v(x,t)}K(x,t;y,t_{0})dg_{t}(x)\\
& +\left(A\alpha(t-t_{0})^{\alpha-1}-CA^{2}(t-t_{0})^{2\alpha}-CA(t-t_{0})^{\alpha}-C\right)  \\
& \qquad\qquad \cdot \int_{M}|\nabla u|^{2}(x,t)v(x,t)e^{A(t-t_{0})^{\alpha}v(x,t)}K(x,t;y,t_{0})dg_{t}(x)\\
& -2A(t-t_{0})^{\alpha}\int_{M}|\nabla u|^{2}(x,t)\langle\nabla^{\mathbb{R}}v(x,t),\nabla_{x}^{\mathbb{R}}\log K(x,t;y,t_{0})\rangle  \\
& \qquad\qquad  \cdot e^{A(t-t_{0})^{\alpha}v(x,t)}K(x,t;y,t_{0})dg_{t}(x).
\end{align*}
Next, we integrate by parts to obtain
\begin{align*}
-2A(t-t_{0})^{\alpha} & \int_{M}|\nabla u|^{2}(x,t)\langle\nabla^{\mathbb{R}}v(x,t),\nabla_{x}^{\mathbb{R}}\log K(x,t;y,t_{0})\rangle  \\
& \qquad\qquad  \cdot e^{A(t-t_{0})^{\alpha}v(x,t)}K(x,t;y,t_{0})dg_{t}(x)\\
= & -2\int_{M}\langle\nabla^{\mathbb{R}}e^{A(t-t_{0})^{\alpha}v(x,t)},|\nabla u|^{2}(x,t)\nabla_{x}^{\mathbb{R}}K(x,t;y,t_{0})\rangle dg_{t}(x)\\
\geq & 2\int_{M}|\nabla u|^{2}(x,t)\partial_{t}\log K(x,t;y,t_{0})e^{A(t-t_{0})^{\alpha}v(x,t)}K(x,t;y,t_{0})dg_{t}(x)\\
& -2\int_{M}\left(|\nabla\nabla u|+|\nabla\overline{\nabla}u|\right)(x,t)|\nabla u|(x,t)|\nabla_{x}^{\mathbb{R}}\log K(x,t;y,t_{0})|  \\
& \qquad\qquad  \cdot e^{A(t-t_{0})^{\alpha}v(x,t)}K(x,t;y,t_{0})dg_{t}(x) ,
\end{align*}
and
\begin{align*}
& -2\int_{M}\nabla\overline{\nabla} v(\nabla u,\overline{\nabla}u)e^{A(t-t_{0})^{\alpha}v(x,t)}K(x,t;y,t_{0})dg_{t}(x)\\\geq & -2\int_{M}(|\nabla\nabla u|+|\nabla\overline{\nabla}u|)(x,t)|\nabla u|(x,t)|\nabla v|(x,t)e^{A(t-t_{0})^{\alpha}v(x,t)}K(x,t;y,t_{0})dg_{t}(x)\\
& -2\int_{M}|\nabla v|(x,t)|\nabla u|^{2}(x,t)|\nabla_{x}\log K(x,t;y,t_{0})|e^{A(t-t_{0})^{\alpha}v(x,t)}K(x,t;y,t_{0})dg_{t}(x)\\
& +2A(t-t_{0})^{\alpha}\int_{M}|\langle\nabla v,\overline{\nabla}u\rangle|^{2}e^{A(t-t_{0})^{\alpha}v(x,t)}K(x,t;y,t_{0})dg_{t}(x)
\end{align*}
We also apply our Harnack inequality, say Theorem \ref{HarnackiofKRF}, to estimate
\[
\partial_{t}\log K(x,t;y,t_{0})\geq\frac{1}{2}|\nabla\log K(x,t;y,t_{0})|^{2}-C\left(\frac{1}{t-t_{0}}+v(x,t)\right).
\]
Combining all of these expressions yields (where $\epsilon>0$ is to be determined)
\begin{align*}
& \frac{d}{dt}\int_{M}|\nabla u|^{2}(x,t)e^{A(t-t_{0})^{\alpha}v(x,t)}K(x,t;y,t_{0})dg_{t}(x) \\ 
\geq & \int_{M}\left(\left(|\nabla\nabla u|^{2}+|\nabla\overline{\nabla}u|^{2}\right)(x,t)-C|\nabla u|^{2}\right)e^{A(t-t_{0})^{\alpha}v(x,t)}K(x,t;y,t_{0})dg_{t}(x)\\
& +\left(A\alpha(t-t_{0})^{\alpha-1}-CA^{2}(t-t_{0})^{2\alpha}-CA(t-t_{0})^{\alpha}-C\right)  \\
& \qquad\qquad  \cdot \int_{M}|\nabla u|^{2}(x,t)v(x,t)e^{A(t-t_{0})^{\alpha}v(x,t)}K(x,t;y,t_{0})dg_{t}(x)\\
& -2\int_{M}(|\nabla\nabla u|+|\nabla\overline{\nabla}u|)(x,t)|\nabla u|(x,t)|\nabla v|(x,t)e^{A(t-t_{0})^{\alpha}v(x,t)}K(x,t;y,t_{0})dg_{t}(x)\\
& -2\int_{M}|\nabla v|(x,t)|\nabla u|^{2}(x,t)|\nabla_{x}\log K(x,t;y,t_{0})|e^{A(t-t_{0})^{\alpha}v(x,t)}K(x,t;y,t_{0})dg_{t}(x)\\
& +2A(t-t_{0})^{\alpha}\int_{M}|\langle\nabla v,\overline{\nabla}u\rangle|^{2}e^{A(t-t_{0})^{\alpha}v(x,t)}K(x,t;y,t_{0})dg_{t}(x)\\
& +\epsilon\int_{M}|\nabla u|^{2}(x,t)|\nabla_x\log K(x,t;y,t_{0})|^{2}e^{A(t-t_{0})^{\alpha}v(x,t)}K(x,t;y,t_{0})dg_{t}(x)\\
& -C\epsilon\int_{M}\left(\frac{1}{t-t_{0}}+v(x,t)\right)|\nabla u|^{2}(x,t)e^{A(t-t_{0})^{\alpha}v(x,t)}K(x,t;y,t_{0})dg_{t}(x)\\
& -2\epsilon\int_{M}\left(|\nabla\nabla u|+|\nabla\overline{\nabla}u|\right)(x,t)|\nabla u|(x,t)|\nabla_{x}\log K(x,t;y,t_{0})|  \\
& \qquad\qquad  \cdot e^{A(t-t_{0})^{\alpha}v(x,t)}K(x,t;y,t_{0})dg_{t}(x)\\
& -2(1-\epsilon)A(t-t_{0})^{\alpha}\int_{M}|\nabla u|^{2}(x,t)|\nabla v|(x,t)|\nabla_{x}\log K(x,t;y,t_{0})|  \\
& \qquad\qquad  \cdot e^{A(t-t_{0})^{\alpha}v(x,t)}K(x,t;y,t_{0})dg_{t}(x)
\end{align*}
Next, we use Cauchy's inequality to estimate
\begin{align*}
-2\int_{M}(|\nabla\nabla u|&+|\nabla\overline{\nabla}u|)(x,t)|\nabla u|(x,t)|\nabla v|(x,t)e^{A(t-t_{0})^{\alpha}v(x,t)}K(x,t;y,t_{0})dg_{t}(x)\\ \geq & -\frac{1}{4}\int_{M}\left(|\nabla\nabla u|^{2}+|\nabla\overline{\nabla}u|^{2}\right)(x,t)e^{A(t-t_{0})^{\alpha}v(x,t)}K(x,t;y,t_{0})dg_{t}(x)\\
 & -8\int_{M}|\nabla v|^{2}(x,t)|\nabla u|^{2}(x,t)e^{A(t-t_{0})^{\alpha}v(x,t)}K(x,t;y,t_{0})dg_{t}(x),
\end{align*}
\begin{align*}
&  -2\epsilon\int_{M}\left(|\nabla\nabla u|+|\nabla\overline{\nabla}u|\right)(x,t)|\nabla u|(x,t)|\nabla_{x}\log K(x,t;y,t_{0})|  \\
& \qquad\qquad  \cdot e^{A(t-t_{0})^{\alpha}v(x,t)}K(x,t;y,t_{0})dg_{t}(x)  \\ 
\geq & -\frac{1}{4}\int_{M}\left(|\nabla\nabla u|^{2}+|\nabla\overline{\nabla}u|^{2}\right)(x,t)e^{A(t-t_{0})^{\alpha}v(x,t)}K(x,t;y,t_{0})dg_{t}(x)\\
 & -8\epsilon^{2}\int_{M}|\nabla u|^{2}(x,t)|\nabla_{x}\log K(x,t;y,t_{0})|^{2}e^{A(t-t_{0})^{\alpha}v(x,t)}K(x,t;y,t_{0})dg_{t}(x).
\end{align*}
Again combining expressions, we obtain
\begin{align*}
&\frac{d}{dt}\int_{M}|\nabla u|^{2}(x,t)e^{A(t-t_{0})^{\alpha}v(x,t)}K(x,t;y,t_{0})dg_{t}(x) \\
\geq & \left(A\alpha(t-t_{0})^{\alpha-1}-CA^{2}(t-t_{0})^{2\alpha}-CA(t-t_{0})^{\alpha}-C\right)  \\
& \qquad\qquad  \cdot  \int_{M}|\nabla u|^{2}(x,t)v(x,t)e^{A(t-t_{0})^{\alpha}v(x,t)}K(x,t;y,t_{0})dg_{t}(x)\\
& +\epsilon(1-8\epsilon)\int_{M}|\nabla u|^{2}(x,t)|\nabla_x\log K(x,t;y,t_{0})|^{2}e^{A(t-t_{0})^{\alpha}v(x,t)}K(x,t;y,t_{0})dg_{t}(x)\\
& -\frac{C\epsilon}{t-t_{0}}\int_{M}|\nabla u|^{2}(x,t)e^{A(t-t_{0})^{\alpha}v(x,t)}K(x,t;y,t_{0})dg_{t}(x)\\
& -2\left((1-\epsilon)A(t-t_{0})^{\alpha}+1\right)\int_{M}|\nabla v|(x,t)|\nabla u|^{2}(x,t)|\nabla_{x}\log K(x,t;y,t_{0})|  \\
& \qquad\qquad  \cdot e^{A(t-t_{0})^{\alpha}v(x,t)}K(x,t;y,t_{0})dg_{t}(x).
\end{align*}
Next, Cauchy's inequality gives
\begin{align*}
-2 & \left((1-\epsilon)A(t-t_{0})^{\alpha}+1\right)\int_{M}|\nabla v|(x,t)|\nabla u|^{2}(x,t)|\nabla_{x}\log K(x,t;y,t_{0})|  \\
& \qquad\qquad  \cdot e^{A(t-t_{0})^{\alpha}v(x,t)}K(x,t;y,t_{0})dg_{t}(x)\\
\geq & -\frac{\epsilon}{2}\int_{M}|\nabla u|^{2}(x,t)|\nabla_x\log K(x,t;y,t_{0})|^{2}e^{A(t-t_{0})^{\alpha}v(x,t)}K(x,t;y,t_{0})dg_{t}(x)\\
& -C\left(A(t-t_{0})^{\alpha}+1\right)^2\epsilon^{-1}\int_{M}|\nabla v|^{2}(x,t)|\nabla u|^{2}(x,t)  \\
& \qquad\qquad  \cdot e^{A(t-t_{0})^{\alpha}v(x,t)}K(x,t;y,t_{0})dg_{t}(x)  ,
\end{align*}
so that (assuming $|t-t_{0}|$ is sufficiently small and $\alpha\in(0,1)$)
\begin{align*}
& \frac{d}{dt}\int_{M}|\nabla u|^{2}(x,t)e^{A(t-t_{0})^{\alpha}v(x,t)}K(x,t;y,t_{0})dg_{t}(x)\\
\geq & \left(A\alpha(t-t_{0})^{\alpha-1}-C\epsilon^{-1}\right)\int_{M}|\nabla u|^{2}(x,t)v(x,t)e^{A(t-t_{0})^{\alpha}v(x,t)}K(x,t;y,t_{0})dg_{t}(x)\\
& -\frac{C\epsilon}{t-t_{0}}\int_{M}|\nabla u|^{2}(x,t)e^{A(t-t_{0})^{\alpha}v(x,t)}K(x,t;y,t_{0})dg_{t}(x).
\end{align*}
If we choose $\epsilon:=(t-t_{0})^{1-\alpha}$, then assuming $A<\infty$ is sufficiently large, we obtain
\begin{align*}
& \frac{d}{dt}\int_{M}|\nabla u|^{2}(x,t)e^{A(t-t_{0})^{\alpha}v(x,t)}K(x,t;y,t_{0})dg_{t}(x)\\
\geq & -\frac{C}{(t-t_{0})^{\alpha}}\int_{M}|\nabla u|^{2}(x,t)e^{A(t-t_{0})^{\alpha}v(x,t)}K(x,t;y,t_{0})dg_{t}(x),
\end{align*}
or equivalently,
\[
\frac{d}{dt}\log\left(\int_{M}|\nabla u|^{2}(x,t)e^{A(t-t_{0})^{\alpha}v(x,t)}K(x,t;y,t_{0})dg_{t}(x)\right)\geq-\frac{C}{(t-t_{0})^{\alpha}}.
\]
Integrating from $t_{0}$ to $t_{1}$ then yields 
\[
|\nabla u|^{2}(y,t_{0})\leq\exp\left(C(t_{1}-t_{0})^{1-\alpha}\right)\int_{M}|\nabla u|^{2}(x,t_{1})e^{A(t_{1}-t_{0})^{\alpha}v(x,t_{1})}K(x,t_{1};y,t_{0})dg_{t}(x).
\]
If $|\nabla u|(\cdot,t_{1})\leq1$ and $\text{supp}(|\nabla u(\cdot,t_1)|)\subseteq B(p,t_{0},D)$, then
\begin{align*}
|\nabla u|^{2}(y,t_{0})\leq & \exp\left(C(t_{1}-t_{0})^{1-\alpha}+C(t_{1}-t_{0})^{\alpha}(D^{2}+1)\right).
\end{align*}
For optimal estimates, we choose $\alpha=\frac{1}{2}$. This completes the proof.
\end{proof}
Now we can prove the good distance distortion upper bound.
\begin{proposition}\label{gddub2}
For any $A<T$, $D<\infty$, there exist constants $\theta=\theta(A,D)>0$, $C=C(A,D)<\infty$, such that the following statement holds.

Assume $-A\leq t_{1}< t_{2}\leq0$ satisfies $t_2-t_1\leq \theta$. Assume $x_1, x_2 \in\bigcup_{ t\in[ t_{1}, t_{2} ] } B( p, t, D)$. Then we have
$$
d_{t_{2}}(x_1, x_2)\leq d_{t_{1}}(x_1, x_2) + C\sqrt{t_{2}-t_{1}}.
$$
\end{proposition}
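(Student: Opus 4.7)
The plan is to adapt the backward-heat-kernel argument of Bamler--Zhang to our localized setting, with Lemma \ref{gmp} playing the role of the global scalar-curvature bound that is unavailable here. The strategy is to smooth a compactly supported cutoff of the later-time distance function $d_{t_2}(x_1,\cdot)$ via the backward heat equation $(\partial_t+\Delta)u=0$; Lemma \ref{gmp} ensures that $u(\cdot,t_1)$ is almost $1$-Lipschitz, so that $u(x_2,t_1)-u(x_1,t_1)\lesssim d_{t_1}(x_1,x_2)$. Comparing the values of $u$ at $x_i$ across the two time slices via the conjugate heat measure from $(x_i,t_2)$, whose concentration is controlled by Lemma \ref{W1balongwl}, then yields the desired upper bound.

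I would first apply Proposition \ref{gwdde} together with Lemma \ref{viprop} to upgrade the hypothesis $x_i\in\bigcup_{t\in[t_1,t_2]}B(p,t,D)$ to the uniform bounds $d_t(x_i,p)\leq D_1$ and $R,\,v\leq D_1$ on $B(p,t,2D_1)\times[t_1,t_2]$ for some $D_1=D_1(A,D)$. Then I would pick a smooth profile $\psi:[0,\infty)\to[0,10D_1]$ with $\psi(r)=r$ on $[0,4D_1]$ and $\psi'\in[0,1]$, and set $f:=\psi\circ d_{t_2}(x_1,\cdot)$, so that $f$ is $g_{t_2}$-$1$-Lipschitz with $\supp(|\nabla f|)\subseteq B(p,t_2,12D_1)$ and $f(x_i)=d_{t_2}(x_1,x_i)$ since $d_{t_2}(x_1,x_i)\leq 2D_1$.

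Let $u\in C^\infty(M\times[t_1,t_2])$ solve $(\partial_t+\Delta)u=0$ with $u(\cdot,t_2)=f$. Applying Lemma \ref{gmp} on each subinterval $[s,t_2]$ yields $|\nabla u(\cdot,s)|\leq 1+C\sqrt{t_2-t_1}$ for all $s\in[t_1,t_2]$ provided $t_2-t_1\leq\theta(A,D)$ is small; in particular,
$$
u(x_2,t_1)-u(x_1,t_1)\leq(1+C\sqrt{t_2-t_1})\,d_{t_1}(x_1,x_2).
$$
To compare $u(x_i,t_2)=f(x_i)$ with $u(x_i,t_1)$, I would consider the pairing $\phi(s):=\int u(\cdot,s)\,d\nu_{x_i,t_2;s}$, with $\phi(t_2)=u(x_i,t_2)$. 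Since $u$ satisfies the backward rather than the conjugate heat equation, $\phi$ is not exactly conserved; a direct computation using $\Box^{\ast} K(x_i,t_2;\cdot,\cdot)=0$ and integration by parts gives
$$
\phi'(s)=2\int\langle\nabla u,\nabla\log K(x_i,t_2;\cdot,s)\rangle\,d\nu_{x_i,t_2;s},
$$
which by Cauchy--Schwarz is bounded by $2(1+C\sqrt{t_2-t_1})\sqrt{I(\nu_{x_i,t_2;s})}$, where $I$ denotes the Fisher information. The scale-invariant bound $I(\nu_s)\leq C/(t_2-s)$ follows from Qi Zhang's gradient estimate for the heat kernel together with the Nash entropy bounds \eqref{NM} and \eqref{oscofNashinPstar} in a bounded $P^\ast$-neighborhood of $(p,0)$ containing $(x_i,t_2)$. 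Integrating then gives $|\phi(t_2)-\phi(t_1)|\leq C\sqrt{t_2-t_1}$, and combined with
$$
|\phi(t_1)-u(x_i,t_1)|\leq(1+C\sqrt{t_2-t_1})\,d^{g_{t_1}}_{W_1}(\nu_{x_i,t_2;t_1},\delta_{x_i})\leq C\sqrt{t_2-t_1}
$$
from Lemma \ref{W1balongwl} applied with $r_0=\sqrt{t_2-t_1}$, one obtains $|u(x_i,t_2)-u(x_i,t_1)|\leq C\sqrt{t_2-t_1}$.

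Putting the three estimates together,
$$
d_{t_2}(x_1,x_2)=u(x_2,t_2)-u(x_1,t_2)\leq(1+C\sqrt{t_2-t_1})\,d_{t_1}(x_1,x_2)+2C\sqrt{t_2-t_1}\leq d_{t_1}(x_1,x_2)+C\sqrt{t_2-t_1},
$$
where in the last step the multiplicative error is absorbed using $d_{t_1}(x_1,x_2)\leq 2D_1$. The main technical obstacle is the time-slice comparison: because the backward heat equation is not dual to the conjugate heat kernel under the natural pairing, closing the argument requires both a scale-invariant Fisher information bound on $\nu_{x_i,t_2;s}$ and a careful propagation of Nash entropy lower bounds from $(p,0)$ to $(x_i,t_2)$ through the $P^\ast$-geometry, precisely what the Ricci-vertex hypothesis and the analytic estimates set up in Section \ref{setupandpre} make possible.
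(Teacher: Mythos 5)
Your proposal is correct and takes the same overall framework as the paper (smooth the distance function backward, apply Lemma~\ref{gmp} to retain an almost-$1$-Lipschitz bound, and then control the time-slice change of the solution at $x_i$), but the time-slice comparison is carried out by a genuinely different argument. The paper takes two backward solutions $u_1,u_2$ with final data $\min\{d_{t_2}(x_i,\cdot),2d_{t_2}(x_1,x_2)\}$, uses the maximum principle to get $u_1+u_2\geq d_{t_2}(x_1,x_2)$, and reduces everything to bounding $u_i(x_i,t_1)$ via the Duhamel formula $(-\partial_t-\Delta+R)u_i=Ru_i$, which it estimates against the concentrated conjugate heat kernel using the rough volume bound (Proposition~\ref{roughvb}), the weak multiplicative distortion (Proposition~\ref{gwdde}), and the weighted integral estimate~(\ref{ihkupbaxi}). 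You instead take a single backward solution, pair it with the conjugate heat measure to form $\phi(s)=\int u(\cdot,s)\,d\nu_{x_i,t_2;s}$, and integrate the identity $\phi'(s)=2\int\langle\nabla u,\nabla\log K\rangle\,d\nu$ against the Fisher information bound $\int|\nabla\log K|^2\,d\nu\leq C/(t_2-s)$; this avoids Proposition~\ref{roughvb}, Proposition~\ref{gwdde}, and the weighted Gaussian integrals entirely. Both approaches close via Lemma~\ref{W1balongwl} to pass from $\phi(t_1)$ to $u(x_i,t_1)$.

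Two small inaccuracies worth flagging, though they do not affect the validity of the argument. First, the scale-invariant Fisher information bound is best derived not from Qi Zhang's gradient estimate (which bounds $|\nabla_x\log K(x,t;y,s)|$, the gradient in the \emph{forward} variable of the forward heat kernel, not the conjugate-variable gradient $|\nabla_y\log K(x_i,t_2;y,s)|$ appearing in your $\phi'$) but from Perelman's $\mathcal{W}$-monotonicity: with $\tau=t_2-s$ and $d\nu=(4\pi\tau)^{-n/2}e^{-f}dg$, one has $\cW=\tau\int(|\nabla f|^2+R)\,d\nu+\cN-\tfrac{n}{2}\leq 0$, so $\tau\int|\nabla f|^2\,d\nu\leq \tfrac{n}{2}-\cN-\tau\int R\,d\nu\leq C$ once a Nash entropy lower bound $\cN\geq -Y$ and $R\geq R_{\min}$ are in place, both of which are available here. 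Second, the preliminary upgrade to the uniform bound $d_t(x_i,p)\leq D_1$ on all of $[t_1,t_2]$ should be attributed to Proposition~\ref{stdde} (which controls distance distortion in both time directions from a point where $v_i$ is bounded) rather than to Proposition~\ref{gwdde} alone, which only bounds distance growth forward in time; the paper cites Proposition~\ref{stdde} for exactly this step.
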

\begin{proof}
Throughout the proof, all the constants may depend on $A, D$. We let $u_{i}\in C^{\infty}(M\times[t_{1},t_{2}))\cap C^{0}(M\times[t_{1},t_{2}])$ solve 
\begin{align*}
(-\partial_{t}-\Delta)u_{i}= & 0  \\
u_{i}(\cdot,t_{2})= & \min\{d_{t_{2}}(x_{i},\cdot),2d_{t_{2}}(x_{1},x_{2})\}.
\end{align*}
Then $u_{i}$ satisfies the hypotheses of Lemma \ref{gmp} and $\max_{M}|\nabla u_{i}|(\cdot,t_2)=1$, so we can estimate
\[
|\nabla u_{i}|(x,t_{1})\leq 1+C\sqrt{t_{2}-t_{1}} ,
\]
for all $x\in M$. Moreover, the triangle inequality gives $u_{1}(\cdot,t_{2})+u_{2}(\cdot,t_{2})\geq d_{t_{2}}(x_{1},x_{2})$, so the maximum principle implies 
\[
d_{t_{2}}(x_{1},x_{2})\leq u_{1}+u_{2}\leq4d_{t_{2}}(x_{1},x_{2}).
\]
In particular, $u_{1}(x_{2},t_{1})\geq d_{t_{2}}(x_{1},x_{2})-u_{2}(x_{2},t_{1}),$ so that 
\begin{align*}
d_{t_{2}}(x_{1},x_{2})-\left(u_{1}(x_{1},t_{1})+u_{2}(x_{2},t_{1})\right) \leq & u_{1}(x_{2},t_{1})-u_{1}(x_{1},t_{1})  \\
\leq & \left(1+C\sqrt{t_{2}-t_{1}}\right)d_{t_{1}}(x_{1},x_{2}).  
\end{align*}
By \cite[Proposition 7.6]{JST23a}, if we choose $\theta$ small enough, then we have 
$$d_{t}(x_{i},p)\leq C ,$$
for all $t\in[ t_{1}, t_{2} ]$, hence $d_{t}(x_{1},x_{2})\leq C$, hence we obtain
\begin{equation}\label{dt2boundbyu}
d_{t_{2}}(x_{1},x_{2})\leq d_{t_{1}}(x_1, x_2) + C\sqrt{t_{2}-t_{1}} + \left(u_{1}(x_{1},t_{1})+u_{2}(x_{2},t_{1})\right) .    
\end{equation}
It remains to estimate $u_{i}(x_{i},t_{1})$.

For any $x\in M$, $t\in[ t_{1}, t_{2} ]$, we have
\begin{equation}\label{lscbbyv}
R(x,t)\leq Cv(x,t)\leq Cv(x,t_1)\leq C(1+d_{t_1}^2(x, p))\leq C(1+d_{t_1}^2(x, x_i)) ,
\end{equation}
hence by Lemma \ref{ghke0}, if we choose $\theta$ small enough, then we have
\begin{equation}\label{hkupbaxi}
K(x,t;x_i,t_1)\leq \frac{C}{(t-t_1)^{n}}\exp\left(-\frac{d_{t_1}^{2}(x,x_i)}{C(t-t_1)}\right)  ,
\end{equation}
for all $x\in M$, $t\in[ t_{1}, t_{2} ]$. We then claim that, if we choose $\theta$ small enough, then for any $t\in ( t_{1}, t_{2} ]$, $k=0,1,2$, we have
\begin{equation}\label{ihkupbaxi}
\int_{M} K(x,t;x_{i},t_{1})\exp\left( d_{t_1}^2(x, x_i) \right) d_{t_1}^k(x, x_i)  dg_{t_1} (x) \leq C(t-t_1)^{k/2} .
\end{equation}
Indeed, for $j=0,1,2\dots$, by (\ref{hkupbaxi}) and Proposition \ref{roughvb} we have
\begin{align*}
& \int_{B(x_i, t_1, 2^{j+1}(t-t_1)^{1/2})\setminus B(x_i, t_1, 2^{j}(t-t_1)^{1/2})} K(x,t;x_{i},t_{1})\exp\left( d_{t_1}^2(x, x_i) \right) d_{t_1}^k(x, x_i)  dg_{t_1} (x) \\
& \leq \int_{B(x_i, t_1, 2^{j+1}(t-t_1)^{1/2})\setminus B(x_i, t_1, 2^{j}(t-t_1)^{1/2})} \frac{C}{(t-t_1)^{n}}\exp\left(-\frac{d_{t_1}^{2}(x,x_i)}{C(t-t_1)}\right)  \\
& \qquad\qquad \cdot\exp\left( d_{t_1}^2(x, x_i) \right) d_{t_1}^k(x, x_i)  dg_{t_1} (x)   \\
& \leq \frac{C}{(t-t_1)^{n}} \exp\left(-C^{-1}2^{2j}\right) 2^{jk}(t-t_1)^{k/2} |B(x_i, t_1, 2^{j+1}(t-t_1)^{1/2})|_{t}  \\
& \leq \frac{C}{(t-t_1)^{n}} \exp\left(-C^{-1}2^{2j}\right) 2^{jk}(t-t_1)^{k/2}2^{2nj}(t-t_1)^{n} \exp\left(C2^{j+1}(t-t_1)^{1/2}\right)  \\
& \leq C \exp\left(-C^{-1}2^{2j}\right)(t-t_1)^{k/2}  ,
\end{align*}
summing over $j$ proves (\ref{ihkupbaxi}).

Now from $(-\partial_{t}-\Delta+R)u_{i}=Ru_{i}$, and the fact that $dg_t\leq Cdg_{t_1}$ for all $t\in [t_1, t_2]$, we have
\begin{align*}
u_{i}(x_{i},t_{1}) \leq C\int_{M} & K(x,t_{2};x_{i},t_{1})d_{t_{2}}(x,x_{i})dg_{t_{1}}(x) \\
& +Cd_{t_{2}}(x_{1},x_{2})\int_{t_{1}}^{t_{2}}\int_{M}K(x,t;x_{i},t_{1})R(x,t)dg_{t_1}(x)dt . \\
\end{align*}
For the first term, by Proposition \ref{gwdde} and (\ref{ihkupbaxi}), if we let $\theta$ small enough, we have
\begin{align*}
& \int_{M} K(x,t_{2};x_{i},t_{1})d_{t_{2}}(x,x_{i})dg_{t_{1}}(x) \\
\leq & \int_{B(x_i, t_1, (t_2-t_1)^{1/2})} K(x,t_{2};x_{i},t_{1})C(t_2-t_1)^{1/2} dg_{t_{1}}(x)  \\
& \qquad  + \int_{M} K(x,t_{2};x_{i},t_{1})C\exp\left( d_{t_1}^2(x, x_i) \right)d_{t_{1}}(x,x_{i})dg_{t_{1}}(x)  \\
\leq &  C (t_2-t_1)^{1/2} .
\end{align*}
For the second term, by (\ref{lscbbyv}) and (\ref{ihkupbaxi}), we have
\begin{align*}
& \int_{t_{1}}^{t_{2}}\int_{M}K(x,t;x_{i},t_{1})R(x,t)dg_{t_1}(x)dt \\
\leq & \int_{t_{1}}^{t_{2}}\int_{M}K(x,t;x_{i},t_{1})C(1+d_{t_1}^2(x, x_i))dg_{t_1}(x)dt  \leq C (t_2-t_1) .
\end{align*}
Combining expressions gives
\[
u_{i}(x_{i},t_{1})\leq C\sqrt{t_{2}-t_{1}},
\]
combine this with (\ref{dt2boundbyu}), we finish the proof.
\end{proof}
%


%
\medskip
\subsection{Continuity of the limiting metric flow}\label{ctoflmf}

In this section, we first use the good distance distortion estimates to prove the Gromov-Hausdorff continuity of the limiting metric flow $\cX$, then we prove the Gromov-$W_1$ continuity.

\begin{proposition}\label{GHctoflmf}
For every $t\in (-\infty, 0]$, we have pointed Gromov-Hausdorff convergence 
\[
(M_{i},d_{g_{i,t}},p_{i})\to(\mathcal{X}_{t},d_{t},q_{t}),
\]
where $q_{t}\in\mathcal{X}_{t}$ satisfy $d^{\cX_s}_{W_{1}}(\nu_{q_{t};s},\delta_{q_{s}})\leq C\sqrt{t-s}$ for $s,t\in (-\infty, 0]$ with $s\leq t$. Moreover, the convergence is locally uniform in time in the sense that 
\[
\lim_{i\to\infty}\sup_{t\in J}d_{PGH}\left((M_{i},d_{g_{i,t}},p_{i}),(\mathcal{X}_{t},d_{t},q_{t})\right)=0
\]
for any compact subset $J\subseteq (-\infty, 0]$. Finally, the map
\[
t\mapsto(\mathcal{X}_{t},d_{t},q_{t}), ~~ t\in (-\infty, 0],
\]
is continuous in the pointed Gromov-Hausdorff topology. 
\end{proposition}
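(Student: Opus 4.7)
The plan is to extract the distinguished points $q_t \in \cX_t$ as limits of the base points $p_i$ along the correspondence $\CCC$, then upgrade the given $\bF$-convergence to pointed Gromov-Hausdorff convergence using the uniform good distance distortion estimates (Propositions \ref{gddlb} and \ref{gddub2}) established earlier in this section, and finally pass the same estimates to the limit to obtain continuity in $t$. The main obstacle will be the passage from $\bF$-convergence (which is time-integrated in nature) to pointwise-in-time pointed Gromov-Hausdorff convergence, and resolving this hinges crucially on the good distance distortion estimates.

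To construct $q_t$ for each $t\in (-\infty, 0]$, I first note that item (2) of Lemma \ref{viprop} combined with $v_i(p_i, 0) = B_0 + 1$ yields $v_i(p_i, t) \leq C(A)$ and hence $R(p_i, t) \leq C(A)$ for $t \in [-A, 0]$. Together with a uniform lower bound $\cN_t^*(p_i, 0) \geq -C(A)$ coming from \eqref{NM} and the monotonicity of Perelman's $\mu$-functional along the original K\"ahler-Ricci flow, Lemma \ref{W1balongwl} applied after parabolic rescaling by $\sqrt{|t|}$ gives
\[
d_{W_1}^{g_{i,t}}\bigl(\nu_{p_i,0;t}, \delta_{p_i}\bigr) \leq C(A)\sqrt{|t|}
\]
uniformly in $i$. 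Combining with the given $\bF$-convergence $\nu_{p_i,0;t} \xrightarrow{\CCC} \nu_{p_\infty;t}$, the Dirac pushforwards $(\varphi_t^i)_* \delta_{p_i}$ form a $W_1$-Cauchy sequence in $(Z_t, d_t^Z)$ along a subsequence; since a $W_1$-limit of Dirac measures is itself Dirac (by zero variance), we extract $q_t \in \cX_t$ with $p_i \xrightarrow{\CCC} q_t$. Passing the above bound to the limit yields $d_{W_1}^{\cX_s}(\nu_{q_t;s}, \delta_{q_s}) \leq C\sqrt{t-s}$ for $s\leq t$.

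To upgrade to pointed Gromov-Hausdorff convergence at each $t$, I will use the smooth convergence on the regular part $\cR \subset \cX$ available from \cite{Bam20c}. Given $R, \epsilon > 0$, construct a finite $\epsilon$-net $\{y^\infty_k\}_{k=1}^N \subset B(q_t, R) \cap \cR_t$ using density of the regular part in $\cX_t$, and lift each $y^\infty_k$ to an approximating sequence $y^i_k \in M_i$ via the smooth convergence, so that $y^i_k \xrightarrow{\CCC} y^\infty_k$ and $d_{g_{i,t}}(y^i_k, y^i_\ell) \to d_t(y^\infty_k, y^\infty_\ell)$. The good distance distortion bounds then guarantee that $\{y^i_k\}$ forms a $2\epsilon$-net in $B(p_i, t, R)$ for large $i$: any point in $M_i$ failing this can be transported via the distortion to a nearby time where effective smooth convergence on $\cR$ would force it to lie near the lifted net, yielding a contradiction. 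Local uniformity on compact $J \subset (-\infty, 0]$ follows from the uniform distortion estimate
\[
|d_{g_{i,t}}(x,y) - d_{g_{i,t_0}}(x,y)| \leq C(J,D)\sqrt{|t-t_0|}
\]
for $x, y \in B(p_i, t_0, D)$, combined with pointed GH convergence at a single $t_0 \in J$.

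Continuity of the limit map $t \mapsto (\cX_t, d_t, q_t)$ is obtained by passing the distance distortion estimates to the limit: density of $\cR_t$ in $\cX_t$ together with smooth convergence transfers the distortion bound to $\cX$ after identifying points across time slices via the Ricci flow spacetime structure on $\cR$, giving $\sqrt{|t-s|}$-continuity of distances in a bounded $q_t$-neighborhood. Combined with the $W_1$ basepoint bound $d_{W_1}^{\cX_s}(\nu_{q_t;s}, \delta_{q_s}) \leq C\sqrt{t-s}$ established in the second step, this yields $d_{PGH}\bigl((\cX_t, d_t, q_t), (\cX_s, d_s, q_s)\bigr) \to 0$ as $t \to s$, completing the proof.
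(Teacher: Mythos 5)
The high-level strategy is right — use Lemma \ref{W1balongwl} to relate $\delta_{p_i}$ to $\nu_{p_i,0;t}$, then combine $\bF$-convergence with the good distance distortion bounds to deduce pointed GH convergence — but two specific steps contain genuine gaps.

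First, the construction of $q_t$ does not work as written. From the estimate $d_{W_1}^{g_{i,t}}(\nu_{p_i,0;t},\delta_{p_i})\leq C\sqrt{|t|}$ together with $(\varphi_t^i)_*\nu_{p_i,0;t}\to(\varphi_t^\infty)_*\nu_{p_\infty;t}$ in $W_1$, the triangle inequality only gives $\limsup_i d_{W_1}^{Z_t}((\varphi_t^i)_*\delta_{p_i},(\varphi_t^\infty)_*\nu_{p_\infty;t})\leq C\sqrt{|t|}$; this bounds the sequence of Dirac pushforwards near a fixed measure, but does \emph{not} make it $W_1$-Cauchy. The Dirac measures could oscillate inside a ball of radius $C\sqrt{|t|}$ and never converge, so you cannot "extract $q_t$" this way. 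The paper instead obtains the pointed GH limit $q_t$ at time-wise times from Theorem 7.3 of \cite{JST23a} (quoted here as Theorem \ref{FtoGHfromJST}), which uses \cite[Proposition 2.7]{Hal} together with a uniform volume lower bound of the form $\nu_{p_\infty;t}(B(x,r))\geq c$ (Claim \ref{nonvanofvol}) — the noncollapsing input your $\epsilon$-net argument also silently relies on but never invokes. Without that, knowing a finite configuration $\{y_k^i\}$ approximates $\{y_k^\infty\}$ in distance does not force $\{y_k^i\}$ to be a $2\epsilon$-net of $B(p_i,t,R)$; a point of $M_i$ could be far from every $y_k^i$ and nothing in the smooth convergence on $\cR$ alone rules this out.

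Second, you never address that the $\bF$-convergence is time-wise only on a full-measure set $I'\subset(-\infty,0]$, and the local smooth convergence on $\cR$ and the convergence of conjugate heat measures that you use are only available at such times. Your construction implicitly treats every $t$ as time-wise. The paper works first on a dense countable $I''\subset I'$, proves the $W_1$-bound on $q_t$ for $s,t\in I''$ (Claim \ref{W1betqtqs}), extends $t\mapsto q_t$ to a continuous path on all of $(-\infty,0)$ via the $\frac12$-H\"older estimate and $H_{2n}$-concentration (Claim \ref{extoggam}), shows GH continuity from the right of $t\mapsto(\cX_t,d_t,q_t)$ (Claim \ref{GHconoflim}), and finally closes the argument for arbitrary $t$ by a three-term triangle inequality with $t_i\searrow t$ in $I''$ using the uniform distortion bounds on both $M_i$ and $\cX$. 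This bookkeeping is not cosmetic — it is exactly where the distortion estimates are used to propagate information from the dense set to all times — and it is absent from your proposal.
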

\begin{proof}
Let $I'\subseteq (-\infty, 0]$ be the set of times where $\bF$-convergence (\ref{FcorKRF'3}) is time-wise; by passing to a subsequence, we may assume $|(-\infty, 0]\setminus I'|=0$. Let $\psi_{i}:U_{i}\to M_{i}$ be the diffeomorphisms realizing locally smooth convergence of the $\bF$-convergence. Let $I''\subseteq I'$ be a dense countable subset of $(-\infty, 0]$. By \cite[Theorem 7.3]{JST23a}, passing to a subsequence, for every $t\in I''$, we have 
$$
\lim_{i\to\infty}d_{PGH}\left((M_{i},d_{g_{i,t}},p_{i}),(\mathcal{X}_{t},d_{t},q_{t})\right)=0 ,
$$
for some $q_t\in \mathcal{X}_{t}$. Moreover, by \cite[Proposition 2.7]{Hal}, we can pass to a subsequence so that $\psi_{i}$ extend to (not necessarily continuous) $r_{i}^{-1}$-Gromov-Hausdorff approximations
\[
\psi_{i,t}:B(q_{t},r_{i})\to B(p_{i}, t, r_{i})
\]
for all $t\in I''$, where $r_{i}\nearrow\infty$ and $q_{t}\in\mathcal{X}_{t}$. 

Note that if $t\in (-\infty, 0]$ and $x_{i}\in M_{i}$ are such that $(x_{i},t)\to x_{\infty}\in\mathcal{X}_{t}$
with respect to the Gromov-Hausdorff convergence, this means $\varphi_{t}^{i}(x_{i})\to\varphi_{t}^{\infty}(x_{\infty})$ in $(Z_{t},d_{t}^{Z})$, so $x_{i}$ converge strictly to $x_{\infty}$ within $\mathfrak{C}$ in the sense of \cite[Definition 6.22]{Bam20b}. By \cite[Theorem 6.23]{Bam20b}, we thus have $(x_{i},t)\xrightarrow[i\to\infty]{\mathfrak{C}}x_{\infty}$. In particular, we then have
\[
\lim_{i\to\infty}d^{\cX_s}_{W_{1}}\left((\varphi_{s}^{i})_{\ast}\nu_{x_{i},t;s}^{i},(\varphi_{s}^{\infty})_{\ast}\nu_{x_{\infty};s}\right)=0
\]
for all $s\in(-\infty,t)\cap I'$. 

\begin{claim}\label{W1betqtqs}
$d_{W_{1}}^{\mathcal{X}_{s}}(\nu_{q_{t};s},\delta_{q_{s}})\leq C(s)\sqrt{t-s}$
for all $s,t\in I''$ with $s\leq t$, $t-s\leq \theta(s)$, where $\theta(s)>0$ is a small constant.     
\end{claim}

%
\begin{proof}
For sufficiently large $i\in\mathbb{N}$, we can estimate
\begin{align*}
d_{W_{1}}^{\mathcal{X}_{s}}(\nu_{q_{t};s},\delta_{q_{s}})\leq & d_{W_{1}}^{Z_{s}}\left((\varphi_{s}^{\infty})_{\ast}\nu_{q_{t};s},(\varphi_{s}^{i})_{\ast}\nu_{\psi_{i,t}(q_{t}),t;s}^{i}\right)+d_{W_{1}}^{g_{i,s}}\left(\nu_{\psi_{i,t}(q_{t}),t;s}^{i},\delta_{\psi_{i,t}(q_{t})}\right)\\
& +d_{g_{i,s}}(\psi_{i,t}(q_{t}),\psi_{i,s}(q_{s}))+d_{s}^{Z}\left((\varphi_{s}^{i}\circ\psi_{i,s})(q_{s}),\varphi_{s}^{\infty}(q_{s})\right),
\end{align*}
and previous arguments give 
$$
\lim_{i\to\infty}d_{W_{1}}^{Z_{s}}\left((\varphi_{s}^{\infty})_{\ast}\nu_{q_{t};s},(\varphi_{s}^{i})_{\ast}\nu_{\psi_{i,t}(q_{t}),t;s}^{i}\right)=0=\lim_{i\to\infty}d_{s}^{Z}\left((\varphi_{s}^{i}\circ\psi_{i,s})(q_{s}),\varphi_{s}^{\infty}(q_{s})\right) .
$$
Moreover, Lemma \ref{W1balongwl} gives
\[
d_{W_{1}}^{g_{i,s}}\left(\nu_{\psi_{i,t}(q_{t}),t;s}^{i},\delta_{\psi_{i,t}(q_{t})}\right)\leq C(s)\sqrt{t-s}.
\]
Using the short-time distortion estimate Proposition \ref{gddlb}, we moreover have
\[
d_{g_{i,s}}(\psi_{i,t}(q_{t}),\psi_{i,s}(q_{s}))\leq d_{g_{i,t}}(\psi_{i,t}(q_{t}),p_{i})+d_{g_{i,s}}(\psi_{i,s}(q_{s}),p_{i})+C(s)\sqrt{t-s},
\]
for all $t$ close to $s$, so combining estimates gives the claim.
\end{proof}

\begin{claim}\label{extoggam}
$\gamma:I''\to\mathcal{X}$, $t\mapsto q_{t}$ uniquely extends to a continuous path $\gamma:(-\infty,0)\to\mathcal{X}$ (with respect to the natural topology of $\mathcal{X}$), satisfying 
\[
d_{W_{1}}^{\mathcal{X}_{s}}(\nu_{\gamma(t);s},\delta_{\gamma(s)})\leq C(s)\sqrt{t-s} ,
\]
for all $s\leq t$, $t-s\leq \theta(s)$, where $\theta(s)>0$ is a small constant. 
\end{claim}
\begin{proof}
Fix $t_{0}\in (-\infty,0)$, and define $\mu_{t_{0}}:I''\cap(t_{0},0)\to\mathcal{P}(\mathcal{X}_{t_{0}})$,
$t\mapsto\nu_{\gamma(t);t_{0}}$. For any $t_{1},t_{2}\in I''\cap(t_{0}, 0)$
with $t_{1}\leq t_{2}$, Claim \ref{W1betqtqs} gives
\[
d_{W_{1}}^{\mathcal{X}_{t_{0}}}(\mu_{t_{0}}(t_{1}),\mu_{t_{0}}(t_{2}))\leq d_{W_{1}}^{\mathcal{X}_{t_{1}}}(\nu_{q_{t_{2}};t_{1}},\delta_{q_{t_{1}}})\leq C(t_0)\sqrt{t_{2}-t_{1}}.
\]
In particular, $\mu_{t_{0}}$ is local $\frac{1}{2}$-H\"older continuous, so extends uniquely to a local $\frac{1}{2}$-H\"older continuous map $\mu_{t_{0}}:[t_{0},0)\to\mathcal{P}(\mathcal{X}_{t_{0}})$. Moreover, by \cite[Lemma 2.10]{Bam20b}, $\text{Var}(\mu_{t_{0}}(t))\leq H_{2n}(t-t_{0})$ passes to the limit to give $\text{Var}(\mu_{t_{0}}(t_{0}))=0$. That is, $\mu_{t_{0}}(t_{0})=\delta_{q_{t_{0}}}$for some $q_{t_{0}}\in\mathcal{X}_{t_{0}}$; this allows us to define $\gamma$ as (not a priori continuous) map $(-\infty, 0)\to\mathcal{X}$. Also observe that 
\begin{equation}\label{W1eagam}
d_{W_{1}}^{\cX_{t_0}}(\delta_{\gamma(t_{0})},\nu_{\gamma(t);t_{0}})\leq C(t_0)\sqrt{t-t_{0}} ,    
\end{equation}
for any $t\in[t_{0},t_{0}+\theta(t_{0}))\cap I''$; by Claim \ref{W1betqtqs}, we have $\gamma|_{I''}$ agrees with its original definition. 

Suppose $t_{i}\in (-\infty,0)$ satisfy $t_{i}\to t\in (-\infty , 0)$. Fix $s\in(-\infty,t)\cap I''$, choose $t_{i}^{\ast}\in(\max(t_{i},t),\max(t_{i},t)+i^{-1})\cap I''$, by (\ref{W1eagam}) we can estimate
\begin{align*}
d_{W_{1}}^{\mathcal{X}_{s}}(\nu_{\gamma(t_{i});s},\nu_{\gamma(t);s})\leq & d_{W_{1}}^{\mathcal{X}_{s}}(\nu_{\gamma(t_{i});s},\nu_{\gamma(t_{i}^{\ast});s})+d_{W_{1}}^{\mathcal{X}_{s}}(\nu_{\gamma(t);s},\nu_{\gamma(t_{i}^{\ast});s})\\
\leq & d_{W_{1}}^{\mathcal{X}_{t_{i}}}(\delta_{\gamma(t_{i})},\nu_{\gamma(t_{i}^{\ast});t_{i}})+d_{W_{1}}^{\mathcal{X}_{t}}(\delta_{\gamma(t)},\nu_{\gamma(t_{i}^{\ast});t})\\
\leq & C(s)\sqrt{i^{-1}+|t-t_{i}|}.
\end{align*}
Therefore, we have $\lim_{i\to\infty}d_{W_{1}}^{\mathcal{X}_{s}}(\nu_{\gamma(t_{i});s},\nu_{\gamma(t);s})=0$, hence $\gamma:(-\infty,0)\to\mathcal{X}$ is continuous. 
\end{proof}
\begin{claim}\label{nonvanofvol}
For any $t\in (-\infty , 0)\cap I'$, $D<\infty$ and $r>0$, there exists $c=c(t, D, r)>0$, such that $\nu_{p_{\infty};t}^{\infty}(B(x,r))\geq c$ for all $x\in B(p_{t},D)$.     
\end{claim}
\begin{proof}
In fact, for any such $x$, we know that 
\[
\nu_{p_{i},0;t}^{i}\left( B(\psi_{i,t}(x),\frac{r}{2})\right) \geq c(t,D, r)>0,
\]
for all sufficiently large $i\in\mathbb{N}$, see the proof of \cite[Theorem 7.3]{JST23a}. Then the weak convergence
\[
(\varphi_{t}^{i})_{\ast}\nu_{p_{i},0;t}^{i}\to(\varphi_{t})_{\ast}\nu_{p_{\infty};t}^{\infty}
\]
and $(\varphi_{t}^{i}\circ\psi_{i,t})(x)\to\varphi_{t}^{\infty}(x)$ imply
\begin{align*}
\nu_{p_{\infty};t}^{\infty}(B(x,r))\geq & \left((\varphi_{t})_{\ast}\nu_{p_{\infty};t}^{\infty}\right)(B^{Z_{t}}(\varphi_{t}(x),r))  \\
\geq & \limsup_{i\to\infty}\left((\varphi_{t}^{i})_{\ast}\nu_{p_{i},0;t}^{i}\right)\left(\overline{B}^{Z_{t}}(\varphi_{t}(x),\frac{3r}{4})\right)\\
\geq & \limsup_{i\to\infty}\left((\varphi_{t}^{i})_{\ast}\nu_{p_{i},0;t}^{i}\right)\left(B^{Z_{t}}((\varphi_{t}^{i}\circ\psi_{i,t})(x),\frac{r}{2})\right)\\
= & \limsup_{i\to\infty}\nu_{p_{i},0;t}^{i}(B(\psi_{i,t}(x),\frac{r}{2}))\geq c(t,D,r).
\end{align*}
\end{proof}

\begin{claim}\label{GHconoflim}
For every $t_{0}\in (-\infty, 0)$, $(\mathcal{X}_{t_{0}},d_{t_{0}},q_{t_{0}})$ is the pointed Gromov-Hausdorff limit of $(\mathcal{X}_{t},d_{t},q_{t})$ as $t\searrow t_{0}$ for $t\in I''$.
\end{claim}
\begin{proof}
Because $\mathcal{X}$ is future-continuous, we know that 
\[
\lim_{I''\ni t\searrow t_{0}}d_{GW_{1}}\left((\mathcal{X}_{t},d_{t},\nu_{p_{\infty};t}^{\infty}),(\mathcal{X}_{t_{0}},d_{t_{0}},\nu_{p_{\infty};t_0}^{\infty})\right)=0.
\]
Moreover, Claim \ref{nonvanofvol} implies that the measures $\nu_{p_{\infty};t}^{\infty},\nu_{p_{\infty};t_0}^{\infty}$ satisfy the hypotheses of \cite[Proposition 2.7]{Hal}, so that after passing to a subsequence, we have
\[
\lim_{I''\ni t\searrow t_{0}}d_{PGH}\left((\mathcal{X}_{t},d_{t},q_{t}),(\mathcal{X}_{t_{0}},d_{t_{0}},q_{t_{0}}')\right)=0
\]
for some $q_{t_{0}}'\in\mathcal{X}_{t_{0}}$. Moreover, the Gromov-Hausdorff convergence can be realized by metric embeddings 
\[
\phi_{t}':(\mathcal{X}_{t},d_{t})\hookrightarrow Z_{t}'
\]
\[
\phi_{t_0}':(\mathcal{X}_{t_{0}},d_{t_{0}})\hookrightarrow Z_{t}'
\]
such that 
\[
\lim_{I''\ni t\searrow t_{0}}d_{W_{1}}^{Z_{t}'}((\phi_{t}')_{\ast}\nu_{p_{\infty};t}^{\infty},(\phi_{t_0}')_{\ast}\nu_{p_{\infty};t_0}^{\infty})=0.
\]
The proof of \cite[Lemma 4.18]{Bam20b} shows that we can choose $Z_{t}':=\mathcal{X}_{t_{0}}\sqcup\mathcal{X}_{t}$, along with the natural embeddings, where 
\[
d_{t}^{Z'}(x,y):=\inf_{z\in\mathcal{X}_{t}}\left(d_{t}(y,z)+d_{W_{1}}^{\mathcal{X}_{t_{0}}}(\nu_{z;t_{0}},\delta_{x})\right)+\epsilon_{t}
\]
for $x\in\mathcal{X}_{t_{0}}$ and $y\in\mathcal{X}_{t}$, where $\lim_{t\searrow t_{0}}\epsilon_{t}=0$. Claim \ref{extoggam} then implies 
\[
d_{W_{1}}^{\mathcal{X}_{t_{0}}}(\nu_{q_{t};t_{0}},\delta_{q_{t_{0}}})\leq C(t_{0})\sqrt{t-t_{0}},
\]
for all $t$ close to $t_0$, so that 
\[
d_{t}^{Z'}(q_{t_{0}}, q_{t})\leq d_{W_{1}}^{\mathcal{X}_{t_{0}}}(\nu_{q_{t};t_{0}},\delta_{q_{t_{0}}})+\epsilon_{t},
\]
hence $\lim_{t\searrow t_{0}}d_{t}^{Z'}(q_{t_{0}},q_{t})=0$. That is, $q_{t}$ converge to $q_{t_{0}}$ with respect to the Gromov-Hausdorff convergence, so we must have $q_{t_{0}}'=q_{t_{0}}$. 
\end{proof}

Now, for any $t\in (-\infty,0)$, choose a sequence $t_{i}\in I''$ with $t_{i}\searrow t$. Then we can estimate 
\begin{align*}
d_{PGH} &\left((M_{i},d_{g_{i,t}},p_{i}),(\mathcal{X}_{t},d_{t},q_{t})\right)\leq d_{PGH}\left((M_{i},d_{g_{i,t}},p_{i}),(M_{i},d_{g_{i,t_{i}}},p_{i})\right)\\
& +d_{PGH}\left((M_{i},d_{g_{i,t_{i}}},p_{i}),(\mathcal{X}_{t_{i}},d_{t_{i}},p_{t_{i}})\right) +d_{PGH}\left((\mathcal{X}_{t_{i}},d_{t_{i}},p_{t_{i}}),(\mathcal{X}_{t},d_{t},q_{t})\right),
\end{align*}
but $\lim_{i\to\infty}d_{PGH}\left((M_{i},d_{g_{i,t}},p_{i}),(M_{i},d_{g_{i,t_{i}}},p_{i})\right)=0$
via the identity map and locally uniform distortion estimates, say Proposition \ref{gddlb} and Proposition \ref{gddub2}, 
$$
\lim_{i\to\infty}d_{PGH}\left((M_{i},d_{g_{i,t_{i}}},p_{i}),(\mathcal{X}_{t_{i}},d_{t_{i}},q_{t_{i}})\right)=0
$$
by the Gromov-Hausdorff convergence at times in $I''$, and 
$$
\lim_{i\to\infty}d_{PGH}\left((\mathcal{X}_{t_{i}},d_{t_{i}},q_{t_{i}}),(\mathcal{X}_{t},d_{t},q_{t})\right)=0
$$
by Claim \ref{GHconoflim}. Thus, the locally uniformly continuous maps
\[
t\mapsto(M_{i},d_{g_{i,t}},p_{i})
\]
converge pointwise to $t\mapsto(\mathcal{X}_{t},d_{t},q_{t})$; this
implies that the convergence is actually locally uniform in $t$,
and in particular that the limit is continuous. 
\end{proof}

\begin{proposition}\label{GW1conofX}
$\mathcal{X}$ is a Gromov-$W_1$ continuous metric flow.
\end{proposition}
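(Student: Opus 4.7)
The plan is to upgrade the pointed Gromov-Hausdorff continuity of Proposition \ref{GHctoflmf} to Gromov-$W_1$ continuity. Since $\cX$ is future continuous by construction of the $\bF$-limit, it suffices to prove left continuity: for every $t_0\in(-\infty,0]$ and every sequence $t_i\nearrow t_0$, I need to exhibit a correspondence between $(\cX_{t_i},d_{t_i},\nu_{p_\infty;t_i})$ and $(\cX_{t_0},d_{t_0},\nu_{p_\infty;t_0})$ within which these measures are close in Wasserstein-$W_1$ distance.

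The first step is to realize the locally uniform pointed Gromov-Hausdorff convergence $(\cX_{t_i},d_{t_i},q_{t_i})\to(\cX_{t_0},d_{t_0},q_{t_0})$ as isometric embeddings of $\cX_{t_i}$ and $\cX_{t_0}$ into a common space $Z_i$, exactly as in Claim \ref{GHconoflim}. This produces the candidate correspondence, and the task reduces to bounding the Wasserstein distance between $(\varphi_{t_i})_\ast\nu_{p_\infty;t_i}$ and $(\varphi_{t_0}^i)_\ast\nu_{p_\infty;t_0}$ in $Z_i$ by a quantity that vanishes as $i\to\infty$.

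The key input will be the reproduction formula $\nu_{p_\infty;t_i}=\int_{\cX_{t_0}}\nu_{y;t_i}\,d\nu_{p_\infty;t_0}(y)$, combined with a short-time $W_1$ estimate on the limiting flow of the form $d_{W_1}^{\cX_{t_i}}(\nu_{y;t_i},\delta_{y'_i})\leq C(t_0)\sqrt{t_0-t_i}$, holding uniformly for $y$ in a bounded neighborhood of $q_{t_0}$, where $y'_i\in\cX_{t_i}$ is the partner of $y$ under the Gromov-Hausdorff approximation. This is the metric-flow analog of Lemma \ref{W1balongwl}; I expect to establish it by first proving the uniform smooth version on $(M_i,g_{i,\cdot})$ via $R\leq Cv$ and the locally uniform Nash entropy lower bound (using the oscillation bound \eqref{oscofNashinPstar} together with the base-point control from Perelman's reduced geometry), and then passing the estimate to the limit along the $\bF$-convergence.

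The main technical obstacle will be assembling these ingredients in a compatible way: choosing the partners $y'_i$ measurably in $y$ so that the induced transport plan integrates to a genuine coupling of $(\varphi_{t_i})_\ast\nu_{p_\infty;t_i}$ and $(\varphi_{t_0}^i)_\ast\nu_{p_\infty;t_0}$, and confining the integration to the region where the short-time $W_1$ estimate is uniform. Concentration of $\nu_{p_\infty;t_0}$ around $q_{t_0}$ via $H_{2n}$-centers and \cite[Lemma 2.10]{Bam20b} restricts the main contribution to a bounded region up to mass $\varepsilon$, after which a diagonal argument lets $\varepsilon\to 0$. Combining this with the triangle inequality and the locally uniform Gromov-Hausdorff control of time slices then gives $d_{W_1}^{Z_i}((\varphi_{t_i})_\ast\nu_{p_\infty;t_i},(\varphi_{t_0}^i)_\ast\nu_{p_\infty;t_0})\to 0$, which yields Gromov-$W_1$ continuity.
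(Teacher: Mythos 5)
Your proposal is correct in spirit, but it takes a genuinely different route from the paper, and the key technical obstacle you flag is real and worth being explicit about.

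The paper does not construct a coupling at all. It invokes \cite[Theorem 4.31]{Bam20b}, which reduces Gromov-$W_1$ continuity of a future-continuous, $H_{2n}$-concentrated metric flow to the single pointwise inequality
\[
\lim_{t\nearrow t_0} d_{W_1}^{\cX_t}(\nu_{x_1;t},\nu_{x_2;t}) \ge d_{t_0}(x_1,x_2)
\]
for all $x_1,x_2\in\cX_{t_0}$. It then verifies this by approximating $x_1,x_2$ by points $x_1',x_2'\in\cX_{t'}$ at a slightly \emph{later} time $t'>t_0$ (using future continuity via \cite[Claim 4.41]{Bam20b}), pulling these back to the smooth flows $M_i$, and applying the two-sided distance distortion of Propositions~\ref{gddlb}, \ref{gddub2} together with Lemma~\ref{W1balongwl}. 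That is: all the hard estimates happen on the smooth manifolds $M_i$, and the passage to the limit is a single $\limsup$ at the end. Your plan instead builds a genuine transport plan on the limiting flow via the reproduction formula $\nu_{p_\infty;t_i}=\int_{\cX_{t_0}}\nu_{y;t_i}\,d\nu_{p_\infty;t_0}(y)$ and estimates its cost. That can work, but you should be aware of two issues. First, your ``partner'' $y'_i\in\cX_{t_i}$ of $y\in\cX_{t_0}$ must simultaneously (i) be close to $y$ in the GH approximation space $Z_i$, and (ii) satisfy $d_{W_1}^{\cX_{t_i}}(\nu_{y;t_i},\delta_{y'_i})\le C\sqrt{t_0-t_i}$. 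These two notions of nearness are a priori unrelated: (i) comes from the isometric embeddings realizing pointed GH convergence, (ii) comes from the conjugate heat flow. The resolution is that the GH approximation of Proposition~\ref{GHctoflmf} goes through worldlines on $M_i$ and is compatible with the correspondence $\CCC$ (this is exactly the observation in its proof that GH-convergent points converge in the sense of \cite[Definition 6.22]{Bam20b}), so (i) and (ii) are both realized by a single $y'_i$ via $(y_j,t_i)\xrightarrow{\CCC} y'_i$ and Lemma~\ref{W1balongwl} on $M_j$; but you would have to spell this out. Second, for the tail it is not enough to know the tail mass is $\le\varepsilon$; you need the \emph{cost} contribution of the tail to be $\lesssim \varepsilon$, which requires a uniform first-moment bound on $\nu_{p_\infty;t_0}$ (this does hold, e.g.\ from the heat kernel Gaussian upper bound, but it is an extra ingredient that ``concentration up to mass $\varepsilon$'' alone does not supply). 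In the end your route reproves part of the content of \cite[Theorem 4.31]{Bam20b} by hand; the paper's route is shorter because it imports that reduction wholesale and never needs a coupling.
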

\begin{proof}
We use the notations from the Proposition \ref{GHctoflmf}.

Let $t_0\in (-\infty, 0)$ be any fixed time. By Theorem 4.31 in \cite{Bam20b}, it suffices to show that for all $x_{1},x_{2}\in\mathcal{X}_{t_{0}}$, we have
\[
\lim_{t\nearrow t_{0}}d_{W_{1}}^{\mathcal{X}_{t}}(\nu_{x_{1};t},\nu_{x_{2};t})\geq d_{t_{0}}(x_{1},x_{2}).
\]
Fix $\epsilon>0$, and let $I''$ be as in the Proposition \ref{GHctoflmf}. By an extension of Proposition 4.40 of \cite{Bam20b} (using the fact that $\mathcal{X}$ is future-continuous), we have

\begin{claim}\label{perofxj}
For all $t'\in I''\cap(t_{0},\infty)$ sufficiently close to $t_{0}$, we can find $x_{1}',x_{2}'\in\mathcal{X}_{t'}$ such that
\[
d_{W_{1}}^{\mathcal{X}_{t_{0}}}(\delta_{x_{j}},\nu_{x_{j}';t_{0}})<\epsilon,\hfill|d_{t'}(x_{1}',x_{2}')-d_{t_{0}}(x_{1},x_{2})|<\epsilon,
\]
\[
|d_{t'}(x_{j}',q_{t'})-d_{t_{0}}(x_{j},q_{t_{0}})|<1
\]
for $j=1,2$.
\end{claim}
\begin{proof}
Let $\phi_{t'}':(\mathcal{X}_{t'},d_{t'})\hookrightarrow Z_{t'}'$, $\phi_{t_0}':(\mathcal{X}_{t_{0}},d_{t_{0}})\hookrightarrow Z_{t'}'$ be as in the Proposition \ref{GHctoflmf}, so that
$$
\lim_{I''\ni t'\searrow t_{0}}d(\phi_{t'}'(q_{t'}),\phi_{t_{0}}'(q_{t_0}))=0 ,
$$
and $\lim_{t'\searrow t_{0}}d_{W_{1}}^{\mathcal{X}_{t_{0}}}(\delta_{q_{t_{0}}},\nu_{q_{t'};t_{0}})=0$. Because $\mathcal{X}$ is future continuous, Claim 4.41 of \cite{Bam20b} gives $x_{1}',x_{2}'\in\mathcal{X}_{t'}$ such that $d_{W_{1}}^{\mathcal{X}_{t_{0}}}(\delta_{x_{j}},\nu_{x_{j}';t_{0}})<\frac{\epsilon}{2}$ and $d^{Z_{t'}'}(\phi_{t'}'(x_{j}'),\phi_{t_{0}}'(x_{j}))<\frac{\epsilon}{2}$ if $t'\in I''\cap(t_{0},\infty)$ is sufficiently close to $t_{0}$. Then 
\begin{align*}
|d_{t'}(x_{1}',x_{2}') & -d_{t_{0}}(x_{1},x_{2})|=|d^{Z_{t'}'}(\phi_{t'}'(x_{1}'),\phi_{t'}'(x_{2}'))-d^{Z_{t'}'}(\phi_{t_{0}}'(x_{1}),\phi_{t_{0}}'(x_{2}))|\\
\leq & d^{Z_{t'}'}(\phi_{t'}'(x_{1}'),\phi_{t_{0}}'(x_{1}))+d^{Z_{t'}'}(\phi_{t'}'(x_{2}'),\phi_{t_{0}}'(x_{2})) \leq \epsilon,
\end{align*}
and the estimate for $|d_{t'}(x_{j}',q_{t'})-d_{t_{0}}(x_{j},q_{t_{0}})|$ is similar.
\end{proof}

Assuming $t'\in I''$, we also have $\psi_{i,t'}(x_{j}')\xrightarrow[i\to\infty]{}  x_{j}'$ with respect to the Gromov-Hausdorff convergence, and in particular $\psi_{i,t'}(x_{j}')\xrightarrow[i\to\infty]{\mathfrak{C}}x_{j}'$. Thus, we have 
\[
d_{t'}(x_{1}',x_{2}')=\lim_{i\to\infty}d_{g_{i,t'}}(\psi_{i,t'}(x_{1}'),\psi_{i,t'}(x_{2}'))  .
\]
Now for any $t\in I''\cap(-\infty,t_{0})$, by triangle inequality we have
\begin{equation}\label{dbbx1x2}
\begin{split}
&d_{W_{1}}^{\mathcal{X}_{t}}(\nu_{x_{1};t},\nu_{x_{2};t})  \\
\geq & d_{W_{1}}^{\mathcal{X}_{t}}(\nu_{x_{1}';t},\nu_{x_{2}';t})-d_{W_{1}}^{\mathcal{X}_{t}}(\nu_{x_{1}';t},\nu_{x_{1};t})-d_{W_{1}}^{\mathcal{X}_{t}}(\nu_{x_{2}';t},\nu_{x_{2};t})\\
\geq & \limsup_{i\to\infty}\big\{d_{W_{1}}^{g_{i,t}}(\nu_{\psi_{i,t'}(x_{1}'),t';t}^{i},\nu_{\psi_{i,t'}(x_{2}'),t';t}^{i})-d_{W_{1}}^{Z_{t}}((\varphi_{t}^{\infty})_{\ast}\nu_{x_{1}';t},(\varphi_{t}^{i})_{\ast}\nu^i_{\psi_{i,t'}(x_{1}'),t';t})\\
& -d_{W_{1}}^{Z_{t}}((\varphi_{t}^{\infty})_{\ast}\nu_{x_{2}';t},(\varphi_{t}^{i})_{\ast}\nu^i_{\psi_{i,t'}(x_{2}'),t';t})\big\}-d_{W_{1}}^{\mathcal{X}_{t_{0}}}(\nu_{x_{1}';t_{0}},\delta_{x_{1}})-d_{W_{1}}^{\mathcal{X}_{t_{0}}}(\nu_{x_{2}';t_{0}},\delta_{x_{2}})\\
\geq & \limsup_{i\to\infty}d_{W_{1}}^{g_{i,t}}(\nu_{\psi_{i,t'}(x_{1}'),t';t}^{i},\nu_{\psi_{i,t'}(x_{2}'),t';t}^{i})-2\epsilon.    
\end{split}
\end{equation}
Next, observe (by the Claim \ref{perofxj})
\begin{equation}\label{dbbxjpp}
\lim_{i\to\infty}d_{g_{i,t'}}(\psi_{i,t'}(x_{j}'),p_{i})=d_{t'}(x_{j}',q_{t'})\leq1+2\epsilon+d_{t_{0}}(x_{j},p_{t_{0}}),    
\end{equation}
hence by Proposition \ref{gddlb} and Proposition \ref{gddub2}, we have
\[
\limsup_{i\to\infty}|d_{g_{i,t'}}(\psi_{i,t'}(x_{1}'),\psi_{i,t'}(x_{2}'))-d_{g_{i,t}}(\psi_{i,t'}(x_{1}'),\psi_{i,t'}(x_{2}'))|\leq C(t_0,x_{1},x_{2})\sqrt{t'-t},
\]
and by Lemma \ref{W1balongwl} we have
\[
\limsup_{i\to\infty}d_{W_{1}}^{g_{i,t}}(\nu_{\psi_{i,t'}(x_{j}'),t';t},\delta_{\psi_{i,t'}(x_{j}')})\leq C(t_0,x_{j})\sqrt{t'-t} ,
\]
both for all $t'\in I''\cap(t_{0},\infty)$ and $t\in I''\cap(-\infty,t_{0})$ both sufficiently close to $t_{0}$. Combining estimates and Claim \ref{perofxj}, we get
\[
d_{W_{1}}^{\mathcal{X}_{t}}(\nu_{x_{1};t},\nu_{x_{2};t})-d_{t_0}(x_{1},x_{2})\geq -3\epsilon - C(t_0,x_{1},x_{2})\sqrt{t'-t}
\]
for all $t'\in I''\cap(t_{0},\infty)$ and $t\in I''\cap(-\infty,t_{0})$ both sufficiently close to $t_{0}$. Taking $t'\searrow t_{0}$ along $t'\in I''$ then yields
\[
d_{W_{1}}^{\mathcal{X}_{t}}(\nu_{x_{1};t},\nu_{x_{2};t})-d_{t_0}(x_{1},x_{2})\geq -3\epsilon - C(t_0,x_{1},x_{2})\sqrt{t_{0}-t},
\]
so that
\[
\lim_{t\nearrow t_{0}}d_{W_{1}}^{\mathcal{X}_{t}}(\nu_{x_{1};t},\nu_{x_{2};t})-d_{t_0}(x_{1},x_{2})\geq -3\epsilon.
\]
Since $\epsilon>0$ was arbitrary, the claim follows. 
\end{proof}

Now we can finish the proof of the main results.

\begin{proof}[\bf{Proof of Theorem \ref{mainforKRF1}}]
Theorem \ref{mainforKRF1} follows immediately from Proposition \ref{GHctoflmf} and Proposition \ref{GW1conofX}.
\end{proof}

\begin{remark}
In Proposition \ref{GHctoflmf} and Proposition \ref{GW1conofX}, we only consider the time $t\in (-\infty,0)$, but the results also hold for the time $t=0$. This can be seen in the following way. When we form the Type I blow-up limits of our original K\"ahler-Ricci flow, we take any sequence of times $t_i\nearrow 1$, and choose the base point as the Ricci vertex $p_{t_i}$ associated to the given $\theta_Y$. Here we can perturb the time $t_i$ to $\hat t_i= t_i+(1-t_i)/2$, but still with the base point as the Ricci vertex $p_{t_i}$. After this perturbation, the time $t=0$ in this section is the time $t=-1$ in the new sequence and limit, and Theorem 6.40 of \cite{Bam20b} would allow us to change the base point of the $\bF$-convergence from $(p_i,0)$ to $(p_i, -1)$. Now our Proposition \ref{GHctoflmf} and Proposition \ref{GW1conofX} apply for the new sequence of flows and limit, which implies the continuity of the original limit at time $t=0$.  
\end{remark}


\bigskip
\section{Structure of noncollapsed Ricci flows with locally bounded scalar curvature}\label{prfofthmmain}

In this section, we consider the more general set-up of Ricci flows with locally bounded scalar curvature, which is already appeared in \cite{JST23a}. First, we recall the definition of based barrier of the scalar curvature. Let $(M, g(t))_{t\in I}$ be a smooth Ricci flow on a compact $n$-dimensional manifold with the interval $I\subset \mathbb{R}$.

\begin{definition}[Based barrier of the scalar curvature] \label{bbofsc}
Let $v:M\times I\to \mathbb{R}$ be a $C^1$-function and $C<\infty$ be a constant.  We call $v$ a \textbf{$C$-barrier of $R_g$} if the following hold on $M\times I$:
\begin{enumerate}
\item $v\geq 1$;
\item $|\partial_t \ln v| + |\nabla \ln v|^2 \leq C$;
\item $R_{g} \leq Cv$.
\end{enumerate}
Let $(x_0, t_0)\in M\times I$ and $B<\infty$. Then we say $v$ is \textbf{$B$-based at $(x_0, t_0)$} if
$$
v(x_0, t_0)\leq B.
$$
\end{definition}
\begin{remark}
For finite time solution of K\"ahler-Ricci flow on projective manifolds, such based barrier functions arise naturally from the normalized Ricci potential.
Suppose $I=[-T, 0]$ for some $T\in (0, \infty]$. Let $\lambda>0$ be a rescaling factor. Denote by $\tilde g_t= \lambda^{-2} g_{\lambda^2 t}$ and $\tilde v(t)=v(\lambda^2 t)$, where $t\in [-\lambda^{-2}T, 0]$. If $v$ is $C$-barrier $B$-based at $(x_0, t_0)$ of $R_g$, then $\tilde v$ is $\lambda^{2}C$-barrier $B$-based at $(x_0, \lambda^{-2}t_0)$ of $R_{\tilde g}$.
\end{remark}
Now, let $( M_i, (g_{i,t})_{t\in [-T_i, 0]} , (p_i, 0) )$ be a sequence of pointed Ricci flows on compact manifolds of dimension $n$ and $T_{\infty}:= \lim_{i\to\infty} T_i$. By the results of \cite{Bam20b}, passing to a subsequence, we can obtain $\bF$-convergence (see Definition \ref{mfpairs} and Definition \ref{corrspandFcon}) on compact time-intervals
\begin{equation}\label{FcofRF'1}
(M_i, (g_{i,t})_{t\in [-T_i , 0]}, (\nu_{p_i,0; t})_{t\in [-T_i , 0]}) \xrightarrow[i\to\infty]{\bF,\CCC}  (\cX, (\nu_{p_\infty; t})_{t\in (-T_{\infty} , 0]}),
\end{equation}
within some correspondence $\CCC$, where $\cX$ is a future continuous and $H_{n}$-concentrated metric flow of full support over $(-T_{\infty} , 0]$.
For the non-collapsing assumption, we assume that, for some uniform $Y_0<\infty$, we have
\begin{equation}\label{lbofNashatbasep}
\nu[g_{i, -T_i}, 2T_i]\geq -Y_0.
\end{equation}
According to \cite{Bam20c}, we can decompose $\cX$ into its regular and singular part 
\begin{equation}\label{rsd'1}
\cX= \cR \sqcup \cS,
\end{equation}
where $\cR$ is a dense open subset of $\cX$. The singular set $\cS$ has parabolic Minkowski dimension $\leq n-2$. Also, $\cR$ carries the structure of a Ricci flow spacetime $(\cR, \mathfrak{t}, \partial_{\mathfrak{t}}, g)$. For any $t\in (-T_{\infty}, 0)$, $\cR_t=\cX_t\cap\cR$, we have that $(\cX_t, d_t)$ is the metric completion of $(\cR_t, g_t)$.
For the local scalar curvature bound assumption, we suppose there exist a sequence of constants $C_i<\infty$ and a sequence of functions $v_i$, such that $v_i$ is a $C_i$-barrier of $R_{g_i}$ and $Y_0$-based at $(p_i, 0)$ for each $i$. 

We have the following improvement on the convergence.
\begin{theorem}[Theorem 7.3 in \cite{JST23a}]\label{FtoGHfromJST}
Suppose we have $C_i\leq Y_0$ for all $i$. Then for every $t\in (-T_\infty, 0)$ where (\ref{FcofRF'1}) is time-wise, passing to a subsequence, we have that $(M_{i}, d_{g_{i,t}}, p_i)$ converge to $(\cX_{t}, d_{t}, q_{t})$ in the Gromov-Hausdorff topology for some $q_{t}\in \cX_{t}$.
\end{theorem}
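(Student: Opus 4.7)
The plan is to promote time-wise $\bF$-convergence to pointed Gromov-Hausdorff convergence on the slice $\cX_t$, using the uniform barriers $v_i$ (with $C_i \leq Y_0$) as a surrogate for a global scalar curvature bound. The proof has three stages: propagate the barrier to obtain locally uniform geometric control, extract concentration of conjugate heat kernel measures, and run a net-extraction argument to identify limit points.

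First I would propagate $v_i$. Since $v_i(p_i,0)\leq Y_0$ and $|\partial_t\ln v_i|+|\nabla\ln v_i|^2\leq Y_0$, integration along spatial and temporal curves gives $v_i\leq C(D,J)$, and hence $R_{g_i}\leq C'(D,J)$, on any parabolic region $\bigcup_{s\in J}B(p_i,s,D)$ with $J\subset(-T_\infty,0)$ compact and $D<\infty$ (the preliminary distance control needed to make such propagation meaningful follows from standard Ricci-flow distortion bounds inside the already-controlled region). Combined with the entropy hypothesis $\nu[g_{i,-T_i},2T_i]\geq -Y_0$, Perelman's monotonicity, and (\ref{NM}), we obtain a uniform Nash entropy lower bound $\cN^*_s(x,s')\geq -C$ at basepoints in this region. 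Lemma~\ref{lvnc} then yields volume non-collapsing $|B(x,s,r)|_{g_{i,s}}\geq cr^n$, and Bamler's non-inflating estimate gives the matching upper bound, uniformly for $x\in B(p_i,s,D)$ and $r\leq r_0(D,J)$.

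Second, I would establish the key conjugate heat kernel concentration: for $x_i\in B(p_i,t,D)$ and small $r>0$, Lemma~\ref{W1balongwl} (whose scalar curvature and entropy hypotheses are verified in step one) yields
$$d_{W_1}^{g_{i,t-r^2}}\bigl(\nu_{x_i,t;t-r^2},\delta_{x_i}\bigr)\leq Cr.$$
This identifies each point with its conjugate heat measure at a slightly earlier time, up to error $O(r)$. Third, I would extract Gromov-Hausdorff convergence by a net argument. Fix $\epsilon, D>0$ and choose a maximal $\epsilon$-net $\{x_{i,k}\}_{k=1}^{N}\subset B(p_i,t,D)$; step one bounds $N\leq N(D,\epsilon)$ uniformly. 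Passing to a subsequence, each $x_{i,k}$ converges within the correspondence $\CCC$---specifically, its conjugate heat measure at time $t-r^2$ (with $r\ll\epsilon$) converges in $W_1$ on $Z_{t-r^2}$, so step two anchors a limit point $x_{\infty,k}\in\cX_t$. The concentration estimate together with the $W_1$-control intrinsic to $\bF$-convergence forces the pairwise distances $d_{g_{i,t}}(x_{i,k},x_{i,l})$ to pass to $d_t(x_{\infty,k},x_{\infty,l})$ in the limit. Non-vanishing of $\nu_{p_\infty;t}^\infty$ on small balls, obtained by transferring the volume lower bound through weak convergence (as in Claim~\ref{nonvanofvol}), ensures the limit net is $\epsilon$-dense in the corresponding ball of $\cX_t$. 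A diagonal argument in $D,\epsilon$ then delivers $(M_i,d_{g_{i,t}},p_i)\to(\cX_t,d_t,q_t)$ in the pointed Gromov-Hausdorff sense, with $q_t$ obtained as the limit of $p_i$ itself.

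The main obstacle is the conversion in step three from convergence of probability measures (the content of $\bF$-convergence) to pointwise convergence of metric spaces. The bridge is the concentration estimate of step two, whose validity depends crucially on the locally uniform scalar curvature bound from the barrier in step one; without $C_i\leq Y_0$, the prefactor in Lemma~\ref{W1balongwl} would blow up and the net extraction would fail. A subsidiary delicacy is transferring non-inflation to the limit so that the ball $B(q_t,D)\subseteq\cX_t$ is totally bounded; this is handled by combining weak convergence of the heat kernel measures with the two-sided volume bounds established in step one.
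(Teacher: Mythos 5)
Your Step~1 is sound, and the high-level ambition (upgrade time-wise $\bF$-convergence at $t$ to pointed Gromov--Hausdorff convergence of the $t$-slices) is the right one; the gap is in what drives the anchoring in Step~3. The key quantitative input the argument actually requires is a lower bound on conjugate heat kernel mass on balls, $\nu_{p_i,0;t}(B(y,t,r))\geq c(D,r)$ for all $y\in B(p_i,t,D)$; this follows from the conjugate heat kernel lower bound $K(p_i,0;y,t)\geq C^{-1}$ (Proposition~\ref{hklb}, quoted from \cite{JST23a}) combined with non-collapsing. Your Step~2 estimate $d_{W_1}(\nu_{x_i,t;t-r^2},\delta_{x_i})\leq Cr$ (Lemma~\ref{W1balongwl}) is true but concerns a different measure --- the conjugate heat flow started at $(x_i,t)$ going backward --- and cannot replace the lower bound on the fixed measure $\nu_{p_i,0;t}$ evaluated near $x_i$ at time $t$. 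Your phrase ``transferring the volume lower bound through weak convergence'' conflates the two: a volume lower bound alone does not bound $\nu_{p_i,0;t}(B(y,r))$ from below without the heat kernel lower bound. Your appeal to Claim~\ref{nonvanofvol} is also circular, since the proof of that claim is itself deferred to the proof of the very statement you are trying to establish.

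Once the measure lower bound is in hand, the cleaner route (and the one this paper invokes via the proof of Theorem~7.3 of \cite{JST23a}, see Proposition~\ref{GHctoflmf} and Corollary~\ref{CorforRF2}) is to apply \cite[Proposition 2.7]{Hal}, which upgrades Gromov--$W_1$ convergence of metric measure spaces to pointed Gromov--Hausdorff convergence at the single time $t$ where the $\bF$-convergence is time-wise, without ever comparing distances across different times. Your net argument, in contrast, compares $d_{g_{i,t}}$ with $d_{g_{i,t-r^2}}$, which requires a two-sided distortion estimate $|d_{g_{i,t}}(x,y)-d_{g_{i,t-r^2}}(x,y)|\leq Cr$. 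In the general barrier setting of Section~\ref{prfofthmmain} only the weaker Proposition~\ref{stdde} is available; the good distortion estimates (Propositions~\ref{gddlb} and \ref{gddub2}) rely on the Harnack inequality Theorem~\ref{HarnackiofKRF} and are established only for the K\"ahler--Ricci flow in Section~\ref{ApptotheKRF}, so that step of your outline is not justified in the setting where Theorem~\ref{FtoGHfromJST} lives.
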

The main result of this section is the characterization of the time-slices of the limiting metric flow $\cX$.

\begin{theorem}\label{mainforRF1}
Suppose we have $C_i\leq Y_0$ for all $i$. Then for every $t\in (-T_\infty, 0)$, the following statements hold.
\begin{enumerate}
%
\item $(\cX_t, d_t,\cR_t, g_t)$ is a singular space of dimension $n$, in the sense of Definition \ref{sp}.
\item We have the Minkowski dimension estimate 
$$\dim_{\mathcal{M}}\cS_{t}\leq n-4.$$
\end{enumerate}
\end{theorem}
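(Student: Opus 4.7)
The plan is to combine the local scalar-curvature control from the barrier $v_i$ with Bamler's partial regularity from \cite{Bam20c} and a localization of the codimension-four stratification argument of \cite[Section 6]{Bam18}. The proof naturally splits into verifying the four axioms of Definition \ref{sp} for $(\cX_t, d_t, \cR_t, g_t)$, and then establishing the Minkowski dimension estimate $\dim_{\mathcal{M}} \cS_t \leq n - 4$.

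I would first check the singular-space structure. Axioms (1)--(3) of Definition \ref{sp} are essentially built into the $\bF$-limit structure: $(\cX_t, d_t)$ is locally compact, complete and length by construction, $\cR_t$ inherits its smooth Riemannian metric as a time slice of the Ricci flow spacetime $(\cR, \mathfrak{t}, \partial_\mathfrak{t}, g)$ from \cite{Bam20c}, and the metric completion identity is automatic. Axiom (4), the two-sided volume estimate $\kappa_1 r^n \leq |B(x, r) \cap \cR_t| \leq \kappa_2 r^n$, is the main point. The bounds $|\partial_t \ln v_i| + |\nabla \ln v_i|^2 \leq C_i \leq Y_0$ combined with $v_i(p_i, 0) \leq Y_0$ propagate to give $v_i \leq C(K)$ on any compact spacetime region $K$, hence a uniform scalar curvature bound $R_{g_i} \leq Y_0 v_i \leq C'(K)$ there. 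Bamler's volume noninflating estimate then yields the upper bound, while the lower bound follows from Lemmas \ref{lvnc} and \ref{hcv} once a local Nash entropy lower bound is in force; the latter is propagated from the hypothesis $\nu[g_{i, -T_i}, 2T_i] \geq -Y_0$ via (\ref{NM}) and the oscillation estimate (\ref{oscofNashinPstar}).

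For the codimension-four estimate, I would follow the quantitative stratification scheme of \cite[Section 6]{Bam18}. The core $\epsilon$-regularity input is that a point $x \in \cX_t$ whose tangent cone is sufficiently Gromov-Hausdorff close to Euclidean $\mathbb{R}^n$ has positive curvature radius on a ball of definite size. Tangent cones are extracted by parabolic rescaling at $(x, t)$; the key observation is that, under a rescaling by $\lambda \geq 1$, the rescaled barrier $\tilde v_i(t) := v_i(\lambda^2 t)$ is a $\lambda^{-2} C_i \leq Y_0$-barrier which remains $Y_0$-based, so the rescaled sequence still satisfies the hypotheses of Theorem \ref{FtoGHfromJST}. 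Tangent cones therefore arise simultaneously as pointed Gromov-Hausdorff limits and as time slices of $\bF$-limits, and inherit Bamler's partial regularity structure. A cone-splitting argument in the spirit of Cheeger-Naber, combined with the codimension-two stratification from \cite{Bam20c}, then improves the stratification to codimension four, yielding the Minkowski bound.

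The main technical obstacle will be the identification, in the tangent-cone extraction step, of pointed Gromov-Hausdorff convergence of base points with $\CCC$-convergence in the sense of Definition \ref{cwithincor}. This identification is needed so that conjugate heat kernels, Nash entropies, and hence the partial regularity structure of \cite{Bam20c}, transfer along tangent-cone sequences. I would handle it using the template developed in Proposition \ref{GHctoflmf} and Proposition \ref{GW1conofX}: combine the estimate $d_{W_1}^{g_{i,s}}(\nu_{x_i, t; s}, \delta_{x_i}) \leq C\sqrt{t - s}$ from Lemma \ref{W1balongwl} with the future-continuity of $\cX$ and the short-time distortion estimates of subsection \ref{gddub}, and compare to weak convergence of the heat measures in the correspondence $\CCC$. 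Once this identification is available, the remainder of the stratification argument from \cite{Bam18} adapts essentially verbatim to the present local setting.
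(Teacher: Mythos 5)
Your high-level plan correctly identifies the central technical challenge --- reconciling pointed Gromov-Hausdorff convergence of basepoints with $\CCC$-convergence --- and the crucial observation that the barrier $v_i$ rescales favorably so that blow-ups retain the hypotheses. However, the argument as laid out has two substantive gaps, both related to the order of implications and to what \cite[Section 6]{Bam18} actually does.

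First, the volume bound in Axiom (4) is not the elementary consequence of Lemmas \ref{lvnc}, \ref{hcv} and Nash-entropy propagation that you describe. The issue is that Axiom (4) asks for a lower bound on $|B(x_\infty, r)\cap\cR_{t_0}|$, the volume of the \emph{regular part} of the ball, not on $\liminf_i|B_{g_{i,t_0}}(x_i,r)|$. The latter controls the former only if one can locate, within bounded distance of $x_\infty$, a point $y_\infty\in\cR_{t_0}$ with $r_{\Rm}(y_\infty)\geq c\,r$, so that a genuine $\cR$-ball of definite volume sits inside $B(x_\infty, Cr)$. The paper does this in Proposition \ref{eoyinftyandtilyinfty}(3), and that proposition in turn invokes the integral curvature estimate of Lemma \ref{iceatzi} to find such a $y_\infty$. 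So the volume lower bound is downstream of the integral-curvature machinery, not independent of it; the order of dependencies in your outline is reversed.

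Second, the codimension-four Minkowski estimate is not obtained by a Cheeger-Naber cone-splitting argument, neither in \cite[Section 6]{Bam18} nor here. The actual mechanism is an iterated covering estimate (Proposition \ref{eocnoMi}): one covers $\{r_{\Rm}\leq\lambda r\}$ inside a $P^*$-ball of scale $r$ by at most $H\lambda^{\mathbf{p}-n}$ sub-$P^*$-balls of scale $\lambda r$, for any $\mathbf{p}<4$. The key lemma (Lemma \ref{icbax0}) is proved by contradiction-and-blow-up: if the covering estimate fails at some sequence of scales $r_i\to 0$, one rescales to unit scale, observes that the rescaled scalar curvature $r_i^2 R_{g_i}\to 0$ at the basepoint so that by $\int|\ric|^2\,d\nu\,dt\to 0$ and \cite[Theorem 15.60]{Bam20c} the blow-up limit is \emph{static}, and then applies the sharp static-limit result \cite[Lemma 15.27]{Bam20c} to derive a contradiction. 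Iterating the covering yields the integral curvature bound $|\{r_{\Rm}\leq sr\}\cap P^*(\cdot;Br,\cdot)|\leq E_{\mathbf p}s^{\mathbf p}r^n$ (Proposition \ref{icbonflow}), which passes to the limit (Proposition \ref{icbonlimitflow}) and directly gives the Minkowski bound via \cite[Definition 1.9]{Bam17}. At no point is cone-splitting or a symmetry stratification invoked at the level of this theorem; tangent cones (Corollary \ref{CorforRF2}, Proposition \ref{tangentcmc}) only appear afterwards to establish the cone property, and are derived \emph{from} the covering machinery and the distance-distortion estimates, not the other way around. A Cheeger-Naber scheme might ultimately be made to work, but it would require independently setting up a quantitative pinching quantity, and it is not the argument being localized here.

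To repair the proposal: first prove the covering estimate (Proposition \ref{eocnoMi}) and the integral curvature bound (Propositions \ref{icbonflow}, \ref{icbonlimitflow}) via the rescale-to-static contradiction argument; then use the resulting curvature-radius control to produce the regular point $y_\infty$ of Proposition \ref{eoyinftyandtilyinfty}, from which Axiom (4) and the Minkowski bound both follow. The $P^*$-neighborhood barrier-propagation (Proposition \ref{viprop3}) and the identification of GH-convergent basepoints with $\CCC$-convergent ones (Proposition \ref{copuug'1}, using \cite[Theorem 9.58]{Bam20b} and the heat-kernel lower bound Proposition \ref{hklb}) are needed throughout and you are right to flag them as the key technical inputs.
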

We first remark that, when the sequence of Ricci flows have globally uniformly bounded scalar curvature, Theorem \ref{mainforRF1} is proved by Bamler (cf. \cite{Bam18}). Here we extend Bamler's results to Ricci flows with locally bounded scalar curvature.

We also remark that, if we assume $\lim_{i\to\infty}C_i= 0$ and $\lim_{i\to\infty} \inf_{M_i\times \left\{-T_i\right\} } R_{g_i}\geq 0$, then $\cX$ is a static limit, hence the conclusion of \cite[Theorem 2.16]{Bam20c} holds for the limiting metric flow $\cX$. Hence in such case, Theorem \ref{mainforRF1} is already proved by Bamler. In conclusion, we have the following corollary.
\begin{corollary}\label{CorforRF1}
Suppose $\lim_{i\to\infty}C_i= 0$ and $\lim_{i\to\infty} \inf_{M_i\times \left\{-T_i\right\} } R_{g_i}\geq 0$. Then the conclusion of \cite[Theorem 2.16]{Bam20c} holds for $\cX$. Moreover, for every $t\in (-T_\infty, 0)$, passing to a subsequence, we have $(M_{i}, d_{g_{i,t}}, p_i)$ converge to $(\cX_{t}, d_{t}, q_{\infty})$ in the Gromov-Hausdorff topology for some $q_{\infty}\in \cX_{t}$.
\end{corollary}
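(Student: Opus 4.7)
The plan is to show that the extra hypotheses force $\cX$ to be a static (time-constant) noncollapsed metric flow arising as a limit with vanishing scalar curvature, after which Bamler's \cite[Theorem 2.16]{Bam20c} applies essentially verbatim.

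First I would establish that $R_{g_i} \to 0$ locally uniformly near the basepoint. From $|\nabla \ln v_i|^2 \leq C_i$ and $|\partial_t \ln v_i| \leq C_i$ together with $v_i(p_i,0) \leq Y_0$, integrating along spacetime paths yields
\[
v_i(x,t) \leq Y_0 \exp\!\left(\sqrt{C_i}\, d_{g_{i,t}}(x,p_i) + C_i|t|\right).
\]
The $\bF$-convergence and the distance distortion estimates of Section \ref{ApptotheKRF} (Propositions \ref{gddlb}, \ref{gddub2}, \ref{gwdde}) bound $d_{g_{i,t}}(x,p_i)$ uniformly on any bounded $P^*$-neighborhood of $(p_i,0)$, so $v_i \leq \tilde Y_0$ and hence $R_{g_i} \leq C_i \tilde Y_0 \to 0$ there. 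For a complementary lower bound, the maximum principle $\partial_t R \geq \Delta R + \tfrac{2}{n}R^2$ together with $\inf_{M_i \times \{-T_i\}} R_{g_i} \to 0$ propagates forward to yield $R_{g_i} \geq -\varepsilon_i$ on bounded time intervals, with $\varepsilon_i \to 0$.

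Next I would upgrade vanishing scalar curvature to a static structure on $\cX$. Bamler's partial regularity from \cite{Bam20c} provides local smooth Cheeger-Gromov convergence $(M_i, g_i) \to (\cR, g)$ around any point of the regular set $\cR \subset \cX$, so Step~1 passes to the limit as $R \equiv 0$ on $\cR$. The smooth evolution identity $\partial_t R = \Delta R + 2|\ric|^2$ then forces $\ric \equiv 0$ on $\cR$, and consequently $\partial_t g = -2 \ric = 0$ along integral curves of $\partial_{\mathfrak t}$. Ricci-flatness moreover makes the forward and conjugate heat kernels on $\cR$ time-translation invariant, so the transition measures $\nu_{x;s}$ of $\cX$ (first for $x \in \cR$, then for all $x \in \cX$ by density and future continuity) are supported on a common spatial cross-section. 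This identifies every $(\cX_t, d_t)$ canonically with a single singular space, i.e., $\cX$ is static. At this point the hypotheses of \cite[Theorem 2.16]{Bam20c} are in force and its conclusion transfers directly to $\cX$. For the ``moreover'' clause, Theorem \ref{FtoGHfromJST} applies since $C_i \leq Y_0$ eventually, giving pointed Gromov--Hausdorff convergence at every time-wise $t$; staticity then extends this to every $t \in (-T_\infty, 0)$ by choosing $q_\infty$ in the common cross-section.

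The main obstacle will be Step~2: passing from pointwise Ricci-flatness on the open set $\cR$ to a genuine static structure on the full metric flow $\cX$. The subtlety is that the length-metric completion of $\cR_t$ need not a priori coincide with the metric flow time-slice $(\cX_t, d_t)$ in a manner compatible across different $t$. The cleanest route is to exploit that Ricci-flatness on $\cR$ makes the conjugate heat kernel a pure function of $t-s$ and the spatial variables, so the transition measures of $\cX$ inherit this invariance and canonically match the slices; combined with $H_n$-concentration and future continuity this upgrades to the required isometry between time-slices.
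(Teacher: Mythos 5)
Your proposal takes a genuinely different route from the paper, and the difference matters. The paper's proof is short and avoids your Step~2 entirely: since $v_i$ is $C_i$-barrier and $Y_0$-based, one gets $R_{g_i}(p_i,0)\le C_iY_0\to 0$ at the single base point; combined with $\inf_{M_i\times\{-T_i\}}R_{g_i}\to 0$ (and the monotonicity of $R_{\min}$ under Ricci flow), the identity $\frac{d}{dt}\int_{M_i}R\,d\nu_{p_i,0;t}=2\int_{M_i}|\ric|^2\,d\nu_{p_i,0;t}$ integrates to give exactly the estimate (\ref{scbatxk2}), i.e.\ $\int_{-T'}^0\int_{M_i}|\ric_{g_i}|^2\,d\nu_{p_i,0;t}\,dt\to 0$. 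That is precisely the hypothesis of \cite[Theorem 15.60]{Bam20c}, which directly gives that $\cX$ is a static limit; then staticity implies Gromov-$W_1$ continuity of $\cX$, hence time-wise $\bF$-convergence at every $t$, and Theorem \ref{FtoGHfromJST} finishes. No local smooth convergence or explicit analysis of the regular set $\cR$ is needed.

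Your route has two concrete problems. First, Step~1 is both more than is needed (only $R(p_i,0)$ needs to go to zero, not $R$ on a whole neighborhood) and invokes the wrong tools: Propositions \ref{gddlb}, \ref{gddub2}, \ref{gwdde} live in Section \ref{ApptotheKRF}, where the barrier $v_i$ is the specific normalized K\"ahler--Ricci potential with additional structure (the Schwarz lemma, the coupled equations (\ref{ceofvi})); Corollary \ref{CorforRF1} sits in the general Ricci-flow framework of Section \ref{prfofthmmain}, where only the $C_i$-barrier axioms of Definition \ref{bbofsc} and the estimates of Subsection \ref{preofbar} (Lemma \ref{viprop2}, Proposition \ref{viprop3}, Proposition \ref{stdde}) are available. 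If you want local boundedness of $v_i$ on $P^*$-balls, you should be citing Proposition \ref{viprop3}, not the K\"ahler-specific distortion bounds.

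Second, and more seriously, Step~2 contains a real gap that you correctly identify but do not close. From $R\equiv 0$ and $\partial_tR=\Delta R+2|\ric|^2$ you do get $\ric\equiv 0$ on the regular part $\cR$, hence $\partial_tg=0$ on $\cR$. But Bamler's notion of "static limit'' in \cite[Theorem 2.16]{Bam20c} is a structural statement about the whole metric flow $\cX$, including singular strata and the compatibility of all time-slices and all conjugate heat flows, not just pointwise Ricci-flatness of the smooth open part. The passage from the latter to the former is exactly what \cite[Theorem 15.60]{Bam20c} is engineered to handle, using the integral $\ric$-bound against conjugate heat measures rather than a pointwise argument. Your sketch (time-translation-invariant heat kernels on $\cR$, support of $\nu_{x;s}$ on a common cross-section, etc.) is not wrong in spirit, but it is not a proof, and reconstructing that machinery by hand is considerably harder than using the integral criterion the paper uses.
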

%


%
Throughout this section, unless otherwise stated, all the constants will depend on $n , Y_0$. We will omit this dependence in this section for convenience.


\medskip
\subsection{Preliminary results} \label{preofbar}

In this section, we recall some results established in \cite{JST23a}. First, we have
\begin{lemma}[Lemma 7.4 in \cite{JST23a}] \label{viprop2} 
For any $A, D<\infty$, there exists a constant $C<\infty$ depending on $A, D$, such that the following statements hold on the Ricci flow $(M_i, (g_{i,t})_{t\in [-T_i , 0]})$.
For any $(x_0, t_0)\in M_i\times [-T_i, 0]$, if
$$  
v_i(x_0, t_0)\leq A,
$$
then we have
\begin{enumerate}
\item $v_i(x_0, t)\leq C$ for all $t\in [ t_0-D , t_0+D ]\cap [-T_i, 0]$;
\item $v_i(x, t_0)\leq C$ for all $x\in B_{g_i}( x_0, t_0, D )$.
\end{enumerate}
\end{lemma}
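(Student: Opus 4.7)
The plan is to derive both bounds directly from the defining properties of a $C$-barrier in Definition \ref{bbofsc}. Under the standing hypothesis $C_i\leq Y_0$, the function $v_i$ satisfies $|\partial_t\ln v_i|\leq Y_0$ and $|\nabla\ln v_i|\leq\sqrt{Y_0}$ on all of $M_i\times[-T_i,0]$, where the gradient is measured with respect to $g_{i,t}$. Combined with the pointwise hypothesis $v_i(x_0,t_0)\leq A$, this reduces the lemma to two routine line-integral computations, one in time and one in space.

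For part (1), I would fix $x_0$ and consider the map $t\mapsto\ln v_i(x_0,t)$ on $[t_0-D,t_0+D]\cap[-T_i,0]$. Since $|\partial_t\ln v_i|\leq Y_0$ pointwise, integrating from $t_0$ out to the target time $t$ and using $\ln v_i(x_0,t_0)\leq\ln A$ yields
\[
\ln v_i(x_0,t)\leq \ln A + Y_0 D,
\]
so (1) holds with $C=Ae^{Y_0 D}$.

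For part (2), given $x\in B_{g_i}(x_0,t_0,D)$, I would use compactness of $M_i$ (together with Hopf-Rinow) to produce a unit-speed $g_{i,t_0}$-minimizing geodesic $\gamma:[0,L]\to M_i$ from $x_0$ to $x$ with $L=d_{g_{i,t_0}}(x_0,x)<D$. Integrating the pointwise bound $|\nabla\ln v_i|\leq\sqrt{Y_0}$ along $\gamma$ with respect to arc length gives
\[
\ln v_i(x,t_0)\leq \ln v_i(x_0,t_0)+\sqrt{Y_0}\,L\leq \ln A+\sqrt{Y_0}\,D,
\]
hence (2) holds with $C=Ae^{\sqrt{Y_0}\,D}$.

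No substantive obstacle is expected. Both estimates are direct consequences of the logarithmic derivative bounds built into Definition \ref{bbofsc}(2), and the condition $v_i\geq 1$ of Definition \ref{bbofsc}(1) plays only the auxiliary role of guaranteeing that $\ln v_i$ is a well-defined nonnegative quantity, so that exponentiating recovers the desired pointwise upper bound on $v_i$. A single implicit constant of the form $C(A,D)=A\exp\!\bigl((Y_0+\sqrt{Y_0})D\bigr)$ simultaneously handles both statements.
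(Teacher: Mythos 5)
Your proof is correct and is the natural argument one would expect; the paper itself does not reprove this lemma but cites it from \cite{JST23a}. The key observation, which you make explicit, is that under the standing hypothesis $C_i\leq Y_0$ the barrier inequality in Definition \ref{bbofsc}(2) gives the pointwise bounds $|\partial_t\ln v_i|\leq Y_0$ and $|\nabla\ln v_i|\leq\sqrt{Y_0}$ on all of $M_i\times[-T_i,0]$, which propagate the pointed bound $v_i(x_0,t_0)\leq A$ along a time segment and along a $g_{i,t_0}$-minimizing geodesic, respectively. One small imprecision: the condition $v_i\geq 1$ is not needed for $\ln v_i$ to be "nonnegative" in the argument -- only $v_i>0$ is needed for $\ln v_i$ to be defined, and exponentiation is monotone in any case -- but this does not affect the validity of the proof.
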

The next lemma states that the boundedness of the barrier function propagated in the $P^*$-parabolic neighborhoods.
\begin{proposition}[Proposition 7.5 in \cite{JST23a}] \label{viprop3} 
For any $\eta\in (0, 1)$, $A, D, T^{\pm}<\infty$, there exists a constant $C<\infty$ depends on $\eta, A, D, T^{\pm}$, such that the following statements hold on the flow $(M_i, (g_{i,t})_{t\in [-T_i , 0]})$.
Suppose $(x_0, t_0)\in M_i\times [-T_i+ T^- +10\eta , 0]$ satisfies
$$  
v_i(x_0, t_0)\leq A .
$$
Then for any $(y_0, s_0)\in P^*(x_0, t_0; D, -T^-, T^+)$, we have
$$  
v_i(y_0, s_0)\leq C .
$$
\end{proposition}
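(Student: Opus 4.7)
The strategy is to reduce the $P^{*}$-parabolic propagation of $v_{i}$ to the geodesic propagation of Lemma~\ref{viprop2} by means of $H_{n}$-centers and the Wasserstein contractivity of conjugate heat flows backward in time.

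I would first set the auxiliary time $t_{\ast}:=t_{0}-T^{-}-\eta$; the hypothesis $t_{0}\in[-T_{i}+T^{-}+10\eta,0]$ guarantees $t_{\ast}\geq -T_{i}+9\eta$, leaving a safety buffer. Property $(2)$ of the barrier gives $|\partial_{s}\ln v_{i}|\leq Y_{0}$ so that $v_{i}(x_{0},\tau)\leq A\,e^{Y_{0}|\tau-t_{0}|}$, and Lemma~\ref{viprop2}(1) then upgrades this to a uniform bound $v_{i}(x_{0},\tau)\leq A'=A'(A,T^{\pm},\eta)$ on $[t_{\ast},t_{0}+T^{+}+\eta]$; property $(3)$ bounds $R(x_{0},\tau)\leq Y_{0}A'$ on the same interval.

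Next I would take $H_{n}$-centers $z_{1}$ and $z_{2}$ at time $t_{\ast}$ of $(x_{0},t_{0})$ and $(y_{0},s_{0})$, respectively. The gradient estimate for heat flows in Definition~\ref{mfpairs}(1), read via Kantorovich duality, implies that $s\mapsto d_{W_{1}}^{g_{s}}(\nu_{x_{0},t_{0};s},\nu_{y_{0},s_{0};s})$ is non-increasing in $s$, so the $P^{*}$-hypothesis at time $t_{0}-T^{-}$ propagates back to $d_{W_{1}}^{g_{t_{\ast}}}(\nu_{x_{0},t_{0};t_{\ast}},\nu_{y_{0},s_{0};t_{\ast}})<D$. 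The triangle inequality for $W_{1}$ together with (\ref{dvh}) then give $d_{g_{t_{\ast}}}(z_{1},z_{2})\leq D_{1}=D_{1}(D,T^{\pm},\eta)$. Combining the $R$-bound from the previous paragraph with Lemma~\ref{ghke0}(i) yields $d_{g_{t_{\ast}}}(x_{0},z_{1})\leq D_{2}=D_{2}(A,T^{-},\eta)$, so the spatial Lipschitz bound $|\nabla\ln v_{i}|\leq \sqrt{Y_{0}}$ at time $t_{\ast}$ gives
\[v_{i}(z_{1},t_{\ast})\leq A'\exp(\sqrt{Y_{0}}D_{2})=:A_{1},\qquad v_{i}(z_{2},t_{\ast})\leq A_{1}\exp(\sqrt{Y_{0}}D_{1})=:A_{2}.\]
Applying Lemma~\ref{viprop2}(1)--(2) at the new base point $(z_{2},t_{\ast})$ with bound $A_{2}$ now yields $v_{i}\leq C(A_{2},R,T^{\pm},\eta)$ on the parabolic cylinder $B_{g_{t_{\ast}}}(z_{2},R)\times[t_{\ast},t_{0}+T^{+}+\eta]$ for any $R>0$. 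It therefore suffices to produce an a priori bound $d_{g_{t_{\ast}}}(y_{0},z_{2})\leq R=R(A,D,T^{\pm},\eta)$.

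The hard part will be this last reduction, which is the main obstacle. The natural route through Lemma~\ref{ghke0}(i) applied at $(y_{0},s_{0})$ with $H_{n}$-center $(z_{2},t_{\ast})$ requires a bound on $\int_{t_{\ast}}^{s_{0}}R(y_{0},\tau)\,d\tau$, which by property $(3)$ amounts to bounding $v_{i}$ along $\{y_{0}\}\times[t_{\ast},s_{0}]$---precisely the quantity we are trying to estimate. I would close this circular dependence by a continuity argument in the $P^{*}$-radius $D'\in[0,D]$. Set $N(D'):=\sup_{P^{*}(x_{0},t_{0};D',-T^{-},T^{+})}v_{i}$, which is finite for each $D'\leq D$ by compactness of $M_{i}$ and continuity of $v_{i}$; Step~1 gives $N(0)\leq A'$, and the function $D'\mapsto N(D')$ is non-decreasing and upper semicontinuous. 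Feeding the a priori input $R(y_{0},\cdot)\leq Y_{0}N(D')$ on the smaller ball into the chain of estimates above produces a finite control $N(D'+\delta)\leq\Phi(N(D'),D',A,T^{\pm},\eta)$ on the slightly larger $P^{*}$-ball, valid for small $\delta>0$; the safety buffer $\eta$ and the finiteness of $D$ allow finitely many such iterations to propagate the bound out to $N(D)\leq C=C(A,D,T^{\pm},\eta)$, which is the desired conclusion.
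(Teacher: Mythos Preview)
The paper itself does not contain a proof of this proposition: it is quoted verbatim from \cite{JST23a} in the preliminaries section. However, the introduction makes the intended mechanism explicit: ``it fits well with Perelman's reduced geometry as the boundedness of the Ricci potential propagated along the reduced geodesic.'' Your proposal does not use reduced geodesics, and the continuity argument you substitute for them has a genuine gap.

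The first part of your outline is fine: you correctly locate $H_n$-centers $z_1,z_2$ at time $t_\ast$, bound $d_{g_{t_\ast}}(x_0,z_1)$ via Lemma~\ref{ghke0}(i) using the known bound on $R(x_0,\cdot)$, and conclude $v_i(z_2,t_\ast)\le A_2$. The problem is the step $N(D'+\delta)\le\Phi(N(D'))$. To apply Lemma~\ref{ghke0}(i) to $(y_0,s_0)$ you need $\int_{t_\ast}^{s_0}R(y_0,\tau)\,d\tau$, which by the barrier property requires control of $v_i(y_0,\tau)$ for $\tau\in[t_\ast,s_0]$. But the points $(y_0,\tau)$ with $\tau<s_0$ have no reason to lie in $P^*(x_0,t_0;D',-T^-,T^+)$: the $P^*$-membership condition is a $W_1$-bound on $\nu_{y_0,\tau;t_0-T^-}$, and there is no monotonicity relating this to $\nu_{y_0,s_0;t_0-T^-}$ without already knowing $R(y_0,\cdot)$ is bounded. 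So $N(D')$ does not feed into the estimate for $(y_0,s_0)$ at radius $D'+\delta$, and the iteration does not close. If you try to bypass this by writing $\int R(y_0,\tau)\,d\tau\le C\,v_i(y_0,s_0)$ directly from $|\partial_t\ln v_i|\le C$, you land on an implicit inequality $V\le A_2\exp(C\sqrt{1+V})$, which gives no a~priori bound.

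The fix that \cite{JST23a} evidently uses---and which the tools recalled in Section~\ref{conandpre} are set up for---is to replace the $H_n$-center of $(y_0,s_0)$ by its $\ell_n$-center $(z,t_\ast)$ from Lemma~\ref{lcenter}. Lemma~\ref{wdn} together with the entropy bound~\eqref{lbofNashatbasep} gives $d_{W_1}^{g_{t_\ast}}(\delta_z,\nu_{y_0,s_0;t_\ast})\le C$ without any assumption on $R(y_0,\cdot)$, so $d_{g_{t_\ast}}(z,x_0)\le C$ and hence $v_i(z,t_\ast)\le C$ by Lemma~\ref{viprop2}. The $\ell_n$-center comes with an $\cL$-geodesic $\gamma$ from $(y_0,s_0)$ to $(z,t_\ast)$ with $\int_0^{s_0-t_\ast}\sqrt{\tau}\,|\dot\gamma|^2\,d\tau\le C$ (using only the lower scalar curvature bound). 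Integrating $|\tfrac{d}{d\tau}\ln v_i(\gamma(\tau),s_0-\tau)|\le C+\sqrt{C}\,|\dot\gamma|$ and applying Cauchy--Schwarz with weight $\tau^{1/2}$ then yields $v_i(y_0,s_0)\le v_i(z,t_\ast)\,e^C\le C$ in one step, with no continuity argument needed.
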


Next, we have the following short time distance distortion estimate.
\begin{proposition}[Proposition 7.6 in \cite{JST23a}]\label{stdde}
For any $\eta\in (0, 1)$, $A, D<\infty$, there exist constants $\delta\in (0, \eta)$, $C<\infty$, both depending on $\eta, A, D$, such that the following statements hold on the Ricci flow $(M_i, (g_{i,t})_{t\in [-T_i , 0]})$.
Suppose $(x_0, t_0)\in M_i\times [-T_i+10\eta, 0]$ satisfies that
$$  
v_i(x_0, t_0)\leq A,
$$
then for any $y_0\in B_{g_i}( x_0, t_0, D )$, we have
$$  
d_{g_{i,t}}( y_0 , x_0 ) \leq C,
$$
for all $t\in [ t_0-\delta , \min\left\{t_0+\delta, 0\right\} ]$.
\end{proposition}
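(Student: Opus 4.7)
The strategy is to first upgrade the single-point bound $v_i(x_0,t_0)\le A$ to a space-time neighborhood of $(x_0,t_0)$ where both the scalar curvature and the Nash entropy are controlled, and then combine Wasserstein/heat-kernel techniques with a forward-time bootstrap to keep $y_0$ at bounded distance from $x_0$. For the upgrade, two successive applications of Lemma \ref{viprop2}---first exploiting $y_0\in B_{g_i}(x_0,t_0,D)$ to obtain $v_i(y_0,t_0)\le C_1(A,D)$, then the temporal part of the lemma---produce a constant $C=C(\eta,A,D)$ with
\[
v_i(x_0,t),\ v_i(y_0,t)\le C,\quad R_{g_i}(x_0,t),\ R_{g_i}(y_0,t)\le C,\quad t\in[t_0-\eta,t_0+\eta]\cap[-T_i,0].
\]
The noncollapsing assumption (\ref{lbofNashatbasep}), Perelman's $\nu$-monotonicity, the bound (\ref{NM}), and the oscillation estimate (\ref{oscofNashinPstar}) also transfer a uniform lower Nash-entropy bound to both $(x_0,t_0)$ and $(y_0,t_0)$ at the scale $\sqrt{\eta}$.

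\textbf{Backward direction.} For $t\in[t_0-\delta,t_0]\cap[-T_i,0]$ with $\delta<\eta$ small, set $r:=\sqrt{t_0-t}$. Lemma \ref{W1balongwl}, applied at both $(x_0,t_0)$ and $(y_0,t_0)$ using the controls just established, gives
\[
d^{g_{i,t}}_{W_1}(\nu_{x_0,t_0;t},\delta_{x_0})+d^{g_{i,t}}_{W_1}(\nu_{y_0,t_0;t},\delta_{y_0})\le Cr.
\]
The built-in gradient estimate for heat flow (Definition \ref{mfpairs}(1)) produces the contraction
\[
d^{g_{i,t}}_{W_1}(\nu_{x_0,t_0;t},\nu_{y_0,t_0;t})\le d_{g_{i,t_0}}(x_0,y_0)\le D,
\]
so the triangle inequality applied at time $t$ to $\delta_{x_0}$, $\delta_{y_0}$ and the two conjugate heat measures yields $d_{g_{i,t}}(y_0,x_0)\le D+Cr\le D+C\sqrt{\delta}$.

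\textbf{Forward direction and main obstacle.} The forward case $t\in[t_0,\min\{t_0+\delta,0\}]$ is the delicate one, since heat-flow contraction of Wasserstein distance operates only backward in time. I would run a continuity argument on
\[
J:=\{t\in[t_0,\min\{t_0+\delta,0\}]:d_{g_{i,s}}(y_0,x_0)\le 2D\text{ for all }s\in[t_0,t]\},
\]
which is closed and contains $t_0$. For $t_\ast\in J$, the bound $d_{g_{i,t_\ast}}(y_0,x_0)\le 2D$ together with Proposition \ref{viprop3} applied at a suitable radius $D'=D'(\eta,A,D)$ yields barrier control $v_i\le C$ on a full $P^\ast$-parabolic neighborhood straddling $[t_0,t_\ast]$. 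Flowing a truncation of the terminal datum $d_{g_{i,t_\ast}}(y_0,\cdot)$ backward via the conjugate heat equation, and invoking a backward-heat gradient estimate in the spirit of Lemma \ref{gmp}, would then bound the Lipschitz constant at time $t_0$ by $1+C\sqrt{t_\ast-t_0}$; combining this with the Wasserstein-to-Dirac estimates of Lemma \ref{W1balongwl} at $(x_0,t_0)$ and $(y_0,t_0)$ forces $d_{g_{i,t_\ast}}(y_0,x_0)\le D+C\sqrt{t_\ast-t_0}$. Hence $J$ is also open, and for $\delta$ small enough $J$ exhausts the whole interval. The main obstacle is precisely this forward estimate: without a local Ricci bound, the classical Hamilton-type distortion inequality is unavailable, and the forward upper bound must instead be synthesized from backward heat-kernel gradient estimates, Perelman's noncollapsing, and the $P^\ast$-neighborhood propagation of the barrier afforded by Proposition \ref{viprop3}.
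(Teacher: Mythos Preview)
The paper does not prove Proposition \ref{stdde}. It is stated in Section \ref{preofbar}, whose opening sentence is ``In this section, we recall some results established in \cite{JST23a}.'' The proposition carries the citation ``Proposition 7.6 in \cite{JST23a}'' and no proof is supplied here, so there is no argument in this paper to compare your attempt against.

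That said, a few remarks on your sketch. Your backward direction is fine: the $W_1$-contraction $d^{g_{i,t}}_{W_1}(\nu_{x_0,t_0;t},\nu_{y_0,t_0;t})\le d_{g_{i,t_0}}(x_0,y_0)$ together with Lemma \ref{W1balongwl} at both endpoints does give the bound.

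The forward direction, however, has a genuine gap that you yourself flag but do not close. You appeal to ``a backward-heat gradient estimate in the spirit of Lemma \ref{gmp},'' but Lemma \ref{gmp} lives in the K\"ahler-Ricci flow section and its proof rests entirely on the Harnack inequality of Theorem \ref{HarnackiofKRF}, which in turn uses the parabolic Schwarz lemma and the specific structure of the Ricci potential equation (\ref{ceofvi}). None of that is available in the general barrier setting of Section \ref{prfofthmmain} where Proposition \ref{stdde} is stated. So ``in the spirit of'' is doing work that has not been justified.

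A more plausible route---and one you can see deployed in this very paper for the related small-scale estimate Lemma \ref{stdse}---is to argue via heat-kernel lower bounds along a $g_{i,t_0}$-geodesic from $x_0$ to $y_0$ (using an $\ell$-center and Zhang's gradient estimate to propagate the lower bound spatially, then the barrier control to propagate it in time), followed by a disjoint-ball counting argument at the target time combined with the volume noncollapsing of Lemma \ref{lvnc}. This bypasses any need for a backward gradient estimate and uses only the ingredients available in the general barrier framework.
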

Finally, we have the following heat kernel lower bound estimate.
\begin{proposition}[Proposition 7.7 in \cite{JST23a}]\label{hklb}
For any $\eta\in (0, 1)$, $A, D<\infty$, there exists constant $C<\infty$ depends on $\eta, A, D$, such that the following statements hold on the Ricci flow $(M_i, (g_{i,t})_{t\in [-T_i , 0]})$.
If $(x_0, t_0)\in M_i\times [-T_i+10\eta, 0]$ satisfies
$$  
v_i(x_0, t_0)\leq A,
$$
then for any $s_0\in [ \max\left\{t_0-\eta^{-1}, -T_i+\eta\right\}  , t_0-\eta ]$ and $y_0\in B_{g_i}( x_0, s_0, D )$, we have
$$  
K_i(x_0,t_0;y_0, s_0) \geq C^{-1},
$$
where $K_i(x,t; y,s)$, $s<t$ denotes the heat kernel along the flow $g_{i,t}$.
\end{proposition}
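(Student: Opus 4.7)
The plan is to derive the lower bound from the general off-diagonal heat kernel estimate
\begin{equation*}
K_i(x_0, t_0; y_0, s_0) \geq \frac{1}{C(t_0-s_0)^{n/2}} \exp\left(-C \int_{s_0}^{t_0} R(y_0, \tau)\, d\tau - \frac{C\, d^2_{g_{i,t_0}}(x_0, y_0)}{t_0 - s_0}\right),
\end{equation*}
which is the real-$n$-dimensional analog of Lemma \ref{ghke0}(ii). Its proof combines Perelman's diagonal bound $K(x,t;x,s) \geq c(t-s)^{-n/2} \exp(-\int_s^t R(x,\tau)\, d\tau)$, obtained by bounding $\ell_{(x,t)}(x,s)$ via the constant path as a competitor, with Qi Zhang's gradient estimate \cite[Theorem 3.2]{ZhQ06} integrated in the $x$-variable at time $t$; these ingredients are general Ricci flow facts and the argument carries over verbatim from the K\"ahler-Ricci flow setting. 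Since $t_0 - s_0 \in [\eta, \eta^{-1}]$ by hypothesis, the task reduces to producing uniform $(\eta, A, D)$-dependent upper bounds on the scalar curvature integral $\int_{s_0}^{t_0} R(y_0, \tau)\, d\tau$ and on the time-$t_0$ distance $d_{g_{i,t_0}}(x_0, y_0)$.

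For the scalar curvature integral I would propagate the barrier bound in two stages. Lemma \ref{viprop2}(1) applied at $(x_0, t_0)$ with time-radius $\eta^{-1}$ yields $v_i(x_0, s) \leq \bar A$ for all $s \in [s_0, t_0]$; in particular $v_i(x_0, s_0) \leq \bar A$. Since $y_0 \in B_{g_i}(x_0, s_0, D)$, Lemma \ref{viprop2}(2) applied at $(x_0, s_0)$ gives $v_i(y_0, s_0) \leq \bar A'$, and a second application of Lemma \ref{viprop2}(1) at $(y_0, s_0)$ extends this to $v_i(y_0, \tau) \leq \bar A''$ for all $\tau \in [s_0, t_0]$. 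The barrier axiom then gives $R(y_0, \tau) \leq Y_0 \bar A''$, hence $\int_{s_0}^{t_0} R(y_0, \tau)\, d\tau \leq Y_0 \bar A'' \eta^{-1}$.

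For the distance I would iterate Proposition \ref{stdde}, using the uniform bound $v_i(x_0, s) \leq \bar A$ available at each intermediate base time. Starting from $D_0 := D$ at $s^{(0)} := s_0$, one application with parameters $(\eta/10, \bar A, D_0)$ produces $\delta_0 > 0$ and $D_1 < \infty$ such that $d_{g_{i,t}}(x_0, y_0) \leq D_1$ for $t \in [s^{(0)}, s^{(0)} + \delta_0]$. Setting $s^{(k+1)} := s^{(k)} + \delta_k$ and continuing, after a bounded number of steps $N = N(\eta, A, D)$ the interval $[s_0, t_0]$ is covered, giving $d_{g_{i,t_0}}(x_0, y_0) \leq D_N =: \bar D(\eta, A, D)$. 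Substituting both uniform bounds into the heat kernel inequality then produces the desired conclusion $K_i(x_0, t_0; y_0, s_0) \geq C^{-1}$.

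The primary technical obstacle is to ensure that the iteration actually closes in finitely many steps with a uniform final radius, i.e., that the step sizes $\delta_k$ do not decay too rapidly compared to the possible growth of the radii $D_k$. This requires some concrete control over the dependence of $(\delta, C)$ on $D$ in Proposition \ref{stdde}; in the setup of \cite[Proposition 7.6]{JST23a} the dependence is polynomial, which is more than enough. Alternatively, the iteration can be bypassed by establishing a global distance-distortion statement for general Ricci flows with a controlled barrier, parallel in spirit to Proposition \ref{gwdde} for the K\"ahler-Ricci flow, in which case a single application replaces the iteration.
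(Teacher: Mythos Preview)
The paper does not prove this proposition; it is quoted verbatim from \cite{JST23a} (Proposition 7.7 there) as a preliminary black box in Section \ref{preofbar}. So there is no proof in the present paper to compare your proposal against.

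Your strategy is sound in outline. The off-diagonal heat-kernel lower bound you write down is a general Ricci flow fact: the ingredients you cite---Perelman's diagonal bound via the constant path, Zhang's gradient estimate, and the on-diagonal upper bound---are all available under the noncollapsing hypothesis (\ref{lbofNashatbasep}), so the K\"ahler structure plays no role and Lemma \ref{ghke0}(ii) does carry over. Your two-stage barrier propagation via Lemma \ref{viprop2} correctly bounds $\int_{s_0}^{t_0} R(y_0,\tau)\,d\tau$.

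The one genuine gap is exactly the one you flag yourself: the iteration of Proposition \ref{stdde} need not close in a number of steps bounded in terms of $(\eta,A,D)$ alone, because as stated both $\delta$ and $C$ depend on the radius parameter, and you have not established that the recursion $D_{k+1}=C(\eta,\bar A,D_k)$, $\delta_k=\delta(\eta,\bar A,D_k)$ covers $[s_0,t_0]$ in boundedly many steps. Your assertion that ``in the setup of \cite{JST23a} the dependence is polynomial'' is an external claim you would need to verify; without it the argument is incomplete. One way to close this without appealing to unstated quantitative dependence is to note that the barrier (hence the scalar curvature) is uniformly bounded along the entire time-$s_0$ geodesic from $x_0$ to $y_0$ and at all times in $[s_0,t_0]$, and then run a heat-kernel ball-counting argument along that geodesic, exactly as in the proof of Lemma \ref{stdse}, to obtain the forward distortion in one step rather than by iteration.
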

%


\medskip
\subsection{$\bF$-limit and local smooth convergence} \label{setupofFlimit}

Let us start with (\ref{FcofRF'1}), that is, we have the $\bF$-convergence on compact time-intervals
\begin{equation}\label{FcorKRF'2}
(M_i, (g_{i,t})_{t\in [-T_i , 0]}, (\nu_{p_i,0; t})_{t\in [-T_i , 0]}) \xrightarrow[i\to\infty]{\bF,\CCC}  (\cX, (\nu_{p_\infty; t})_{t\in (-T_{\infty} , 0]}),
\end{equation}
within some correspondence $\CCC$, where $\cX$ is a future continuous and $H_{n}$-concentrated metric flow of full support over $(-T_\infty , 0]$. We can decompose $\cX$ into it's regular and singular part 
\begin{equation}\label{rsd}
\cX= \cR \sqcup \cS,
\end{equation}
where $\cR$ is dense open subset of $\cX$. Also, $\cR$ carries the structure of a Ricci flow spacetime $(\cR, \mathfrak{t}, \partial_{\mathfrak{t}}, g)$. For any $t\in (-T_\infty , 0]$, $\cR_t=\cX_t\cap\cR$, we have $(\cX_t, d_t)$ is the metric completion of $(\cR_t, g_t)$.

We denote the conjugate heat kernels restricted to $\cR$ as follows:
$$ d\nu_{x;s}=:K(x; \cdot)dg_s,~~~x\in \cX,~~s\in (-T_\infty, \mathfrak{t}(x)), $$
where
$$ K:\left\{ (x;y)\in \cX\times\cR~~~:~~~ \mathfrak{t}(x) > \mathfrak{t}(y) \right\} \to \mathbb{R}_+, $$
is a continuous function. For any $x\in\cX$, the function $K(x;\cdot):\cR_{<\mathfrak{t}(x)}\to \mathbb{R}_+$ is a smooth function and satisfies the conjugate heat equation $\Box^*K(x;\cdot)=0$; for any $y\in \cR$, the function $K(\cdot;y):\cR_{>\mathfrak{t}(y)}\to \mathbb{R}_+$ is a smooth function and satisfies the heat equation $\Box K(\cdot;y)=0$.

We can find an increasing sequence of open subsets $U_1\subset U_2\subset \cdots \subset \cR $ with $\cup_{i=1}^{\infty}U_i=\cR$, open subset $V_i\subset M_i\times [-T_i, 0]$ and time-preserving diffeomorphisms $\psi_i:U_i\to V_i$ such that on $\cR$, we have
\begin{equation}\label{lsc'1}
\psi_i^*g_i \xrightarrow[i\to\infty]{C^{\infty}_{loc}} g ,~~~~ \psi_i^*\partial^i_{\mathfrak{t}} \xrightarrow[i\to\infty]{C^{\infty}_{loc}} \partial_{\mathfrak{t}},  ~~~~ K_i(x_i, t_i; \cdot)\circ \psi_i\xrightarrow[i\to\infty]{C^{\infty}_{loc}} K(x_{\infty}; \cdot)
\end{equation}
for any sequence $(x_i, t_i)\xrightarrow[i\to\infty]{\CCC}x_\infty \in \cX$ (see Definition \ref{cwithincor}). On 
$$
\left\{ (x;y)\in \cR\times\cR~~~:~~~ \mathfrak{t}(x) > \mathfrak{t}(y) \right\},
$$
we have the convergence of the heat kernels
\begin{equation}\label{lsc'2}
K_i\circ (\psi_i, \psi_i)\xrightarrow[i\to\infty]{C^{\infty}_{loc}} K.
\end{equation}
%


\bigskip
\subsection{Compactness of points under uniform geometry}
In this subsection, we prove compactness of points with uniform bounded geometry under the $\mathbb{F}$-convergence (\ref{FcorKRF'2}). For the definition of $P^\circ$-parabolic neighborhood, see \cite[Section 9.5]{Bam20b}.
\begin{proposition}\label{copuug'1}
For any $t_{\infty}\in (-T_\infty, 0)$, $D<\infty$, there exists $\Bar{\sigma}=\Bar{\sigma}( t_\infty, D)>0$, such that for any $\sigma\in (0, \Bar{\sigma}]$, the following statement holds.
Assume $(x_i, t_i)\in M_i\times (-T_i, 0)$ satisfy  
\begin{enumerate}
    \item $\lim_{i\to\infty}t_i=t_\infty$;
    \item $d_{g_{i, t_i}}(p_i, x_i)\leq D$;
    \item $\sigma\leq r_{\Rm}(x_i, t_i) \leq \sigma^{-1},$
\end{enumerate}
for all $i\in \mathbb{N}^+$. Then, after passing to a subsequence, we can find a point $x_\infty\in \mathcal{R}_{t_\infty}$, such that the following hold:
\begin{enumerate}
    \item $(x_i, t_i)\xrightarrow[i\to\infty]{\CCC}x_\infty$;
    \item $\sigma\leq \tilde r_{\Rm}(x_\infty) \leq \sigma^{-1}$;
    \item There exists a compact subset $K\subset\subset \cR$, such that whenever $i$ is large enough, we have $(x_i, t_i)\in V_i$ and $\psi_i^{-1}(x_i, t_i)\in K \subset\subset U_i$;
    \item $P^{\circ}(x_\infty; c_0\sigma, (c_0\sigma)^2, -(c_0\sigma)^2) \subset \cR$ is unscathed with $\tilde r_\Rm\geq c_0\sigma$ everywhere, where $c_0=c_0(n)>0$ is a dimensional constant, and $P(x_i, t_i; c_0\sigma)$ converge to $P^{\circ}(x_\infty; c_0\sigma, (c_0\sigma)^2, -(c_0\sigma)^2)$ in the Cheeger-Gromov sense.
\end{enumerate}
\end{proposition}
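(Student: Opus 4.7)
The plan is to combine Hamilton's Cheeger-Gromov compactness theorem with the local smooth convergence \eqref{lsc'1} and the heat kernel convergence \eqref{lsc'2} to extract a smooth limit and then embed it isometrically into $\cR$. First, propagate the curvature bound at $(x_i,t_i)$: the hypothesis $r_{\Rm}(x_i,t_i)\geq\sigma$ gives $|\Rm|\leq\sigma^{-2}$ on $P(x_i,t_i;\sigma)$, and Lemma \ref{deofrrm} upgrades this to $r_{\Rm}\geq c_0\sigma$ throughout a smaller parabolic ball $P(x_i,t_i;c_0\sigma)$ for some dimensional $c_0=c_0(n)>0$. For non-collapsing, Proposition \ref{viprop3} applied with base $(p_i,0)$ bounds $v_i$, and hence $R_{g_i}$, uniformly on a $P^{\ast}$-neighborhood containing $(x_i,t_i)$; combined with the entropy assumption \eqref{lbofNashatbasep} and Lemma \ref{lvnc}, Perelman's $\kappa$-noncollapsing yields a uniform lower bound on $\mathrm{inj}(x_i,t_i)$ depending only on $n,Y_0,\sigma,t_\infty,D$.

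With these bounds in hand, apply Hamilton's compactness theorem to extract, along a subsequence, pointed smooth Cheeger-Gromov convergence of $(P(x_i,t_i;c_0\sigma),g_{i,t_i+\cdot},x_i)$ to a pointed smooth Ricci flow $(N,(h_t),x^{\ast})$ which is unscathed on $P(x^{\ast};c_0\sigma)$ with $r_{\Rm,h}\geq c_0\sigma$ everywhere. The upper bound $r_{\Rm,h}(x^{\ast})\leq\sigma^{-1}$ passes to the limit because $r_{\Rm}$ is $1$-Lipschitz in space and has bounded time derivative by Lemma \ref{deofrrm}.

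To place the limit in $\cX$, use Lemma \ref{W1balongwl} at $(x_i,t_i)$: the conjugate heat measures $\nu^i_{x_i,t_i;s}$ lie within bounded $g_{i,s}$-$W_1$-distance of $\delta_{x_i}$ as $s\nearrow t_i$. Pushing these forward into $(Z_s,d^Z_s)$ via $\varphi^i_s$ and invoking tightness together with the $H_n$-concentration of $\cX$, pass to a further subsequence to produce $x_\infty\in\cX_{t_\infty}$ with $(x_i,t_i)\xrightarrow[i\to\infty]{\CCC}x_\infty$ in the sense of Definition \ref{cwithincor}, proving~(1).

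The heart of the argument, and the main obstacle, is the matching step: showing $x_\infty\in\cR_{t_\infty}$ and identifying $(N,h,x^{\ast})$ with an open Ricci flow spacetime neighborhood of $x_\infty$ in $\cR$. To do this, fix $s_0<t_\infty$ close to $t_\infty$ and, using Proposition \ref{hklb}, choose $y\in\cR_{s_0}$ for which $K_i(x_i,t_i;\psi_i(y))$ is uniformly bounded below. Shi's estimates combined with the smooth Hamilton convergence of Step~2 then force smooth subconvergence of the backward heat kernels $K_i(\cdot,\cdot;\psi_i(y))$ to a positive smooth solution of $\Box u=0$ on $P(x^{\ast};c_0\sigma)$; by \eqref{lsc'2} this solution must agree with $K(\cdot;y)$ on $\cR$ wherever both are defined, and uniqueness of smooth heat kernels on a Ricci flow spacetime forces $(N,h)$ to embed isometrically as an open neighborhood of $x_\infty$ inside $(\cR,\mathfrak{t},\partial_{\mathfrak{t}},g)$. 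Hence $x_\infty\in\cR_{t_\infty}$ with $\tilde r_{\Rm}(x_\infty)\in[\sigma,\sigma^{-1}]$, proving~(2); the precompactness of $\psi_i^{-1}(x_i,t_i)$ in a fixed compact $K\subset\subset\cR$ follows from this identification, yielding~(3); and the unscathed $P^\circ$-neighborhood asserted in~(4) is precisely the smooth region identified above, with Cheeger-Gromov convergence from Step~2. The delicate point throughout is this matching: curvature bounds alone produce only an abstract Cheeger-Gromov limit, and it is the heat kernel bridge \eqref{lsc'1}--\eqref{lsc'2} that canonically places this limit inside the $\bF$-limit $\cX$.
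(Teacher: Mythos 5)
Your outline shares some high-level intuitions with the paper — propagating the curvature bound via Lemma \ref{deofrrm}, getting non-collapsing from entropy, and using heat kernel lower bounds to control the limit — but the crucial ``matching'' step you identify as the heart of the argument is exactly where your proposal has a genuine gap. You propose to extract an abstract Cheeger-Gromov limit $(N,h,x^*)$ and then embed it into $\cR$ by arguing that heat kernels on $N$ must agree with $K(\cdot;y)$ ``on $\cR$ wherever both are defined'' and that ``uniqueness of smooth heat kernels on a Ricci flow spacetime forces $(N,h)$ to embed isometrically.'' This circular: to say the two heat kernels agree on overlap, or to apply any uniqueness statement, you already need an identification between $N$ and a subset of $\cR$, which is precisely what you are trying to construct. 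Moreover, even pointwise agreement of two positive heat-equation solutions at a point does not, by any standard theorem, yield an isometric embedding of the ambient Ricci flow spacetimes. This step does not go through as written.

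What the paper does instead is invoke a purpose-built structure theorem, \cite[Theorem 9.58]{Bam20b}, which takes exactly the input you have — a sequence $(y^i_j, s_j)\xrightarrow{\CCC}y_j$ with uniform curvature radius lower bound — and directly outputs that $y_j\in\cR_{s_j}$ together with a maximal unscathed forward parabolic neighborhood $P^\circ(y_j;\sigma/2,-(\sigma/2)^2,T^*_j)\subset\cR$ to which the rescaled flows Cheeger-Gromov-converge. The heat kernel lower bound (Lemma \ref{hkbayij'1}) is then used not for any uniqueness/matching argument but to rule out $T^*_j<(\sigma/2)^2$ in Lemma \ref{ulboTj*}: if the neighborhood terminated early, Theorem 9.58 forces $K(p_\infty;y_j(t))\to 0$, contradicting the positive lower bound inherited from Proposition \ref{hklb}. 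Two further technical points you gloss over are also load-bearing in the paper: (i) $t_i$ may not lie in the set of times where the $\bF$-convergence is time-wise, which is why the paper works at perturbed times $s_j=t_\infty-\ep_j\notin E_\infty$ and with auxiliary $H_n$-center points $y^i_j$ near $(x_i,t_i)$ rather than at $(x_i,t_i)$ directly (Lemmas \ref{neandb'1} and \ref{rrmbofyij}); and (ii) the conclusion $(x_i,t_i)\xrightarrow{\CCC}x_\infty$ is obtained by first invoking \cite[Theorem 6.49]{Bam20b} for the conjugate heat flows of $(x_i,t_i)$ and then locating $x_\infty$ inside the already-identified compact regular set $\overline{W}$, not by a free-standing tightness argument. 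Without Theorem 9.58 or an equivalent reconstruction of it, your proposal cannot complete items (1)--(4).
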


\begin{proof}
The proof of Proposition \ref{copuug'1} consists of a series of lemmas. Throughout the proof, unless otherwise stated, all the constants will depend at most on $t_\infty, D$.
Let $\eta:=(T_\infty + t_\infty)/100>0$ (if $T_\infty=\infty$, simply take $\eta=1$). Hence passing to a subsequence, we may assume that $t_i\geq -T_i+50\eta$ and $|t_i-t_\infty|\leq \eta$ for all $i$. All the times for each $i$ we consider in this proof is in $[-T_i+ \eta , 0]$, hence we have $R_{g_i}\geq -C$ when we need the lower scalar curvature bound.

To start, since $v_i(p_i, 0)\leq C$, by Lemma \ref{viprop2}, we have 
\begin{equation}\label{vibdatpi}
v_i(p_i, t)\leq C,
\end{equation}
for all $t\in [t_i-10\eta, 0]$, hence we have $R_{g_i}(p_i, t)\leq C$ for all $t\in [t_i, 0]$. Hence by Lemma \ref{W1balongwl}, we have
$$
d^{g_{i, t_i}}_{W_1} (\nu_{p_i, 0; t_i} , \delta_{p_i} )  \leq C,
$$
combining with assumption (2), we have
$$
d^{g_{i, t_i}}_{W_1} (\nu_{p_i, 0; t_i} , \delta_{x_i} )  \leq C.
$$
Hence we can apply \cite[Theorem 6.49]{Bam20b} to obtain that, after passing to a subsequence, we can find a conjugate heat flow $(\mu^\infty_t)_{t\in (-T_\infty, t_\infty)}$ on $\cX$ with
\begin{equation}\label{varomu'1}
\lim_{t\nearrow t_\infty}\var (\mu^\infty_t)=0,
\end{equation}
such that on compact time-intervals, 
\begin{equation}\label{conxiti}
(\nu_{x_i, t_i; t})_{t\in (-T_i, t_i)} \xrightarrow[i\to\infty]{\CCC} (\mu^\infty_t)_{t\in (-T_\infty, t_\infty)}.
\end{equation}
Since $\cX$ is $H_{n}$-concentrated, from (\ref{varomu'1}), we have
\begin{equation}\label{varomu'2}
\var (\mu^\infty_t)\leq H_{n}(t_\infty-t), ~~ for ~~ all ~~ t\in (-T_\infty, t_\infty).
\end{equation}
By \cite[Lemma 6.7]{Bam20b}, passing to a subsequence if necessary, we can find a subset $E_{\infty}\subset (-T_\infty, 0)$, which is of measure zero, such that both (\ref{FcorKRF'2}) and (\ref{conxiti}) are time-wise outside of $E_{\infty}$.

Now for any $j\in \mathbb{N}^+$, we let $\ep_j\to 0^+$, $\ep_j \leq \eta$, such that
$$  s_j:=t_\infty-\ep_j\notin E_{\infty}.  $$
Then for each $j$, we let $y_j \in \cX _{s_j}$ be an $H_{n}$-center of $(\mu^\infty_t)_{t\in (-T_\infty, t_\infty)}$, and hence
\begin{equation}\label{varomu'3}
\var (\mu^\infty_{s_j}, \delta_{y_j})\leq H_{n}(t_\infty-s_j) = H_{n}\ep_j .
\end{equation}
Now we apply \cite[Theorem 6.45]{Bam20b} to each $y_j$ to find $y^i_j\in M_i$, such that
\begin{equation}\label{coyijtoyj}
(y^i_j, s_j) \xrightarrow[i\to\infty]{\CCC} y_j,
\end{equation}
and (\ref{coyijtoyj}) is time-wise outside of $E_{\infty}$ for each $j$.

Then for each $j$, we can find $\delta_j\in (0, \ep_j]$, such that
$$
s_j - \delta_j \notin E_{\infty},
$$
hence (\ref{coyijtoyj}) is time-wise at $s_j - \delta_j$. For each $j$, we let $(z^i_j, s_j-\delta_j)$ be $\ell_{n}$-center of $(y^i_j, s_j)$, that is 
\begin{equation}\label{llengthofyijtozij}
\ell_{(y^i_j, s_j)} (z^i_j, s_j-\delta_j) \leq n/2,
\end{equation}
hence we can find smooth spacetime curve $\gamma_{y^i_j}: [0, \delta_j]\to M_i\times [s_j-\delta_j , s_j]$ connecting $(y^i_j, s_j)$ to $(z^i_j, s_j-\delta_j)$ such that $\cL(\gamma_{y^i_j})\leq n\delta_j^{1/2}$. 

First, we have the following lemma.
\begin{lemma}\label{neandb'1}
There exist constant $C<\infty$, such that for each $j$, there exists $i_0(j)<\infty$, such that for all $i\geq i_0(j)$, we have:
\begin{equation}\label{neandb'2}
d_{g_{i, s_j-\delta_j}}(x_i, z^i_j)\leq C\ep_j^{1/2}.    
\end{equation}
\end{lemma}
\begin{proof}
We will apply the triangle inequality to the $d_{W_1}^{Z_{s_j-\delta_j}}$-distance along the following diagram:
\begin{equation}\label{diagramofti'1}
\delta_{x_i} \stackrel{(a)}\longleftrightarrow \nu_{x_i, t_i ; s_j-\delta_j} \stackrel{(b)}\longleftrightarrow \mu^\infty_{s_j-\delta_j} \stackrel{(c)}\longleftrightarrow \nu_{y_j ; s_j-\delta_j} \stackrel{(d)}\longleftrightarrow \nu_{y^i_j, s_j ; s_j-\delta_j} \stackrel{(e)}\longleftrightarrow \delta_{z^i_j}.    
\end{equation}
For (a) in (\ref{diagramofti'1}), from (\ref{vibdatpi}), we have $v_i(p_i, t_i)\leq C$, hence by the assumption $d_{g_{i, t_i}}(p_i, x_i)\leq D$ and Lemma \ref{viprop2}, we have $v_i(x_i, t_i)\leq C$, hence by Lemma \ref{viprop2} again, we have
\begin{equation}\label{vibdatxit}
v_i(x_i, t)\leq C ,
\end{equation}
for all $t\in [t_i-1, 0]$. Hence we have $R_{g_i}(x_i, t)\leq C$ for all $t\in [t_i-1, 0]$. Hence we can apply Lemma \ref{W1balongwl} to obtain that
\begin{equation}\label{W1boxi}
d^{g_{i, s_j-\delta_j}}_{W_1} (\nu_{x_i, t_i; s_j-\delta_j} , \delta_{x_i} )\leq C\ep_j^{1/2},
\end{equation}
if we choose $i_0(j)$ large enough such that $|t_i-t_{\infty}|<\ep_j$ for all $i\geq i_0$. Hence we can compute
\begin{equation}\label{W1bbxizi'2}
\begin{split}
d^{Z_{s_j-\delta_j}}_{W_1} ( (\varphi^i_{s_j-\delta_j})_*(\delta_{x_i}) , & (\varphi^i_{s_j-\delta_j})_*( \nu_{x_i, t_i ; s_j-\delta_j} ) )\\
&\leq d^{g_{i, s_j-\delta_j}}_{W_1} (\delta_{x_i}, \nu_{x_i, t_i ; s_j-\delta_j} ) \leq C\ep_j^{1/2}.  
\end{split}
\end{equation}
where $(Z_t, d_t)$ and $\varphi^i_t$ are from Definition \ref{corrspandFcon}.
For (b) in (\ref{diagramofti'1}), since $(\nu_{x_i, t_i; t})_{t\in (-T_i, t_i)} \xrightarrow[i\to\infty]{\CCC} (\mu^\infty_t)_{t\in (-T_\infty, t_\infty)}$ is time-wise at $s_j-\delta_j$, we have
\begin{equation}\label{W1bbximuinfty}
d^{Z_{s_j-\delta_j}}_{W_1} ( (\varphi^i_{s_j-\delta_j})_*( \nu_{x_i, t_i ; s_j-\delta_j} ) , (\varphi^\infty_{s_j-\delta_j})_*( \mu^\infty_{s_j-\delta_j} ) ) \leq \ep_j^{1/2}, 
\end{equation}
for $i\geq i_0(j)$, $i_0(j)$ large enough.

For (c) in (\ref{diagramofti'1}), since $y_j \in \cX _{s_j}$ is $H_{n}$-center of $(\mu^\infty_t)_{t\in (-T_\infty, t_\infty)}$ and $\cX$ is $H_{n}$-concentrated, we can compute that
\begin{equation}\label{W1bbmuyj}
\begin{split}
&d^{Z_{s_j-\delta_j}}_{W_1} ( (\varphi^\infty_{s_j-\delta_j})_*( \mu^\infty_{s_j-\delta_j} ) , (\varphi^\infty_{s_j-\delta_j})_*( \nu_{y_j ; s_j-\delta_j} ) ) \leq \sqrt{\var(\mu^\infty_{s_j-\delta_j} , \nu_{y_j ; s_j-\delta_j})} \\
&\leq \sqrt{\var(\mu^\infty_{s_j} , \delta_{y_j} ) + H_{n}\delta_j } \leq \sqrt{ H_{2n}(t_\infty-s_j)+H_{n}\delta_j } \leq C\ep_j^{1/2}.      
\end{split}
\end{equation}

For (d) in (\ref{diagramofti'1}), since $(y^i_j, s_j) \xrightarrow[i\to\infty]{\CCC} y_j$ is time-wise at $s_j-\delta_j$, we have
\begin{equation}\label{W1bbyjyij}
d^{Z_{s_j-\delta_j}}_{W_1} ( (\varphi^\infty_{s_j-\delta_j})_*( \nu_{y_j ; s_j-\delta_j} ) , (\varphi^i_{s_j-\delta_j})_*( \nu_{y^i_j, s_j ; s_j-\delta_j} ) ) \leq \ep_j^{1/2}, 
\end{equation}
for $i\geq i_0(j)$, $i_0(j)$ large enough.

For (e) in (\ref{diagramofti'1}), we can apply Lemma \ref{wdn} to obtain that
\begin{equation}\label{W1bbzijyij}
\begin{split}
&d^{Z_{s_j-\delta_j}}_{W_1} ( (\varphi^i_{s_j-\delta_j})_*( \nu_{y^i_j, s_j ; s_j-\delta_j} ) ,  (\varphi^i_{s_j-\delta_j})_*(\delta_{z^i_j}) )\\
& \leq \left(1+ \frac{\cL(\gamma_{y^i_j})}{2\delta_j^{1/2}} -\cN^*_{s_j-\delta_j}(y^i_j, s_j)  \right) ^{1/2} \delta_j^{1/2} \leq C\delta_j^{1/2} . 
\end{split}
\end{equation}
Now, we can combine (\ref{W1bbxizi'2}), (\ref{W1bbximuinfty}), (\ref{W1bbmuyj}), (\ref{W1bbyjyij}), (\ref{W1bbzijyij}) and the triangle inequality along the order (a), (b), (c), (d), (e) in (\ref{diagramofti'1}) to obtain that
\begin{equation}\label{W1bbziyij}
d^{Z_{s_j-\delta_j}}_{W_1} ( (\varphi^i_{s_j-\delta_j})_*(\delta_{x_i}) , (\varphi^i_{s_j-\delta_j})_*( \delta_{z^i_j} ) )\leq  C\ep_j^{1/2}.
\end{equation}
Since $\varphi^i_{s_j-\delta_j}: (M_i, d_{g_{i, s_j-\delta_j}})\to (Z_{s_j-\delta_j}, d^Z_{s_j-\delta_j})$ is isometric embedding, we obtain from (\ref{W1bbziyij}) that
$$d_{g_{i, s_j-\delta_j}}(x_i, z^i_j)\leq C\ep_j^{1/2}.$$
This completes the proof of Lemma \ref{neandb'1}.
\end{proof}
Next, we have the following lemma.
\begin{lemma}\label{rrmbofyij}
There exists $j_0<\infty$, such that for all $j\geq j_0$, there exists $i_0(j)<\infty$, such that for all $i\geq i_0(j)$, we have
\begin{enumerate}
    \item $d_{g_{i, s_j-\delta_j}}(y^i_j, x_i)\leq C\ep_j^{1/2}$;
    \item $d_{g_{i, s_j-\delta_j}}(y^i_j, p_i)\leq C$;
    \item $r_{\Rm}(y^i_j, s_j)\geq \sigma/2$.
\end{enumerate}
\end{lemma}
\begin{proof}
First, recall that $\lim_{i\to\infty}t_i=t_\infty=s_j+j^{-1}$. From (\ref{vibdatxit}) we have $v_i(x_i, s_j-\delta_j) \leq C$, hence by Lemma \ref{viprop2} and Lemma \ref{neandb'1}, we have $v_i(z^i_j, s_j-\delta_j) \leq C$. Hence we can apply Proposition \ref{viprop3} to obtain that $v_i(y^i_j, s_j)\leq C$, hence by Lemma \ref{viprop2}, we have $v_i(y^i_j, t)\leq C$ for all $t\in [t_i-1, 0]$. Hence we have
$$
R_{g_i}(y^i_j, t)\leq C ,
$$
for all $t\in [t_i-1, 0]$. Hence we can apply Lemma \ref{W1balongwl} to obtain that
\begin{equation}\label{W1boyij}
d^{g_{i, s_j-\delta_j}}_{W_1} (\nu_{y^i_j, s_j; s_j-\delta_j} , \delta_{y^i_j} )\leq C\delta_j^{1/2} .
\end{equation}
Combining this with (\ref{W1bbzijyij}), we can compute
$$
d_{g_{i, s_j-\delta_j}} ( y^i_j , z^i_j )\leq d^{g_{i, s_j-\delta_j}}_{W_1} ( \delta_{y^i_j} , \nu_{y^i_j, s_j ; s_j-\delta_j} ) + d^{g_{i, s_j-\delta_j}}_{W_1} (\nu_{y^i_j, s_j ; s_j-\delta_j} , \delta_{z^i_j} ) \leq C \delta_j^{1/2} .
$$
Combining this with Lemma \ref{neandb'1}, we have $d_{g_{i, s_j-\delta_j}}(y^i_j, x_i)\leq C\ep_j^{1/2}$, this proves item (1).
Next, since $d_{g_{i, t_i}}(p_i, x_i)\leq D$, by applying Proposition \ref{stdde}, we have 
$$
d_{g_{i, s_j-\delta_j}}(p_i, x_i)\leq C,
$$
provided $j\geq j_0$, $j_0$ large, and $i\geq i_0(j)$. This proves item (2).
Finally, from $r_{\Rm}(x_i, t_i)\geq \sigma$, from Lemma \ref{deofrrm}, we have $r_{\Rm}(x_i, s_j-\delta_j)\geq 9\sigma/10$ provided $j\geq j_0$, $j_0$ large, and $i\geq i_0(j)$. Hence by Lemma \ref{deofrrm} and item (2) of this lemma, we have $r_{\Rm}(y^i_j, s_j-\delta_j)\geq 8\sigma/10$ provided $j\geq j_0$ and $i\geq i_0(j)$. Hence by Lemma \ref{deofrrm} again, we have $r_{\Rm}(y^i_j, s_j)\geq \sigma/2$ provided $j\geq j_0$. This proves item (3). 

This completes the proof of lemma \ref{rrmbofyij}.
\end{proof}

Next, we let $\Bar{\sigma}\leq \eta$. Then we have
\begin{lemma}\label{hkbayij'1}
There exists $j_0<\infty$, such that for all $j\geq j_0$, there exists $i_0(j)<\infty$, such that for all $i\geq i_0(j)$, for all $y\in B(y^i_j, s_j, \sigma/2)$, for all $t\in [ s_j-(\sigma/2)^2 , s_j+(\sigma/2)^2 ]$, we have
\begin{equation}\label{hkbayij'2}
K(p_i, 0; y, t)\geq C^{-1}.
\end{equation}
\end{lemma}
\begin{proof}
From item (2) of Lemma \ref{rrmbofyij} and (\ref{vibdatpi}), we apply Proposition \ref{stdde} to get
\begin{equation}\label{dbyijpi}
d_{g_{i, s_j}}(y^i_j, p_i)\leq C.
\end{equation}
provided $j\geq j_0$ and $i\geq i_0(j)$. Hence for any $y\in B(y^i_j, s_j, \sigma/2)$, we have $d_{g_{i, s_j}}(y, p_i)\leq C$. By Proposition \ref{stdde} again, if we choose $\Bar{\sigma}>0$ small enough, then we have
$$ d_{g_{i, t}}(y, p_i)\leq C. $$
for all $t\in [ s_j-(\sigma/2)^2 , s_j+(\sigma/2)^2 ]$. Then (\ref{hkbayij'2}) follows immediately from Proposition \ref{hklb}. This completes the proof.
\end{proof}

From Lemma \ref{rrmbofyij}, we have for $j\geq j_0$, $i\geq i_0(j)$, $r_{\Rm}(y^i_j, s_j) \geq \sigma/2$, and since $(y^i_j, s_j) \xrightarrow[i\to\infty]{\CCC} y_j$, we can apply \cite[Theorem 9.58]{Bam20b} to each $j\geq j_0$, to obtain that, there is a maximal $T^*_j \in (0,  (\sigma/2)^2]$ such that the open parabolic neighborhood (see \cite[Section 9.5]{Bam20b}) $P^\circ(y_j; \sigma/2, -(\sigma/2)^2, T^*_j) \subset \cR$ is unscathed. The point is to obtain uniform lower bound of $T^*_j$. We have the following lemma.

\begin{lemma}\label{ulboTj*}
There exists $j_0<\infty$, such that for all $j\geq j_0$, we have
$$T^*_j = (\sigma/2)^2.$$
In particular, we have $P(y^i_j, s_j; \sigma/2)$ converge to $P^\circ(y_j; \sigma/2, -(\sigma/2)^2, (\sigma/2)^2)\subset \cR$ in the Cheeger-Gromov sense as $i\to\infty$.
\end{lemma}
\begin{proof}
Choose $j_0$ large such that Lemma \ref{hkbayij'1} holds. Assume that for some $j\geq j_0$, we have $T^*_j < (\sigma/2)^2$. Then according to \cite[Theorem 9.58]{Bam20b}, we have 
\begin{equation}\label{hkbayij'3}
\lim_{t\nearrow s_j + T^*_j} K(p_\infty; y_j(t)) = 0 ,
\end{equation}
where $y_j(t)\in \cR(t)$ denotes the point survive from $y_j\in \cR(s_j)$ to time $t$.

On the other hand, we have the smooth convergence of the parabolic neighborhoods $P(y^i_j, s_j; \sigma/2, -(\sigma/2)^2, T^*_j)$ to the unscathed open parabolic neighborhood $P^\circ(y_j; \sigma/2, -(\sigma/2)^2, T^*_j) \subset \cR$, hence for any $t \in [s_j , s_j + T^*_j)$, by \cite[Theorem 9.31, (e)]{Bam20b} we have
\begin{equation}\label{cohkatyij}
K(p_\infty; y_j(t)) = \lim_{i\to\infty} K(p_i, 0 ; y^i_j, t).
\end{equation}
Hence combining (\ref{cohkatyij}) and Lemma \ref{hkbayij'1}, we obtain
$$ K(p_\infty; y_j(t)) = \lim_{i\to\infty} K(p_i, 0 ; y^i_j, t)\geq C^{-1}, $$
for all $t \in [s_j , s_j + T^*_j)$, this however contradicts (\ref{hkbayij'3}). This contradiction means that $T^*_j = (\sigma/2)^2$, hence completes the proof.
\end{proof}
Next, we have the following distance distortion estimate, which holds trivially true on ordinary Ricci flows.

\begin{lemma}\label{ddeunrmb}
Assume $r\in (0,1)$, $x_0\in \cR_{t_0}$ satisfy that $P^\circ(x_0; r, -r^2, r^2) \subset \cR$ is unscathed and $ \tilde r_{\Rm}(x_0)\geq r$. Then for any $x\in B(x_0, \frac{r}{4})$ and $t\in [t_0-r^2, t_0+r^2]$, we have
$$e^{-r^{-2}|t-t_0|}d_{t_0}(x_0, x)\leq d_t(x_0(t), x(t))\leq e^{r^{-2}|t-t_0|}d_{t_0}(x_0, x).$$
\end{lemma}
\begin{proof}
Since $P^\circ(x_0; r, -r^2, r^2) \subset \cR$ is unscathed, for any $x\in B(x_0, \frac{r}{4})$, we can find a smooth curve $\gamma:[0,1]\to B(x_0, r)$ connecting $x_0$ to $x$, such that $\ell_{g_{t_0}}(\gamma)=d_{g_{t_0}}(x_0, x)$. Hence for any $t\in [t_0-r^2, t_0+r^2]$, we have
$$-r^{-2}\ell_{g_t}(\gamma)\leq \frac{d}{dt} \ell_{g_t}(\gamma)=\int_{0}^{1} \frac{-\ric_{g_t}(\dot{\gamma}(s), \dot{\gamma}(s))}{|\dot{\gamma}(s)|_{g_t}}ds\leq r^{-2}\ell_{g_t}(\gamma).  $$
Integrating this we obtain 
$$e^{-r^{-2}|t-t_0|}d_{g_{t_0}}(x_0, x)\leq d_{g_t}(x_0(t), x(t))\leq e^{r^{-2}|t-t_0|}d_{g_{t_0}}(x_0, x),$$
then we recall that for any $t<0$, $(\cX_t, d_t)$ is the metric completion of $(\cR_t, g_t)$, which completes the proof. 
\end{proof}
Now we can finish the proof of Theorem \ref{copuug'1}. 

First, due to Lemma \ref{rrmbofyij} and Lemma \ref{ulboTj*} and \cite[Lemma 15.16]{Bam20c}, for $j\geq j_0$, we have $\tilde r_{\Rm}(y_j) \geq \sigma/2$ and $P^\circ(y_j; \sigma/2, -(\sigma/2)^2, (\sigma/2)^2) \subset \cR$ is unscathed. Hence we can apply \cite[Lemma 15.16, (e),(f)]{Bam20c} to find a dimensional constant $c_0=c_0(n)>0$, such that
\begin{equation}\label{rrmeonPnbhdofyj}
\tilde r_{\Rm}(y) \geq \sigma/4, ~~ for~~ any~~ y\in P^\circ(y_j; 2c_0\sigma, -(2c_0\sigma)^2, (2c_0\sigma)^2).
\end{equation}
Indeed, given $y\in P^\circ(y_j; 2c_0\sigma, -(2c_0\sigma)^2,(2c_0\sigma)^2)$, we have $|\tilde r_{\Rm}(y(s_j))-\tilde r_{\Rm}(y_j)|\leq 2c_0\sigma$, hence $\tilde r_{\Rm}(y(s_j))\geq \sigma/3$ if $c_0\leq 1/1000$; then from $|\tilde r_{\Rm}^2(y)-\tilde r_{\Rm}^2(y(s_j))|\leq C_0(n)(2c_0\sigma)^2$, hence $\tilde r_{\Rm}(y) \geq \sigma/4$ is $c_0(n)$ is small enough.

Next, we claim that if we choose $c_0(n)$ small enough, then for any $y\in P^\circ(y_j; 2c_0\sigma, -(2c_0\sigma)^2,(2c_0\sigma)^2)$, we have
\begin{equation}\label{croPyangPyj}
P^\circ(y; 2c_0\sigma, -(2c_0\sigma)^2, (2c_0\sigma)^2)\subset P^\circ(y_j; \sigma/2, -(\sigma/2)^2, (\sigma/2)^2).
\end{equation}
Indeed, let $\tilde t:=\mathfrak{t}(y)$, $x\in P^\circ(y; 2c_0\sigma, -(2c_0\sigma)^2,(2c_0\sigma)^2)$, then we have $d_{\tilde t}(x(\tilde t), y)\leq 2c_0\sigma$. But from Lemma \ref{ddeunrmb}, we have $d_{\tilde t}(y, y_j(\tilde t))\leq ed_{s_j}(y(s_j), y_j)\leq 6c_0\sigma$, hence we have $d_{\tilde t}(x(\tilde t), y_j(\tilde t))\leq 8c_0\sigma$. But we have $\tilde r_{\Rm}(y_j(\tilde t)) \geq \sigma/4\geq 4\cdot8c_0\sigma$, hence again by Lemma \ref{ddeunrmb}, we have $d_{s_j}(x(s_j), y_j)\leq ed_{\tilde t}(x(\tilde t), y_j(\tilde t))\leq 20c_0\sigma\leq \sigma/10$, hence $x\in P^\circ(y_j; \sigma/2, -(\sigma/2)^2, (\sigma/2)^2)$. This proves (\ref{croPyangPyj}). Similar proof of (\ref{rrmeonPnbhdofyj}) shows that we can further require that
\begin{equation}\label{rrmeonPnbhdofy}
\tilde r_{\Rm}(x) \geq c_0\sigma, ~~ for~~ any~~ x\in P^\circ(y; 2c_0\sigma, -(2c_0\sigma)^2, (2c_0\sigma)^2).
\end{equation}
Next, the same arguments in the proof of Lemma \ref{ulboTj*} show that
\begin{equation}\label{cohkaty'1}
K(p_\infty; y) \geq C^{-1} , ~~ \forall ~~ y\in P^\circ(y_j; \sigma/5, -(\sigma/5)^2, (\sigma/5)^2)
\end{equation}
but from \cite[Lemma 15.9, (a)]{Bam20c}, we have
\begin{equation}\label{cohkaty'2}
K(p_\infty; y) \leq C \exp \left( -\frac{  (d_{W_1}^{\cX_{\mathfrak{t}(y)}} ( \nu_{p_\infty ; \mathfrak{t}(y)} , \delta_y ))^2 }{C}\right) ,
\end{equation}
hence we can combine (\ref{cohkaty'1}) and (\ref{cohkaty'2}) to obtain that 
$$ d_{W_1}^{\cX_{\mathfrak{t}(y)}} ( \nu_{p_\infty ; \mathfrak{t}(y)} , \delta_y ) \leq A , $$
for all $y\in P^\circ(y_j; \sigma/5, -(\sigma/5)^2, (\sigma/5)^2)$. Hence if we set
$$ 
W := \left\{ \tilde r_{\Rm} \geq \sigma/10 \right\} \cap P^*(p_\infty; A, t_\infty-10\eta) \cap \cR_{[ t_\infty - \sigma^2 , t_\infty + \sigma^2 ]}, 
$$
then from (\ref{rrmeonPnbhdofyj}), for $j\geq j_0$, we have $P^\circ(y_j; 2c_0\sigma, -(2c_0\sigma)^2, (2c_0\sigma)^2) \subset W$. Using \cite[Lemma 15.16, (h)]{Bam20c}, we know $ W \subset \cR $ is a relatively compact subset of $\cR$, hence for $i$ large enough, we have $ W \subset U_i $, then from Lemma \ref{ulboTj*}, we have
\begin{equation}\label{pnbhdoyijinVi}
P(y^i_j, s_j ; c_0\sigma ) \subset V_i = \psi_i(U_i) ,
\end{equation}
for $i\geq i_0(j)$, $i_0(j)$ large enough, and we have 
$$\psi_i^{-1}(P(y^i_j, s_j ; c_0\sigma ))\subset P^\circ(y_j; 2c_0\sigma, -(2c_0\sigma)^2, (2c_0\sigma)^2).$$
Recall that $ r_{\Rm}(x_i , t_i)\geq \sigma$ and $\lim_{i\to\infty}t_i=t_\infty=s_j+\ep_j$, we have $r_{\Rm}(x_i , s_j-\delta_j)\geq \sigma/2$ for $j\geq j_0$, $i\geq i_0(j)$. From item (2) of Lemma \ref{rrmbofyij}, we have $d_{g_{i, s_j-\delta_j}}(y^i_j, x_i)\leq C\ep_j^{1/2}$, hence by Lemma \ref{ddeunrmb}, we have $(x_i, t_i)\in P(y^i_j, s_j ; c_0\sigma )$ for $j\geq j_0$, $i\geq i_0(j)$. We can fix one such $j$ from now on. Hence $\psi_i^{-1}(x_i, t_i)$ is well-defined, and 
$$\psi_i^{-1}(x_i, t_i) \in P^\circ(y_j; 2c_0\sigma, -(2c_0\sigma)^2, (2c_0\sigma)^2) \subset W.$$
Since $ W \subset \cR $ is relatively compact, passing to a subsequence, we have
$$ \psi_i^{-1}(x_i, t_i)\xrightarrow[i\to\infty]{} x_\infty\in \overline{P^\circ(y_j; 2c_0\sigma, -(2c_0\sigma)^2, (2c_0\sigma)^2)} \subset K:= \overline{W} \subset \cR, $$
hence apply \cite[Theorem 9.31, (c)]{Bam20b}, we have $(x_i, t_i)\xrightarrow[i\to\infty]{\CCC}x_\infty$, this proves items (1) and (3). Item (2) follows since $ \tilde r_{\Rm}(x_\infty) = \lim_{i\to\infty} r_{\Rm}(x_i, t_i) $. Item (4) follows from Lemma \ref{ulboTj*}, (\ref{croPyangPyj}) and (\ref{rrmeonPnbhdofy}).

This completes the proof of Proposition \ref{copuug'1}.
\end{proof}
%


\medskip
\subsection{Long-time distance distortion estimates at small scales}

In this section, we prove the following long-time distance distortion estimates at small scales.

\begin{proposition}\label{Tboundonsmallr}

%
For any $\eta\in (0, 1)$, $A, D<\infty$, there exist constants $C<\infty$ and $\Bar{r}>0$, both depend on $\eta, A, D$, such that for any $0<r\leq \Bar{r}$, the following statements hold on the Ricci flow $(M_i, (g_{i,t})_{t\in [-T_i , 0]})$.
Suppose $(x_0, t_0)\in M_i\times [-T_i+10\eta, 0]$ satisfies that
$$  
v_i(x_0, t_0)\leq A,
$$
then for any $t_1\in [t_0-r^2, t_0+r^2]$ and $y_0\in B_{g_i}( x_0, t_0, Dr )$, we have
$$
d_{g_{i, t_1}} (x_0, y_0)\leq Cr.
$$
\end{proposition}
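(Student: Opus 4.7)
The plan is to prove Proposition~\ref{Tboundonsmallr} by a parabolic rescaling argument, reducing it to the short-time distortion estimate Proposition~\ref{stdde} iterated a bounded number of times. First I would set $\hat{g}_\tau := r^{-2} g_{i,\,t_0+r^2\tau}$ and $\hat{v}(\cdot,\tau) := v_i(\cdot,t_0+r^2\tau)$ for $\tau\in [-(T_i+t_0)/r^2,\, -t_0/r^2]$. By the remark following Definition~\ref{bbofsc}, $\hat{v}$ is an $(r^2 C_i)$-barrier of $R_{\hat{g}}$, hence a $C_i$-barrier (lying in the same barrier class) for $r\leq 1$, and it is $A$-based at $(x_0,0)$. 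Note that $B_{\hat{g}(0)}(x_0,D) = B_{g_i}(x_0,t_0,Dr)$. Since $t_0\geq -T_i+10\eta$, the rescaled time interval contains $[-10\eta/r^2,0]$; fixing $\hat\eta'\in(0,1)$ and taking $\bar r\leq\sqrt{\eta/\hat\eta'}$ guarantees that the hypothesis of Proposition~\ref{stdde} at the slice $\tau=0$ is met for every $r\leq\bar r$.

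Next I would apply Proposition~\ref{stdde} to the rescaled flow at $(x_0,0)$ with parameters $\hat\eta', A, D$, obtaining constants $\hat\delta_0, \hat C_0$ depending only on $\hat\eta', A, D$ (and the background $n, Y_0$) such that
\[
d_{\hat g(\tau)}(x_0, y_0)\leq \hat C_0\qquad\text{for all }y_0\in B_{\hat g(0)}(x_0, D),\ \tau\in[-\hat\delta_0,\,\min\{\hat\delta_0,\,-t_0/r^2\}].
\]
Translating back via $d_{g_i(t_1)}(x_0, y_0)=r\,d_{\hat g(\tau)}(x_0, y_0)$ yields $d_{g_i(t_1)}(x_0, y_0)\leq \hat C_0\,r$ on the interval $t_1\in[t_0-\hat\delta_0 r^2,\, t_0+\hat\delta_0 r^2]$. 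If $\hat\delta_0\geq 1$ this already proves the claim with $C:=\hat C_0$. Otherwise I would iterate: at the endpoint $\tau=\hat\delta_0$, Lemma~\ref{viprop2} applied to $\hat v$ bounds $\hat v(x_0,\hat\delta_0)\leq A_1$ for some $A_1 = A_1(\eta, A, D)$, and reapplying Proposition~\ref{stdde} at $(x_0,\hat\delta_0)$ with parameters $\hat\eta',A_1,\hat C_0$ extends the distance control to $\tau\in[\hat\delta_0,\hat\delta_0+\hat\delta_1]$; proceeding analogously in both directions covers the whole interval $\tau\in[-1,1]$ in finitely many steps.

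The main obstacle will be verifying that this iteration terminates in a uniform number of steps with constants depending only on $\eta, A, D$. The key observation is that, because the total elapsed time in either direction is bounded by $1$, a single application of Lemma~\ref{viprop2} from $(x_0,0)$ (with time-radius $1$) already yields a uniform basepoint bound $\hat v(x_0,\tau)\leq A_\ast=A_\ast(A)$ on the whole interval $\tau\in[-1,1]$, so the only parameter that can grow across iterations is the spatial radius $D_k = \hat C_{k-1}$. Provided the dependence of $\hat C$ on $D$ is sufficiently mild (as should follow from inspection of the proof of Proposition~\ref{stdde}), the $\hat\delta_k$ are bounded below by some $\hat\delta_\ast(\eta, A, D)>0$, so the iteration terminates in $N\leq\lceil 2/\hat\delta_\ast\rceil$ steps with a final distance bound $\hat C=\hat C(\eta, A, D)$ independent of $r$; scaling back proves the proposition.
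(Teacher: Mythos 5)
Your high-level strategy --- rescale by $r$ and iterate a short-time distortion estimate --- matches the paper's, but there is a genuine gap in your iteration control, which you yourself flag as the ``main obstacle'' without resolving it. The paper does \emph{not} iterate Proposition~\ref{stdde} directly; instead it first proves Lemma~\ref{stdse}, which says that if $d_{g_{i,t'}}(x_0,x_1)\leq r$ and $|t'-t''|\leq \alpha r^2$ then $d_{g_{i,t''}}(x_0,x_1)\leq C_0 r$, where $\alpha$, $C_0$, $\tilde r$ are constants \emph{independent of $r$}. The structural point is that the permitted time step $\alpha r^2$ scales with the square of the current radius while $\alpha$ stays fixed: splitting $[t_0,t_1]$ into $Q:=\lfloor\alpha^{-1}\rfloor+1$ equal pieces of length $\delta_0\leq\alpha r^2$, each application of Lemma~\ref{stdse} remains valid even as the radius grows to $C_0^{j}Dr$, since $\delta_0\leq\alpha r^2\leq\alpha(C_0^{j}Dr)^2$. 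The iteration count $Q$ is therefore fixed in advance, and the growing radii are absorbed once and for all by the requirement $C_0^{Q}D\bar r\leq\tilde r$.

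In your version you instead invoke Proposition~\ref{stdde} at each step, which returns a time window $\delta=\delta(\eta,A,D)\in(0,\eta)$ that, as stated, depends on the spatial parameter $D$. Under your iteration the radius grows, $D_{k+1}=\hat C_k$, and you assert that the $\hat\delta_k$ ``are bounded below \ldots\ provided the dependence of $\hat C$ on $D$ is sufficiently mild.'' This is circular: the number of iterations $N$ needed to cover $[-1,1]$ depends on $\inf_k\hat\delta_k$, which depends on $\sup_k D_k$, which depends on $N$. Proposition~\ref{stdde} gives no uniform lower bound on $\delta$ as $D\to\infty$, so the loop cannot be closed from the stated result alone. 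This is not a cosmetic omission --- it is exactly the obstruction that Lemma~\ref{stdse} is designed to eliminate, by promoting the time-step constant $\alpha$ to an absolute constant and coupling the time step quadratically to the radius. To repair your argument you would need to actually prove a $D$-uniform lower bound on the $\delta$ of Proposition~\ref{stdde} (which amounts to re-deriving the content of Lemma~\ref{stdse}), or restructure the atomic step so that the iteration count is $\lfloor\alpha^{-1}\rfloor+1$ from the outset, as the paper does.
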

\begin{proof}
Throughout the proof, unless otherwise stated, all the constants will depend at most on $\eta, A, D$. All the times we consider in this proof is in $[-T_i+ \eta , 0]$, hence we have $R_{g_i}\geq -C$ when we need the lower scalar curvature bound.

By Lemma \ref{viprop2}, we have
\begin{equation}\label{boundvi2}
v_i(x_0, t)\leq C ,
\end{equation}
for all $t\in [t_0-\eta , 0]$. We then have
\begin{lemma}\label{stdse}
There exists a constant $0<\alpha<\frac{1}{100}$, $C_0<\infty$, $0<\tilde{r}<1$, such that the following statement holds.

Assume $0< r\leq \tilde{r}$, $t',t''\in [t_0-\eta, 0]$ with $|t'-t''|\leq \alpha r^2$, and $(x_1,t')\in M_i\times \left\{t'\right\}$ with $d_{g_{i,t'}}(x_0, x_1)\leq r$, then we have
$$d_{g_{i,t''}}(x_0, x_1)\leq C_0 r.$$
\end{lemma}
\begin{proof}
Consider the rescaled flow $(M_i, (\tilde g_{i,t})_{t\in [-r^{-2}T_i, 0]})$ with $\tilde g_{i,t}=r^{-2}g_{i, r^2t}$. Denote by $t^*:=r^{-2}t'$ and $t^{**}:=r^{-2}t''$, then we have $|t^*-t^{**}|\leq \alpha$. Let $\gamma:[0,1]\to M_i$ be a $\tilde g_{i,t^*}$-minimizing geodesic between $(x_0, t^*)$ and $(x_1, t^*)$.

Let $(z_0, t^*-1)$ be an $\ell_{n}$-center of $(x_0, t^*)$, hence we have
$$\ell_{(x_0, t^*)}(z_0, t^*-1)\leq n/2.$$
Then we consider the function $K(x,t):=K(x,t;z_0, t^*-1)$, which satisfies that $\partial_t K=\Delta K$. Then we have
$$\frac{d}{dt} \int_{M_i} K(\cdot, t)d\tilde g_{i,t}=\int_{M_i} (\Delta K(\cdot, t) - R_{\tilde g_{i}}(\cdot, t)K(\cdot, t))d\tilde g_{i,t}\leq C\int_{M_i} K(\cdot, t)d\tilde g_{i,t}. $$
Hence for $t\in [t^*-\frac{1}{2}, t^*+\frac{1}{2}]$, we have
\begin{equation}\label{ibofhk}
\int_{M_i} K(\cdot, t)d\tilde g_{i,t}\leq e^{2n(t-(t^*-1))}. 
\end{equation}
Also, for all $t\in [t^*-\frac{1}{2}, t^*+\frac{1}{2}]$, we have
\begin{equation}\label{ubofhk}
K(\cdot, t)\leq B_1, 
\end{equation}
on $M_i$, for some constant $B_1=B_1(n,\omega_0)<\infty$. Hence we can apply \cite[Theorem 3.2]{ZhQ06} to obtain that
\begin{equation}\label{gbofhk}
\left| \nabla \sqrt{\ln  \frac{B_1}{K(\cdot, t)} } \right|_{\tilde g_{i,t}} \leq \sqrt{\frac{1}{ t - (t^*-\frac{1}{4}) }}\leq 10,
\end{equation}
for all $t\in [t^*-\frac{1}{8}, t^*+\frac{1}{8}]$. But we have
$$K(\gamma(0), t^*)=K(x_0, t^*)\geq \frac{1}{(4\pi(t^*-(t^*-1)))^{n/2}}e^{-\ell_{(x_0, t^*)}(z_0, t^*-1)}\geq c_0,$$
for some constant $c_0=c_0(n)>0$, hence we can integrate (\ref{gbofhk}) at $t=t^*$ along $\gamma$ to obtain that
\begin{equation}\label{hllbalonggamma}
K(\gamma(s), t^*)\geq c_0, ~~for ~~all~~s\in[0,1].
\end{equation}
Now, for any time $t$ between $t^*$ and $t^{**}$, we can apply \cite[Lemma 3.1]{BZ17} to obtain that 
\begin{equation}\label{dbohk}
|\partial_t K(\gamma(s), t)|\leq B_1(R_{\tilde g_{i}}(\gamma(s), t)+C(n,\omega_0)),
\end{equation}
for all $s\in[0,1]$, $t\in [t^*-\frac{1}{8}, t^*+\frac{1}{8}]$. From (\ref{boundvi2}), we have $v_i(\gamma(s), t')\leq C$ for all $s\in [0,1]$. Hence we have $v_i(\gamma(s), t)\leq C$ for all $s\in [0,1]$ and $t\in [-T-\eta, 0]$, which gives $R_{g_i}(\gamma(s), t)\leq C$ for all $s\in [0,1]$ and $t\in [-T-\eta, 0]$. Hence we have
\begin{equation}\label{scbatgam1}
R_{\tilde g_i}(\gamma(s), t)\leq Cr^2\leq 1 , 
\end{equation}
for all $s\in[0,1]$, $t\in [t^*-\frac{1}{8}, t^*+\frac{1}{8}]$. Hence from (\ref{dbohk}), we have
\begin{equation}\label{dbohk'2}
|\partial_t K(\gamma(s), t)|\leq 2B_1,
\end{equation}
for all $s\in[0,1]$, $t\in [t^*-\frac{1}{8}, t^*+\frac{1}{8}]$. Hence if we choose $\alpha>0$ small enough, we can integrate (\ref{dbohk'2}) and use (\ref{hllbalonggamma}) to obtain 
\begin{equation}\label{hllbalonggamma'2}
K(\gamma(s), t^{**})\geq c_1>0, ~~for ~~all~~s\in[0,1].
\end{equation}
Then we can integrate (\ref{gbofhk}) at $t=t^{**}$ to obtain that, for all $s\in[0,1]$
\begin{equation}\label{hllbalonggamma'3}
K(\cdot, t^{**})\geq c_2>0, ~~on~~B_{\tilde g_{i}}(\gamma(s),t^{**},1).
\end{equation}
Now we let $Q\geq 1$ be maximal subject to the fact that there are parameters $0\leq s_1 <s_2<\dots< s_Q\leq 1$ such that the balls $B_{\tilde g_{i}}(\gamma(s_1),t^{**},1)$, $\dots$, $B_{\tilde g_{i}}(\gamma(s_Q),t^{**},1)$ are mutually disjoint. Then the balls $B_{\tilde g_{i}}(\gamma(s_1),t^{**},2)$, $\dots$, $B_{\tilde g_{i}}(\gamma(s_Q),t^{**},2)$ cover $\gamma([0,1])$. Hence we have $d_{\tilde g_{i,t^{**}}}(x_0, x_1)\leq 4Q$. 

So we only need to bound $Q$. From $v_i(\gamma(s), t'')\leq C$ for all $s\in [0,1]$, for any $x\in B_{\tilde g_{i}}(\gamma(s),t^{**},1)$, we have $v_i(x, t'')\leq C$, hence $R_{g_i}(x, t'')\leq C$, hence we have 
$$R_{\tilde g_{i}}(\cdot, t^{**})\leq Cr^2\leq 1,~~on~~B_{\tilde g_{i}}(\gamma(s_j),t^{**},1),$$
for all $1\leq j\leq Q$, hence we can apply Lemma \ref{lvnc} to obtain that
$$\vol_{\tilde g_{i,t^{**}}}(B_{\tilde g_{i}}(\gamma(s_j),t^{**},1))\geq c_3>0.$$
Combining this with (\ref{ibofhk}) and (\ref{hllbalonggamma'3}), we have
$$e^{10n}\geq \int_{M_i} K(\cdot, t^{**})d\tilde g_{i,t^{**}}\geq \sum_{j=1}^{Q}\int_{B_{\tilde g_{i}}(\gamma(s_j),t^{**},1)} K(\cdot, t^{**})d\tilde g_{i,t^{**}}\geq Q\cdot c_2 \cdot c_3, $$
hence we have $Q\leq C_0$. This completes the proof of Lemma \ref{stdse}.
\end{proof}
Given $t_1 \in [t_0-r^2,t_0+r^2]$, then let $y_0\in B(x_0, t_0, Dr )$. We only consider the case $t_0<t_1$, the other case is similar. Let $Q:=\lfloor \alpha^{-1} \rfloor +1$, $\delta_0:=(t_1-t_0)/Q$, $s_j:=t_0+j\delta_0$ for $j = 0 , 1 , \dots, Q$. We then choose $\Bar{r}>0$ small enough such that
$$ C_0^Q D\Bar{r}\leq \tilde{r}, $$
where $C_0$ and $\tilde{r}$ are the constant from Lemma \ref{stdse}. We have $d_{g_{i,s_0}}(x_0, y_0)\leq Dr$. We prove by induction that 
\begin{equation}\label{iaonsj}
d_{g_{i,s_j}}(x_0, x_1)\leq C_0^j Dr,    
\end{equation}
for each $j = 1 , \dots, Q$. Indeed, if (\ref{iaonsj}) holds for $j-1$, then we have
$$ s_{j}-s_{j-1}=\delta_0\leq r^2/Q \leq \alpha r^2\leq \alpha (C_0^{j-1} D  r)^2, $$
and we have $C_0^{j-1} Dr\leq C_0^Q D\Bar{r}\leq \tilde{r} $, hence we can apply Lemma \ref{stdse} with $r \leftarrow C_0^{j-1} r$ to obtain (\ref{iaonsj}) holds for $j$. Apply (\ref{iaonsj}) with $j=Q$ shows that 
$$d_{g_{i,t_1}}(x_0, x_1)\leq C r.$$
This completes the proof of Proposition \ref{Tboundonsmallr}.
\end{proof}
%


\medskip
\subsection{Integral curvature bounds}
In this subsection, we will first prove integral curvature bounds on the flows $(M_i, (g_{i,t})_{t\in [-T_i , 0]})$, then pass it to the limit metric flow $\cX$.

First, we have the following covering result.

\begin{proposition}\label{eocnoMi}
For any $T\in (0, T_\infty)$, $A, B<\infty$, $0<\mathbf{p}<4$, there exists constants $H=H(A, B, T, \mathbf{p})<\infty$, $B_0=B_0(A, T)<\infty$, $\Bar{\lambda}=\Bar{\lambda}(A, B, T, \mathbf{p})>0$, such that whenever $B\geq B_0$, then the following statement holds.

For any $t_0\in(-T, 0)$, $\lambda\in (0, \Bar{\lambda})$, there exists constant $\Bar{r}>0$ depends on $\lambda, t_0, A, B$, $T$,$\mathbf{p}$, such that the following statement holds for all $i$ large enough.

Given any $i\geq i_0$, for any $(x_0, t_0)\in P^*(p_i, 0; A, -T)$, $0<r\leq\Bar{r}$, we can find $Q$-many points $y_1, \dots, y_Q \in M_i$, such that
\begin{enumerate}
    \item $Q\leq H\lambda^{\mathbf{p}-n}$;
    \item $(y_k, t_0) \in P^*(x_0, t_0+r^2; Br, -2r^2)$;
    \item $\left\{ r_{\Rm}\leq \lambda r \right\}\cap P^*(x_0, t_0+r^2; Br, -2r^2)\cap (M_i\times \left\{t_0\right\})\subset \bigcup_{j=1}^{Q}P^*(y_j, t_0+(\lambda r)^2; B\lambda r, -2(\lambda r)^2)$.
\end{enumerate}
\end{proposition}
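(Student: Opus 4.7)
The proposition is a codimension-four covering estimate for the small-curvature set, of the type established by Bamler in \cite{Bam20c} for Ricci flows with globally bounded scalar curvature. My approach is to localize Bamler's argument using the barrier $v_i$. The strategy breaks into three steps: (i) propagate the barrier bound to obtain $R_{g_i} \le C$ throughout $P^*(x_0, t_0+r^2; Br, -2r^2)$; (ii) derive an integral estimate for the $\nu_{x_0, t_0+r^2; t_0}$-mass of $\{r_\Rm(\cdot,t_0) \le \lambda r\}$; and (iii) run a Vitali-type extraction.

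For step (i), the assumption $(x_0,t_0)\in P^*(p_i, 0; A, -T)$ together with $v_i(p_i, 0)\le Y_0$ and Proposition \ref{viprop3} gives $v_i(x_0,t_0)\le B_0(A,T)$; Lemma \ref{viprop2} transfers this to $v_i(x_0, t_0+r^2)\le C$ for $r$ sufficiently small (driving part of the dependence of $\bar r$ on $t_0$ via the requirement $t_0+r^2<0$); and a second application of Proposition \ref{viprop3} centered at $(x_0, t_0+r^2)$ at $P^*$-scale $Br$ then controls $v_i$, and hence $R_{g_i}$, throughout the relevant neighborhood. The constant $B_0$ of the statement is identified in this step.

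For step (ii), I will adapt \cite[Section 10]{Bam20c}. The essential input is a lower bound for the pointed Nash entropy $\cN_{t_0-2r^2}^*(x_0, t_0+r^2)$ depending only on $A, T, Y_0$, obtained by combining the oscillation estimate (\ref{oscofNashinPstar}), the monotonicity (\ref{NM}), and $\nu[g_{i,-T_i}, 2T_i] \ge -Y_0$, with the $R_{\min}$ appearing in (\ref{oscofNashinPstar}) now controlled by the localized bound from step (i). Bamler's heat-kernel-weighted estimate for $r_\Rm^{-\mathbf{p}}$ then yields, for any $\mathbf{p} < 4$,
\begin{equation*}
\nu_{x_0, t_0+r^2; t_0}\bigl(\{r_\Rm(\cdot, t_0) \le \lambda r\} \cap P^*(x_0, t_0+r^2; Br, -2r^2)\bigr) \le C(A,B,T,\mathbf{p})\, \lambda^{\mathbf{p}}.
\end{equation*}

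For step (iii), I will select a maximal family $\{y_j\}_{j=1}^Q$ in the bad set with the reduced $P^*$-balls $P^*(y_j, t_0+(\lambda r)^2; B\lambda r/5, -2(\lambda r)^2)$ pairwise disjoint; by maximality and the triangle inequality for $W_1$, their fivefold enlargements then cover, so requiring $B \geq B_0$ sufficiently large yields item (3). The mass lower bound $\nu_{x_0, t_0+r^2; t_0}(P^*(y_j, t_0+(\lambda r)^2; B\lambda r/5) \cap \{t_0\}) \ge c\lambda^n$ follows from the Gaussian heat-kernel lower bound of Lemma \ref{ghke0} (the kernel is $\gtrsim r^{-n}$ on the ball, using $\int R\,dt \leq C$ from step (i)) combined with the non-collapsing of Lemma \ref{lvnc} at scale $\lambda r$. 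Dividing the total mass $C\lambda^{\mathbf{p}}$ by $c\lambda^n$ gives $Q \le H\lambda^{\mathbf{p}-n}$. I expect the main technical obstacle to lie in the non-collapsing at the small scale $\lambda r$: securing $\vol(B(y_j, t_0, \lambda r)) \ge c(\lambda r)^n$ requires a scalar-curvature upper bound at scale $\lambda r$ and a Nash-entropy lower bound at base $(y_j, t_0+(\lambda r)^2)$ that survives the rescaling, and it is here that both the smallness threshold $\bar\lambda(A,B,T,\mathbf{p})$ and the $\lambda$-dependence of $\bar r$ must be calibrated.
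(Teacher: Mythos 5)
Your outline of steps (i) and (iii) matches the paper's proof in spirit, but step (ii) contains a genuine gap that is in fact the main difficulty the paper has to overcome. You assert that ``Bamler's heat-kernel-weighted estimate for $r_{\Rm}^{-\mathbf{p}}$ then yields, for any $\mathbf{p}<4$,'' the desired integral bound. This is not available directly. For a general noncollapsed Ricci flow, Bamler's integral estimate (\cite[Lemma 15.27]{Bam20c} and related results) gives the curvature-radius bound only for exponents $\mathbf{p}<2$; the improvement to $\mathbf{p}<4$ requires the flow to be quantitatively close to a \emph{static} (or more generally self-similar) model on the relevant parabolic neighborhood. Nothing in your steps (i) or (ii) establishes this closeness, so as written your argument only proves the proposition for $\mathbf{p}<2$, which is not enough for the codimension-four conclusion in Theorem \ref{mainforRF1}.

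The paper handles this in Lemma \ref{icbax0} via a contradiction-plus-blowup argument whose purpose is precisely to convert the rough scalar curvature control into static-closeness at small enough scales. Assuming the volume estimate fails for a sequence $r_i\to 0^+$ at points $(x_i,t_0)\in P^*(p_i,0;A,-T)$, one parabolically rescales by $r_i^{-2}$ centered at $(x_i, t_0+r_i^2)$. After rescaling, the scalar curvature bound $R_{g_i}(x_i,\cdot)\le C$ at the original scale becomes $R_{\hat g_i}(x_i,0)\le Cr_i^2\to 0$, and together with the almost-nonnegativity of $R$ one gets $\int_{-T'}^0\int_{\hat M_i}|\ric_{\hat g_i}|^2\,d\nu_{x_i,0;t}\,dt\to 0$ (this is (\ref{scbatxk2})). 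By \cite[Theorem 15.60]{Bam20c} the $\bF$-limit $\hat\cX$ is a static flow, and only at that point can the $\mathbf{p}<4$ bound of \cite[Lemma 15.27, (a)]{Bam20c} be applied, giving a contradiction with a smooth-convergence lower bound on the bad set. The $\lambda$-dependence of $\bar r$ and the ``for all $i\ge i_0$'' qualifier in the statement are exactly the output of this contradiction argument; a direct estimate without the blowup cannot produce them.

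A secondary, non-fatal difference: the paper's counting step (Lemma \ref{eoQ}) compares a Riemannian volume lower bound on each disjoint $P^*$-ball (obtained via $H_n$-centers and the noncollapsing Lemma \ref{hcv}) against the volume integral bound of Lemma \ref{icbax0}, whereas you propose to compare conjugate-heat-kernel masses. Either accounting can be made to work once the correct integral bound is in hand; the substantive missing ingredient is the blowup argument justifying $\mathbf{p}<4$.
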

\begin{proof}
We choose $\eta\in (0,1)$ such that $-T-100\eta> -T_\infty$, then we consider all $i$ large enough such that $-T-50\eta> -T_i$. We will determine $0<\Bar{r}=\Bar{r}(\lambda, t_0, A, B, T, \mathbf{p})<1$ in the course of the proof. We always require that $B\Bar{r}\leq 1$, $t_0-\Bar{r}^2\geq -T$, $t_0+\Bar{r}^2<0$, $0<r\leq\Bar{r}$.

We choose points 
$$(y_1, t_0), \dots, (y_Q, t_0) \in \left\{ r_{\Rm}\leq \lambda r \right\}\cap P^*(x_0, t_0+r^2; Br, -2r^2)\cap (M_i\times \left\{t_0\right\}),$$
with $Q$ being maximal subject to the fact that the subsets 
$$
P^*(y_j, t_0+(\lambda r)^2; 3H_{n}^{1/2}\lambda r, -2(\lambda r)^2)\cap (M_i\times \left\{t_0\right\})
$$
are mutually disjoint for $1\leq j \leq Q$.

First, we have
\begin{lemma}\label{P*nbhdofyix0}
There exists constant $B_1=B_1(A, B, T)<\infty$, such that if we choose $i_0=i_0(A, T)<\infty$ large enough, then we have
$$P^*(y_j, t_0+(\lambda r)^2; 3H_{n}^{1/2}\lambda r, -2(\lambda r)^2) \subset P^*(x_0, t_0+r^2; B_1 r, -2r^2),$$
for all $1\leq j \leq Q$.
\end{lemma}
\begin{proof}
First, since $(x_0, t_0)\in P^*(p_i, 0; A, -T)$, we can apply Proposition \ref{viprop3} to obtain that $v_i(x_0, t_0)\leq C$, hence
\begin{equation}\label{boundvi3}
v_i(x_0, t)\leq C(A, T) ,
\end{equation}
for all $t\in [-T, 0]$. Hence we have $R_{g_i}(x_0, t)\leq C(A, T)$ for all $t\in [-T, 0]$, then we can apply Lemma \ref{W1balongwl} to obtain
\begin{equation}\label{W1dbx0}
d^{g_{i, t_0-r^2}}_{W_1} (\nu_{x_0, t_0+r^2; t_0-r^2} , \nu_{x_0, t_0; t_0-r^2} ) \leq d^{g_{i, t_0}}_{W_1} (\nu_{x_0, t_0+r^2; t_0} , \delta_{x_0} ) \leq C(A, T)r.
\end{equation}
Next, from $(y_j, t_0) \in P^*(x_0, t_0+r^2; Br, -2r^2)$ we have
\begin{equation}\label{W1dbyjx0t0}
d^{g_{i, t_0-r^2}}_{W_1} (\nu_{x_0, t_0+r^2; t_0-r^2} , \nu_{y_j, t_0; t_0-r^2} ) \leq Br , 
\end{equation}
hence we can combine (\ref{W1dbx0}) and (\ref{W1dbyjx0t0}) to obtain that
\begin{equation}\label{W1dbyjx0t0+r2}
d^{g_{i, -T}}_{W_1} (\nu_{x_0, t_0; -T} , \nu_{y_j, t_0; -T} )\leq d^{g_{i, t_0-r^2}}_{W_1} (\nu_{x_0, t_0; t_0-r^2} , \nu_{y_j, t_0; t_0-r^2} ) \leq C(A, B, T)r.
\end{equation}
Combining this with $(x_0, t_0)\in P^*(p_i, 0; A, -T)$, we have
\begin{equation}\label{W1dbyjpi}
d^{g_{i, -T}}_{W_1} (\nu_{p_i, 0; -T} , \nu_{y_j, t_0; -T} ) \leq A + C(A, B, T)r \leq A+1 ,
\end{equation}
if we choose $\Bar{r}>0$ small enough. Using (\ref{W1dbyjpi}), we can apply Proposition \ref{viprop3} to obtain that $v_i(y_j, t_0)\leq C(A, T)$, hence
\begin{equation}\label{boundvi4}
v_i(y_j, t)\leq C(A, T) ,
\end{equation}
for all $t\in [-T, 0]$. Hence we have $R_{g_i}(y_j, t)\leq C(A, T)$ for all $t\in [-T, 0]$, then we can apply Lemma \ref{W1balongwl} to obtain
\begin{equation}\label{W1dbyj}
\begin{split}
d^{g_{i, t_0-r^2}}_{W_1} (\nu_{y_j, t_0+(\lambda r)^2; t_0-r^2} , \nu_{y_j, t_0; t_0-r^2} ) \leq d^{g_{i, t_0}}_{W_1} (\nu_{y_j, t_0+(\lambda r)^2; t_0} , \delta_{y_j} ) \leq C(A, T)r.
\end{split}
\end{equation}
Hence we can combine (\ref{W1dbyjx0t0}) and (\ref{W1dbyj}) to obtain that
\begin{equation}\label{W1dbyjx0'2}
\begin{split}
d^{g_{i, t_0-r^2}}_{W_1} ( \nu_{x_0, t_0+r^2; t_0-r^2} , \nu_{y_j, t_0+(\lambda r)^2; t_0-r^2} ) \leq C(A, B, T)r.
\end{split}
\end{equation}
On the other hand, for any $(\hat{y}, \hat{t})\in P^*(y_j, t_0+(\lambda r)^2; 3H_{n}^{1/2}\lambda r, -2(\lambda r)^2)$, we have
\begin{equation}\label{W1dbhatyyj}
\begin{split}
d^{g_{i, t_0-r^2}}_{W_1} &( \nu_{y_j, t_0+(\lambda r)^2; t_0-r^2} , \nu_{\hat{y}, \hat{t}; t_0-r^2} )\\
&\leq d^{g_{i, t_0-(\lambda r)^2}}_{W_1} ( \nu_{y_j, t_0+(\lambda r)^2; t_0-(\lambda r)^2} , \nu_{\hat{y}, \hat{t}; t_0-(\lambda r)^2} ) \leq 3H_{n}^{1/2}\lambda r .
\end{split}
\end{equation}
Combining (\ref{W1dbyjx0'2}) and (\ref{W1dbhatyyj}), we obtain
$$ d^{g_{i, t_0-r^2}}_{W_1} ( \nu_{x_0, t_0+r^2; t_0-r^2} , \nu_{\hat{y}, \hat{t}; t_0-r^2} ) \leq C(A, B, T)r. $$
This completes the proof of Lemma \ref{P*nbhdofyix0}.
\end{proof}
Next, we have
\begin{lemma}\label{P*nbhdofyirm}
There exists constant $B_2=B_2(A, T)<\infty$, such that
$$P^*(y_j, t_0+(\lambda r)^2; 3H_{n}^{1/2}\lambda r, -2(\lambda r)^2) \subset \left\{ r_{\Rm}\leq B_2\lambda r \right\},$$
for all $1\leq j \leq Q$.
\end{lemma}
\begin{proof}
First note that $r_{\Rm}(y_j, t_0)\leq \lambda r$, using Lemma \ref{deofrrm}, we have
\begin{equation}\label{eormatyj}
r_{\Rm}(y_j, t_0-(\lambda r)^2)\leq C_0(n)\lambda r.
\end{equation}
Next, we fix $(\hat{y}, \hat{t})\in P^*(y_j, t_0+(\lambda r)^2; 3H_{n}^{1/2}\lambda r, -2(\lambda r)^2)$. Then from (\ref{boundvi4}) and Proposition \ref{viprop3}, we can get $v_i(\hat{y}, \hat{t})\leq C(A, T)$, hence
\begin{equation}\label{boundvi5}
v_i(\hat{y}, t)\leq C(A, T) ,
\end{equation}
for all $t\in [-T, 0]$. Hence we have $R_{g_i}(\hat{y}, t)\leq C(A, T)$ for all $t\in [-T, 0]$, then we can apply Lemma \ref{W1balongwl} to obtain
\begin{equation}\label{W1dbhaty}
d^{g_{i, t_0-(\lambda r)^2}}_{W_1} ( \delta_{\hat{y}} , \nu_{\hat{y}, \hat{t}; t_0-(\lambda r)^2} ) \leq C(A, T)\lambda r .
\end{equation}
Next, we have
\begin{equation}\label{W1dbhatyyi}
d^{g_{i, t_0-(\lambda r)^2}}_{W_1} ( \nu_{\hat{y}, \hat{t}; t_0-(\lambda r)^2} ,  \nu_{y_j, t_0+(\lambda r)^2; t_0-(\lambda r)^2} ) \leq 3H_{n}^{1/2}\lambda r .
\end{equation}
Finally, similar to (\ref{W1dbyj}), we have
\begin{equation}\label{W1dbyj2}
\begin{split}
d^{g_{i, t_0-(\lambda r)^2}}_{W_1} (\nu_{y_j, t_0+(\lambda r)^2; t_0-(\lambda r)^2} , \delta_{y_j} ) \leq C(A, T)r.
\end{split}
\end{equation}
Combining (\ref{W1dbhaty}), (\ref{W1dbhatyyi}) and (\ref{W1dbyj2}), we obtain from the triangle inequality that
\begin{equation}\label{dbbyjhatz}
\begin{split}
d_{g_{i, t_0-(\lambda r)^2}} ( y_j , \hat{y} ) \leq C(A, T)\lambda r.
\end{split}
\end{equation}
Combining Lemma \ref{deofrrm}, (\ref{eormatyj}) and (\ref{dbbyjhatz}) we obtain
\begin{equation}\label{eormathatz}
r_{\Rm}(\hat{y}, t_0-(\lambda r)^2)\leq C(A, T)\lambda r.
\end{equation}
But we have $\hat{t}-(t_0-(\lambda r)^2)<2(\lambda r)^2$, hence by Lemma \ref{deofrrm} again, we have
$$
r_{\Rm}(\hat{y}, \hat{t} )\leq C(A, T)\lambda r.
$$
This completes the proof of Lemma \ref{P*nbhdofyirm}.
\end{proof}
Next, we have
\begin{lemma}\label{icbax0}
There exists constant $E_{\mathbf{p}}=E_{\mathbf{p}}(A, B, T)<\infty$, $0<\Bar{r}=\Bar{r}(\lambda, t_0, A, B, T, \mathbf{p})<1$, such that for all $i$ large enough, we have
\begin{equation}
\left| \left\{ r_{\Rm}\leq B_2\lambda r \right\}\cap P^*(x_0, t_0+r^2; B_1r, -2r^2)\cap (M_i\times \left\{t_0\right\}) \right|_{g_{i, t_0}}   \leq E_{\mathbf{p}}\lambda^{\mathbf{p}}r^{n}.  
\end{equation}
\end{lemma}
\begin{proof}
We will determine the constant $E_{\mathbf{p}}=E_{\mathbf{p}}(A, B, T)<\infty$ in the course of the proof. 

Assume such $\Bar{r}$ does not exist. Then passing to a subsequence, for each $i$, we can find a point $(x_i, t_0)\in P^*(p_i, 0; A, -T)$, $r_i\to 0^+$ such that
\begin{equation}\label{coicbx0}
\left| \left\{ r_{\Rm}\leq B_2\lambda r_i \right\}\cap P^*(x_i, t_0+r_i^2; B_1r_i, -2r_i^2)\cap (M_i\times \left\{t_0\right\}) \right|_{g_{i, t_0}}   \geq E_{\mathbf{p}}\lambda^{\mathbf{p}}r_i^{n} ,
\end{equation}
for all $i\in \mathbb{N}$. Let $(\hat M_{i}, (\hat g_{i,t})_{t\in [-\hat T_{i} , 0]})$ be the flows arise from $(M_i, (g_{i,t})_{t\in [-T_i , 0]})$ by setting 
$$ \hat M_{i}:=M_i,~~~ \hat g_{i, t}:=r_i^{-2}g_{i,r_i^2t+(t_0+r_i^2)},~~~ t\in [-\hat T_{i} , 0], $$
where $\hat T_{i}\to\infty$ as $i\to\infty$ are chosen such that $-T_i=-r_i^2\hat T_{i}+(t_0+r_i^2)$. Then from (\ref{coicbx0}), on the flow $(M_{i,k}, (g_{i,k,t})_{t\in [-T_{i,k} , 0]})$, we have
\begin{equation}\label{ceooxk'1}
\left| \left\{ r_{\Rm}\leq B_2\lambda  \right\}\cap P^*(x_i, 0; B_1, -2)\cap (\hat M_{i}\times \left\{-1\right\}) \right|_{\hat g_{i, -1}} \geq E_{\mathbf{p}}\lambda^{\mathbf{p}}.  
\end{equation}
Passing to a subsequence, we have $\bF$-convergence on compact time-intervals
\begin{equation}\label{FcorKRFofxk}
(\hat M_{i}, (\hat g_{i,t})_{t\in [-\hat T_{i} , 0]}, (\nu_{x_i,0; t})_{t\in [-\hat T_{i} , 0]}) \xrightarrow[i\to\infty]{\bF,\CCC}  (\hat{\cX}, (\nu_{x_\infty; t})_{t\in (-\infty , 0]}),
\end{equation}
within some correspondence $\CCC$, where $\hat{\cX}$ is a future continuous and $H_{n}$-concentrated metric flow of full support over $(-\infty , 0]$. We denote by $\hat{\cX}= \hat{\cR} \sqcup \hat{\cS}$ the regular-singular decomposition with $\hat{\cR}$ carries the structure of a Ricci flow spacetime $(\hat{\cR}, \mathfrak{t}, \partial_{\mathfrak{t}}, \hat{g})$, and let $\hat U_{i}\subset \hat{\cR}$, $\hat V_{i}\subset M_{i}\times [-\hat T_{i}, 0]$ be the open subsets where we have local smooth convergence, with time-preserving diffeomorphisms $\hat\psi_{i}:\hat U_{i}\to \hat V_{i}$ for each $i=1,2,\dots$.

Let $\delta=\delta(\lambda, E_{\mathbf{p}}, A, B, T)>0$ be a small constant to be determined. Then we can apply \cite[Lemma 15.27, (a)]{Bam20c} to obtain that
\begin{equation}\label{ceooxk'2}
\left| \left\{ r_{\Rm}\leq 2\delta  \right\}\cap P^*(x_i, 0; B_1, -2)\cap (\hat M_{i}\times \left\{-1\right\}) \right|_{\hat g_{i,-1}} \leq C(A, B, T)\delta,
\end{equation}
hence we can combine (\ref{ceooxk'1}) and (\ref{ceooxk'2}) to get
\begin{equation}\label{ceooxk'3}
\left| W_{i} \right|_{\hat g_{i,-1}} \geq \frac{1}{2}E_{\mathbf{p}}\lambda^{\mathbf{p}},
\end{equation}
where $W_{i}\subset \hat M_{i}$ is defined by
$$W_{i}:=\left\{ 2\delta\leq r_{\Rm}\leq B_2\lambda \right\}\cap P^*(x_i, 0; B_1, -2)\cap (\hat M_{i}\times \left\{-1\right\}),$$
provided that $\delta$ is chosen small enough. 

We claim that, for all $i$ large, for any $(y,-1)\in P^*(x_i, 0; B_1, -2)$, we have
\begin{equation}\label{dbyxi}
\begin{split}
d_{\hat g_{i,-1}}(y, x_i)\leq C(A, B, T) .
\end{split}
\end{equation}
Indeed, by Proposition \ref{viprop3}: since $(x_i, t_0)\in P^*(p_i, 0; A, -T)$, we have $v_i(x_i, t_0)\leq C(A, T)$; then from $(y,t_0)\in P^*(x_i, t_0+r_i^2; B_1r_i, -2r_i^2)$, we have $v_i(y, t_0)\leq C(A, B, T)$. Hence $v_i(x_i, t), v_i(y, t)\leq C(A, B, T)$ for all $t\in [-T, 0]$, and we then have
\begin{equation}\label{scbatxiy}
R_{g_i}(x_i, t), R_{g_i}(y, t)\leq C(A, B, T),
\end{equation}
for all $t\in [-T, 0]$. Hence we can apply Lemma \ref{W1balongwl} to get
\begin{equation}\label{dW1bx0viP2}
\begin{split}
d^{g_{i, t_0-r_i^2}}_{W_1} (\nu_{x_i, t_0+ r_i^2; t_0-r_i^2} , \delta_{x_i} )  \leq C(A, B, T)r_i ,
\end{split}
\end{equation}
and 
\begin{equation}\label{dW1byviP}
\begin{split}
d^{g_{i, t_0-r_i^2}}_{W_1} (\nu_{y, t_0; t_0-r_i^2} , \delta_{y} )  \leq C(A, B, T)r_i ,
\end{split}
\end{equation}
hence by the triangle inequality, we have $d_{g_{i, t_0-r_i^2}} (y , x_i )  \leq C(A, B, T)r_i$. Then for all $i$ large enough (hence $r_i$ small enough), we can apply Proposition \ref{Tboundonsmallr} to obtain (\ref{dbyxi}).

Now, if we set $\hat v_i(t)=v_i(r_i^2t+(t_0+r_i^2))$, then $\hat v_i$ is a $r_i^2C_i$-barrier of $R_{\hat g_i}$, and is $C(A, B, T)$-based at $(x_i, 0)$ . Hence by (\ref{dbyxi}), we can apply Proposition \ref{copuug'1} to obtain that $W_{i}\subset \hat V_{i}$ for all $i$ large enough (if this is not true, we can pass to a subsequence to find points $(y_i,-1)\in W_{i}\setminus \hat V_{i}$, this contradicts to item (3) of Proposition \ref{copuug'1}). Hence for $i$ large enough, $\hat\psi_{i}^{-1}(W_{i})\subset \hat U_{i}$ is well-defined. Then we have

\begin{claim}\label{cowik}
For $i$ large enough, we have
$$ 
\hat\psi_{i}^{-1}(W_{i})\subset \left\{ \delta\leq \tilde r_{\Rm}\leq 2B_2\lambda \right\}\cap P^*(x_\infty; 2B_1, -3)\cap \hat{\cR}_{-1}.
$$
\end{claim}
\begin{proof}
If this is not true, passing to a subsequence, we can find a sequence of points $(y_i,-1)\in W_{i}$ but $\hat\psi_{i}^{-1}(y_i,-1)\notin \left\{ \delta\leq \tilde r_{\Rm}\leq 2B_2\lambda \right\}\cap P^*(x_\infty; 2B_1, -3)$. Using Proposition \ref{copuug'1} again, passing to a subsequence, there exists a point $y_\infty\in \hat{\cR}_{-1}$, such that we have $(y_i,-1)\xrightarrow[i\to\infty]{\CCC}y_\infty$, and $ \tilde r_{\Rm}(y_\infty) = \lim_{i\to\infty} r_{\Rm}(y_i,-1)\in [2\delta, B_2\lambda]$. According to \cite[Lemma 15.8]{Bam20c}, we have $y_\infty\in P^*(x_\infty; 2B_1, -3)$, and since $\hat\psi_{i}^{-1}(y_i,-1)\to y_\infty$ in $\hat{\cR}_{-1}$, we have $\hat\psi_{i}^{-1}(y_i,-1)\in \left\{ \delta\leq \tilde r_{\Rm}\leq 2B_2\lambda \right\}\cap P^*(x_\infty; 2B_1, -3)$ for $i$ large enough. This is a contradiction which proves the claim.
\end{proof}
Due to \cite[Lemma 15.16, (h)]{Bam20c}, the set $\left\{ \delta\leq \tilde r_{\Rm}\leq 2B_2\lambda \right\}\cap P^*(x_\infty; 2B_1, -3)\cap \hat{\cR}_{-1}$ is relatively compact in $\hat{\cR}_{-1}$, hence due to the local smooth convergence on $\hat{\cR}_{-1}$, we have from (\ref{ceooxk'3}) and Claim \ref{cowik} that
\begin{equation}\label{ceooxinfty'1}
\left| \left\{ \delta\leq \tilde r_{\Rm}\leq 2B_2\lambda \right\}\cap P^*(x_\infty; 2B_1, -3)\cap \hat{\cR}_{-1} \right|_{\hat{g}_{-1}} \geq \frac{1}{2} E_{\mathbf{p}}\lambda^{\mathbf{p}}. 
\end{equation}
On the other hand, from (\ref{scbatxiy}), we have
\begin{equation}\label{scbatxk}
R_{g_i}(x_i, t_0+r_i^2)\leq C(A, B, T), 
\end{equation}
for all $i$ large. Also, we have $R_{g_i}\geq -C(T)$ on $M_i\times [-T, 0]$. Hence we have
$$R_{\hat g_{i}}(x_i, 0)\leq C(A, B, T)r_i^2\to 0,~~as~~i\to\infty,$$
and $R_{\hat g_{i}}\geq -C(n, \omega_0)r_i^2$ on $\hat M_i\times [-T', 0]$ for any $T'<\infty$, for $i$ large enough. Hence we have
\begin{equation}\label{scbatxk2}
\int_{-T'}^{0}\int_{\hat M_{i}} \left| \ric_{\hat g_{i}} \right|^2 d\nu_{x_i,0;t}dt\leq R_{\hat g_{i}}(x_i,0)+C(A,B, T)r_i^2\to 0,~~as~~i\to\infty . 
\end{equation}
Hence we can apply \cite[Theorem 15.60]{Bam20c} to obtain that $(\hat{\cX}, (\nu_{x_\infty; t})_{t\in (-\infty , 0]})$ is a static limit. 

We claim that, there exists $C_0=C_0(Y_0)<\infty$, such that for any $x\in P^*(x_\infty; 2B_1, -3)\cap \hat{\cR}_{-1}$, for any $t\in [-2, -1]$, we have
\begin{equation}\label{containofxt}
x(t)\in P^*(x_\infty; 2B_1+C_0, -3)\cap \hat{\cR}_{t}  ,
\end{equation}
where $x(t)$ denotes the point survive to time $t$. Indeed, let $t\in [-2, 1)$, consider the curve $\gamma(\tau)=x(-1-\tau)$, $\tau\in [0, -1-t]$. Since $\ric\equiv 0$ on $\hat{\cR}$, we have $\cL(\gamma)=0$, hence from \cite[Lemma 22.2]{Bam20c}, we have 
$$
d^{\hat{\chi}_{-3}}_{W_1} (\nu_{x; -3} , \nu_{x(t); -3} ) \leq d^{\hat{\chi}_{-3}}_{W_1} (\nu_{x; t} , \delta_{x(t)} ) < C(Y_0) ,
$$
hence by triangle inequality,
$$
d^{\hat{\chi}_{-3}}_{W_1} (\nu_{x_\infty; -3} , \nu_{x(t); -3} ) \leq d^{\hat{\chi}_{-3}}_{W_1} (\nu_{x_\infty; -3} , \nu_{x; -3} ) +  d^{\hat{\chi}_{-3}}_{W_1} (\nu_{x; -3} , \nu_{x(t); -3} ) < 2B_1 + C(Y_0) ,
$$
this proves (\ref{containofxt}).

Then we can apply the first inequality in \cite[Lemma 15.27, (a)]{Bam20c} (taking $I_\infty=[-2,-1]$ there is enough) to obtain that (note that $\mathbf{p}\in (0,4)$)
$$
\int_{-2}^{-1} \int_{\left\{ 0< \tilde r_{\Rm}\leq 2B_2\lambda \right\}\cap P^*(x_\infty; 2B_1+C_0, -3)\cap \hat{\cR}_{t}} d\hat{g}_t dt \leq B_3\lambda^{\mathbf{p}},
$$
for some constant $B_3=B_3(\mathbf{p}, B_1, B_2)=B_3(\mathbf{p}, A, B, T)<\infty$. Using (\ref{containofxt}) and the fact that $\hat{\cX}$ is static limit, we conclude
\begin{equation}\label{ceooxinfty'2}
\left| \left\{ 0< \tilde r_{\Rm}\leq 2B_2\lambda \right\}\cap P^*(x_\infty; 2B_1, -3)\cap \hat{\cR}_{-1} \right|_{\hat{g}_{-1}} \leq B_3\lambda^{\mathbf{p}},
\end{equation}
Hence if we choose $E_{\mathbf{p}}\geq 10B_3$, then we obtain a contradiction from (\ref{ceooxinfty'1}) and (\ref{ceooxinfty'2}). 

This completes the proof of Lemma \ref{icbax0}.
\end{proof}

\begin{lemma}\label{eoQ}
There exists constants $H=H(\mathbf{p}, A, B, T)<\infty$, $0<\Bar{r}=\Bar{r}(\lambda, t_0, A, B, T, \mathbf{p})<1$, such that
\begin{equation}
Q\leq H\lambda^{\mathbf{p}-n}.  
\end{equation}
\end{lemma}
\begin{proof}
Let $0<\Bar{r}=\Bar{r}(\lambda, t_0, A, B, T, \mathbf{p})<1$ be the constant from Lemma \ref{icbax0}.

For each $1\leq j\leq Q$, we let $(z_j, t_0)$ be an $H_{n}$-center of $(y_j, t_0+(\lambda r)^2)$, then we have
\begin{equation}\label{eoWdbyjzj}
d^{g_{i, t_0-(\lambda r)^2}}_{W_1} (\nu_{y_j, t_0+(\lambda r)^2; t_0-(\lambda r)^2} , \nu_{z_j, t_0; t_0-(\lambda r)^2} ) \leq d^{g_{i, t_0}}_{W_1} (\nu_{y_j, t_0+(\lambda r)^2; t_0} , \delta_{z_j} ) \leq H_{n}^{1/2}\lambda r.    
\end{equation}
But for any $(z, t_0)\in B_{g_i}(z_j, t_0, (2H_{n})^{1/2}\lambda r)$, we have
\begin{equation}\label{eoWdbzjz}
d^{g_{i, t_0-(\lambda r)^2}}_{W_1} (\nu_{z_j, t_0; t_0-(\lambda r)^2} , \nu_{z, t_0; t_0-(\lambda r)^2} ) \leq d_{g_{i, t_0}} (z_j , z ) \leq 2H_{n}^{1/2}\lambda r.    
\end{equation}
Hence we can combine (\ref{eoWdbyjzj}) and (\ref{eoWdbzjz}) to obtain that
\begin{equation}\label{eoWdbyjz}
d^{g_{i, t_0-(\lambda r)^2}}_{W_1} (\nu_{y_j, t_0+(\lambda r)^2; t_0-(\lambda r)^2} , \nu_{z, t_0; t_0-(\lambda r)^2} ) \leq  3H_{n}^{1/2}\lambda r,
\end{equation}
hence we have
$$ B(z_j, t_0, (2H_{n})^{1/2}\lambda r)\subset P^*(y_j, t_0+(\lambda r)^2; 3H_{n}^{1/2}\lambda r, -2(\lambda r)^2). $$
Hence from Lemma \ref{hcv}, we have
\begin{equation}
\left| P^*(y_j, t_0+(\lambda r)^2; 3H_{n}^{1/2}\lambda r, -2(\lambda r)^2) \cap (M_i\times \left\{t_0\right\}) \right|_{g_{i, t_0}} \geq c(\lambda r)^{n}, 
\end{equation}
for some constant $c=c(n, \omega_0)>0$, but these subsets are mutually disjoint, hence we can combine Lemma \ref{P*nbhdofyix0}, Lemma \ref{P*nbhdofyirm} and Lemma \ref{icbax0} to obtain that, for all $i$ large enough, we have
$$ Q\cdot c(\lambda r)^{2n} \leq E_{\mathbf{p}}\lambda^{\mathbf{p}}r^{2n}, $$
where $E_{\mathbf{p}}$ is the constant from Lemma \ref{icbax0}. This completes the proof.
\end{proof}
Lemma \ref{eoQ} proves item (1) of Proposition \ref{eocnoMi}.

Finally, we come to prove item (3) of Proposition \ref{eocnoMi}. Assume there is a point $(y_0, t_0)\in \left\{ r_{\Rm}\leq \lambda r \right\}\cap P^*(x_0, t_0+r^2; Br, -2r^2)$, but 
\begin{equation}\label{eoWdbyjzy0}
d^{g_{i, t_0-(\lambda r)^2}}_{W_1} (\nu_{y_0, t_0; t_0-(\lambda r)^2} , \nu_{y_j, t_0+(\lambda r)^2; t_0-(\lambda r)^2} ) \geq B\lambda r,
\end{equation}
for all $j=1,\dots, Q$. As the arguments of (\ref{W1dbyjpi}) to (\ref{W1dbyj}) we have $R_{g_i}(y_0, t)\leq C(A,T)$ for all $t\in [-T, 0]$ and 
\begin{equation}\label{W1dby0}
\begin{split}
d^{g_{i, t_0-(\lambda r)^2}}_{W_1} (\nu_{y_0, t_0; t_0-(\lambda r)^2} , \nu_{y_0, t_0+(\lambda r)^2; t_0-(\lambda r)^2} ) \leq d^{g_{i, t_0}}_{W_1} (\nu_{y_0, t_0+(\lambda r)^2; t_0} , \delta_{y_0} ) \leq C(A, T)\lambda r.
\end{split}
\end{equation}
Let $(\hat{y}, \hat{t})\in P^*(y_0, t_0+(\lambda r)^2; 3H_{n}^{1/2}\lambda r, -2(\lambda r)^2)$, then from (\ref{W1dby0}), we have
\begin{equation}\label{W1dbhatyy0}
\begin{split}
d^{g_{i, t_0-(\lambda r)^2}}_{W_1} (\nu_{\hat{y}, \hat{t}; t_0-(\lambda r)^2} , \nu_{y_0, t_0; t_0-(\lambda r)^2} ) \leq C(A, T)\lambda r.
\end{split}
\end{equation}
Combining (\ref{eoWdbyjzy0}) and (\ref{W1dbhatyy0}), we have 
\begin{equation}\label{W1dbhathatyyj}
\begin{split}
d^{g_{i, t_0-(\lambda r)^2}}_{W_1} (\nu_{\hat{y}, \hat{t}; t_0-(\lambda r)^2} , \nu_{y_j, t_0+(\lambda r)^2; t_0-(\lambda r)^2} ) \geq (B-C(A, T))\lambda r\geq 10H_{n}^{1/2}\lambda r,
\end{split}
\end{equation}
provided that $B\geq 2C(A, T)$. Hence we have
$$ P^*(y_0, t_0+(\lambda r)^2; 3H_{n}^{1/2}\lambda r, -2(\lambda r)^2) \cap P^*(y_j, t_0+(\lambda r)^2; 3H_{n}^{1/2}\lambda r, -2(\lambda r)^2) = \emptyset $$
for all $j=1,\dots, Q$. But this contradicts the maximality of $Q$. 

This completes the proof of Proposition \ref{eocnoMi}.
\end{proof}
Applying Proposition \ref{eocnoMi} successively for sufficiently small $\lambda$ yields

\begin{proposition}\label{icbonflow}
For any $T\in (0, T_\infty)$, $A, B<\infty$, $0<\mathbf{p}<4$, $t_0\in(-T, 0)$, there exists constant $E_{\mathbf{p}}=E_{\mathbf{p}}(A, B, T)<\infty$, such that the following statement holds.
For any $t_0\in(-T, 0)$, there exists constant $\Bar{r}>0$ depends on $t_0, A, B, T, \mathbf{p}$, such that the following statement holds for all $i$ large enough.

Given any $i\geq i_0$, for any $(x_0, t_0)\in P^*(p_i, 0; A, -T)$, $0<r\leq\Bar{r}$, $0<s\leq 1$, we have
\begin{equation}
\left| \left\{ r_{\Rm}\leq sr \right\}\cap P^*(x_0, t_0+r^2; Br, -2r^2)\cap (M_i\times \left\{t_0\right\}) \right|_{g_{i, t_0}}   \leq E_{\mathbf{p}}s^{\mathbf{p}}r^{n}.  
\end{equation}
\end{proposition}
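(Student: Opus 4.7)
The strategy is to iterate Proposition \ref{eocnoMi} at geometrically decreasing scales $r,\lambda r,\lambda^{2}r,\dots$ until the scale reaches $sr$. Naively, after $k$ iterations the set $\{r_{\Rm}\leq \lambda^{k}r\}$ is covered by at most $H^{k}\lambda^{k(\mathbf{p}-n)}$ many $P^{\ast}$-neighborhoods at scale $\lambda^{k}r$, each contributing volume at most $C_{\mathrm{NI}}(\lambda^{k}r)^{n}$ to the time slice by Bamler's volume non-inflating (combined with the $H_{n}$-center description of $P^{\ast}$-neighborhoods and Lemma \ref{hnc'2}). Multiplying yields a bound $C_{\mathrm{NI}}H^{k}\lambda^{k\mathbf{p}}r^{n}$, but with $H\geq 1$ the factor $H^{k}$ blows up as $k\to\infty$. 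To defeat this, I will exploit that we have the freedom to choose the exponent in Proposition \ref{eocnoMi}: fix any $\mathbf{p}'\in(\mathbf{p},4)$ (e.g.\ $\mathbf{p}':=(\mathbf{p}+4)/2$), apply Proposition \ref{eocnoMi} with the larger exponent $\mathbf{p}'$, and then choose $\lambda\in(0,\bar\lambda(A+1,B,T,\mathbf{p}')]$ small enough that
\[
H(\mathbf{p}',A+1,B,T)\,\lambda^{\mathbf{p}'-\mathbf{p}}\leq 1.
\]
With this choice, $(H\lambda^{\mathbf{p}'})^{k}=(H\lambda^{\mathbf{p}'-\mathbf{p}})^{k}\lambda^{k\mathbf{p}}\leq\lambda^{k\mathbf{p}}$, so the iterated bound becomes $C_{\mathrm{NI}}\lambda^{k\mathbf{p}}r^{n}$ with a prefactor that is independent of $k$.

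The bookkeeping of the parameter $A$ is the technical core of the argument. At each step of the iteration, the new centers $y_{j}$ provided by Proposition \ref{eocnoMi} lie in a $P^{\ast}$-neighborhood of the previous center at the relevant scale, and by the estimates used in the proof of Lemma \ref{P*nbhdofyix0} (triangle inequality along the conjugate-heat-flow diagram, plus Lemma \ref{W1balongwl}), one obtains
\[
d^{g_{i,-T}}_{W_{1}}\bigl(\nu_{y_{j},t_{0};-T},\nu_{p_{i},0;-T}\bigr)\leq A+C(A,B,T)\,\rho,
\]
where $\rho$ is the current scale. Summing the resulting geometric series $\rho\in\{r,\lambda r,\lambda^{2}r,\dots\}$ gives a cumulative increase bounded by $C(A+1,B,T)\,r/(1-\lambda)$, which is $\leq 1$ once $\bar r$ is chosen small enough in terms of $\lambda,A,B,T,\mathbf{p}$. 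Consequently all iterated centers remain in $P^{\ast}(p_{i},0;A+1,-T)$, so at every step Proposition \ref{eocnoMi} can be reapplied with the uniform parameter $A+1$, and the constants $H(\mathbf{p}',A+1,B,T)$ and $\bar\lambda(A+1,B,T,\mathbf{p}')$ are truly independent of the stage.

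Given $s\in(0,1]$, choose $k\in\mathbb{N}$ with $\lambda^{k+1}<s\leq\lambda^{k}$ (take $k=0$ if $s=1$). Since $sr\leq\lambda^{k}r$, we have $\{r_{\Rm}\leq sr\}\subseteq\{r_{\Rm}\leq\lambda^{k}r\}$, and the iterated covering bound from above yields
\[
\bigl|\{r_{\Rm}\leq sr\}\cap P^{\ast}(x_{0},t_{0}+r^{2};Br,-2r^{2})\cap(M_{i}\times\{t_{0}\})\bigr|_{g_{i,t_{0}}}\leq C_{\mathrm{NI}}\lambda^{k\mathbf{p}}r^{n}\leq C_{\mathrm{NI}}\lambda^{-\mathbf{p}}s^{\mathbf{p}}r^{n},
\]
using $\lambda^{k}<s/\lambda$. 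Setting $E_{\mathbf{p}}:=C_{\mathrm{NI}}\lambda^{-\mathbf{p}}$ (which depends only on $A,B,T,\mathbf{p}$ through the choice of $\lambda$ and the non-inflating constant) completes the proof, with $\bar r$ chosen small enough to guarantee both the validity of Proposition \ref{eocnoMi} at scale $\lambda^{k}r$ (this requires $\lambda^{k}r\leq\bar r(\lambda,t_{0},A+1,B,T,\mathbf{p}')$, which constrains $r$ in terms of $s$) and the $A$-parameter stability above.

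The main obstacle, and the step where care is needed, is neither the single covering step (given by Proposition \ref{eocnoMi}) nor the volume non-inflating estimate for the terminal $P^{\ast}$-neighborhoods, but the simultaneous control of three quantities as $k\to\infty$: (i) the multiplicative factor $H^{k}$, handled by the $\mathbf{p}\mapsto\mathbf{p}'$ upgrade; (ii) the drifting $W_{1}$-distance of the iterated centers to $p_{i}$, handled by the geometric-series bookkeeping that forces $r$ to be small relative to $\lambda$; and (iii) the dependence of the threshold $\bar r$ in Proposition \ref{eocnoMi} on the base point $x_{0}$ and time $t_{0}$, which must be arranged to remain uniform across the iteration, ultimately giving the $\bar r=\bar r(t_{0},A,B,T,\mathbf{p})$ in the conclusion.
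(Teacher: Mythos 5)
Your proposal is correct and matches the paper's proof essentially step for step: iterate Proposition \ref{eocnoMi}, upgrade the exponent to $\mathbf{p}'=(\mathbf{p}+4)/2$, and choose $\lambda$ small so that $H\lambda^{\mathbf{p}'-\mathbf{p}}\leq 1$ kills the multiplicative $H^{k}$, then invoke Bamler's volume non-inflating for the terminal $P^{\ast}$-balls. You make explicit two points the paper leaves implicit (the geometric-series control on the $W_{1}$-drift of the iterated centers back to $p_{i}$, and the reduction from general $s$ to $s=\lambda^{k}$); one tiny slip: the constraint $\lambda^{k}r\leq\bar r$ does not actually couple $r$ to $s$, since $\lambda^{k}r\leq r\leq\bar r$ automatically, so no $s$-dependence enters $\bar r$.
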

\begin{proof}
We choose $\eta\in (0,1)$ such that $-T-100\eta> T_\infty$, then we consider all $i$ large enough such that $-T-50\eta> -T_i$. We always require that $t_0-\Bar{r}^2\geq -T$, $t_0+\Bar{r}^2<0$, $0<r\leq\Bar{r}$.

Let $B_0=B_0(A+1, T)$ be the constant from Proposition \ref{eocnoMi}. We the replace $B$ by $\max\left\{ B, B_0 \right\}$.

Let $\mathbf{p}'=(\mathbf{p}+4)/2\in (\mathbf{p}, 4)$, then let $H=H(A+1, B, T, \mathbf{p}')$ and $\Bar{\lambda}=\Bar{\lambda}(A+1, B, T, \mathbf{p}')>0$ be the constants from Proposition \ref{eocnoMi}. Choose $0<\lambda=\lambda(A, B, T, \mathbf{p})\leq \Bar{\lambda}/2$ such that 
\begin{equation}\label{doflambda}
H\lambda^{\mathbf{p}'-\mathbf{p}}\leq 1.  
\end{equation}
Now, for any $k\in \mathbb{N}^+$, we apply Proposition \ref{eocnoMi} successively for $k$ times to find points $(y_1, t_0), \dots, (y_Q, t_0) \in P^*(x_0, t_0+r^2; 2Br, -2r^2)$ with $Q\leq (H\lambda^{\mathbf{p}'-n})^k$ such that
\[
\begin{split}
\left\{ r_{\Rm}\leq \lambda^k r \right\}\cap P^*(x_0, t_0+r^2; & Br, -2r^2)\cap (M_i\times \left\{t_0\right\})\\
&\subset \bigcup_{j=1}^{Q}P^*(y_j, t_0+(\lambda^k r)^2; B\lambda^k r, -2(\lambda^k r)^2),
\end{split}
\]
hence we can apply \cite[Theorem 9.8]{Bam20a} to obtain that
\[
\begin{split}
&\left| \left\{ r_{\Rm}\leq \lambda^k r \right\}\cap P^*(x_0, t_0+r^2; Br, -2r^2)\cap (M_i\times \left\{t_0\right\}) \right|_{g_{i, t_0}} \\
&\leq (H\lambda^{\mathbf{p}'-n})^k\cdot C(A, T)(B\lambda^k r)^{n}\leq E_{\mathbf{p}}(H\lambda^{\mathbf{p}'-\mathbf{p}})^k \lambda^{\mathbf{p}k} r^{n}\leq E_{\mathbf{p}} \lambda^{\mathbf{p}k} r^{n}.
\end{split}
\]
This completes the proof.
\end{proof}
Now we can pass the integral curvature bound in Proposition \ref{icbonflow} to the limit $\cX$.

\begin{proposition}\label{icbonlimitflow}
For any $T\in (0, T_\infty)$, $A, B<\infty$, $0<\mathbf{p}<4$, $t_0\in(-T, 0)$, there exist constants $0<\Bar{r}=\Bar{r}( t_0, A, B, T, \mathbf{p} )<1$, $E_{\mathbf{p}}=E_{\mathbf{p}}(A, B, T)<\infty$, such that the following statement holds.

For any $y_\infty\in P^*(p_\infty; A, -T)\cap\cX_{t_0}$, $0<r\leq\Bar{r}$, $0<s\leq 1$, we have
\begin{equation}\label{icbonlimitflow'2}
\left| \left\{ 0<\tilde r_{\Rm}\leq sr \right\}\cap P^*\left(y_\infty; Br, -\frac{3}{2}r^2\right)\cap \cR_{t_0-r^2} \right|_{g_{t_0-r^2}}   \leq E_{\mathbf{p}}s^{\mathbf{p}}r^{2n}.  
\end{equation}
\end{proposition}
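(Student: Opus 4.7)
The plan is to transfer Proposition \ref{icbonflow} to the limit $\cX$ via the local smooth convergence on the regular part $\cR$ provided by (\ref{lsc'1})--(\ref{lsc'2}).

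First, given $y_\infty \in P^*(p_\infty; A, -T) \cap \cX_{t_0}$, apply \cite[Theorem 6.45]{Bam20b} to produce approximating points $y_i \in M_i$ with $(y_i, t_0) \xrightarrow[i\to\infty]{\CCC} y_\infty$. Combining the hypothesis with $W_1$-convergence of conjugate heat kernels along the correspondence yields $(y_i, t_0) \in P^*(p_i, 0; A+1, -T)$ for all large $i$. Using Lemma \ref{W1balongwl} to bound $d_{W_1}^{g_{i,t_0-r^2}}(\nu_{y_i, t_0; t_0-r^2}, \delta_{y_i}) \leq Cr$, one concludes $(y_i, t_0-r^2) \in P^*(p_i, 0; A+2, -T)$ for $i$ large, provided $\bar r$ is chosen small (the required bounds on $v_i$ and Nash entropy at $y_i$ follow from Proposition \ref{viprop3} and the hypothesis $C_i \leq Y_0$).

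Second, apply Proposition \ref{icbonflow} (with $A+2$, $2B$ in place of $A$, $B$) at base point $x_0 = y_i$ and flow time $t_0 - r^2$. The resulting $P^*$-neighborhood $P^*(y_i, (t_0-r^2)+r^2; \cdot, -2r^2) = P^*(y_i, t_0; \cdot, -2r^2)$ is automatically centered at $(y_i,t_0)$, matching the limit center $y_\infty$, and we obtain
\[
\left|\{r_{\Rm} \leq 2sr\} \cap P^*(y_i, t_0; 2Br, -2r^2) \cap (M_i \times \{t_0-r^2\})\right|_{g_{i, t_0-r^2}} \leq 2^\mathbf{p} E_\mathbf{p}(A+2, 2B, T)\, s^\mathbf{p} r^n,
\]
for $i$ large and $\bar r$ small, where the factor $2^{\mathbf{p}}$ absorbs the replacement of $sr$ by $2sr$ on the left.

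Third, exhaust the open set on the left-hand side of (\ref{icbonlimitflow'2}) by compact subsets $K \subset \cR_{t_0-r^2}$ and pass to the limit on each $K$. Since $K$ is relatively compact in $\cR$, the local smooth convergence (\ref{lsc'1}) gives $K \subset U_i$ for $i$ large, and $\psi_i^* g_i \to g$ in $C^\infty_{\text{loc}}$ forces $r_{\Rm}(\psi_i(x), t_0-r^2) \leq 2\tilde r_{\Rm}(x) \leq 2sr$ uniformly on $K$. For the $P^*$-condition, $x \in P^*(y_\infty; Br, -\tfrac{3}{2}r^2)$ combined with convergence of conjugate heat kernels (see (\ref{lsc'2}) and \cite[Lemma 15.8]{Bam20c}), together with monotonicity of the $W_1$-distance along the conjugate heat flow (which lets us relax the measuring time from $t_0-\tfrac{3}{2}r^2$ to $t_0-2r^2$), gives $\psi_i(x) \in P^*(y_i, t_0; 2Br, -2r^2)$ for $i$ large. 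Smooth convergence of volume forms then yields $|\psi_i(K)|_{g_{i, t_0-r^2}} \to |K|_{g_{t_0-r^2}}$, and combining with Step 2 gives $|K|_{g_{t_0-r^2}} \leq 2^\mathbf{p} E_\mathbf{p}(A+2, 2B, T)\, s^\mathbf{p} r^n$. Exhaustion and absorbing the constants into $E_\mathbf{p}$ completes the proof.

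The main obstacle is the uniform transfer of the $P^*$-neighborhood condition from $\cX$ to the $M_i$ over a compact subset of $\cR_{t_0-r^2}$: one needs the $W_1$-distance between $\nu_{y_i, t_0; t_0-3r^2/2}$ and $\nu_{\psi_i(x), t_0-r^2; t_0-3r^2/2}$ on $M_i$ to be uniformly close to its counterpart on $\cX$, simultaneously for all $x$ in the compact subset. This type of uniform convergence follows from (\ref{lsc'2}) and standard stability arguments in Bamler's $\bF$-convergence framework, but careful bookkeeping of how the constants $A+2$, $2B$, and the $-\tfrac{3}{2}r^2$ versus $-2r^2$ time windows combine is what governs the final form of $E_\mathbf{p}$ and $\bar r$.
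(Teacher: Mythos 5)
Your proposal is correct and follows essentially the same route as the paper: produce approximating points $y_i$ via \cite[Theorem 6.45]{Bam20b}, check $(y_i,t_0-r^2)\in P^*(p_i,0;A+2,-T)$ using Proposition \ref{viprop3} and Lemma \ref{W1balongwl}, apply Proposition \ref{icbonflow} on $M_i$ with the relaxed parameters $2sr$, $2Br$, $-2r^2$, then pass the volume bound to the limit via local smooth convergence on a compact exhaustion of $\cR_{t_0-r^2}$. The paper's exhaustion is by the sets $W_\delta=\{2\delta\le\tilde r_{\Rm}\le sr\}\cap P^*(y_\infty;Br,-\tfrac32 r^2)\cap\cR_{t_0-r^2}$ (made relatively compact by \cite[Lemma 15.16(h)]{Bam20c}), and the "uniform transfer of the $P^*$-condition" you flag as the main obstacle is resolved exactly as you suggest, by a compactness-and-contradiction argument combining convergence of $r_{\Rm}$ with \cite[Lemma 15.8]{Bam20c}; you are slightly hand-wavy on this step but you identify both the issue and the correct ingredients.
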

\begin{proof}
We choose $\eta\in (0,1)$ such that $-T-100\eta> -T_\infty$. We then require that $\Bar{r}\leq \eta$.

Using \cite[Theorem 6.45]{Bam20b}, we can find points $(y_i, t_0)\in M_i\times \left\{t_0\right\}$ such that $(y_i, t_0)\xrightarrow[i\to\infty]{\CCC}y_\infty$.

Let $\delta>0$ be any small constant, then we define the region
$$
W_{\delta}:=\left\{ 2\delta\leq \tilde r_{\Rm}\leq s r \right\}\cap P^*\left(y_\infty; Br, -\frac{3}{2}r^2\right)\cap \cR_{t_0-r^2} .
$$
According to \cite[Lemma 15.16, (h)]{Bam20c}, we know $W_{\delta}\subset\cR_{t_0-r^2}$ is a relatively compact subset, hence $W_{\delta}\subset U_i$ for $i$ large enough. Hence $\psi_i(W_{\delta})\subset V_i$ are well-defined subsets. Let 
$$W_{i}:=\left\{ \delta\leq r_{\Rm}\leq 2s r \right\}\cap P^*\left(y_i, t_0; 2Br, -2r^2\right)\cap (M_i\times \left\{t_0-r^2\right\}).$$
We claim that $\psi_i(W_{\delta})\subset W_i$ for $i$ large enough. If this is not true, passing to a subsequence if necessary, we can find points $(\tilde y_i, t_0-r^2)\in \psi_i(W_{\delta})\setminus W_i$. Since $W_{\delta}\subset\cR_{t_0-r^2}$ is relatively compact, passing to a subsequence if necessary, we have $\psi_i^{-1}(\tilde y_i, t_0-r^2) \to \tilde y_\infty\in \overline{W_\delta}$. Hence by \cite[Theorem 9.31]{Bam20b}, we have $(\tilde y_i, t_0-r^2)\xrightarrow[i\to\infty]{\CCC}y_\infty$, hence $\lim_{i\to\infty} r_{\Rm}(\tilde y_i, t_0-r^2) = \tilde r_{\Rm}(\tilde y_\infty)\in [2\delta, sr]$, combining with \cite[Lemma 15.8]{Bam20c}, we have $(\tilde y_i, t_0-r^2)\in W_i$ for $i$ large enough, which is a contradiction.

Again by \cite[Lemma 15.8]{Bam20c}, we have $(y_i, t_0)\in P^*(p_i, 0; A+1, -T-\eta)$ for all $i$ large enough. Using Proposition \ref{viprop3}, we have $v_i(y_i, t_0)\leq C(A, T)$, hence 
$$
v_i(y_i, t)\leq C(A, T) ,
$$
for all $t\in [-T-\eta, 0]$. Hence we have
$$
R_{g_i}(y_i, t)\leq C(A, T) ,
$$
for all $t\in [-T-\eta, 0]$. Hence we can apply Lemma \ref{W1balongwl} to obtain that
\begin{equation}\label{W1dbyi}
\begin{split}
d^{g_{i, -T-\eta}}_{W_1} (\nu_{y_i, t_0; -T-\eta} , \nu_{y_i, t_0-r^2; -T-\eta} ) \leq d^{g_{i, t_0-r^2}}_{W_1} (\nu_{y_i, t_0; t_0-r^2} , \delta_{y_i} ) \leq C(A, T) r\leq 1,
\end{split}
\end{equation}
if $\Bar{r}$ is small enough. Hence we have $(y_i, t_0-r^2)\in P^*(p_i, 0; A+2, -T-\eta)$ for $i$ large enough. Hence we can apply Proposition \ref{icbonflow} to obtain that
\begin{equation}\label{icbofWdelta}
|\psi_i(W_{\delta})|_{g_{i, t_0-r^2}} \leq |W_i|_{g_{i, t_0-r^2}} \leq E_{\mathbf{p}}s^{\mathbf{p}}r^{n},
\end{equation}
if $r\leq \Bar{r}=\Bar{r}(A, B, T, \mathbf{p}, t_0)$, $i$ large enough, where $E_{\mathbf{p}}=E_{\mathbf{p}}(A, B, T)<\infty$. Again, since $W_{\delta}\subset\cR_{t_0-r^2}$ is relatively compact, we have $\psi_i^*g_{i, t_0-r^2}\to g_{t_0-r^2}$ in $C^{\infty}(W_{\delta})$-sense, hence we can pass $i\to\infty$ in (\ref{icbofWdelta}) to obtain
$$|W_{\delta}|_{g_{t_0-r^2}}\leq E_{\mathbf{p}}s^{\mathbf{p}}r^{n},$$
this holds for any $\delta>0$ with $E_{\mathbf{p}}$ being independent of $\delta$, hence we let $\delta\to 0$ to obtain (\ref{icbonlimitflow'2}). 

This completes the proof of Proposition \ref{icbonlimitflow}.
\end{proof}


\medskip
\subsection{Proof of Theorem \ref{mainforRF1}}

First, we need the following result.
\begin{proposition}\label{eoyinftyandtilyinfty}
For any $A<\infty$, $t_0\in(-T_\infty, 0)$, there exists constants $\Bar{r}>0$, $\lambda>0$, $C<\infty$, all depend on $t_0, A$, such that the following statement holds.
Suppose $x_\infty\in\cX_{t_0}$ satisfies that
$$
d^{\chi_{t_0}}_{W_1} (\nu_{p_\infty; t_0} , \delta_{x_\infty} ) \leq A .
$$
Then for any $0<r\leq\Bar{r}$, there exists a point $\tilde y_\infty\in \cR_{t_0+(\lambda r)^2}$, a point $ y_\infty\in \cR_{t_0}$, $\tilde y_\infty$ is the point survive from $y_\infty$, such that
\begin{enumerate}
\item $\tilde y_\infty\in P^*(p_\infty ; C, t_0-\Bar{r}^2)$;
\item $B(x_\infty,r)\subset P^*(\tilde y_\infty ; Cr, -(\lambda r)^2)$;
\item $d(x_\infty, y_\infty)\leq Cr$, $r_{\Rm}( y_\infty)\geq C^{-1}r$ and $|B( y_\infty, r)\cap \cR_{t_0}|_{g_{t_0}}\geq C^{-1}r^{n}$.
\end{enumerate}
\end{proposition}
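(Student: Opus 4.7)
The idea is to produce $y_\infty$ by a pigeonhole argument at scale $r$: a ball around $x_\infty$ has non-collapsed volume in $\cR_{t_0}$, while Proposition~\ref{icbonlimitflow} (or its approximating version Proposition~\ref{icbonflow}) bounds the subset where the curvature radius is $\le \lambda r$ by $E_{\mathbf p}\lambda^{\mathbf p} r^{n}$. It is cleaner to carry this out on the approximating flows $(M_i,g_{i,t})$ and then transfer to the limit via Proposition~\ref{copuug'1}.

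First, I would transfer the hypothesis to the sequence. Set $T:=|t_0|+1$ (chosen smaller than $T_\infty$). By \cite[Theorem 6.45]{Bam20b} pick $x_i\in M_i$ with $(x_i,t_0)\xrightarrow[i\to\infty]{\CCC}x_\infty$. The bound $d_{W_1}^{\cX_{t_0}}(\nu_{p_\infty;t_0},\delta_{x_\infty})\le A$ together with \cite[Lemma 15.8]{Bam20c} gives $(x_i,t_0)\in P^*(p_i,0;A+1,-T)$ for $i$ large. Proposition~\ref{viprop3} then yields $v_i(x_i,\cdot)\le C(t_0,A)$ on $[-T,0]$, hence $R_{g_i}(x_i,\cdot)\le C(t_0,A)$ there; by Lemma~\ref{W1balongwl} the shifted base point $(x_i,t_0+r^2)$ still lies in $P^*(p_i,0;A+2,-T)$ provided $0<r\le \Bar r$.

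Second, I would find $y_i$ by pigeonhole. Fix $\mathbf p=2$ and a large constant $B=B(t_0,A)$. Proposition~\ref{icbonflow} applied at $(x_i,t_0+r^2)$ yields
\begin{equation*}
\bigl|\{r_{\Rm}\le \lambda r\}\cap P^*(x_i,t_0+r^2;Br,-2r^2)\cap(M_i\times\{t_0\})\bigr|_{g_{i,t_0}} \le E_{2}\lambda^{2} r^{n}
\end{equation*}
for any $0<\lambda\le 1$. Using the scalar curvature bound, Lemma~\ref{lvnc} (after parabolic rescaling by $r^{-1}$) gives $|B_{g_i}(x_i,t_0,r)|_{g_{i,t_0}}\ge c(t_0,A)\,r^{n}$, and a short $W_1$-computation via Lemma~\ref{W1balongwl} places $B_{g_i}(x_i,t_0,r)$ inside the above $P^*$-neighborhood once $B$ is large enough. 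Choosing $\lambda=\lambda(t_0,A)$ with $E_{2}\lambda^{2}<c/2$, pigeonhole yields $y_i\in B_{g_i}(x_i,t_0,r)$ with $r_{\Rm}(y_i,t_0)\ge \lambda r$.

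Finally, I would extract the limit, build $\tilde y_\infty$, and verify (1)--(3). By Proposition~\ref{copuug'1} applied to the rescaled sequence $r^{-2}g_{i,r^2 t + t_0}$, a subsequence gives $(y_i,t_0)\xrightarrow{\CCC}y_\infty\in\cR_{t_0}$ with $\tilde r_{\Rm}(y_\infty)\ge c_0\lambda r$ and an unscathed $P^\circ$-neighborhood at parabolic scale $c_0\lambda r$ around $y_\infty$. Replacing $\lambda$ by $c_0\lambda/2$ so that the survival time $(\lambda r)^{2}$ lies in the unscathed region, set $\tilde y_\infty:=y_\infty(t_0+(\lambda r)^2)\in \cR_{t_0+(\lambda r)^2}$; the convergence at time $t_0$ and $d_{g_{i,t_0}}(x_i,y_i)\le r$ yield $d_{t_0}(x_\infty,y_\infty)\le r$. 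Property (3) is then immediate, with the volume lower bound following from local smooth convergence and standard non-inflating/non-collapsing in the unscathed smooth parabolic neighborhood. Property (1) follows by chaining $W_1$-triangle inequalities $p_\infty\to x_\infty\to y_\infty\to\tilde y_\infty$ through the hypothesis, $d(x_\infty,y_\infty)\le r$, and Lemma~\ref{W1balongwl} applied to the smooth evolution $y_\infty\mapsto\tilde y_\infty$. For (2), given $z\in B(x_\infty,r)$ one bounds $d_{W_1}^{\cX_{t_0-(\lambda r)^2}}(\nu_{z;\cdot},\nu_{\tilde y_\infty;\cdot})$ by interpolating through $\delta_z$ and $\delta_{y_\infty}$ at time $t_0$, using $d(z,y_\infty)\le 2r$ and Lemma~\ref{W1balongwl}. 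The main obstacle is coordinating scales so that the survival $y_\infty\to\tilde y_\infty$ sits in the unscathed region produced by Proposition~\ref{copuug'1}, which forces $\lambda$ to absorb the dimensional constant $c_0(n)$; one must also verify that $E_{\mathbf p}$ and the non-collapsing constant $c$ depend only on $(t_0,A)$, so that a single scale-independent $\lambda$ works for all $0<r\le \Bar r$.
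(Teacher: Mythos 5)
Your strategy — pigeonhole via Proposition~\ref{icbonflow} against non-collapsing, extract a regular limit via Proposition~\ref{copuug'1}, then verify (1)--(3) with $W_1$-triangle inequalities — is essentially the same as the paper's. There are two small structural differences worth noting. First, the paper does the pigeonhole on the ball $B_{g_i}(z_i,\tilde t_0,r)$ at the \emph{forward} time $\tilde t_0 = t_0 + (\lambda r)^2$, producing $y_i$ with $r_{\Rm}(y_i,\tilde t_0)\geq 2\delta r$, and then uses Lemma~\ref{deofrrm} to propagate the curvature radius lower bound backward over $[t_0,\tilde t_0]$ before applying Proposition~\ref{copuug'1} at both endpoints. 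You pigeonhole at $t_0$ directly and flow forward through the unscathed neighborhood. Both work; the paper's version avoids having to renormalize $\lambda$ by the dimensional constant $c_0$, but yours is arguably more direct. Second, the paper replaces $x_\infty$ by a nearby regular point $z_\infty\in B(x_\infty,r)\cap\cR_{t_0}$ and its approximants $z_i$. This is not cosmetic: the $W_1$-chain (\ref{W1debzinfyinf})--(\ref{W1debyinf}) that yields $d_{t_0}(x_\infty,y_\infty)\leq Cr$ goes through worldlines emanating from $z_\infty$, which needs $z_\infty$ to lie in the regular part. In your version, deducing $d_{t_0}(x_\infty,y_\infty)\leq r$ from $d_{g_{i,t_0}}(x_i,y_i)\leq r$ requires identifying $\CCC$-convergence with convergence of the images $\varphi_{t_0}^i(x_i)$ in $Z_{t_0}$ (as in \cite[Theorem 6.23]{Bam20b}); this is true but should be stated, since the $\CCC$-convergence of $\delta_{x_i}$ is \emph{a priori} only a statement about conjugate heat flows, not about pointwise distances.

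There is one genuine inaccuracy to fix: you invoke Lemma~\ref{W1balongwl} on the limit flow to bound $d_{W_1}^{\cX_{t_0}}(\nu_{\tilde y_\infty;t_0},\delta_{y_\infty})$ and to verify (1) and (2). But Lemma~\ref{W1balongwl} is stated for a smooth Ricci flow on a \emph{compact} manifold, while $\cR$ is neither compact nor a global Ricci flow background. The estimate you want, $d_{W_1}^{\cX_{t_0}}(\delta_{y_\infty},\nu_{\tilde y_\infty;t_0})\leq Cr$, should instead be obtained exactly as in (\ref{W1debyinf}) of the paper: the worldline $\tau\mapsto y_\infty(t_0+(\lambda r)^2-\tau)$ has bounded curvature radius, so $\cL(\gamma)\leq C\lambda r$, and \cite[Lemma 22.2]{Bam20c} then gives the $W_1$-bound. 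Alternatively, one can prove the estimate on $(M_i,g_{i,t})$ (where Lemma~\ref{W1balongwl} does apply, since $R_{g_i}(y_i,\cdot)\leq C$ via the barrier) and pass to the limit through \cite[Lemma 15.8]{Bam20c}, as the paper does in the proof of item (1) via (\ref{dbbeyipi}). Similarly, the "rescaled sequence $r^{-2}g_{i,r^2t+t_0}$" phrasing in your application of Proposition~\ref{copuug'1} is slightly off: after rescaling, the base point $p_i$ no longer sits at time $0$, so the hypotheses of the setup in Section~\ref{setupofFlimit} are not literally satisfied. It is cleaner (and matches the paper) to apply Proposition~\ref{copuug'1} to the \emph{unrescaled} flow with $\sigma:=\lambda r$; this is legitimate because the proposition only requires $\sigma\leq\Bar\sigma(t_\infty,D)$, which holds once $r\leq\Bar r$.
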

\begin{proof}
Throughout the proof, unless otherwise stated, all the constants will depend at most on $t_0, A$. First, we choose $\eta\in (0,1)$ such that $t_0-100\eta> -T_\infty$. We then require that $\Bar{r}\leq \eta$.

Since $\cR_{t_0}$ is a dense subset of $\cX_{t_0}$, we can find a point $z_\infty\in B(x_\infty,r)\cap \cR_{t_0}$. Using \cite[Theorem 6.45]{Bam20b}, we can find points $(z_i, t_0)\in M_i\times \left\{t_0\right\}$ such that $(z_i, t_0)\xrightarrow[i\to\infty]{\CCC} z_\infty$. Since
$$
d^{\cX_{t_0}}_{W_1} ( \delta_{z_\infty} , \nu_{p_\infty; t_0} )  \leq d^{\cX_{t_0}}_{W_1} ( \delta_{z_\infty} , \delta_{x_\infty} ) + d^{\cX_{t_0}}_{W_1} ( \delta_{x_\infty} , \nu_{p_\infty; t_0} ) \leq A+1 ,
$$
from \cite[Lemma 15.8]{Bam20c}, we have $(z_i, t_0)\in P^*(p_i, 0; A+2, t_0-\eta)$ for $i$ large enough. According to Proposition \ref{viprop3}, we have $v_i(z_i, t_0)\leq C$, hence
\begin{equation}\label{viboundatzi}
v_i(z_i, t) \leq C,
\end{equation}
for all $t\in [t_0-\eta, 0]$. Hence we have
\begin{equation}\label{scanddealongzi}
R_{g_i}(z_i, t) \leq C, 
\end{equation}
for all $t\in [t_0-\eta, 0]$, for $i$ large enough. We have the following lemma.

\begin{lemma}\label{iceatzi}
For any $\lambda\in (0,1)$, let $\tilde t_0:=t_0+(\lambda r)^2$, then for $i$ large enough, for any $s\in (0,1)$, we have
\begin{equation}
\left| \left\{ r_{\Rm}\leq sr \right\}\cap B_{g_i}\left(z_i,\tilde t_0, r\right) \right|_{g_{i, \tilde t_0}}   \leq Csr^{n}.    
\end{equation}
\end{lemma}
\begin{proof}
From (\ref{scanddealongzi}) and Lemma \ref{W1balongwl}, we have
\begin{equation}\label{W1dbalongzi}
d^{g_{i, \tilde t_0}}_{W_1} (\nu_{z_i, \tilde t_0 + r^2; \tilde t_0} , \delta_{z_i} ) \leq C r.    
\end{equation}
and similarly
\begin{equation}\label{W1dbalongzi'2}
d^{g_{i, t_0}}_{W_1} (\nu_{z_i, \tilde t_0 ; t_0} , \delta_{z_i} ) \leq C(A, T)\lambda r\leq 1,    
\end{equation}
for $i$ large enough. Let $(x, \tilde t_0)\in B_{g_i}\left(z_i,\tilde t_0, r\right)$, then from (\ref{W1dbalongzi}) we have
\[
\begin{split}
d^{g_{i, \tilde t_0 - r^2}}_{W_1} &( \nu_{z_i, \tilde t_0 + r^2; \tilde t_0 - r^2} , \nu_{x, \tilde t_0 ; \tilde t_0 - r^2} ) \leq d^{g_{i, \tilde t_0}}_{W_1} ( \nu_{z_i, \tilde t_0 + r^2; \tilde t_0} , \delta_{x} )\\
& \leq d^{g_{i, \tilde t_0}}_{W_1} ( \nu_{z_i, \tilde t_0 + r^2; \tilde t_0} , \delta_{z_i} ) + d_{g_{i, \tilde t_0}} (z_i, x)\leq B r,
\end{split}
\]
for some constant $B<\infty$. Hence we have
$$ B_{g_i}\left(z_i,\tilde t_0, r\right)\subset P^*(z_i,\tilde t_0 + r^2; Br, -2r^2 ).$$
Also from (\ref{W1dbalongzi'2}) we have
\[
\begin{split}
&d^{g_{i, t_0-\eta}}_{W_1} ( \nu_{z_i, \tilde t_0; t_0-\eta} , \nu_{p_i, 0 ; t_0-\eta } ) \\
&\leq d^{g_{i, t_0-\eta}}_{W_1} ( \nu_{z_i, \tilde t_0; t_0-\eta} , \nu_{z_i, t_0 ; t_0-\eta} ) + d^{g_{i, t_0-\eta}}_{W_1} ( \nu_{z_i, t_0; t_0-\eta} , \nu_{p_i, 0 ; t_0-\eta} )\\
&\leq d^{g_{i, t_0}}_{W_1} (\nu_{z_i, \tilde t_0 ; t_0} , \delta_{z_i} ) + A +2\leq A+3,
\end{split}
\]
hence we can apply Proposition \ref{icbonflow} to obtain that
\[
\begin{split}
&\left| \left\{ r_{\Rm}\leq sr \right\}\cap B_{g_i}\left(z_i,\tilde t_0, r\right) \right|_{g_{i, \tilde t_0}} \\
&\leq \left| \left\{ r_{\Rm}\leq sr \right\}\cap P^*(z_i,\tilde t_0 + r^2; Br, -2r^2 )\cap (M_i\times \left\{\tilde t_0\right\}) \right|_{g_{i, \tilde t_0}} \leq Csr^{2n},
\end{split}
\]
for some constant $C<\infty$, provided that $r<\Bar{r}$. This completes the proof.
\end{proof}
From (\ref{viboundatzi}), we have $v_i\leq C$ on $B_{g_i}\left(z_i,\tilde t_0, r\right)$, hence we have
$$
R_{g_i}(x, \tilde t_0)\leq C, ~~for ~~all ~~x\in B_{g_i}\left(z_i,\tilde t_0, r\right), 
$$
for $i$ large enough. Hence from Lemma \ref{lvnc}, we have
$$\left|B_{g_i}\left(z_i,\tilde t_0, r\right) \right|_{g_{i, \tilde t_0}}\geq C^{-1}r^{n},$$
for some $C<\infty$. Combining this with Lemma \ref{iceatzi}, we can find a constant $\delta>0$, such that there exists a point $(y_i, \tilde t_0)\in B_{g_i}\left(z_i,\tilde t_0, r\right)$ such that
\begin{equation}\label{rrmlbatyi'1}
r_{\Rm}(y_i, \tilde t_0) \geq 2\delta r.    
\end{equation}
Using Lemma \ref{deofrrm}, there exists a dimensional constant $C_0(n)<\infty$ such that
$$ |r_{\Rm}^2(y_i, t) - r_{\Rm}^2(y_i, t_0)|\leq C_0(n)\lambda^2r^2,  $$
for all $t\in [t_0, \tilde t_0]$, hence if we choose $\lambda>0$ small enough, then from (\ref{rrmlbatyi'1}) we have
\begin{equation}\label{rrmlbatyi'2}
r_{\Rm}(y_i, t) \geq \delta r,   
\end{equation}
for all $t\in [t_0, \tilde t_0]$.

From (\ref{scanddealongzi}) and Lemma \ref{W1balongwl}, we have
\begin{equation}\label{W1dbalongzi2}
d^{g_{i, t_0-\eta}}_{W_1} ( \delta_{z_i} , \nu_{z_i, t_0 ; t_0-\eta} ) \leq C \eta^{1/2} \leq 1 .   
\end{equation}
But we also have $(z_i, t_0)\in P^*(p_i, 0; A+2, t_0-\eta)$ and $d^{g_{i, t_0-\eta}}_{W_1} ( \nu_{p_i, 0 ; t_0-\eta} , \delta_{p_i} ) \leq C $, hence by triangle inequality, we have
$$
d_{g_{i, t_0-\eta}} (z_i, p_i)\leq C ,
$$
hence by Proposition \ref{stdde}, if we choose $\eta$ and $\Bar{r}$ small enough, then we have
\begin{equation}\label{W1dbalongzi3}
d_{g_{i, t}} (z_i, p_i)\leq C   
\end{equation}
for all $t\in [t_0-\eta, \tilde t_0]$. Hence
$$
d_{g_{i, \tilde t_0}} (y_i, p_i)\leq C .
$$
Hence by Proposition \ref{stdde} again, if we choose $\Bar{r}$ small enough, then we have
\begin{equation}\label{dbbeyipi}
d_{g_{i, t}} (y_i, p_i)\leq C , 
\end{equation}
for all $t\in [t_0, \tilde t_0]$. Hence we can apply Proposition \ref{copuug'1} to find points $\tilde y_\infty\in \cR_{t_0+(\lambda r)^2}$ and $y_\infty\in \cR_{t_0}$ such that (passing to a subsequence)
$$(y_i, t_0+(\lambda r)^2)\xrightarrow[i\to\infty]{\CCC} \tilde y_\infty,$$
and
$$(y_i, t_0)\xrightarrow[i\to\infty]{\CCC} y_\infty,$$
with $\tilde y_\infty$ being the point survive from $y_\infty$. From (\ref{dbbeyipi}) we have $d^{g_{i, t}}_{W_1} ( \delta_{y_i} , \nu_{p_i, 0 ; t } ) \leq C $ for all $t\in [t_0, \tilde t_0]$. Hence item (1) is clear from \cite[Lemma 15.8]{Bam20c}.

Next, we prove item (3). Since $z_\infty\in \cR_{t_0}$, we can find a constant $0<r_0\leq \lambda r$ such that $\tilde r_{\Rm}(z_\infty)\geq r_0$. From (\ref{W1dbalongzi3}), if we choose $\Bar{r}$ small enough, then we can apply Proposition \ref{copuug'1} to find a dimensional constant $c_0(n)>0$, such that the worldline of $z_\infty$ in $\cR$ survives from $\cR_{t_0}$ to $\cR_{t_0+(c_0r_0)^2}$, and for any $t\in [t_0, t_0+(c_0r_0)^2]$, if we denote by $z_\infty(t)\in \cR_t$ the point survives from $z_\infty$, then we have 
$$(z_i, t)\xrightarrow[i\to\infty]{\CCC} z_\infty(t),$$
with $\tilde r_\Rm(z_\infty(t))\geq c_0r_0$. Similar to (\ref{W1dbalongzi'2}), we have 
$$d^{g_{i, t_0+(c_0r_0)^2}}_{W_1} (\nu_{z_i, t_0 + (\lambda r)^2 ; t_0+(c_0r_0)^2} , \delta_{z_i} ) \leq C r,$$
hence we have
\[
\begin{split}
&\qquad d^{g_{i, t_0+\frac{1}{2}(c_0 r_0)^2}}_{W_1} ( \nu_{y_i, t_0+(\lambda r)^2; t_0+\frac{1}{2}(c_0 r_0)^2} , \nu_{z_i, t_0+(c_0 r_0)^2; t_0+\frac{1}{2}(c_0 r_0)^2} ) \\
&\leq d^{g_{i, t_0+\frac{1}{2}(c_0 r_0)^2}}_{W_1} ( \nu_{y_i, t_0+(\lambda r)^2; t_0+\frac{1}{2}(c_0 r_0)^2} , \nu_{z_i, t_0+(\lambda r)^2; t_0+\frac{1}{2}(c_0 r_0)^2} )\\
&\qquad\qquad\qquad\qquad+ d^{g_{i, t_0+\frac{1}{2}(c_0 r_0)^2}}_{W_1} ( \nu_{z_i, t_0+(\lambda r)^2; t_0+\frac{1}{2}(c_0 r_0)^2} , \nu_{z_i, t_0+(c_0 r_0)^2; t_0+\frac{1}{2}(c_0 r_0)^2} )\\
&\leq d_{g_{i,t_0+(\lambda r)^2}}(y_i, z_i) + d^{g_{i, t_0+(c_0 r_0)^2}}_{W_1} ( \nu_{z_i, t_0+(\lambda r)^2; t_0+(c_0 r_0)^2} , \delta_{z_i} )\leq C r,
\end{split}
\]
hence from \cite[Lemma 15.8]{Bam20c}, we have 
\begin{equation}\label{W1debzinfyinf}
d^{\cX_{t_0}}_{W_1} ( \nu_{\tilde y_\infty; t_0} , \nu_{z_\infty(t_0+(c_0 r_0)^2); t_0} )\leq C r.    
\end{equation}

If we consider the spacetime curve $\gamma(\tau)=z_\infty(t_0+(c_0r_0)^2-\tau)$ with $\tau\in [0, (c_0r_0)^2]$, then $\tilde r_\Rm(\gamma(\tau))\geq c_0r_0$, hence we have $\cL(\gamma)\leq c_0r_0$, then from \cite[Lemma 22.2]{Bam20c}, we have 
\begin{equation}\label{W1debzinf}
d^{\cX_{t_0}}_{W_1} ( \delta_{z_\infty} , \nu_{z_\infty(t_0+(c_0 r_0)^2); t_0} )\leq C r_0\leq C r.    
\end{equation}
According to Proposition \ref{copuug'1}, we can choose $\lambda=\lambda(A,T)$ even smaller, such that the worldline of $y_\infty$ in $\cR$ survives from $\cR_{t_0}$ to $\tilde y_\infty\in\cR_{t_0+(\lambda r)^2}$, and for any $t\in [t_0, t_0+(\lambda r)^2]$, we have $(y_i, t)\xrightarrow[i\to\infty]{\CCC}  y_\infty(t)$. Hence $\tilde r_\Rm( y_\infty(t))\geq \delta r$ from (\ref{rrmlbatyi'2}) for all $t\in [t_0, t_0+(\lambda r)^2]$. Similarly to (\ref{W1debzinf}), we can prove
\begin{equation}\label{W1debyinf}
d^{\cX_{t_0}}_{W_1} ( \delta_{y_\infty} , \nu_{\tilde y_\infty; t_0} )\leq C r.    
\end{equation}
Now we can combine (\ref{W1debzinfyinf}) (\ref{W1debzinf}) and (\ref{W1debyinf}) with the triangle inequality to obtain 
$$d_{t_0}(z_\infty,  y_\infty)\leq C r.$$
Hence $d_{t_0}(x_\infty, y_\infty)\leq C r$. 

Also, there is a constant $c_0=c_0(n)>0$, such that $P(y_i, t_0; c_0\delta r)$ converge in the Cheeger-Gromov sense to $P^{\circ}( y_\infty; c_0\delta r, (c_0\delta r)^2, -(c_0\delta r)^2)\subset\cR$, which is unscathed. But $B_{g_{i,t_0}}(y_i, t_0, c_0\delta r)\subset P(y_i, t_0; c_0\delta r)$ and $|B_{g_{i,t_0}}(y_i, t_0, \frac{1}{2}c_0\delta r)|_{g_{i,t_0}}\geq C^{-1}r^{n}$, hence we have $|B( y_\infty, c_0\delta r)|_{g_{t_0}}\geq C^{-1}r^{n}$. This proves item (3).

Finally, we prove item (2). Let $x\in B(x_\infty,r)$, hence by item (3) we have
$$
d_{t_0}( x , y_\infty ) \leq d_{t_0}( x , x_\infty ) + d_{t_0}( x_\infty , y_\infty )\leq C r ,
$$
hence by (\ref{W1debyinf}), we can compute
\[
\begin{split}
d^{\cX_{t_0}}_{W_1} ( \delta_{x} , \nu_{\tilde y_\infty ; t_0 } ) \leq d^{\cX_{t_0}}_{W_1} ( \delta_{x} , \delta_{y_\infty} ) + d^{\cX_{t_0}}_{W_1} ( \delta_{y_\infty} , \nu_{\tilde y_\infty ; t_0 } ) \leq Cr ,
\end{split}
\]
this proves item (2).

This completes the proof of Proposition \ref{eoyinftyandtilyinfty}.
\end{proof}
Now we can prove Theorem \ref{mainforRF1}.
\begin{proof}[Proof of Theorem \ref{mainforRF1}]
We fix a time $t_0\in (-T_\infty, 0)$. Let $\eta\in ( 0 , 1 )$ be a small constant such that
$t_0-100\eta>-T_\infty$.




%
For item (1), we only need to verify condition (4) in Definition \ref{sp}. Let $K\subset \cX_{t_0}$ be any compact subset, then there is a constant $A=A(K)<\infty$ such that $K\subset P^*( p_{\infty} ; A ,t_0 )$. 

Hence for any $x_{\infty}\in K$, we can find constants $0<\Bar{r}=\Bar{r}(K, t_0)<\eta$, $0<\lambda=\lambda(K, t_0)<1$, $C=C(K, t_0)<\infty$, and for any $r\in (0, \Bar{r})$, we can find points $\tilde y_\infty\in \cR_{t_0+(\lambda r)^2}$, $y_\infty\in \cR_{t_0}$, such that the statements of Proposition \ref{eoyinftyandtilyinfty} hold. Hence we have
\begin{equation}\label{vnceatxinfty}
|B(x_\infty, 2Cr)\cap \cR_{t_0}|_{g_{t_0}}\geq |B( y_\infty, r)\cap \cR_{t_0}|_{g_{t_0}}\geq  C^{-1}r^{n}\geq  \tilde C^{-1}(2Cr)^{n},    
\end{equation}
for some constant $\tilde C=\tilde C(K, t_0)<\infty$, this proves the volume non-collapsing estimate in condition (4) in Definition \ref{sp}.

Next, for the volume non-inflating estimate, we let $ r\in (0, \Bar{r})$. We have $B(x_\infty,r)\subset P^*(\tilde y_\infty ; Cr, -(\lambda r)^2)$. For any $\delta>0$, we set $W_\delta:=\left\{ \tilde r_{\Rm}\geq 2\delta \right\}\cap P^*(\tilde y_\infty ; Cr, -\frac{3}{2}(\lambda r)^2) \cap \cR_{t_0}$. According to \cite[Theorem 6.45]{Bam20b}, we can find points $y_i\in M_i$ such that $(y_i, t_0+(\lambda r)^2)\xrightarrow[i\to\infty]{\CCC} \tilde y_\infty$. Then similar to the proof of Proposition \ref{icbonlimitflow}, we have $W_{\delta}\subset\cR_{t_0}$ is relatively compact, $W_{\delta}\subset U_i$ for $i$ large enough, and $\psi_i(W_{\delta})\subset V_i$ are well-defined, and if we let 
$$W_{i}:=\left\{ r_{\Rm}\geq \delta \right\}\cap P^*\left(y_i, t_0+(\lambda r)^2; 2C r , -10\lambda^2 \right)\cap (M_i\times \left\{t_0\right\}),$$
then we have $\psi_i(W_{\delta})\subset W_i$ for $i$ large enough. Since $(t_0+(\lambda r)^2) - 20 \lambda^2 r^2 >-T_{\infty} $ and $t_0\in [(t_0+(\lambda r)^2) - 10 \lambda^2 r^2 , (t_0+(\lambda r)^2)]$, according to \cite[Theorem 9.8]{Bam20a}, we have 
$$
|\psi_i(W_{\delta})|_{g_{i,t_0}}\leq|W_i|_{g_{i,t_0}}\leq \tilde C(K, t_0) r^{n} ,
$$
for all $i$ large enough, hence by the smooth convergence of $\psi^*g_{i, t_0}$ to $g_{t_0}$ on $W_{\delta}$, we have $|W_{\delta}|_{g_{t_0}}\leq\tilde C(K, t_0) r^{n}$, letting $\delta\to 0$, we obtain $|B(x_\infty,  r)\cap\cR_{t_0}|_{g_{t_0}}\leq\tilde C(K, t_0) r^{n}$. This proves the volume non-inflating estimate in condition (4) in Definition \ref{sp}, and proves item (1).

Finally, we prove item (2), we consider the case $K=\left\{x_\infty\right\}$. Since $\tilde y_\infty\in P^*(p_\infty ; C, t_0-\Bar{r}^2)$ with $\mathfrak{t}(\tilde y_\infty)=t_0+(\lambda r)^2\leq t_0/2$ if we choose $\Bar{r}$ small enough, we can apply Proposition \ref{icbonlimitflow} to obtain that, for any $0<s<1$, $0<\mathbf{p}<4$, 
$$\left| \left\{ 0<\tilde r_{\Rm}\leq sr \right\}\cap P^*\left(\tilde y_\infty; Cr, -\frac{3}{2}(\lambda r)^2\right)\cap \cR_{t_0} \right|_{g_{t_0}}   \leq E_{\mathbf{p}}s^{\mathbf{p}}r^{n},$$
for some $E_{\mathbf{p}}=E_{\mathbf{p}}(x_\infty, t_0)<\infty$, hence from $B(x_\infty, r)\subset P^*\left(\tilde y_\infty; Cr, -\frac{3}{2}(\lambda r)^2\right)$, we have
$$\left| \left\{ 0< \tilde r_{\Rm}\leq sr \right\}\cap B(x_\infty, r)\cap \cR_{t_0} \right|_{g_{t_0}}   \leq E_{\mathbf{p}}s^{\mathbf{p}}r^{n}.$$
Combining with item (1), we have $(\cX_{t_0}, d_{t_0},\cR_{t_0}, g_{t_0})$ is a singular space of dimension $n$, which has singularities of codimension $4$ in the sense of \cite[Definition 1.9]{Bam17}. This proves item (2).
\end{proof}
\begin{proof}[Proof of Corollary \ref{CorforRF1}]
Since $v_i$ is a $C_i$-barrier of $R_{g_i}$ and $Y_0$-based at $(p_i, 0)$, and $C_i\to 0$ as $i\to\infty$, we have $\lim_{i\to\infty} R_{g_i}(p_i, 0)\leq \lim_{i\to\infty} C_iY_0=0$. If we have $\lim_{i\to\infty} \inf_{M_i\times \left\{-T_i\right\} } R_{g_i}\geq 0$, then the same computation as (\ref{scbatxk2}) and \cite[Theorem 15.60]{Bam20c} show that $\cX$ is a static limit. This implies that $\cX$ is continuous in the Gromov-$W_1$ sense, hence by \cite{Bam20b}, for every $t\in (-T_\infty, 0)$ the $\bF$-convergence (\ref{FcofRF'1}) is time-wise. By Theorem \ref{FtoGHfromJST}, this completes the proof.
\end{proof}
%


\medskip
\subsection{Tangent spaces of the limiting space}

In this section, we have the following corollary for the tangent spaces of the limiting singular space.
\begin{corollary}\label{CorforRF2}
Suppose we have $C_i\leq Y_0$ for all $i$. Then for every $t\in (-T_\infty, 0)$ where the $\bF$-convergence (\ref{FcofRF'1}) is time-wise, the following statements hold.
\begin{enumerate}
\item For any sequence of scales $\sigma_k\to 0^+$, for any $q\in \cX_t$, by passing to a subsequence, we have
\begin{equation}\label{GHconofchi'1}
(\cX_t, \sigma_k^{-1}d_{t}, q) \xrightarrow[k\to\infty]{}  ( \tilde\cX_{-1}, d^{\tilde\cX}_{-1} , \tilde q ),
\end{equation}
in the pointed Gromov-Haudorff sense for some $ \tilde  q \in \cX_{-1}$, where $\tilde\cX_{-1}$ is the $-1$-time-slice of a static metric flow $\tilde\cX$, which is a limit arising as in Corollary \ref{CorforRF1}.
\item For any sequence of scales $\eta_k\to 0^+$, by passing to a subsequence, we have
\begin{equation}\label{GHconofchi'2}
(\tilde\cX_{-1}, \eta_k^{-1}d^{\tilde\cX}_{-1}, \tilde q ) \xrightarrow[k\to\infty]{}  ( \hat\cX_{-1}, d^{\hat\cX}_{-1} , \hat q ),
\end{equation}
in the pointed Gromov-Haudorff sense, where $\hat\cX_{-1}$ is the $-1$-time-slice of a static metric flow $\hat\cX$, which is a limit arising as in Corollary \ref{CorforRF1}. Moreover, $( \hat\cX_{-1}, d^{\hat\cX}_{-1} )$ is a metric cone.
\end{enumerate}
\end{corollary}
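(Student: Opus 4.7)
The plan is to deduce both items from parabolic rescalings combined with Corollary \ref{CorforRF1}, with the metric cone conclusion in item (2) coming from the soliton structure of tangent flows established in \cite{Bam20c}.

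For item (1), I would first choose points $q_i \in M_i$ via Theorem \ref{FtoGHfromJST} so that $(M_i, d_{g_{i,t}}, q_i) \to (\cX_t, d_t, q)$ in the pointed Gromov--Hausdorff sense. For each scale $\sigma_k$ define the rescaled Ricci flows
\[
g^{(k)}_{i, s} := \sigma_k^{-2}\, g_{i,\, t + \sigma_k^2 (s+1)},
\]
so that rescaled time $s=-1$ corresponds to original time $t$. The functions $v^{(k)}_i(s) := v_i(t + \sigma_k^2(s+1))$ are $(\sigma_k^2 C_i)$-barriers of $R_{g^{(k)}_i}$. Because $q$ has bounded $W_1$-distance from $p_\infty$ in $\cX_t$, \cite[Lemma 15.8]{Bam20c} combined with Proposition \ref{viprop3} yields a uniform bound $v_i(q_i, t) \leq Y_q$ for large $i$, so each $v^{(k)}_i$ is $Y_q$-based at $(q_i, 0)$; Perelman's lower scalar bound $R_{g_i} \geq -C$ on a fixed subinterval rescales to a vanishing lower bound. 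A diagonal choice $i(k) \to \infty$ makes $\sigma_k^2 C_{i(k)} \to 0$ and arranges all hypotheses of Corollary \ref{CorforRF1}. Applying that corollary along a further subsequence produces $\bF$-convergence to a static metric flow $\tilde\cX$ over $(-\infty, 0]$, with pointed Gromov--Hausdorff convergence on every time slice. Reading off the slice $s = -1$ and combining with the original convergence $(M_{i(k)}, d_{g_{i(k), t}}, q_{i(k)}) \to (\cX_t, d_t, q)$ via the triangle inequality for pointed Gromov--Hausdorff distance yields the required $(\cX_t, \sigma_k^{-1} d_t, q) \to (\tilde\cX_{-1}, d^{\tilde\cX}_{-1}, \tilde q)$.

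For item (2), I would iterate item (1) with $\tilde\cX$ in place of $\cX$. Since $\tilde\cX$ itself arises from Corollary \ref{CorforRF1} with \emph{vanishing} barrier constants, the same rescaling at scales $\eta_k$---implemented on the doubly-rescaled Ricci flows $\sigma_k^{-2}\eta_l^{-2}\, g_{i,\, t + \sigma_k^2\eta_l^2(\cdot)}$ via a triple-diagonal choice of $i,k,l$---produces a tangent limit $\hat\cX$ which is again a Corollary \ref{CorforRF1}-type static limit. The metric cone property then follows from the tangent flow theory of \cite{Bam20c}: every tangent $\bF$-limit of $\tilde\cX$ at $\tilde q$ is a singular shrinking gradient Ricci soliton, so $\hat\cX$ is simultaneously a shrinking soliton and (by its Corollary \ref{CorforRF1}-static origin) Ricci flat on its regular part $\hat\cR$. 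The soliton identity then reduces to $\nabla^2 f = \tfrac{1}{2|\tau|} g$ on $\hat\cR$; integrating this along minimizing geodesics in the classical Cheeger--Colding manner produces a radial dilation fixing $\hat q$, identifying $(\hat\cX_{-1}, d^{\hat\cX}_{-1})$ as a metric cone with vertex $\hat q$.

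The main obstacle will be the metric cone step in item (2). One has to carefully match two a priori different diagonal-subsequence constructions---the shrinking-soliton tangent flow of \cite{Bam20c} and the iterated Corollary \ref{CorforRF1} limit---to identify them, and then push the pointwise soliton identity from $\hat\cR$ across the singular set $\hat\cS$ so that the conclusion becomes a genuine statement about the metric completion $(\hat\cX_{-1}, d^{\hat\cX}_{-1})$. The rescaling bookkeeping in item (1) is routine but still requires some care in propagating the $W_1$-bound along the diagonal subsequence and in verifying that the barrier constants genuinely vanish in the limit.
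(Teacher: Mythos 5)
Your item~(1) follows essentially the same strategy as the paper: parabolically rescale based at the slightly later time $t+\sigma_k^2$, choose a diagonal subsequence $i(k)$ so that the rescaled barrier constants $\sigma_k^2 C_{i(k)}$ vanish, apply Corollary~\ref{CorforRF1} to obtain a static limit $\tilde\cX$, read off the $s=-1$ time slice, and combine pointed Gromov--Hausdorff approximations by the triangle inequality. The paper's mechanism for locating the base point at time $t_0+\sigma_k^2$ is a bit different: it invokes Proposition~\ref{eoyinftyandtilyinfty} to produce $\tilde q_k\in P^*(p_\infty;C,t_0-\eta)\cap\cX_{t_0+\sigma_k^2}$ with $d_{W_1}^{\cX_{t_0}}(\delta_q,\nu_{\tilde q_k;t_0})\le C\sigma_k$, approximates $\tilde q_k$ by $q_k^i\in M_i$, and proves Lemma~\ref{debeqistarq} to show $d_{g_{i,t_0}}(q_i^*,q_k^i)\le C\sigma_k$. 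You instead reuse the single point $q_i$ at the later time; this works and is arguably simpler, but you still need the analogue of that estimate (that $(q_i,t)$ and $(q_i,t+\sigma_k^2)$ are $O(\sigma_k)$-close in $W_1$, via Lemma~\ref{W1balongwl} after propagating the barrier bound), and in the diagonal you must choose $i(k)$ large enough that the un-rescaled PGH error decays faster than $\sigma_k$ (the paper explicitly imposes $d_{PGH}\le\sigma_k^3$ in (\ref{pGHofMigitocX2})). These are the details glossed over in ``routine bookkeeping.'' Also note that Theorem~\ref{FtoGHfromJST} as stated gives PGH convergence based at $p_i$, not at arbitrary $q$; to get $q_i$ converging to $q$ you need the measured-GH convergence of \cite[Proposition~2.7]{Hal} together with strict $\CCC$-convergence, as the paper does via $q_i^*$.

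For item~(2), the paper's route is substantially shorter than yours: it simply cites \cite[Theorem~2.16]{Bam20c}, which already asserts that in a static noncollapsed $\bF$-limit all tangent cones of the time slices are metric cones, and then observes (by repeating item~(1)) that these tangent cones also arise as Corollary~\ref{CorforRF1}-type limits, establishing both claimed properties of $\hat\cX_{-1}$. Your plan---identify the iterated Corollary~\ref{CorforRF1} limit with Bamler's shrinking soliton tangent flow, then combine the soliton identity with Ricci flatness of the regular part to integrate to a cone structure---is essentially a re-derivation of that cited theorem. It is not wrong, but as you yourself note, matching the two diagonal constructions and pushing the pointwise cone structure across $\hat\cS$ to the metric completion is genuine extra work. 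You should simply invoke the existing theorem rather than re-prove it inside this corollary.
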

\begin{proof}
Fix a time $t_0\in (-T_\infty, 0)$ where the where the $\bF$-convergence (\ref{FcofRF'1}) is time-wise. We choose $\eta\in (0,1)$ such that $t_0-100\eta> -T_\infty$. Fix a point $q\in \cX_{t_0}$. Let $\sigma_k\to 0^+$ be any blow-up scales. We want to study the blow-up sequence $(\cX_{t_0}, \sigma_k^{-1}d_{t_0}, q)$. 

Since (\ref{FcofRF'1}) is time-wise at $t_0$, by the proof of \cite[Theorem 7.3]{JST23a}, the condition of \cite[Proposition 2.7]{Hal} hold for $(M_{i}, d_{g_{i,t_0}}, \nu_{p_i,0; t_0})$, with base-point $(p_i, t_0)$, which converge to $(\cX_{t_0}, d_{t_0}, \nu_{p_\infty; t_0})$ in the Gromov-$W_1$-Wasserstein sense. Hence, we can find points $q_i^*\in M_i$, such that $(q_i^*, t_0)$ strictly converge to $q$ within $\CCC$ and 
\begin{equation}\label{pGHofMigitocX}
(M_{i}, d_{g_{i,t_0}},  q_i^*) \xrightarrow[i\to\infty]{}  ( \cX_{t_0}, d^{\cX}_{t_0} ,  q ),
\end{equation}
in the pointed Gromov-Haudorff sense.


%
We need to do blow-up at the time $t_0+\sigma_k^2$. By Proposition \ref{eoyinftyandtilyinfty}, passing to a subsequence (such that all $\sigma_k>0$ small enough, depending on $t_0, q$), for each $k$, there exists a point $\tilde q_k \in P^*(p_\infty ; C, t_0-\eta)\cap\cX_{t_0+\sigma_k^2}$ such that
\begin{equation}\label{W1dbqtildeqk}
d^{\cX_{t_0}}_{W_1} ( \delta_{q} , \nu_{\tilde q_k; t_0} )  \leq  C\sigma_k .
\end{equation}
Since (\ref{FcofRF'1}) is time-wise at $t_0$, by \cite[Theorem 6.45]{Bam20b}, for each $k$, there exists $q^i_k \in M_i$, such that
\begin{equation}\label{W1conofqiktoqk}
(q^i_k, t_0+\sigma_k^2)\xrightarrow[i\to\infty]{\CCC} \tilde q_k ,
\end{equation}
where is time-wise at $t_0$.  

\begin{lemma}\label{debeqistarq}
There exists a constant $C=C(t_0, q)<\infty$, such that for each $k$, there exists $i_0(k)<\infty$, such that for all $i\geq i_0(k)$, we have
\begin{equation}\label{debeqistarq2}
d_{g_{i,t_0}} ( q_i^* , q^i_k ) \leq C\sigma_k  .
\end{equation}
\end{lemma}
\begin{proof}
By \cite[Lemma 15.8]{Bam20c}, for all $i\geq i_0(k)$ with $i_0(k)$ large enough, we have
$$
( q^i_k , t_0+\sigma_k^2 ) \in P^*(p_i, 0 ; 2C, t_0-2\eta) .
$$
Hence from $v_i(p_i, 0)\leq Y_0$ and Proposition \ref{viprop3}, we have $v_i( q^i_k , t_0+\sigma_k^2 )\leq C(t_0, q)$, hence
\begin{equation}\label{viboundatqik}
v_i( q^i_k , t )\leq C(t_0, q) ,
\end{equation}
for all $t\in [t_0-2\eta , 0]$, hence 
\begin{equation}\label{Rgiboundatqik}
R_{g_i}( q^i_k , t )\leq C(t_0, q) ,
\end{equation}
for all $t\in [t_0-2\eta , 0]$.

Now, since $(q_i^*, t_0)$ strictly converge to $q$ within $\CCC$, we have
\begin{equation}\label{W1bbqistarq}
\begin{split}
d^{Z_{t_0}}_{W_1} ( (\varphi^i_{t_0})_*(\delta_{q_i^*}) , (\varphi^{\infty}_{t_0})_*(\delta_{q}) ) \leq \sigma_k ,  
\end{split}
\end{equation}
for all $i\geq i_0(k)$ with $i_0(k)$ large enough. Next, by (\ref{W1dbqtildeqk}), we have
\begin{equation}\label{W1bbtilqkq}
\begin{split}
d^{Z_{t_0}}_{W_1} ( (\varphi^{\infty}_{t_0})_*(\delta_{q}) , (\varphi^{\infty}_{t_0})_*( \nu_{\tilde q_k; t_0} ) ) \leq d^{\cX_{t_0}}_{W_1} ( \delta_{q} , \nu_{\tilde q_k; t_0} ) \leq C\sigma_k .  
\end{split}
\end{equation}
Next, since (\ref{W1conofqiktoqk}) is time-wise at $t_0$, we have
\begin{equation}\label{W1bbtilqkqik}
\begin{split}
d^{Z_{t_0}}_{W_1} ( (\varphi^{\infty}_{t_0})_*( \nu_{\tilde q_k; t_0} ) , (\varphi^{i}_{t_0})_*( \nu_{q^i_k , t_0+\sigma_k^2 ; t_0} ) ) \leq \sigma_k ,  
\end{split}
\end{equation}
for all $i\geq i_0(k)$ with $i_0(k)$ large enough. Finally, from (\ref{Rgiboundatqik}), we have
\begin{equation}\label{W1bbtilqik}
\begin{split}
d^{Z_{t_0}}_{W_1} ( (\varphi^{i}_{t_0})_*( \nu_{q^i_k , t_0+\sigma_k^2 ; t_0} ) , (\varphi^{i}_{t_0})_*( \delta_{q^i_k} ) )  \leq C \sigma_k .  
\end{split}
\end{equation}
Now, by the triangle inequality and (\ref{W1bbqistarq}), (\ref{W1bbtilqkq}), (\ref{W1bbtilqkqik}), (\ref{W1bbtilqik}), we have
\begin{equation}\label{W1bbtilqikqstar}
\begin{split}
d^{Z}_{t_0} ( (\varphi^i_{t_0})(q_i^*) , (\varphi^{i}_{t_0})( q^i_k ) ) = d^{Z_{t_0}}_{W_1} ( (\varphi^i_{t_0})_*(\delta_{q_i^*}) , (\varphi^{i}_{t_0})_*( \delta_{q^i_k} ) )  \leq C \sigma_k ,  
\end{split}
\end{equation}
for all $i\geq i_0(k)$ with $i_0(k)$ large enough. Since $\varphi^i_{t_0}: (M_i, d_{g_{i, t_0}})\to (Z_{t_0}, d^Z_{t_0})$ is isometric embedding, we conclude
$$
d_{g_{i,t_0}} ( q_i^* , q^i_k ) \leq C\sigma_k ,
$$
for all $i\geq i_0(k)$ with $i_0(k)$ large enough. This completes the proof.
\end{proof}
From (\ref{pGHofMigitocX}), we can choose $i_0(k)$ large enough, such that for all $i\geq i_0(k)$, we have 
\begin{equation}\label{pGHofMigitocX2}
d_{ PGH }((M_{i}, d_{g_{i,t_0}},  q_i^*) ,  ( \cX_{t_0}, d^{\cX}_{t_0} ,  q ))\leq \sigma_k^3 .
\end{equation}
Now, for each $k\in\mathbb{N}$, we choose $i_0(k)$ large enough, such that Lemma \ref{debeqistarq} and (\ref{pGHofMigitocX2}) hold. Then we set
$$
\tilde M_k:=M_{i_0(k)},~~~ \tilde g_{k,t}:=\sigma_k^{-2}g_{ i_0(k) , \sigma_k^{2}t + ( t_0+\sigma_k^2 )  },~~~ x_k:=q^{i_0(k)}_{k}, ~~~ t\in [-\tilde T_k , 0] ,
$$
where $\tilde T_k=\sigma_k^{-2}( T_{i_0(k)} - \eta + t_0+\sigma_k^2 )\to\infty$ as $k\to\infty$. We also set
$$
\tilde q^*_k:=q^*_{i_0(k)} ,
$$
then from Lemma \ref{debeqistarq}, we have
\begin{equation}\label{debeqistarq3}
d_{\tilde g_{k,-1}} ( \tilde q^*_k , x_k ) \leq C  .
\end{equation}
Also, from (\ref{pGHofMigitocX2}), we have
\begin{equation}\label{pGHofMigitocX3}
d_{ PGH }( ( \tilde M_k , d_{\tilde g_{k,-1}} ,  \tilde q^*_k ) ,  ( \cX_{t_0}, \sigma_k^{-1}d^{\cX}_{t_0} ,  q ))\leq \sigma_k .
\end{equation}
Next, recall that $v_i$ is $C_i$-barrier of $R_{g_i}$, hence if we set $\tilde v_k(t)=v_{i_0(k)}(\sigma_k^{2}t + ( t_0+\sigma_k^2 ))$, $t\in [-\tilde T_k , 0]$, then from (\ref{viboundatqik}), $\tilde v_k$ is $\sigma_k^{2}Y_0$-barrier of $R_{\tilde g_{k}}$ and $C$-based at $(x_k, 0)$. Moreover, due to the choice of $\tilde T_k$, we have $R_{\tilde g_{k}} \geq -C(t_0, q)\sigma_k^{2}$. Hence, Corollary \ref{CorforRF1} applied here, that is, by passing to a subsequence, we have $\bF$-convergence on compact time-intervals
\begin{equation}\label{FcofRF'2}
(\tilde M_k, (\tilde g_{k,t})_{t\in [-\tilde T_k , 0]}, (\nu_{x_k,0; t})_{t\in [-\tilde T_k , 0]}) \xrightarrow[i\to\infty]{\bF,\tilde\CCC}  (\tilde\cX, (\nu_{x_\infty; t})_{t\in (-\infty , 0]}),
\end{equation}
within some correspondence $\tilde\CCC$, with $\tilde\cX$ being a static limit, satisfies the conclusions of \cite[Theorem 2.16]{Bam20c}. Moreover, since $\tilde\cX$ is static, it's continuous on $( -\infty , 0 )$ in the sense of \cite[Definition 4.25]{Bam20b}, hence by \cite[Theorem 7.6]{Bam20b}, the $\bF$-convergence (\ref{FcofRF'2}) is time-wise at every $t\in ( -\infty , 0 )$. Hence we can apply Theorem \ref{FtoGHfromJST} to conclude that,
\begin{equation}\label{pGHofMigitocX4}
d_{ PGH }( ( \tilde M_k , d_{\tilde g_{k,-1}} ,  \tilde q^*_k ) ,  ( \tilde \cX_{-1}, d^{\tilde \cX}_{-1} , \tilde  q ))\to 0 ,
\end{equation}
as $k\to\infty$ for some $ \tilde  q \in \cX_{-1}$. Note that here we have used (\ref{debeqistarq3}). Combining (\ref{pGHofMigitocX3}) and (\ref{pGHofMigitocX4}), we conclude that 
$$
d_{ PGH }( ( \cX_{t_0}, \sigma_k^{-1}d^{\cX}_{t_0} ,  q ) ,  ( \tilde \cX_{-1}, d^{\tilde \cX}_{-1} , \tilde  q ))\to 0 ,
$$
which proves item (1). Finally, due to \cite[Theorem 2.16]{Bam20c}, every tangent cone of $( \tilde \cX_{-1}, d^{\tilde \cX}_{-1} )$ at any point is a metric cone, repeating the above arguments prove item (2).

This completes the proof.
\end{proof}
As a consequence, we have the following corollary.
\begin{corollary}\label{cortang}
Under the set-up of Theorem \ref{mainforKRF1}, for every $t\in (-\infty, 0]$, the conclusions of Corollary \ref{CorforRF2} hold.
\end{corollary}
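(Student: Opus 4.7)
The plan is to deduce Corollary \ref{cortang} by verifying that the K\"ahler--Ricci flow set-up from Section \ref{setupandpre} satisfies the hypotheses of Corollary \ref{CorforRF2}, and then applying that corollary directly. The verification requires three ingredients: (a) a uniform barrier bound $C_i\leq Y_0$, (b) the noncollapsing condition (\ref{lbofNashatbasep}), and (c) time-wise $\bF$-convergence at every $t\in(-\infty,0]$.

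For (a), Lemma \ref{viprop} provides a uniform constant $Y_0=Y_0(n,\omega_0,\|\rho\|_{C^4(\omega_{\fs})})$ such that each $v_i$ is a $Y_0$-barrier of $R_{g_i}$ that is $Y_0$-based at $(p_i,0)$; no $i$-dependence enters, so the hypothesis $C_i\leq Y_0$ of Corollary \ref{CorforRF2} holds uniformly. For (b), the monotonicity of Perelman's $\nu$-functional along the K\"ahler-Ricci flow, together with the uniformly bounded initial geometry, yields a uniform lower bound $\nu[g_{i,-T_i},2T_i]\geq -Y_0$ after possibly enlarging $Y_0$. For (c), Theorem \ref{mainforKRF1} asserts that $\cX$ is Gromov-$W_1$ continuous on $(-\infty,0]$; by \cite[Theorem 7.6]{Bam20b}, applied to a future-continuous $H_{2n}$-concentrated metric flow whose conjugate heat flow $(\nu_{p_\infty;t})$ is continuous in $W_1$, the $\bF$-convergence (\ref{FcorKRF'3}) can be upgraded so that it is time-wise at every $t\in(-\infty,0)$; this only uses the density of time-wise convergence from \cite[Lemma 6.7]{Bam20b} together with the already-established continuity of $\cX$.

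With these three conditions checked, Corollary \ref{CorforRF2} applies and yields the desired tangent-space statements for every $t\in(-\infty,0)$. To handle the endpoint $t=0$, I would invoke the perturbation trick stated in the remark following Proposition \ref{GW1conofX}: replace $t_i$ by $\hat t_i:=t_i+(1-t_i)/2$ while still using the original Ricci vertex $p_{t_i}$ as base point, so that in the new rescaling the time $t=0$ becomes the time $t=-1$ of the shifted flow; then apply \cite[Theorem 6.40]{Bam20b} to move the base-point of the $\bF$-convergence from $(p_i,0)$ to $(p_i,-1)$ and invoke the interior statement at time $t=-1$ of the new limit.

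The main technical obstacle is item (c): ensuring time-wise $\bF$-convergence at \emph{every} time rather than only almost every time, which is what the blow-up argument inside the proof of Corollary \ref{CorforRF2} needs in order to produce, via \cite[Theorem 6.45]{Bam20b}, approximating points $q^i_k\in M_i$ with $(q^i_k,t_0+\sigma_k^2)\xrightarrow[i\to\infty]{\CCC}\tilde q_k$ for the iterated rescaling. The $t=0$ boundary case is then essentially bookkeeping once the time-shift has been performed; the rest of the tangent-cone analysis is imported verbatim from Corollary \ref{CorforRF2}, since the rescaled barriers $\tilde v_k(t)=v_i(\sigma_k^2 t+(t_0+\sigma_k^2))$ automatically have Lipschitz constant $\sigma_k^2 Y_0\to 0$, forcing the intermediate limit $\tilde\cX$ to be static by \cite[Theorem 15.60]{Bam20c}.
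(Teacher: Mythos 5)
Your proposal is correct and follows essentially the same line of reasoning that the paper implicitly relies on: Corollary~\ref{cortang} is stated without a written proof because it is intended as an immediate consequence of Corollary~\ref{CorforRF2} once one observes that (i) Lemma~\ref{viprop} supplies a uniform barrier constant, (ii) scale invariance of $\nu$ together with monotonicity gives $\nu[g_{i,-T_i},2T_i]=\nu[g(0),2t_i]\geq\nu[g(0),2]$, and (iii) the Gromov-$W_1$ continuity from Theorem~\ref{mainforKRF1} combined with \cite[Theorem 7.6]{Bam20b} upgrades the $\bF$-convergence to time-wise at every $t\in(-\infty,0)$, with $t=0$ handled by the time-shift remark after Proposition~\ref{GW1conofX}. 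You have simply written out the verification that the paper leaves implicit, and your argument is sound.
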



\medskip
\subsection{Proof of Theorem \ref{mainforKRF2}}
In this subsection, we come back to the set-up of Section \ref{setupandpre}. Under this K\"ahler-Ricci flow set-up, we actually have stronger results than Corollary \ref{cortang} due to our distance distortion estimates.

\begin{proposition}\label{tangentcmc}
Under the set-up of Theorem \ref{mainforKRF1}, for any $t_{0}\in(-\infty,0)$, and any $x_{0}\in\mathcal{X}_{t_{0}}$,
any tangent cone of $(\mathcal{X}_{t_{0}},d_{t_{0}})$ at $x_{0}$
is a metric cone.
\end{proposition}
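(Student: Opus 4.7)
The plan is to combine the blow-up structure provided by Corollary \ref{cortang} with the rigidity case of Perelman's entropy monotonicity for static $\bF$-limits, thereby upgrading the iterated-tangent-cone statement Corollary \ref{CorforRF2}(2) to the first-order statement about $(\cX_{t_0},d_{t_0})$ itself. Fix $x_0\in\cX_{t_0}$ and a scale sequence $\sigma_k\to 0^+$. Applying Corollary \ref{cortang}, after passing to a subsequence $(\cX_{t_0},\sigma_k^{-1}d_{t_0},x_0)$ converges in the pointed Gromov--Hausdorff sense to $(\tilde\cX_{-1},d^{\tilde\cX}_{-1},\tilde q)$, where $\tilde\cX$ is the static $\bF$-limit over $(-\infty,0]$ produced from rescalings $\tilde g_{k,t}:=\sigma_k^{-2}g_{i_0(k),\sigma_k^2 t+(t_0+\sigma_k^2)}$, with basepoint $x_\infty\in\tilde\cX_0$. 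It therefore suffices to show that $(\tilde\cX_{-1},d^{\tilde\cX}_{-1})$ is a metric cone with tip $\tilde q$.

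Next I would establish that the pointed Nash entropy in the limit flow based at $x_\infty$ is constant as a function of $\tau>0$. Using scale invariance of the pointed Nash entropy together with its continuity under $\bF$-convergence (see \cite{Bam20a,Bam20b}), one has, for each fixed $\tau>0$,
\[
\cN^{\tilde\cX}_{x_\infty}(\tau)=\lim_{k\to\infty}\cN_{q_k^{i_0(k)},\,t_0+\sigma_k^2}(\sigma_k^2\tau).
\]
Combining the short-time distance distortion estimates of Section \ref{ApptotheKRF} (which show that $(q_k^{i_0(k)},t_0+\sigma_k^2)$ approximates $(x_0,t_0)$ in $W_1$-distance) with the monotonicity of $\tau\mapsto\cN_{x,t}(\tau)$, this common limit equals the Gaussian density $\Theta(x_0,t_0):=\lim_{\tau\to 0^+}\cN_{x_0,t_0}(\tau)$, independently of $\tau$. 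Hence $\cN^{\tilde\cX}_{x_\infty}(\tau)\equiv\Theta(x_0,t_0)$ on $(0,\infty)$.

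With constant Nash entropy in hand, I would invoke the rigidity case of Perelman's entropy monotonicity in the form used by Bamler in Sections 15 and 22 of \cite{Bam20c}: constancy of the pointed Nash entropy in a noncollapsed, $H_n$-concentrated $\bF$-limit forces the gradient shrinking soliton identity
\[
\ric+\nabla^2 f-\tfrac{1}{2\tau}g=0
\]
to hold on the regular part $\tilde\cR$, where $f$ is the potential of $\nu_{x_\infty;\cdot}$. Since $\tilde\cX$ is static (in particular $\ric\equiv 0$ on $\tilde\cR$, by the argument used for Corollary \ref{CorforRF1}), the identity reduces to $\nabla^2 f=\tfrac{1}{2\tau}g$ on $\tilde\cR_{-1}$, which is the infinitesimal characterization of $(\tilde\cR_{-1},g_{-1})$ being a Riemannian cone with tip at the unique minimum of $f$. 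This minimum coincides with $\tilde q$ because $\tilde q$ is the $H_n$-center of $\nu_{x_\infty;-1}$ and hence the concentration point of that measure.

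The most delicate step will be passing the smooth cone structure on $\tilde\cR_{-1}$ to a metric cone structure on the completion $(\tilde\cX_{-1},d^{\tilde\cX}_{-1})$, since a priori we only know that $\tilde\cS_{-1}$ has Minkowski dimension at most $n-4$ (Theorem \ref{mainforRF1}(2)). Here the plan is to exploit the singular-space structure of Theorem \ref{mainforRF1}(1): the gradient $\nabla f$ is a dilation field on $\tilde\cR_{-1}$ whose integral curves, by the codimension-four bound on $\tilde\cS_{-1}$, can miss the singular set and lift uniquely to flows on the metric completion, thereby realizing the required dilation symmetry on all of $\tilde\cX_{-1}$.
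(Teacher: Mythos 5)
Your proposal takes a genuinely different route from the paper. The paper's proof sidesteps Corollary \ref{cortang} entirely: it parabolically rescales the limit flow $\cX$ at $(x_0,t_0)$ to obtain a tangent flow $\mathcal{Y}$, cites Bamler's theory (implicitly via Corollary \ref{CorforRF1} and \cite[Theorem 2.16]{Bam20c}) to conclude that $\mathcal{Y}$ is a \emph{static cone}, and then uses the Section~3 distance distortion estimates to show that as $t\nearrow 0$ the pointed GH limits of $(\cX^{\lambda_i,t_0}_t,z_t)$ coincide, up to $\epsilon(t)\to 0$, with the given tangent cone $(\widehat X,\widehat d,\widehat x)$. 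Since the former converge to the model cone $(C(Y),d_Y,y_\ast)$ as $t\nearrow 0$ (because the $2H_{2n}$-centers $y_t$ converge to the vertex $y_\ast$), the tangent cone is pointed isometric to a metric cone. Your proof instead reduces to showing that $\tilde\cX_{-1}$ from Corollary \ref{cortang} is itself a cone, by re-deriving the soliton structure of $\tilde\cX$ through constancy of the Nash entropy and then invoking the rigidity $\nabla^2 f = \tfrac{1}{2\tau}g$ on $\tilde\cR$. Those two pieces are sound ideas and are indeed what underlie Bamler's classification of tangent flows, so the Nash-entropy step is essentially a proof-within-a-proof of a fact the paper treats as available.

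There are two real issues with your version. First, the passage from a cone structure on the open dense regular part $\tilde\cR_{-1}$ to a metric cone structure on the completion $\tilde\cX_{-1}$ is a genuine gap: integral curves of $\nabla f$ need not globalize on a singular space, and the codimension-four Minkowski bound on $\tilde\cS_{-1}$ by itself does not guarantee that the dilation flow extends continuously to the completion or that the cone metric on $\tilde\cR_{-1}$ is the restriction of a cone metric on $\tilde\cX_{-1}$. This is the step Bamler's structure theorem handles for you, and it is precisely what the paper avoids by comparing the tangent cone directly with the pre-existing model cone $C(Y)$. Second, the identification of $\tilde q$ as the cone vertex is unjustified: $\tilde q$ arises as a limit of $H_{2n}$-center-type basepoints, and in a Gaussian soliton an $H_{2n}$-center of the tip at time $-1$ is generically at bounded, \emph{not} vanishing, distance from the minimum of $f$; the paper handles this correctly by letting the time parameter $t\nearrow 0$ so that the $2H_{2n}$-centers $y_t$ converge to $y_\ast$, while your argument works at a single fixed slice $t=-1$. (The proposition itself does not demand that $\tilde q$ be the tip, so this second point is more a misstatement than a fatal flaw, but as written the claim is not substantiated.)
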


\begin{proof}
Let $\mathcal{X}^{\lambda,t_{0}}$ denote the parabolic rescaling
of $\mathcal{X}$ by $\lambda$, based at time $t_{0}$; in particular,
we have $(\mathcal{X}_{t}^{\lambda,t_{0}},d_{t}^{\lambda,t_{0}})=(\mathcal{X}_{t_{0}+\lambda^{-2}t},\lambda d_{t_{0}+\lambda^{-2}t})$.
Then, for any sequence $\lambda_{i}\to\infty$, we can pass to a further
subsequence so that
\[
(\mathcal{X}^{\lambda_{i},t_{0}},(\nu_{t_{0}+\lambda^{-2}t})_{t\in(-\infty,0]})\xrightarrow[i\to\infty]{\mathbb{F}}(\mathcal{Y},(\mu_{t})_{t\in(-\infty,0]})
\]
where $(\mathcal{Y},(\mu_{t}))$ is a static cone, which is itself
a rescaled limit of the original flow $(M,(g_{t})_{t\in[0,1)})$.
Let $(C(Y),d_{Y},(\nu_{x;t}')_{x\in C(Y),t\in(-\infty,0]})$ be the
model of the metric cone, with $y_{\ast}$ the vertex. Because the $\mathbb{F}$-convergence 
$$(M_i, (g_{i,t})_{t\in [-T_i , 0]}, (\nu_{p_i,0; t})_{t\in [-T_i , 0]}) \xrightarrow[i\to\infty]{\bF,\CCC}  (\cX, (\nu_{p_\infty; t})_{t\in (-\infty , 0]})$$
is time-wise at every time $t\in (-\infty,0)$, Claim \ref{nonvanofvol} implies that $(\mathcal{X}_t^{\lambda_i,t_0},d_t^{\lambda_i,t_0},z_t)$ satisfy the hypotheses of \cite[Proposition 2.7]{Hal}, so after passing to a further subsequence, we have
\[
(\mathcal{X}_{t}^{\lambda_{i},t_{0}},d_{t}^{\lambda_{i},t_{0}},z_{t})\xrightarrow{i\to\infty}(C(Y),d_{Y},y_{t})
\]
in the pointed Gromov-Hausdorff sense, for any fixed $t\in(-\infty,0)$,
where $z_{t}\in\mathcal{X}_{t_{0}+\lambda_{i}^{-2}t}$ is an $H_{2n}$-center
of $x_{0}$, and $y_{t}\in C(Y)$ is a $2H_{2n}$-center of $y_{\ast}$.
The distance distortion estimates and the proof of Claim 3.15 then
imply that 
\[
d_{PGH}\left((\mathcal{X}_{t}^{\lambda_{i},t_{0}},d_{t}^{\lambda_{i},t_{0}},z_{t}),(\mathcal{X}_{t_{0}},\lambda_{i}d_{t_{0}},x_{0})\right)\leq\epsilon(t),
\]
where $\lim_{t\nearrow0}\epsilon(t)=0$. By choosing the original
sequence $\lambda_{i}$ so that $(\mathcal{X}_{t_{0}},\lambda_{i}d_{t_{0}},x_{0})$
converges in the pointed Gromov-Hausdorff sense to a given tangent
cone $(\widehat{X},\widehat{d},\widehat{x})$ of $(\mathcal{X}_{t_{0}},d_{t_{0}})$
at $x_{0}$, we obtain
\begin{align*}
d_{PGH}&\left((C(Y),d_{Y},y_{t}),(\widehat{X},\widehat{d},\widehat{x})\right) & \\ \leq & \limsup_{i\to\infty}\left(d_{PGH}\left((\mathcal{X}_{t}^{\lambda_{i},t_{0}},d_{t}^{\lambda_{i},t_{0}},z_{t}),(\mathcal{X}_{t_{0}},\lambda_{i}d_{t_{0}},x_{0})\right)+d_{PGH}\left((\mathcal{X}_{t_{0}},\lambda_{i}d_{t_{0}},x_{0}),(\widehat{X},\widehat{d},\widehat{x})\right)\right)\\
\leq & \epsilon(t).
\end{align*}
Moreover, because $y_{t}$ is an $2H_{2n}$-center of $y_{\ast}$
and $\mathcal{Y}$ is a static cone, it follows that $d_{Y}(y_{t},y_{\ast})\leq C|t|$,
so that $(C(Y),d_{Y},y_{t})\to(C(Y),d_{Y},y_{\ast})$ in the pointed
Gromov-Hausdorff sense as $t\nearrow0$. In particular, taking $t\nearrow0$
above tells us that $(C(Y),d_{Y},y_{\ast})$ is pointed isometric
to $(\widehat{X},\widehat{d},\widehat{x})$. 
\end{proof}

We can finish the Proof of Theorem \ref{mainforKRF2}.

\begin{proof}[Proof of in Theorem \ref{mainforKRF2}]
Let $(M_i, (g_{i,t})_{t\in [-T_i , 0]})$ be the sequence of Ricci flows defined in Section \ref{setupandpre}. According to Lemma \ref{viprop}, we conclude that $v_i$ is a $C$-barrier of $R_{g_{i}}$ and is $2B_0$-based at $ (p_i, 0) $, where $v_i$ is the Ricci potential defined in (\ref{cordovsi}). Hecne Theorem \ref{mainforKRF2} follows immediately from Theorem \ref{mainforRF1} and Proposition \ref{tangentcmc}.
\end{proof}


\bigskip
\bigskip

\noindent{\bf Acknowledgements} The authors would like to thank Richard Bamler, Yalong Shi and Zhenlei Zhang for many inspiring discussions. The second named author thanks Xiaochun Rong, Zhenlei Zhang and Kewei Zhang for hospitality and providing an excellent environment during his visits to Capital Normal University and Beijing Normal University where part of this work was carried out.  

\bigskip
\bigskip



\begin{thebibliography}{99}

\bibitem{Bam17} Bamler, R. {\em Structure theory of singular spaces}, J. Funct. Anal. 272 (2017), no. 6, 2504--2627

\bibitem{Bam18} Bamler, R. {\em Convergence of Ricci flows with bounded scalar curvature}, Ann. of Math. (2) 188 (2018), no. 3, 753--831

\bibitem{Bam20a} Bamler, R. {\em Entropy and heat kernel bounds on a Ricci flow background},arXiv:2008.07093

\bibitem{Bam20b} Bamler, R. {\em Compactness theory of the space of Super Ricci flows}, arXiv:2008.09298

\bibitem{Bam20c} Bamler, R. {\em Structure theory of non-collapsed limits of Ricci flows}, arXiv:2009.03243

\bibitem{BZ17} Bamler, R. and Zhang, Q. {\em Heat kernel and curvature bounds in Ricci flows with bounded scalar curvature}, Adv.Math. {\bf 319} (2017), 396--450

\bibitem{BZ19} Bamler, R. and Zhang, Q. {\em Heat kernel and curvature bounds in Ricci flows with bounded scalar curvature -- Part II}, Calc. Var. Partial Differ. Eq, 58 (2019) no. 49 

 
\bibitem{Chen} Chen, X., Sun, S. and Wang, B. {\em K\"ahler-Ricci Flow, K\"ahler-Einstein metrics, and K-Stability}, Geom \& Top, {\bf 22} (2018), no. 6, 3145-3173 

\bibitem{CW3} Chen, X.X. and Wang, B. {\em Space of Ricci flows (II)—part B: weak compactness of the flows}, J. Differential Geom, 116 (2020), no. 1, 1--123

\bibitem{DerSz} Dervan, R. and Sz\'ekelyhidi, G.  {\em The K\"ahler-Ricci flow and Optimal Degenerations}, J. Differential Geom, 116 (2020), no. 1, 187--203

\bibitem{Ham} Hamilton, R. {\em Three-manifolds with positive Ricci curvature}, J. Differential Geom, 1982, 17: 255--306

\bibitem{Ham2} Hamilton, R. {\em The formation of singularities in the Ricci flow. Surveys in differential geometry}, Vol. II (Cambridge, MA, 1993), 7–136, Int. Press, Cambridge, MA, 1995. 

\bibitem{HanLi} Han, J. and Li, C. {\em Algebraic uniqueness of K\"ahler-Ricci flow limits and optimal degenerations of Fano varieties}, preprint, math.AG/2009.01010

\bibitem{Per1} Perelman, G. {\em The entropy formula for the Ricci flow and its geometric applications}, preprint, math.DG/0211159

\bibitem{Per2} Perelman, G. {\em Ricci flow with surgery on three-manifolds}, preprint, math.DG/0303109v1

\bibitem{Per3} Perelman, G. {\em Finite extinction time for the solutions to the Ricci flow on certain three manifolds}, preprint, math.DG/0307245

\bibitem{HN} Hein, H. and Naber, A. {\em New logarithmic Sobolev inequalities and an $\varepsilon$-regularity theorem for the Ricci flow}, Comm. Pure Appl. Math. 67 (2014), no. 9, 1543--1561

\bibitem{Hal} Hallgren, M. {\em Ricci flow with Ricci curvature and volume bounded below}, arXiv:2104.03386

 
\bibitem{J} Jian, W. {\em On the improved no-local-collapsing theorem of Ricci flow},  Peking Math. J. 6 (2023), no. 2, 459--468

 
\bibitem{JST23a} Jian, W., Song, J. and Tian, G. {\em Finite time singularities of the K\"ahler-Ricci flow}, preprint

\bibitem{JST23b} Jian, W., Song, J. and Tian, G. {\em A new proof of Perelman's scalar curvature and diameter estimates for the K\"ahler-Ricci flow on Fano manifolds}, preprint

\bibitem{SeT} Sesum, N. and Tian, G. {\em Bounding scalar curvature and diameter along the K\"ahler Ricci flow (after Perelman)},  J. Inst. Math. Jussieu  7  (2008),  no. 3, 575--587

 
\bibitem{ST1} Song, J. and Tian, G. {\em The K\"ahler-Ricci flow on surfaces of positive Kodaira dimension}, Invent. Math. {\bf 170} (2007), no. 3, 609--653

\bibitem{ST2} Song, J. and Tian, G. {\em Canonical measures and K\"ahler-Ricci flow}, J. Amer. Math. Soc., {\bf 25} (2012), no. 2, 303--353

 
\bibitem{ST4} Song, J. and Tian, G. {\em The K\"ahler-Ricci flow through singularities}, Invent. Math. {\bf 207} (2017), 519--595

 
\bibitem{TiZZL} Tian, G. and Zhang, Z.L. {\em Regularity of K\"ahler-Ricci flows on Fano manifolds}, Acta Math. 216 no. 1 (2016), 127--176

\bibitem{ZhQ06} Zhang, Q. {\em Some gradient estimates for the heat equation on domains and for an equation by Perelman}, Int. Math. Res. Not., 39 pages Art. ID 92314, 39, 2006

\bibitem{ZhQ11} Zhang, Q. {\em Bounds on volume growth of geodesic balls under Ricci flow}, Math. Res. Lett. 19 (2012), no. 1, 245--253

\bibitem{ZhZh} Zhang, Q. and Zhu, M. {\em Li-Yau gradient bounds on compact manifolds under nearly optimal curvature conditions}, J. Funct. Anal. 275 (2018), no. 2, 478--515

\end{thebibliography}
\end{document}